\newcommand*{\mc}[1]{\mathcal{#1}}
\newcommand*{\mb}[1]{\mathbb{#1}}
\newcommand*{\ms}[1]{\mathsf{#1}}
\newcommand*{\mo}[1]{\mathbf{#1}}
\newcommand{\N}{\mathbb{N}}
\newcommand{\Nn}{\mathbb{N}_0}
\newcommand{\Z}{\mathbb{Z}}
\newcommand{\Q}{\mathbb{Q}}
\newcommand{\R}{\mathbb{R}}
\newcommand{\Rp}{\R_{\geq0}}
\newcommand{\Rpp}{\R_{>0}}
\newcommand{\Rpo}{\R_{\geq1}}
\newcommand{\Rgeq}[1]{\R_{\geq#1}}
\newcommand{\Rppo}{\R_{>1}}
\newcommand{\ball}{\mathrm{B}}
\newcommand{\nset}[2]{{\left\llbracket #1,#2\right\rrbracket}}
\newcommand{\nnset}[1]{{\left\llbracket #1\right\rrbracket}}
\newcommand{\nnzset}[1]{{\left\llbracket 0, #1\right\rrbracket}}
\newcommand{\lcb}{\left\lbrace} 
\newcommand{\rcb}{\right\rbrace} 
\newcommand{\cb}[1]{\lcb #1 \rcb} 
\newcommand{\cbOf}[1]{\mathopen{}\lcb #1 \rcb\mathclose{}} 
\newcommand{\lab}{\left[} 
\newcommand{\rab}{\right]} 
\newcommand{\ab}[1]{\lab #1 \rab} 
\newcommand{\abOf}[1]{\!\ab{#1}} 
\newcommand{\lb}{\left(} 
\newcommand{\rb}{\right)} 
\newcommand{\br}[1]{\lb #1 \rb} 
\newcommand{\brOf}[1]{\!\br{#1}} 
\newcommand{\abs}[1]{\left| #1 \right|} 
\newcommand{\normof}[1]{\Vert#1\Vert}
\newcommand{\cardof}[1]{\##1}
\renewcommand{\Pr}{\mathbf{P}} 
\newcommand{\PrOf}[1]{\Pr\brOf{#1}}
\newcommand{\E}{\mathbf{E}} 
\newcommand{\Eof}[1]{\E[#1]}
\newcommand{\Eoff}[2]{\E_{#1}[#2]}
\newcommand{\EOff}[2]{\E_{#1}\abOf{#2}}
\newcommand{\LpNormOf}[3]{\left\Vert#3\right\Vert_{L^{#1}({#2})}}
\newcommand{\supNormof}[1]{\vert#1\vert_\infty}
\newcommand{\supNormOf}[1]{\abs{#1}_\infty}
\newcommand{\opNormof}[1]{\Vert#1\Vert_{\mathsf{op}}}
\newcommand{\opNormOf}[1]{\left\Vert#1\right\Vert_{\mathsf{op}}}
\newcommand{\euclof}[1]{\Vert#1\Vert_2}
\newcommand{\euclOf}[1]{\left\Vert#1\right\Vert_2}
\newcommand{\sizedMid}[2]{#1 \, \kern-\nulldelimiterspace\mathopen{}\left| \vphantom{#1}\,#2\right.\mathclose{}\kern-\nulldelimiterspace}
\newcommand{\setByEle}[2]{\cb{\sizedMid{#1}{#2}}}
\newcommand{\setByEleInText}[2]{\{#1 \mid #2\}}
\newcommand{\tr}{^{\!\top}\!} 
\newcommand{\pr}{^\prime}
\newcommand{\prr}{^{\prime\prime}}
\newcommand{\ind}{\mathds{1}}
\newcommand{\indOfOf}[2]{\ind_{\!#1}\!\brOf{#2}}%
\newcommand{\smoothC}{C^\infty}
\newcommand{\eqcm}{\,,} 
\newcommand{\eqfs}{\,.}
\newcommand{\dl}{\mathrm{d}} 
\newcommand{\supp}{\operatorname{supp}} 
\renewcommand{\subset}{\subseteq}
\newcommand{\euler}{\mathrm{e}} 
\newcommand{\kullback}{\mathrm{K\!L}} 
\newcommand{\lebesgue}{\mathbb{L}} 
\DeclareMathOperator*{\argmin}{arg\,min}
\DeclareMathOperator*{\esssup}{ess\,sup}
\newcommand{\const}[1]{C_{\mathsf{#1}}}
\newcommand{\assuRef}[1]{\texorpdfstring{\protect\hyperlink{assu#1}{\textsc{#1}}}{}}
\newcommand{\newAssuRef}[1]{\hypertarget{assu#1}{\textsc{#1}}}
\newcommand{\indset}[3]{#1_{\nset{#2}{#3}}}
\newcommand{\noise}{\varepsilon}
\newcommand{\ftrue}{f^\star}
\newcommand{\festi}{\hat f}
\newcommand{\xany}[1]{x_{#1}}
\newcommand{\xdeltaSymb}{x^\delta}
\newcommand{\xdelta}[1]{\xdeltaSymb_{#1}}
\newcommand{\stepsize}{\Delta\!t}
\newcommand{\Tmax}{T_{\mathsf{max}}}
\newcommand{\Tsum}{T_{\mathsf{sum}}}
\newcommand{\nmax}{n_{\mathsf{max}}}
\newcommand{\hypercube}{[0,1]^d}
\newcommand{\xdomain}{\mathcal{X}}
\newcommand{\setIdx}{\mathcal{I}}
\newcommand{\ptrue}{\theta^\star}
\newcommand{\ParamSnakeBase}[2]{\Theta_{#1}^{#2}}
\newcommand{\ParamSnakeSymb}{\Theta^{\mathsf{snake}}}
\newcommand{\ParamSnake}{\ParamSnakeSymb_{d,\beta}}
\newcommand{\FSmooth}{\mathcal{F}_{d,\beta}}
\newcommand{\dm}[1]{d_{\mathsf{#1}}} 
\newcommand{\Kper}{K_{\mathsf{per}}}
\newcommand{\tube}{\mathcal{T}}
\newcommand{\htext}[1]{h^{\ms{#1}}}
\newcommand{\hbump}[2]{\htext{bump}_{#1,#2}}
\newcommand{\hpulse}[2]{\htext{pulse}_{#1,#2}}
\newcommand{\stdKernel}{K^*}
\newcommand{\rmax}{r_{\ms{max}}}
\newcommand{\Lbeta}{L_{\beta}} 
\newbox{\myorcidthanksbox}
\sbox{\myorcidthanksbox}{\large\includegraphics[height=1.8ex]{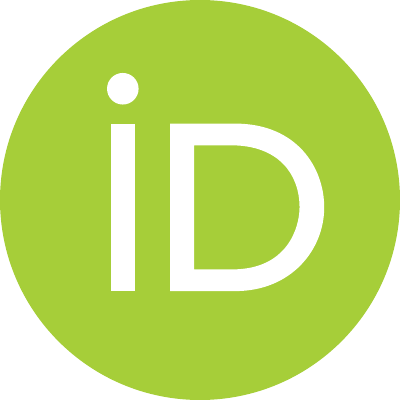}}
\newcommand{\orcidthanks}[1]{%
    \href{https://orcid.org/#1}{\raisebox{-0.5ex}{\usebox{\myorcidthanksbox}}\,#1}}
\def\postBoxSkip{1.0ex}
\def\postBoxSkipCmd{\vskip\postBoxSkip}
\def\preBoxSkip{1.0ex}
\def\preBoxSkipCmd{\vskip\preBoxSkip}
\declaretheoremstyle[
	bodyfont=\normalfont,
	postfoothook={\postBoxSkipCmd},
	preheadhook={\preBoxSkipCmd},
	mdframed={
		backgroundcolor = black!2,
		startcode={},
}]{ruledBoxStyle}
\declaretheoremstyle[
	bodyfont=\normalfont,
	postfoothook={\postBoxSkipCmd},
	preheadhook={\preBoxSkipCmd},
	mdframed={
		backgroundcolor=white,
}]{ruledBoxStyleWhite}
\declaretheoremstyle[
	bodyfont=\normalfont,
	postfoothook={\postBoxSkipCmd},
	preheadhook={\preBoxSkipCmd},
	mdframed={
		backgroundcolor=black!2,
		linecolor = black!2,
		tikzsetting = {
			draw = black,
			line width = 2pt,%
			dashed,%
			dash pattern = on 10pt off 3pt
		},
}]{dashedBoxStyle}
\declaretheoremstyle[
	bodyfont=\normalfont,
	postfoothook={\postBoxSkipCmd},
	preheadhook={\preBoxSkipCmd},
	mdframed={
		linecolor = white,
		startcode={},
		tikzsetting = {
			draw = black,
			line width = 1pt,%
			loosely dotted,
		},
	}
]{dashedStyle}
\declaretheoremstyle[
	bodyfont=\normalfont,
	postfoothook={\postBoxSkipCmd},
	preheadhook={\preBoxSkipCmd},
	mdframed={
		linecolor = black,
		innerlinewidth=1pt,outerlinewidth=1pt,
		middlelinewidth=1pt,
		linecolor=black,middlelinecolor=white,
		startcode={},
	}
]{doubleStyle}
\declaretheoremstyle[
	bodyfont=\normalfont,
	postfoothook={\postBoxSkipCmd},
	preheadhook={\preBoxSkipCmd},
	mdframed={
		backgroundcolor = black!4,
		linecolor = black!4,
		startcode={},
}]{boxStyle}
\declaretheoremstyle[
	headfont=\normalfont\itshape,
	notefont=\normalfont\itshape,
	notebraces={}{},
	bodyfont=\normalfont,
	qed=\qedsymbol,
	numbered=no,
	headindent=0pt,
	postheadspace=1ex,
	name={Proof},
	postheadhook={},
	mdframed={
		hidealllines = true,
		innerrightmargin = 0pt,
		innerleftmargin = 0pt,
		innertopmargin = 0pt,
		innerbottommargin = 0pt,
		leftmargin = 0pt,
		rightmargin = 0pt,
	}
]{proofStyle}
\declaretheoremstyle[
	bodyfont=\normalfont,
	postfoothook={\postBoxSkipCmd},
	preheadhook={\preBoxSkipCmd},
	mdframed={
		backgroundcolor = white,
		linecolor = black,
		startcode={},
		leftline = false,
		rightline = false,
}]{tobBottomStyle}
\declaretheoremstyle[
bodyfont=\normalfont,
]{standardStyle}
\declaretheorem[style=ruledBoxStyle,name=Definition, numberwithin=section]{definition}
\declaretheorem[style=ruledBoxStyle,name=Lemma,numberwithin=section,numberlike=definition]{lemma}
\declaretheorem[style=ruledBoxStyle,name=Proposition,numberwithin=section,numberlike=definition]{proposition}
\declaretheorem[style=ruledBoxStyle,name=Theorem,numberwithin=section,numberlike=definition]{theorem}
\declaretheorem[style=ruledBoxStyle,name=Corollary,numberwithin=section,numberlike=definition]{corollary}
\declaretheorem[style=boxStyle,name=Remark,numberwithin=section,numberlike=definition]{remark}
\declaretheorem[style=boxStyle,name=Notation,numberwithin=section,numberlike=definition]{notation}
\declaretheorem[style=dashedStyle,name=Example,numberwithin=section,numberlike=definition]{example}
\declaretheorem[style=dashedStyle,name=Assumption,numberwithin=section,numberlike=definition]{assumption}
\title{Lower Bounds for Nonparametric Estimation of Ordinary Differential Equations}
\date{}
\author[1,2]{Christof Schötz\thanks{math@christof-schoetz.de, \orcidthanks{0000-0003-3528-4544}}}
\author[3]{Maximilian Siebel\thanks{siebel@math.uni-heidelberg.de, \orcidthanks{0009-0008-7843-1712}}}
\affil[1]{Potsdam Institute for Climate Impact Research}
\affil[2]{Technical University of Munich}
\affil[3]{Heidelberg University}
\begin{document}
\maketitle
\begin{abstract}
	We noisily observe solutions of an ordinary differential equation $\dot u = f(u)$ at given times, where $u$ lives in a $d$-dimensional state space. The model function $f$ is unknown and belongs to a Hölder-type smoothness class with parameter $\beta$. For the nonparametric problem of estimating $f$, we provide lower bounds on the error in two complementary model specifications: the snake model with few, long observed solutions and the stubble model with many short ones. The lower bounds are minimax optimal in some settings. They depend on various parameters, which in the optimal asymptotic regime leads to the same rate for the squared error in both models: it is characterized by the exponent $-2\beta/(2(\beta+1)+d)$ for the total number of observations $n$. To derive these results, we establish a master theorem for lower bounds in general nonparametric regression problems, which makes the proofs more comparable and seems to be a useful tool for future use.
\end{abstract}
\textbf{Keywords:} Nonparametric Regression, Ordinary Differential Equations, Nonparametric Estimation, Lower Bounds, Minimax-Optimality
\tableofcontents
\section{Introduction}\label{sec:intro}
\phantom{x}\\
\textbf{Motivation.}
In numerous scientific domains spanning from physics, engineering, and chemistry to biology, aspects of the real world are modeled as dynamical systems. See \cite{Ott1993, Strogatz2024} for a comprehensive overview. If the dynamics of such systems are not fully known, we may want to learn them from observational data and statistics comes into play. Reviews of current methods for statistical inference and data analysis for dynamical systems are given in \cite{McGoff15, Dattner2020}.

\textbf{Model.}
Here we consider $d$-dimensional, autonomous, first order ordinary differential equations (ODEs) as simple, yet general instances of a dynamical systems.  They have the form
\begin{equation}\label{eq:Intro1}
	\dot{u}(t) = f(u(t))
	\eqcm
\end{equation}
where $t\in\R$ is called \textit{time}, $u\colon\R\to\R^d$ describes the \textit{state} of the system at given time with its time-derivative denoted as $\dot u := \frac{\dl u}{\dl t}$, and $f\colon\R^d\to\R^d$ is the so-called \textit{model function} describing the dynamics of the system. The ODE is \textit{autonomous} as $f$ does not explicitly depend on time. It is of \textit{first order} as only the first derivative of $u$ is part of the equation. We denote by $t\mapsto U(f, x, t)$ the solution to the \textit{initial value problem} that is given by \eqref{eq:Intro1} and \textit{initial conditions} $x\in\R^d$, i.e., $\frac{\dl}{\dl t} U(f, x, t) = f(U(f, x, t))$ and $U(f, x, 0) = x$.
Classical textbook results provide existences and uniqueness of solutions to such initial value problems if $f$ is globally Lipschitz continuous, see for instance \cite{Magnus2023}. In our statistical model, we assume $f$ to be unknown, but to belong to some (infinite-dimensional) smoothness class, which is a subset of the Lipschitz continuous functions $\R^d \to \R^d$. For known initial conditions $x_1, \dots, x_m$, we assume to measure $t\mapsto U(f,x_j,t)$ at known time points $t_{j,1},\dots t_{j,n_j}$ to collect a total of $n = \sum_j n_j$ observations. The measurement error is modeled as centered and independent $d$-dimensional random variables $\noise_{j,i}$. Thus, our observations $Y_{j,i}$ follow the model equation
\begin{equation}\label{eq:RegODEIntro}
    Y_{j,i} = U(f,x_j,t_{j,i}) + \noise_{j,i}\eqfs
\end{equation}
Our goal is to investigate lower bounds on the rate of convergence for estimating the model function $f$. See \cref{fig:IntroDataExample} for a first visual illustration of the model.

\begin{figure}
    \begin{subfigure}{0.45\textwidth}
        \centering
        \includegraphics[width=\textwidth]{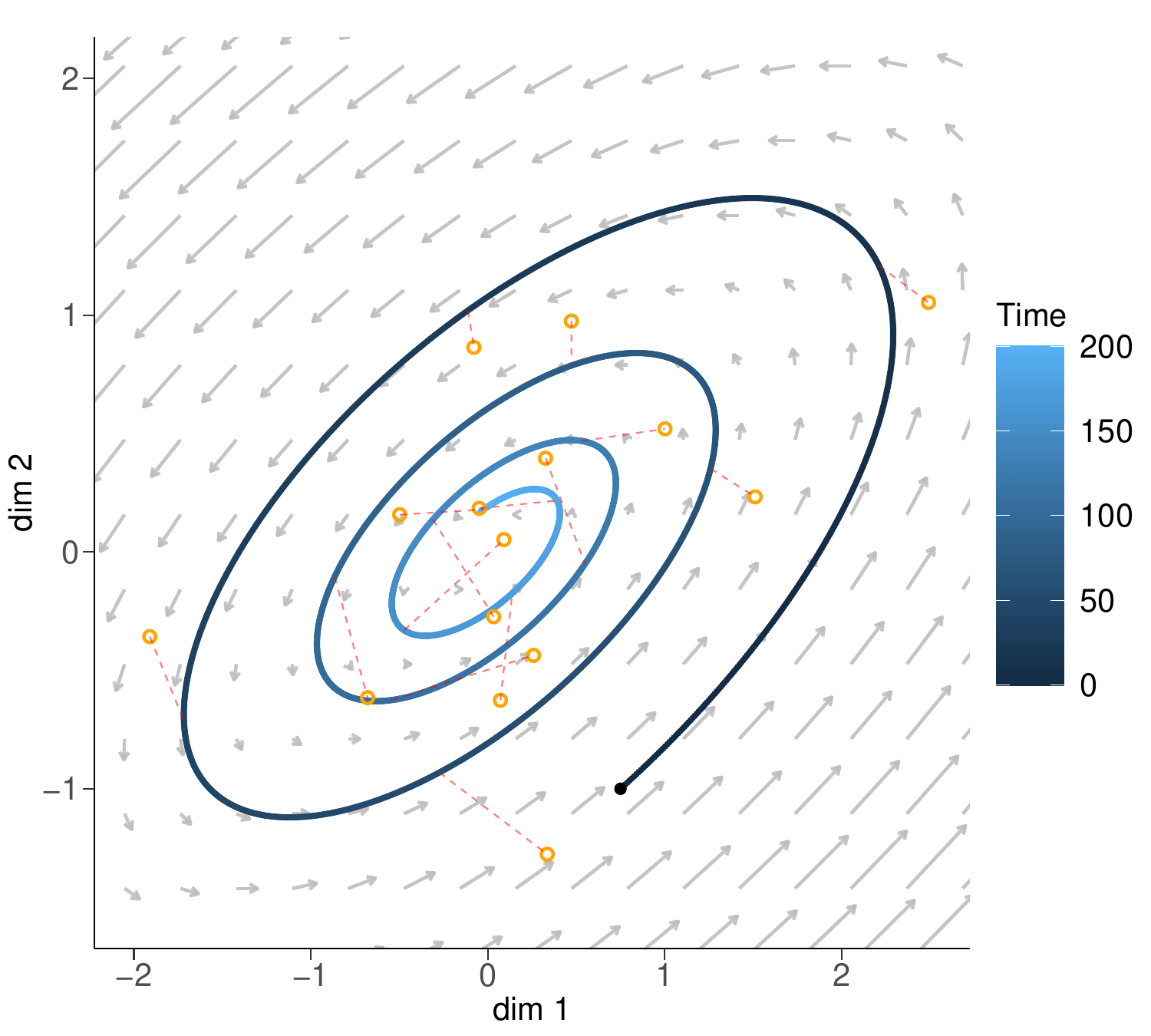}
        \caption{The state space view. The trajectory of $u$ is shown in blue colors. The gray arrows visualize the model function. The observations (depicted with orange circles) are connected with dashed lines to their noise-free state.}
        \label{fig:dataintroexample}
    \end{subfigure}
    \hfill
    \begin{subfigure}{.45\textwidth}
        \centering
        \includegraphics[width=\textwidth]{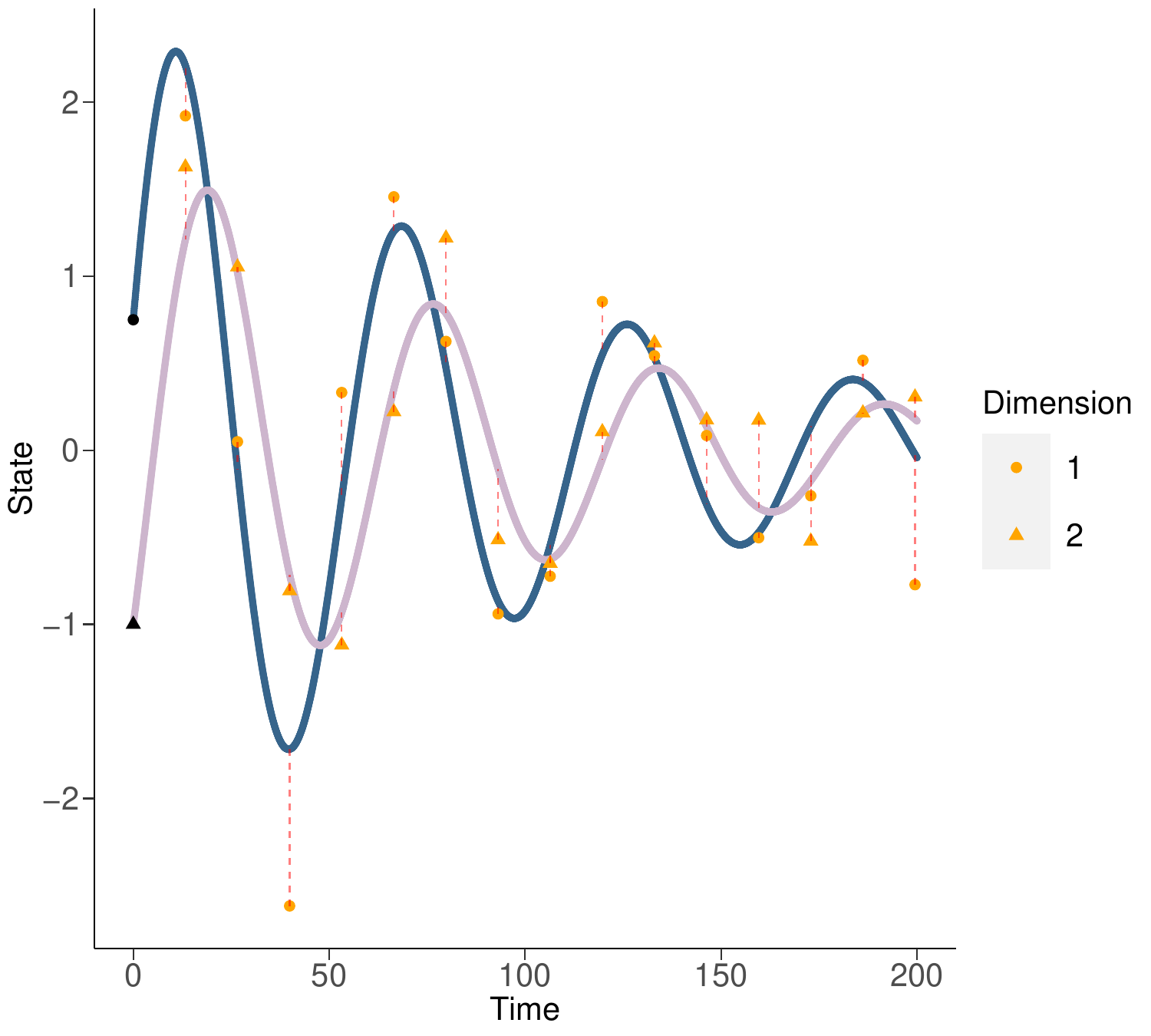}
        \caption{The evolution of the two state dimensions over time. The observations (depicted with orange circles and triangles, respectively) are connected with dashed lines to their noise-free counterpart.}
        \label{fig:data_dimplot_example}
    \end{subfigure}
    \caption{An example of the model with $m=1$ and $d=2$. A solution $u(t) = U(f, x_1, t)$ of an ODE is shown with observations $Y_i$.}
    \label{fig:IntroDataExample}
\end{figure}

\textbf{Inverse Problems.}
    Since the parameter of interest $f$ is observed only indirectly via the state variable $U(f,x,\cdot)$, the problem can be naturally interpreted as a statistical inverse problem. The non-linearity of the solution map (also called \textit{forward map}) $f\mapsto U(f,x,\cdot)$, renders classical results from linear inverse problem theory inapplicable. While (deterministic) non-linear inverse problems have a long mathematical tradition (see \cite{Engl} for instance), more recent work is concerned with nonparametric statistical theory for regression models driven by differential equations with non-linear forward maps. For instance, \cite{Monard2021} and \cite{NicklGeerWang2020} investigate PDE-constrained regression models from a Bayesian perspective and through M-estimation techniques, respectively. This emerging field has shown considerable promise for several PDE models (see \cite{Giordano2020,BohrNickl2023,Kekkonen2022}, and \cite{NicklTiti2024} for instance). These models are more complex than \eqref{eq:Intro1} in that they incorporate second order derivatives and derivatives in more than one variable. But they are also simpler in that they are linear differential equations, i.e., linear combinations of derivatives with coefficient functions to be estimated. In contrast, we allow arbitrary smooth model functions $f$ in \eqref{eq:Intro1}.

\textbf{Related Work.}
For the ODE-model \eqref{eq:RegODEIntro}, the parametric setting, in which $f = f_\theta$ is determined by a finite-dimensional parameter vector $\theta \in \R^p$, is well-studied. In \cite{Brunel_2008, qi10, gugushvili12, dattner15} different methods are explored that achieve the optimal $n^{-\frac12}$-error rate for estimating $\theta$. These estimators typically consist of two steps, in which initially the trajectory itself is estimated nonparametrically, and subsequently, $\theta$ is estimated using a least squares or maximum likelihood approach. See also \cite{Ramsay_2017} for a comprehensive overview.

In a parametric ODE model, each observation typically contains information on the whole, global parameter vector $\theta$. This is different in the nonparametric setting, where $f$ lives in an infinite dimensional space of functions. An observation at one location $z = U(f, x, t)$ is only locally informative, i.e., only contains information about $f(\tilde z)$ with $\tilde z$ close to $z$. As the location $z$ is determined by the unknown model function $f$, the nonparametric setting requires fundamentally different approaches than the parametric one.

Several algorithms have been proposed in the nonparametric setting of \eqref{eq:RegODEIntro} or similar ones, but almost no theoretical results on the estimation error have been proven:
The problem can be approached, e.g., by using Gaussian processes \cite{heinonen18}, neural networks \cite{chen18}, or random feature maps and ensemble Kalman filters \cite{gottwald21}. In \cite{Lahouel2023}, an algorithm for nonparametric estimation in the model \eqref{eq:RegODEIntro} based on reproducing kernel Hilbert spaces is introduced. The authors also prove a $n^{-\frac14}$-convergences rate upper bound for the root mean integrated squared error of estimating the observed trajectory $u$ within the observed time span.

Another novel statistical perspective has been developed in \cite{marzouk2023}, where distribution learning via neural differential equations is studied. Utilizing transport of measure theory, the authors establish nonparametric statistical convergence analysis for distribution learning via ODE models trained through likelihood maximization. 

\textbf{Upper Bounds, Stubble and Snake.} Upper bounds on the error rate for nonparametric estimation of the model function $f$ have recently been studied in \cite{upperbounds}. To make consistent estimation possible, one of two different sets of conditions are assumed, leading to the so-called \textit{snake} and \textit{stubble} models. The observations need to contain sufficient information about the model function $f$ in the subset of its domain where we want to estimate it (the \textit{domain of interest}, e.g., $[0,1]^d$). But the location of observation itself is determined by the dynamics of the system, i.e., by $f$. In the observation scheme of the stubble model, we assume to observe many short trajectories $t\mapsto U(f, x_j, t)$ that start from initial conditions which uniformly cover the domain of interest. In contrast to this, in the snake model, we measure only few, but long trajectories that are required to cover the domain of interest in a suitable way. Both observation schemes are illustrated in \cref{fig:test}. For both settings \cite{upperbounds} provides estimation algorithms with upper bounds on the rate of convergence.

\textbf{Contributions.} We provide lower bounds for the estimation risk in terms of pointwise, $L^p$ and uniform loss for the snake and the stubble model, where the relevant parameter dependencies are tracked rigorously.

The proofs are based on theorems for deriving lower bounds in a very generic regression model framework (\cref{thm:lower:master}, \cref{thm:lower:deterministic:master}). We belief these to be useful tools not restricted to their specific application in this work. We illustrate the usefulness of these general theorems by applying them to the classical nonparametric regression framework (\cref{thm:lower:regression}). Furthermore, using the general results for the proofs in the snake and stubble ODE models as well as in the classical regression model, allows for a direct comparison (\cref{tbl:symbolsMaster}) and potentially deeper understanding of the proofs.

We prove lower bounds for the estimation of the ground truth model function $f = \ftrue\colon \R^d\to\R^d$ in \eqref{eq:RegODEIntro} by an estimator $\festi$ based on $Y_{j,i}$, assuming that $\ftrue$ belongs to a Hölder-type class of $\beta$-smooth functions $\FSmooth$. The results are provided for point-wise, $L^p$, and uniform loss. They are presented in \cref{thm:stubble:probabilistic}, \cref{thm:stubble:deterministic}, and \cref{cor:stubble:nice} for the stubble model and in \cref{thm:snake:probabilistic}, \cref{thm:snake:deterministic}, and \cref{cor:snake:combined} for the snake model. Here, we paraphrase the results for point-wise error at $x_0\in\R^d$ and assume equidistant observation times, $t_{j,i+1}-t_{j,i} = \stepsize$.

\textbf{In the Stubble model:}
    If the initial conditions form a regular grid in the domain of interest and only a bounded amount of observations is made per trajectory, we have
	\begin{equation}\label{eq:intro:stubble:result}
		C \inf_{\festi} \sup_{\ftrue\in\FSmooth} \EOff{\ftrue}{\euclOf{\festi(x_0)-\ftrue(x_0)}^2}
		\geq
		\stepsize^{2\beta} + \br{n \stepsize^2}^{-\frac{2\beta}{2\beta+d}}
	\end{equation}
	for some constant $C>0$.

\textbf{In the Snake model:}
	If the observed trajectories cover the domain of interest when inflated to a radius $\delta$, then
	\begin{equation}\label{eq:intro:snake:result}
	  	C \inf_{\festi} \sup_{\ftrue\in\FSmooth}  \EOff{\ftrue}{\euclOf{\festi(x_0)-\ftrue(x_0)}^2}
	  	\geq
	  	\delta^{2\beta}+ \br{\delta^{d-1} \stepsize}^{\frac{2\beta}{2(\beta+1)+d}}
	\end{equation}
	for some constant $C>0$.

\textbf{In both models:}
	Even though the two models are complementary, we show that the same minimal lower bound holds independently of the asymptotics of $\stepsize$ and $\delta$ (\cref{cor:stubble:nice:onlyn}, \cref{cor:snake:combined:nice}): When balancing the two terms in \eqref{eq:intro:stubble:result} as well as the two terms in \eqref{eq:intro:snake:result}, we obtain
	\begin{equation}\label{eq:intro:nice}
		C \inf_{\festi} \sup_{\ftrue\in\FSmooth} \EOff{\ftrue}{\euclOf{\festi(x_0)-\ftrue(x_0)}^2}
		\geq
		n^{-\frac{2\beta}{2(\beta +1) + d}}
		\eqfs
	\end{equation}
	This rate makes sense intuitively when comparing it to the classical minimax optimal nonparametric rate of convergence for the squared error in regression,
	\begin{equation}\label{eq:intro:npreg}
		n^{-\frac{2(\tilde\beta-s)}{2\tilde\beta + d}}
		\eqcm
	\end{equation}
	see \cref{thm:lower:regression} in \cref{sec:app:lower}, where the $s$-th derivative of a $\tilde\beta$-smooth function $\R^d \to \R$ is estimated. In our ODE-regression model, the observed solutions have smoothness $\tilde\beta = \beta+1$, and we want to estimate their first derivative ($s=1$) in the form  of $f$, which has a $d$-dimensional domain. Thus, the rate in \eqref{eq:intro:nice} fits precisely the classical nonparametric regression rate \eqref{eq:intro:npreg}.

These rates are shown to be minimax optimal in certain settings by comparing them to \cite{upperbounds}.

\textbf{Overview.} In \cref{sec:model}, the general ODE model is discussed. In \cref{sec:stubble} and \cref{sec:snake}, we focus on the stubble model and the snake model, respectively, and present the main results, i.e., the lower bounds and discuss the ideas of their proofs. We show some auxiliary results and review some basics and notation on multivariate derivatives in \cref{sec:analystical}. In \cref{sec:app:lower}, the general lower bounds theorem for regression models is stated, proven and applied in an example. The main proofs are given in \cref{sec:app:stubble} and \cref{sec:app:snake}. Finally, \cref{sec:app:run} comments on the number of trajectories $m$ required to obtain the error lower bound in the snake model.

\section{General Model}\label{sec:model}
\subsection{Model Description}
We consider an autonomous, first order, ordinary differential equation (ODE) of the form $\dot u = f(u)$, where $f$ is an unknown smooth function, which we call the model function. One or several solutions to this ODE are observed with noise at given times. The noise is assumed to be independent.
The task we consider is to estimate the model function $f$.

As we will show in the following, care has to be taken when defining the precise model and estimation task. In a straight-forward setting, no consistent estimation is possible at all.
\subsection{Model Definition}\label{sec:modelDef}
\begin{notation}\mbox{}
	\begin{enumerate}[label = (\roman*)]
		\item
		Let $\mb K \in \cb{\N, \Z, \R}$.
		For $a\in\mb K$, define $\mb K_{\geq a} := \mb K \cap [a,\infty)$ and $\mb K_{>a} := \mb K \cap (a,\infty)$.
		For $n,m\in\Z$, define $\nset{n}{m} := \Z\cap[n,m]$ and $\nnset{n}:=\nset{1}{n}$.
		\item
		For $x\in\R$, define the largest integer strictly smaller than $x$ as $\llfloor x\rrfloor := \max\setByEleInText{m\in\Z}{m < x}$, and the smallest integer strictly larger than $x$ as $\llceil x\rrceil :=\min\setByEleInText{m\in\Z}{m > x}$. The non-strict versions are denote as $\lfloor x\rfloor := \max\setByEleInText{m\in\Z}{m \leq x}$ and $\lceil x\rceil :=\min\setByEleInText{m\in\Z}{m\geq x}$.
		\item
		For $\mathcal{I}\subseteq\Z$ define the shorthand notation $O_\mathcal{I}:=(O_i)_{i\in\mathcal{I}}$ for any family of objects $O_i$, $i\in\mc I$.
		In particular, we will write $\indset{O}{n}{m}$ for $(O_n, \dots, O_m)$, where $n,m\in\N$, $n\leq m$.
		\item
		Let $(\Omega, \mathcal{A}_\Omega, \Pr)$ be a probability space and denote by $\Eof{\cdot}$ the corresponding expectation. We assume that every random variable mentioned in the following is defined on this space without further mentioning it.
		\item
		Let $d\in\N$. For a probability distribution $P$ on $\R^d$ and a vector $v\in\R^d$, denote by $v + P$ the distribution $P$ shifted by $v$, i.e., $(v+P)(A) = P(\setByEleInText{x - v}{x\in A})$ for all measurable sets $A\subset \R^d$.
		\item
		Let $d\in\N$. Denote the Euclidean norm of $v\in\R^d$ by $\euclof{v}$.
	\end{enumerate}
\end{notation}
Let $d\in\N$ be the dimension of the \textit{state space} $\R^d$.
For a (globally) Lipschitz-continuous function $f\colon\R^d\mapsto\R^d$ (the \textit{model function}) and \textit{initial conditions} $x\in\R^d$, let $U(f, x, \cdot)\colon\R\to\R^d,\,t\mapsto U(f, x, t)$ be the solution to the \textit{initial value problem} $\dot u(t) = f(u(t))$ with $u(0) = x$. The global Lipschitz assumption implies global existence and uniqueness of the solutions. We refer to $t$ and the domain of $u$ as \textit{time} and to $U(f,x,t)$ as \textit{state}. The image of $U(f,x,\cdot)$ is a \textit{trajectory}. The map $U(f, \cdot, \cdot)\colon\R^d\times\R\to\R^d$ is called \textit{flow}. It has the \textit{semigroup property}
\begin{equation}
	U(f, U(f, x, s),t) = U(f, x, s + t)  
\end{equation}
for all $x\in\R^d, s, t, \in \R$.

Let $\beta\in\Rpo$ be the \textit{smoothness parameter}. Set $\ell := \llfloor\beta\rrfloor$. Let $L_0, \dots, L_\ell, \Lbeta\in\Rpp$.
Let $\FSmooth := \Sigma^{d\to d}(\beta, \indset{L}{0}{\ell}, \Lbeta)$ denote our \textit{smoothness class} of model functions, which contains exactly those functions $f\colon\R^d\to\R^d$ with the following properties: For $j\in\nnset d$, let $f_j\colon\R^d \to\R$ be the $j$-th component functions of $f$, i.e., $f(x) = (f_1(x), \dots, f_d(x))$ for all $x\in\R^d$. For $k\in\nnzset\ell$, the $k$-th derivative $D^k f_j(x)$ of $f_j$ at any location $x\in\R^d$ is bounded in operator norm by $L_k$, i.e.,
\begin{equation}
	\forall k\in\nnset\ell\colon \forall x\in\R^d\colon \ \opNormof{D^k f_j(x)}\leq L_k
	\eqfs
\end{equation}
Moreover, the $\ell$-th derivative is $(\beta-\ell)$-Hölder continuous with constant $\Lbeta$, i.e,
\begin{equation}
	\forall x,\tilde x\in\R^d\colon\ \opNormOf{D^\ell f_j(x) - D^\ell f_j(\tilde x)} \leq \Lbeta \euclOf{x - \tilde x}^{\beta-\ell}
	\eqfs
\end{equation}
We refer to the appendix section \ref{def:Hoelder} for further details on derivatives and Hölder-smoothness classes.

Let $\ftrue\in\FSmooth$ be the \textit{true model function} or \textit{ground truth}. Let $m \in \N$ be the \textit{number of observed trajectories}. Let $n_1, \dots, n_m \in \N$ be the \textit{number of observations per trajectory}. Denote $\nmax :=\max_{j\in\nnset m}n_j$. Let $\setIdx := \bigcup_{j=1}^m (\{j\}\times\nnset{n_j})$ be the \textit{set of indices} for our observations and $n := \cardof{\setIdx} = \sum_{j=1}^m n_j$ the \textit{total number of observations}.
Let $T_j\in\Rpp$ be the \textit{maximum observation time of trajectory $j\in\nnset m$}. Let the \textit{observation time} of the $i$-th observation on trajectory $j$ be $t_{j,i} \in [0, T_j]$ for $(j,i) \in \setIdx$ with $t_{j,0} = 0$, $t_{j,n_j} = T_j$, and $t_{j,i-1} \leq t_{j,i}$. Denote the (overall) \textit{maximum observation time} as $T_{\ms{max}} := \max_{j\in\nnset{m}} T_j$ and the \textit{total time} as $\Tsum := \sum_{j\in\nnset{m}} T_j$.
Let $\xany1, \dots, \xany m \in \R^d$ be the \textit{initial conditions} for the $m$ trajectories.
Let the \textit{noise} $\noise_{j,i}$, $(j,i) \in \setIdx$ be independent $\R^d$-valued random variables with expectation $\Eof{\noise_{j,i}} = 0$.
Let
\begin{equation}\label{sec2:data}
	Y_{j,i} = U(\ftrue, \xany j, t_{j,i}) + \noise_{j,i}\,,\qquad (j,i)\in\setIdx
\end{equation}
be our observations. In this model, we observe $Y_{j,i}$ and know $t_{j,i}$ as well as $\xany j$, but $\ftrue$ is unknown.

To obtain lower bounds on the error of estimating $\ftrue$, we will require the noise distribution to fulfill the following condition.
\begin{assumption}\mbox{ }
	\begin{itemize}
		\item \newAssuRef{Noise}:
			There is $\const{noise}\in\Rpp$ such that, for all $v_1,v_2\in\R^{d}$, we have
			\begin{equation}
				\kullback(v_1 + P^\noise, v_2 + P^\noise) \leq \const{noise}\euclof{v_1 - v_2}^2,
			\end{equation}
            where $\kullback(\cdot,\cdot)$ denotes the Kullback-Leibler distance for two probability measures.
	\end{itemize}
\end{assumption}
\begin{remark}\mbox{}\label{rem:noise}
    \begin{enumerate}[label=(\roman*)]
        \item
            \textbf{Example.}
            The normal distribution $P^\noise = \mc N(0, A)$, where $A\in\R^{d\times d}$ is symmetric and positive definite, fulfills \assuRef{Noise} with
            \begin{equation}
                \const{noise} = \frac{1}{2\lambda_{\ms{min}}}
                \eqcm
            \end{equation}
            where $\lambda_{\ms{min}}\in\Rpp$ is the smallest eigenvalue of $A$.
        \item
            \textbf{Interpretation.}
            \assuRef{Noise} ensures that the noise is not restricted to too simple distributions. For example, if we would know that the errors have a uniform distribution, a faster rate of estimation may be possible. Intuitively, the mean of a uniform distribution can be estimated with a $1/n$ error rate, whereas the error for estimating the mean of a Gaussian is $1/\sqrt{n}$. This leads to different rates of convergence in regression problems. See \cite[Exercise 2.7]{Tsybakov09Introduction}.
    \end{enumerate}
\end{remark}
\subsection{Target: Estimation of the Model Function}
\begin{notation}\mbox{}
    \begin{enumerate}[label = (\roman*)]
        \item
        For $p\in [1,\infty]$, $d\in\N$, a measurable set $A\subset\R^d$, and a measurable function $g\colon A \to \R$, denote the $L^p$-norm of $g$ on $A$ as
        \begin{equation}
            \LpNormOf{p}{A}{g} =
            \begin{cases}
                \br{\int_A \abs{g}^p \dl x}^\frac1p & \text{ for } p\in[1,\infty)\eqcm \\
                \esssup_{x\in A} \abs{g} & \text{ for } p = \infty\eqfs
            \end{cases}
        \end{equation}
    \end{enumerate}
\end{notation}
Let us consider the estimation of $\ftrue$ via an estimator denoted $\festi$.
A first ingredient for a typical loss function for such an objective is a way to compare $\ftrue(x), \festi(x)\in\R^d$. We will always use the Euclidean norm $\euclof\cdot$ in $\R^d$ for this purpose and base loss functions on
\begin{equation}
	\euclof{\festi-\ftrue} \colon \R^d\to\Rp\eqcm x\mapsto  \euclof{\festi-\ftrue}(x) := \euclof{\festi(x)-\ftrue(x)}
	\eqfs
\end{equation}
For a given state $x_0\in\R^d$, we may want to consider the pointwise loss
\begin{equation}
	\euclof{\festi-\ftrue}(x_0) = \euclOf{\festi(x_0) - \ftrue(x_0)}
	\eqfs
\end{equation}
To obtain a global evaluation of an estimate, we can instead integrate $\euclof{\festi - \ftrue}$ over some measurable subset $A \subset \R^d$. A general form of this loss is
\begin{equation}
	\LpNormOf{p}{A}{\euclOf{\festi - \ftrue}}
	\eqcm
\end{equation}
where $p\in [1,\infty]$.
The set $A$ may be chosen beforehand as a fixed domain of interest such as $A := [0,1]^d$. However, it is not possible to obtain a consistent estimate without making further assumptions. Consider following example. We observe $m=1$ trajectories in $d=1$ dimensions and are interested in $\ftrue$ on $A=[0, 1]$. But the initial conditions $x_1 = 0$ and the true model function $\ftrue$ are so that $\ftrue(x_1) = 0$. Then we will only make observations at $U(\ftrue, x_1, t) = x_1$. Therefore, we will not be able to create a meaningful estimate of, say, $\ftrue(1)$. This example can be easily extended to higher dimensions.

To obtain non-trivial results, we make restrictions on the general model so that consistent estimation is possible. The restrictions ensure a suitable distribution of observations across the domain of interest (see \cref{fig:sub1} for an example where this as condition is not fulfilled). Two kinds of restrictions are discussed: In the stubble model, we assume to cover the domain of interest by a suitable number of initial conditions, each of which starts a new trajectory that is observed (e.g., \cref{fig:sub3}). In the snake model, the number of initial conditions and their location is not directly restricted, but the observed trajectory or trajectories must have forms to suitably cover of the domain of interest (e.g., \cref{fig:sub2}). The precise meaning of \textit{suitably} is discussed in \cref{sec:snake}.

\begin{figure}
	\centering
    \begin{subfigure}{.49\textwidth}
        \centering
         \includegraphics[width=\textwidth]{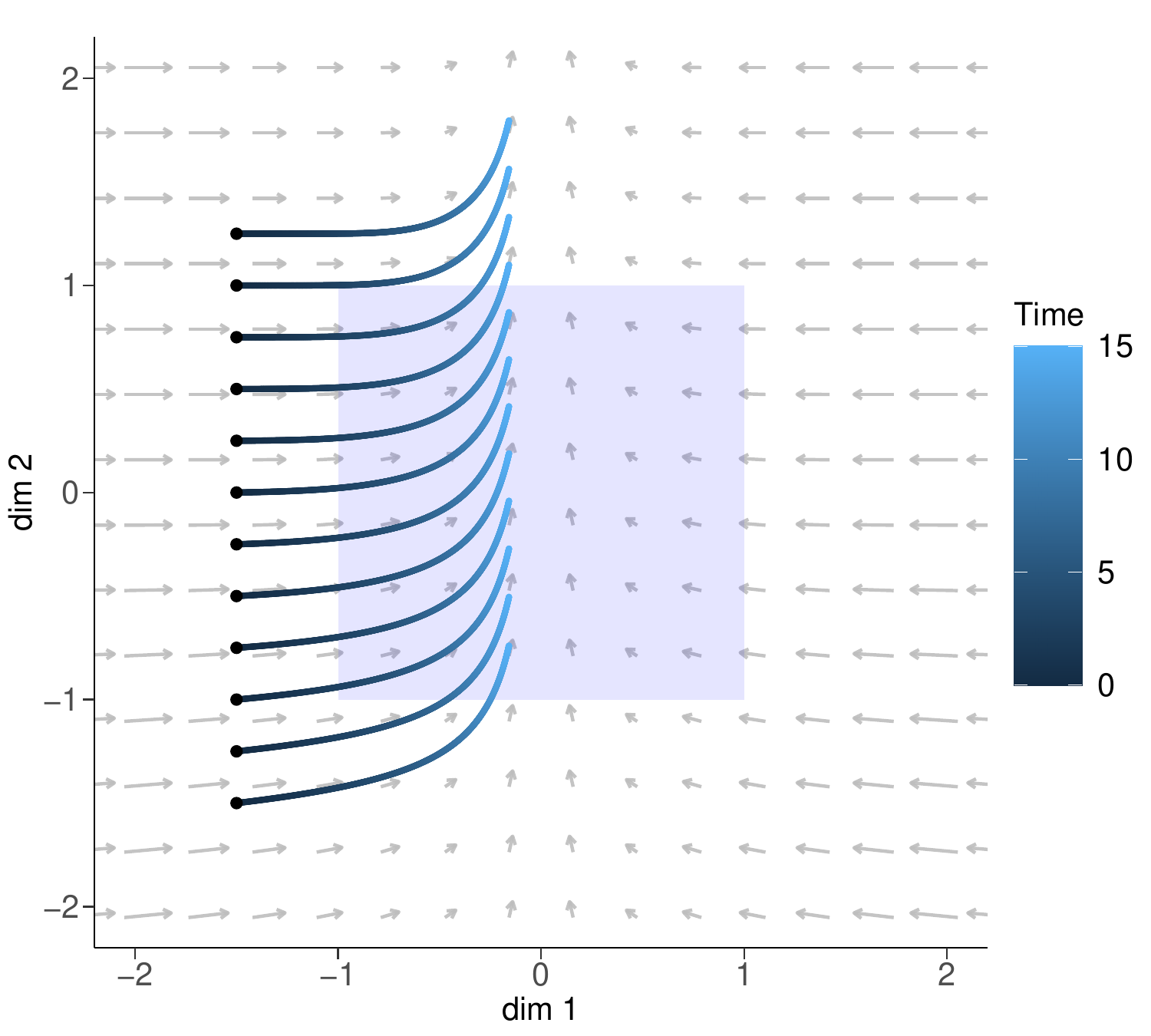}
            \caption{Observations of the given trajectories are insufficient for the estimation of $f$ within the domain of interest.}
        \label{fig:sub1}
    \end{subfigure}\\[1ex]
    \begin{subfigure}{0.49\textwidth}
        \centering
         \includegraphics[width=\textwidth]{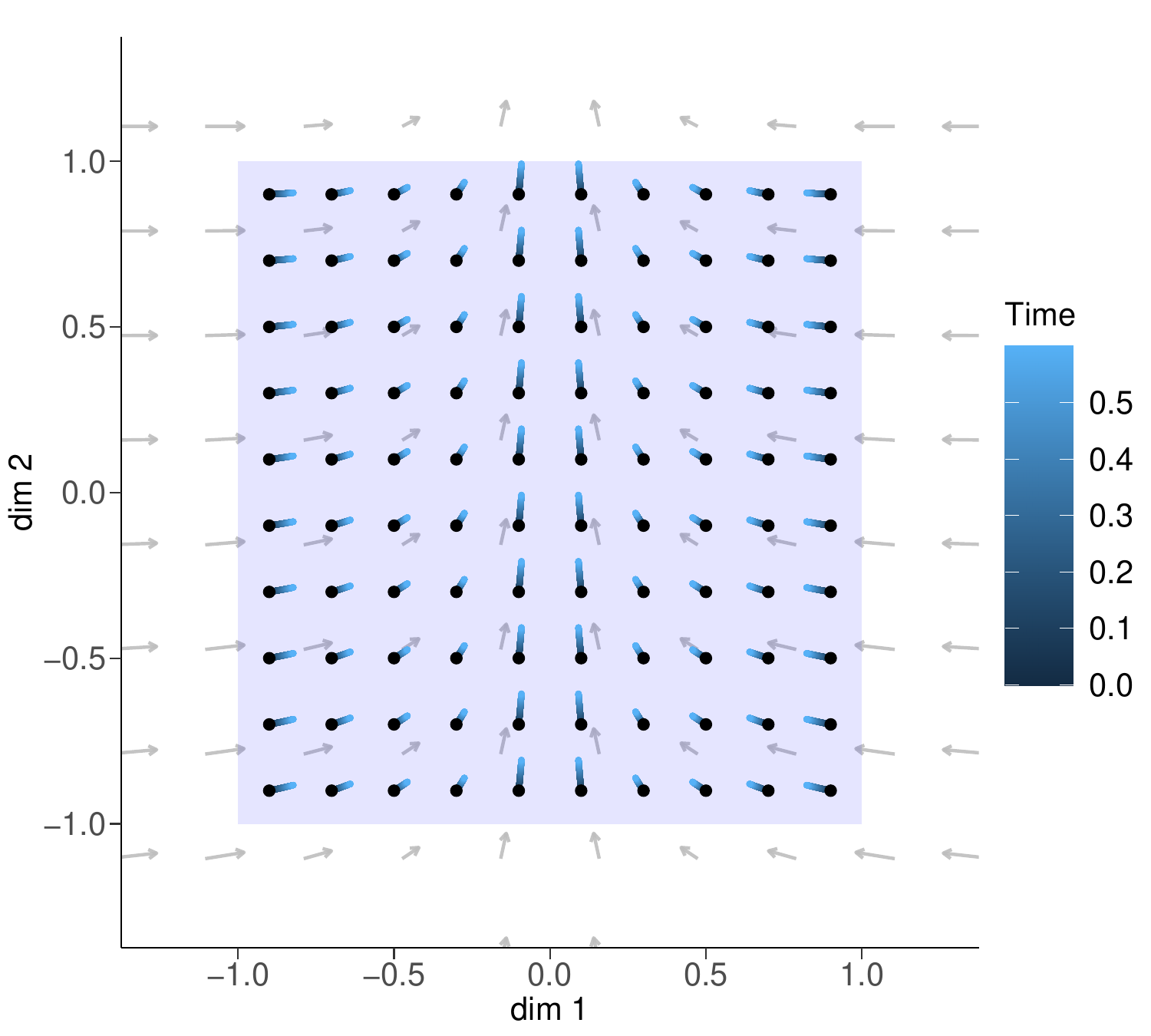}
            \caption{Stubble Model with a uniform grid of initial conditions in the domain of interest.}
        \label{fig:sub3}
    \end{subfigure}
    \hfill
    \begin{subfigure}{.49\textwidth}
        \centering
         \includegraphics[width=\textwidth]{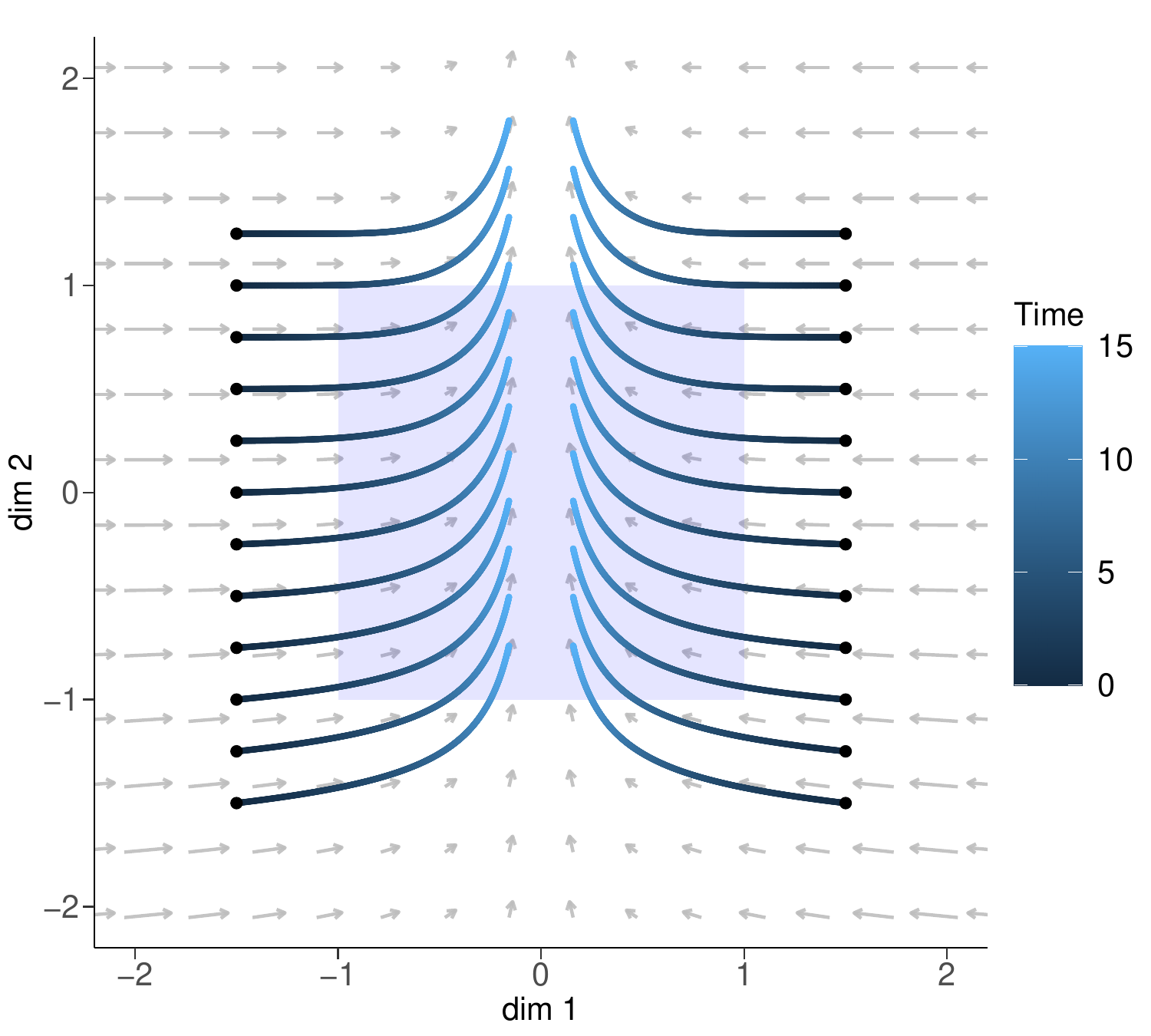}
            \caption{Snake Model with trajectories that cover the domain of interest in a suitable sense.}
        \label{fig:sub2}
    \end{subfigure}\\[1ex]
    \caption{Illustrations of the problem of consistent estimation of $f$ and solution strategies. The underlying model function $f$ has a vanishing value in the first dimension when the first dimension becomes 0. The plots show the state space. The model function is depicted with gray arrows. Trajectories are shown in black to blue color with black points indicating their initial conditions. The color transition visualizes time. The domain of interest is $[-1,1]^2$.}
    \label{fig:test}
\end{figure}

\section{Stubble Model}\label{sec:stubble}
In this section, we consider the model described in \cref{sec:modelDef} with the target of estimating $\ftrue$ on the domain of interest $\hypercube$ and denote by $\Pr_{\ftrue}$ and $\EOff{\ftrue}{\cdot}$ the corresponding probability measure and expectation, respectively. We make a restriction on the initial conditions to avoid trivial settings in which consistent estimation is impossible. The resulting model is called the \textit{stubble model}. In this model, we derive two different types of lower bounds -- one \textit{deterministic}, which is independent of the noise and the sample size $n$ and another \textit{probabilistic} depending on these objects. Then the rates are combined for a full picture. The obtained lower bounds on the error rates are shown to be minimax optimal in at least some settings.
\subsection{Model}
\begin{notation}\mbox{}
	\begin{enumerate}[label = (\roman*)]
		\item
		Let $d\in\N$, $z\in\R^d$, $r\in\Rpp$. Denote the open ball in $\R^d$ with radius $r$ and center $z$ as
		\begin{equation}
			\ball^d(z,r) := \setByEle{x\in\R^d}{\euclOf{x-z} < r}
			\eqfs
		\end{equation}
		\item
		Let $A$ be a set. Denote the indicator function of $A$ as $\indOfOf{A}{\cdot}$, i.e.,
		\begin{equation}
			\indOfOf{A}{x} = \begin{cases}
				1, & \text{ if } x\in A\eqcm\\
				0, & \text{ if } x\not\in A\eqfs
				\end{cases}
		\end{equation}
	\end{enumerate}
\end{notation}
In the stubble model, we need following assumption on the initial conditions.
\begin{assumption}\mbox{ }
	\begin{itemize}
		\item \newAssuRef{Cover}:
		There is $\const{cvr}\in\Rpp$ such that, for all $z\in[0,1]^d$, $r\in\Rpp$, we have
		\begin{equation}
			\frac1m\sum_{j=1}^m \indOfOf{\ball^d(z, r)}{x_j} \leq \max\brOf{\frac1m,\, \const{cvr}r^d}
			\eqfs
		\end{equation}
	\end{itemize}
\end{assumption}
\begin{remark}\mbox{}
    \begin{enumerate}[label=(\roman*)]
        \item
            \textbf{Example.}
            The regular grid in $[0, 1]^d$,
            \begin{equation}
                \cb{x_1, \dots, x_m} = \setByEle{\begin{pmatrix}\frac{k_{1}}K & \dots & \frac{k_{d}}{K}\end{pmatrix}\tr}{k_1, \dots, k_d\in\nset0K}
            \end{equation}
            with $K\in\N$ and $m = (K+1)^d$, fulfills \assuRef{Cover} with
            \begin{equation}
                \const{cvr} = 4^d
                \eqcm
            \end{equation}
            which can be seen by bounding the hypersphere $\ball^d(z,r)$ by a hypercube of side length $2r$.
        \item
            \textbf{Interpretation.}
            To make a non-trivial upper bound on the estimation error possible, one could require each part of the domain of interest to contain a sufficient amount of initial conditions and, therefore, of observations. This would ensure that there are no large holes in the state space without observations, where the estimation error does not vanish. As we are interested in a lower bound, we make a complementary assumption: We require that no ball in the domain of interest contains too many initial conditions, which could yield over-proportionally good estimates locally.
    \end{enumerate}
\end{remark}
\subsection{Lower Bound -- Probabilistic}
\begin{theorem}\label{thm:stubble:probabilistic}
	Use the model of \cref{sec:modelDef}.
	Assume \assuRef{Noise} and \assuRef{Cover}.
	Let $p\in\Rpo$.
	Then there is $C\in\Rpp$ large enough, depending only on $\beta, d, L_0, \dots, L_\ell, \Lbeta, \const{noise}, \const{cvr}, p$ with the following property:
	Assume
	\begin{equation}
		m^{\frac{2\beta}d} \geq C \nmax \Tmax^2
		\qquad\text{and}\qquad
		m \nmax \Tmax^2 \geq C
		\eqfs
	\end{equation}
	Then
	\begin{align}
		\forall x_0\in\hypercube\colon\ \inf_{\festi} \sup_{\ftrue\in\FSmooth}
		&\Pr_{\ftrue}\brOf{C \euclOf{\festi - \ftrue}(x_0) \geq \br{m \nmax \Tmax^2}^{-\frac{\beta}{2\beta+d}}}
		\geq
		\frac14
		\eqcm\\
		\inf_{\festi} \sup_{\ftrue\in\FSmooth}
		&\Pr_{\ftrue}\brOf{C \sup_{x\in\hypercube}\euclOf{\festi - \ftrue}(x) \geq \br{\frac{m \nmax \Tmax^2}{\log\brOf{m \nmax \Tmax^2}}}^{-\frac\beta{2\beta+d}}}
		\geq
		\frac14
		\eqcm\\
		\inf_{\festi} \sup_{\ftrue\in\FSmooth}
		&\Pr_{\ftrue}\brOf{C \LpNormOf{p}{\hypercube}{\euclOf{\festi - \ftrue}} \geq \br{m \nmax \Tmax^2}^{-\frac{\beta}{2\beta+d}}}
		\geq
		\frac14
		\eqcm
	\end{align}
    where the infima range over all estimators $\festi$ of $\ftrue$ based on the observations $Y_{\setIdx}$.
\end{theorem}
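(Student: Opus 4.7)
My plan is to reduce all three bounds to the master theorem for nonparametric lower bounds (\cref{thm:lower:master}). It will then suffice to (a) construct a sufficiently rich family of hypotheses inside $\FSmooth$ indexed by $\omega\in\cb{0,1}^M$; (b) quantify their mutual separation in the three relevant losses; and (c) control the Kullback--Leibler divergences between the laws of $Y_\setIdx$ under different hypotheses via the forward map $f\mapsto U(f,\cdot,\cdot)$ together with \assuRef{Noise}. The pointwise and $L^p$ bounds will then follow by Le Cam two-point (or Assouad) style arguments wrapped in the master theorem, while the sup-norm bound will invoke a Fano-style variant that produces the extra logarithm.

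\textbf{Hypotheses and separation.} Fix a bandwidth $h\in(0,1]$ (to be tuned), a smooth profile $\varphi\colon\R^d\to[0,1]$ with $\supp\varphi\subset\ball^d(0,1)$ and $\varphi(0)=1$ lying in the scalar Hölder class, and a fixed direction $e_1\in\R^d$. Take $z_1,\ldots,z_M\in\hypercube$ to be a maximal $2h$-packing, so $M\asymp h^{-d}$, and set
\begin{equation*}
    g_k(x):=c\,h^{\beta}\,\varphi\!\br{\tfrac{x-z_k}{h}}e_1,\qquad f_\omega:=\sum_{k=1}^M \omega_k\,g_k,\quad \omega\in\cb{0,1}^M.
\end{equation*}
Because the bumps have pairwise disjoint supports, one checks by rescaling that $f_\omega\in\FSmooth$ for a small constant $c$ depending only on $\beta,d,L_0,\ldots,L_\ell,\Lbeta$. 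The resulting separations read $\euclof{f_\omega-f_{\omega'}}(x)\leq c h^\beta$ with equality at any center where $\omega_k\neq\omega'_k$, hence the sup-norm gap is $\asymp h^\beta$ whenever $\omega\neq\omega'$, while the $L^p$ gap satisfies $\LpNormOf{p}{\hypercube}{\euclof{f_\omega-f_{\omega'}}}\gtrsim h^\beta\,(h^d\normof{\omega-\omega'}_1)^{1/p}$. For the pointwise bound at $x_0\in\hypercube$ I specialize to the two-hypothesis pair $f_0\equiv 0$ and $f_1(x)=c h^\beta \varphi((x-x_0)/h)e_1$, whose separation at $x_0$ is exactly $c h^\beta$.

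\textbf{Forward-map and KL bound.} The key analytic input is a Grönwall-based trajectory estimate: subtracting the integral forms of the two ODEs and using that $f_\omega$ is globally $L_1$-Lipschitz gives
\begin{equation*}
    \euclof{U(f_\omega,x_j,t)-U(f_{\omega'},x_j,t)}\,\lesssim\,\sup_y \euclof{(f_\omega-f_{\omega'})(y)}\cdot t\,\euler^{L_1 t}\,\lesssim\, h^\beta\,t,\qquad t\in[0,\Tmax],
\end{equation*}
and this right-hand side is nonzero only if the trajectory enters the support of some $g_k$ with $\omega_k\neq\omega'_k$. Since $\supNormof{f_\omega}\leq L_0$, trajectories travel at most $L_0\Tmax$ in time $\Tmax$, so only $x_j$ within distance $h+L_0\Tmax$ of such an active center contribute; in the regime enforced by the theorem's hypothesis $\Tmax\lesssim h$ this inflated radius is still of order $h$. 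Applying \assuRef{Cover} with radius $h$ bounds the number of contributing $j$'s per altered index $k$ by $\max(1,\const{cvr}m h^d)$, and the condition $m^{2\beta/d}\geq C\nmax\Tmax^2$ ensures $m h^d\gtrsim 1$ at the optimal $h$. Summing over $(j,i)\in\setIdx$ and invoking \assuRef{Noise} together with independence of the noise produces
\begin{equation*}
    \kullback\br{\Pr_{f_\omega},\Pr_{f_{\omega'}}}\,\leq\,\const{noise}\sum_{(j,i)\in\setIdx}\euclof{U(f_\omega,x_j,t_{j,i})-U(f_{\omega'},x_j,t_{j,i})}^2\,\lesssim\,m h^d\cdot\nmax\Tmax^2\cdot h^{2\beta}\cdot\normof{\omega-\omega'}_1.
\end{equation*}

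\textbf{Tuning and application.} Balancing the separation $h^\beta$ against the KL budget above forces $h\asymp(m\nmax\Tmax^2)^{-1/(2\beta+d)}$; the assumption $m\nmax\Tmax^2\geq C$ guarantees $h\leq 1$, and $m^{2\beta/d}\geq C\nmax\Tmax^2$ guarantees $M\asymp h^{-d}\lesssim m$, which is what makes the Cover assumption effective. Feeding these ingredients into \cref{thm:lower:master} via the two-point reduction (pointwise), an Assouad-type reduction over $\cb{0,1}^M$ ($L^p$), and a Fano-type reduction over the full hypothesis set (sup-norm, where the logarithmic correction arises from the $\log M \asymp \log(m\nmax\Tmax^2)$ denominator in Fano's inequality) yields the three stated bounds. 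The main obstacle is the careful bookkeeping of constants, specifically: verifying that $f_\omega\in\FSmooth$ uniformly in $\omega$ when many disjoint scaled bumps are superposed, controlling the forward map with the inflated radius $h+L_0\Tmax$ instead of plain $h$, and matching the regime conditions of the theorem to the thresholds at which the Cover assumption switches between its two branches.
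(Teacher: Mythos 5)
Your overall strategy matches the paper: apply the master theorem (Theorem \ref{thm:lower:master}) with bump alternatives $f_{z,r}(x) = \Lbeta r^\beta \hbump{d}{\beta}\brOf{\tfrac{x-z}{r}}\mo{e}_1$ built around the zero null hypothesis, and let the Cover assumption count the contributing initial conditions. The pointwise/Assouad/Fano reductions you invoke are exactly what the master theorem packages. However, there are two related gaps in your handling of the forward map.

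First, your Grönwall estimate $\euclof{U(f_\omega,x_j,t)-U(f_{\omega'},x_j,t)} \lesssim h^\beta\,t\,\euler^{L_1 t}$ carries an $\euler^{L_1 \Tmax}$ factor that is not absorbed by the theorem's hypotheses. Taking $\nmax=1$, $\Tmax=1$, $m=n\to\infty$ gives $h\asymp m^{-1/(2\beta+d)}\to 0$ while $\Tmax$ stays at $1$, so $m^{2\beta/d}\geq C\nmax\Tmax^2$ and $m\nmax\Tmax^2\geq C$ both hold but $\Tmax/h\to\infty$; nothing bounds $\euler^{L_1\Tmax}$ by a universal constant in general (the theorem does not assume $\Tmax$ small). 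The paper never invokes Grönwall here: under the zero null $f_0\equiv 0$, the trajectory displacement under a single bump alternative is simply $v(x,t) = \int_0^t f_{z,r}(U(f_{z,r},x,s))\,\dl s$ with $0\leq f_{z,r}\leq b := \Lbeta r^\beta \supNormof{h}$, hence $\psi_n(r)\leq \Tmax b$ exactly, linearly in $\Tmax$ with no exponential.

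Second, and for the same reason, your worry about the ``inflated radius $h+L_0\Tmax$'' is unnecessary, and your proposed resolution (``in the regime enforced by the theorem's hypothesis $\Tmax\lesssim h$'') is false by the same counterexample. The key structural fact you are missing is that for the autonomous ODE with $f_0=0$ and compactly supported bumps, a trajectory with initial condition $x_j\notin\ball^d(z,r)$ has $f_{z,r}(x_j)=0$ and therefore (by uniqueness of the Lipschitz IVP) never moves; it stays at $x_j$ for all time and never enters the bump support. A trajectory starting inside $\ball^d(z,r)$ drifts in the $+\mo{e}_1$ direction with speed vanishing at the boundary, so it stays inside the ball forever. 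Consequently only initial conditions already in $\ball^d(z,r)$ contribute, the Cover assumption applies directly with radius $r$, and $\chi_n(r)\leq\const{cvr} r^d m \nmax$ with no inflation. This observation is what verifies Assumption \ref{ass:lower:general} \ref{ass:lower:general:ref} in the master theorem (the condition ``$q_k(\theta_0)\notin\ball^d(z,r)\Rightarrow u_k(\theta_{z,r})=u_k(\theta_0)$''), and it simultaneously removes both the exponential factor and the inflated radius from your argument.
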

\begin{remark}\mbox{ }\label{rem:stubble:probabilistic}
	\begin{enumerate}[label=(\roman*)]
		\item
        \textbf{Maximum Values.}
        The bounds are presented in terms of the maximum number of observations for a trajectory $\nmax = \max_j n_j$ and the maximum observation time for a trajectory $\Tmax = \max_j T_j$. We do not expect the bounds to be optimal if the variation in $n_j$ or $T_j$ is extreme. Thus, a primary way of understanding the theorem is for the case of equal numbers of observation $n_j$ and equal observation times $T_j$. In this case, the total number of observations is $n=m\nmax$.
        \item
        \textbf{Relation Assumptions.}
		We are mainly interested in asymptotics where $\nmax$ is constant, $m \xrightarrow{n\to\infty} \infty$, and $\Tmax  \xrightarrow{n\to\infty} 0$ or $\Tmax \leq c$. For this case, the condition $m^{\frac{2\beta}d} \geq C \nmax \Tmax^2$ is not a restriction. The requirement $m \nmax \Tmax^2 \geq C$ requires $\Tmax$ to not decline too quickly. As an example, constant $\nmax$, $m = c n$, $\Tmax = c n^{-\frac{1}{2(\beta +1) + d}}$ fulfill the conditions. These settings are used in \cref{cor:stubble:nice:onlyn} to show a minimal lower bound only depending on $n$.
        \item
        \textbf{Error Tendencies.}
		Generally, we obtain a lower bound that is large if $n \approx m\nmax$ is small and $\Tmax$ is small. The former is typical, the latter can be explained as follows: The larger time gets, the more distance can be attained between trajectories that start from the same initial conditions, but are driven by different model functions. Thus, the signal to noise ratio gets larger. In other words, the longer two stubbles are, the more apparent the differences in their dynamics will be.
	\end{enumerate}
\end{remark}
\cref{thm:stubble:probabilistic} is a direct consequence of \cref{thm:stubble:probabilistic:details}, see appendix \ref{sec:app:stubble:probabilistic}, where the proof is given in detail. Here, we present a sketch of the proof.
\begin{proof}[Proof sketch of \cref{thm:stubble:probabilistic}]
    The proof is based on an application of the master theorem, \cref{thm:lower:master}, which presents a tool for deriving lower bounds for general regression models and is itself based on \cite{Tsybakov09Introduction}.
    Essentially, we need to find two (or more) hypotheses $f\in\FSmooth$ that have a large distance in the error metric of interest while still being hard to distinguish from observations.

    We use the null hypothesis
    \begin{equation}
        f_0(x) = \br{0,\, \dots,\, 0}\tr
        \eqcm
    \end{equation}
    which results in no movement of the states $U(f_0,x,t)$ over time. For the alternatives, we add one or several \textit{bumps}\footnote{Here a \textit{bump} refers to a smooth function $\R\to\R$ with compact support that first increases into positive values, than decreases until it is 0.} to the first dimension of $f_0$. The bumps are scaled and shifted versions of prototype $\hbump{d}{\beta}\colon \R^d \to \R$ that is based on a symmetric kernel with compact support. See \cref{lmm:bumpandpulse} for a definition.
    For a location $z\in\R^d$ and a scale $r\in\Rpp$, we define the alternative model function as
    \begin{equation}
        f_{z,r}(x) := \br{\Lbeta r^\beta \hbump{d}{\beta}\brOf{\frac{x-z}{r}},\, 0,\, \dots,\, 0}\tr
        \eqfs
    \end{equation}
    The solutions of the initial value problem corresponding to such an alternative are constant in all directions, except when they start in the support of a bump. In that case there is some small movement in the first dimension. See \cref{fig:bumpsstubble} for an illustration.
\end{proof}
\begin{figure}[H]
    \begin{subfigure}{0.45\textwidth}
        \centering
         \includegraphics[width=\textwidth]{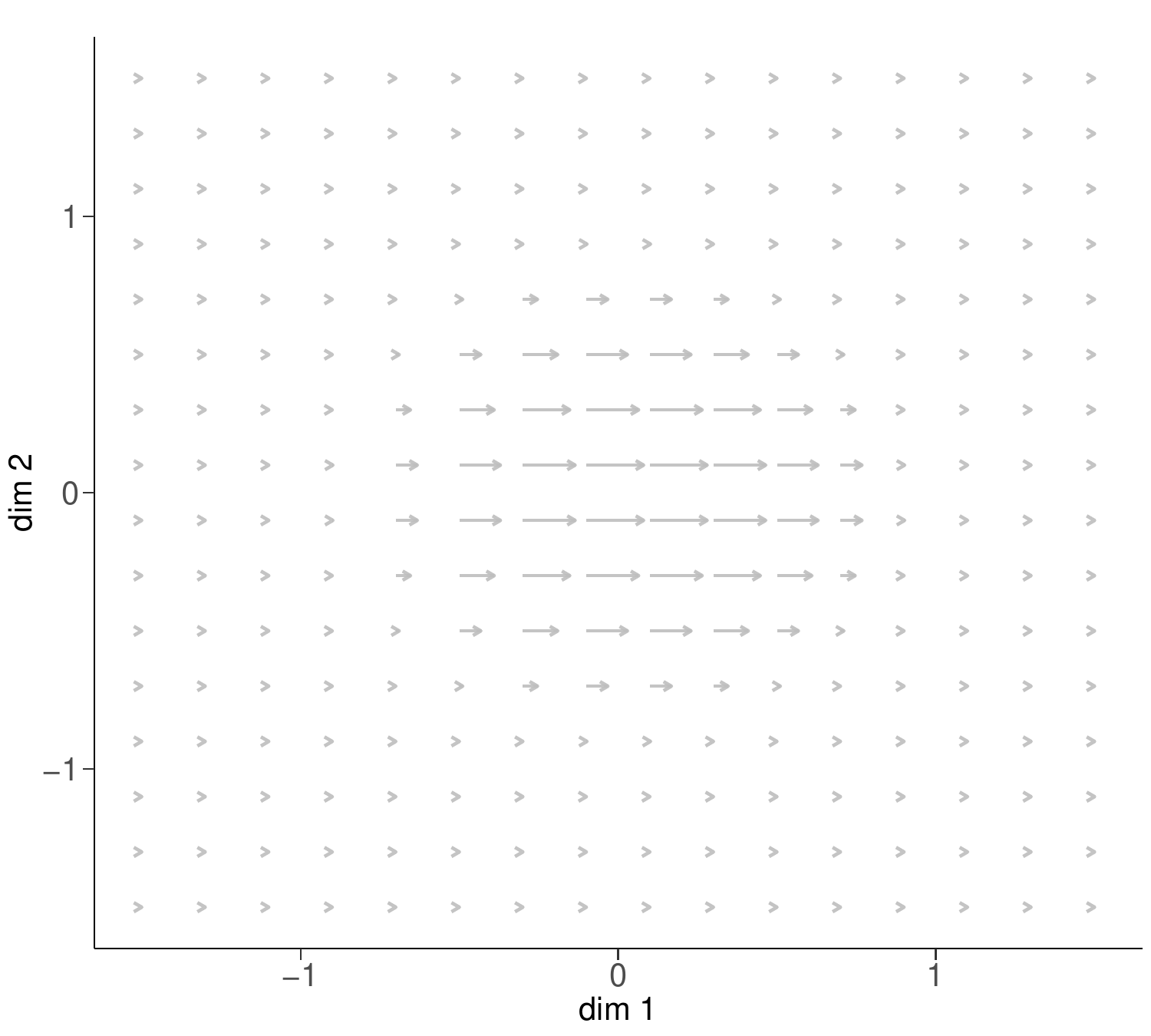}
            \caption{The model function $f_{z,r}$.}
        \label{fig:bumpvfstubble}
    \end{subfigure}
    \hfill
    \begin{subfigure}{.45\textwidth}
        \centering
         \includegraphics[width=\textwidth]{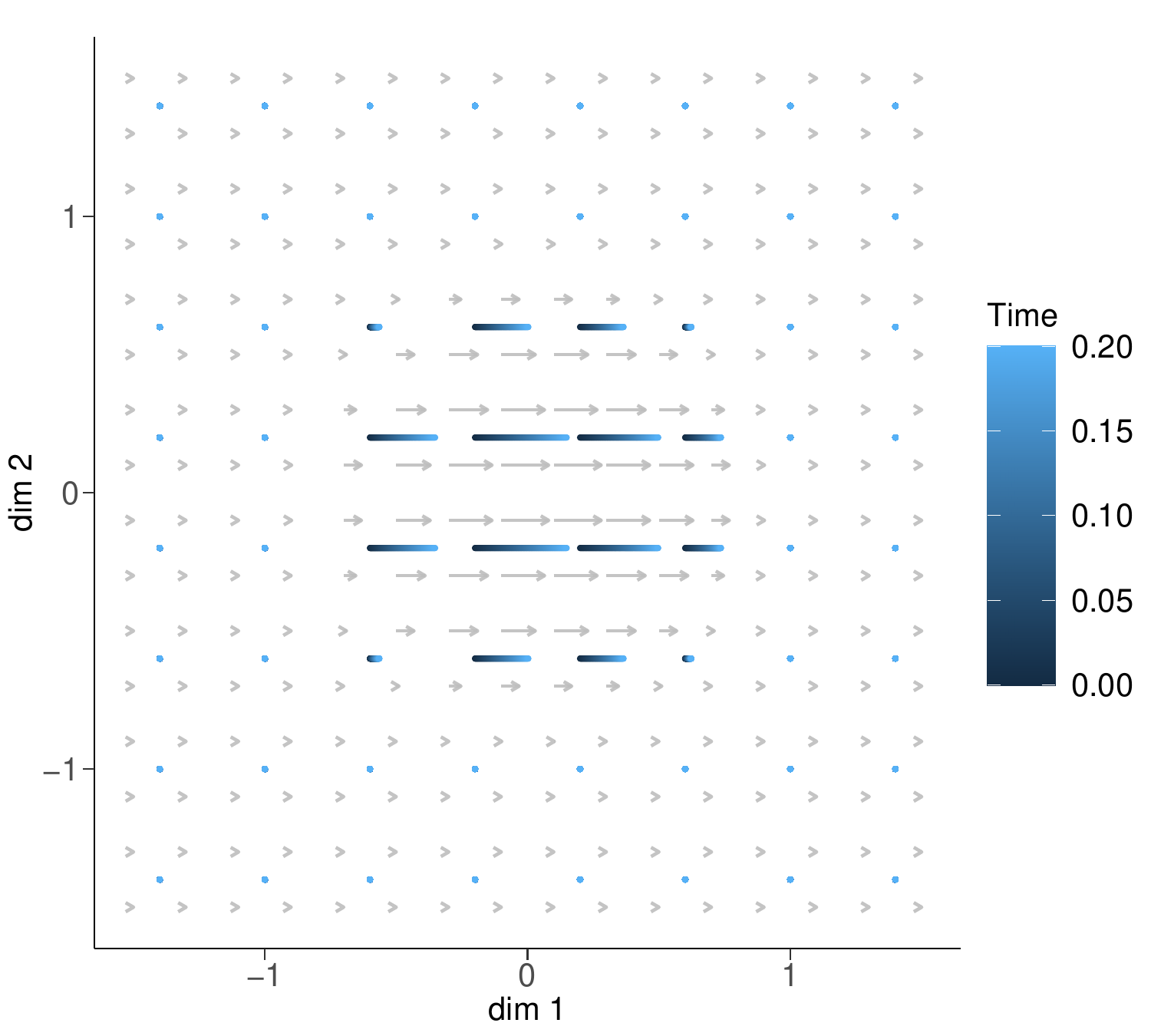}
            \caption{The model function $f_{z,r}$ with a grid of solutions.}
        \label{fig:bumptrajstubble}
    \end{subfigure}
    \caption{The model function used in the proof of error lower bounds in the stubble model. Object description as in \cref{fig:test}.}
    \label{fig:bumpsstubble}
\end{figure}
\subsection{Lower Bound -- Deterministic}
We assume here that we make observations at equally spaced timepoints. In this setting, we always have a non-vanishing error term -- even when observing infinitely many trajectories starting at every point in the domain of interest (and outside), and even when observing without any measurement noise. This is demonstrated with the following theorem. We show the existence of two different model functions that induce trajectories that coincide periodically. If this period is equal to the constant timestep of observation times, it is impossible to distinguish the two functions.
\begin{theorem}\label{thm:stubble:deterministic}
	Let $d\in\N$, $\beta\in\Rpo$, $\ell := \llfloor\beta\rrfloor$, $L_0,\dots,L_\ell,\Lbeta \in\Rpp$.
    Set $\FSmooth := \Sigma^{d\to d}(\beta, \indset{L}{0}{\ell}, \Lbeta)$.
    Let $p\in \Rpo$.
	Then there is $C\in\Rpp$ large enough, depending only $d, \beta, L_0, \dots, L_\ell, \Lbeta, p$, with the following property:
	Let $x_0\in\hypercube$ and $\stepsize \in\Rpp$ with $\stepsize \leq C^{-1}$.
	Then there are $f_0, f_1 \in \FSmooth$ such that
	\begin{equation}\label{eq:periodicallyIdentical}
		U(f_0, x, i\stepsize) = U(f_1, x, i\stepsize)
	\end{equation}
	for all $i\in\Z$, $x\in\R^d$ and
	\begin{equation}\label{eq:stubble:determinstic:separated}
		C \euclOf{f_0 - f_1}(x_0) \geq \stepsize^{\beta}
		\qquad\text{and}\qquad
		C \LpNormOf{p}{\hypercube}{\euclOf{f_0 - f_1}} \geq \stepsize^{\beta}
		\eqfs
	\end{equation}
\end{theorem}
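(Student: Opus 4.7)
The plan is to construct $f_0, f_1 \in \FSmooth$ that depend only on $x_1$ and differ only in their first output component, reducing \cref{eq:periodicallyIdentical} to a one-dimensional analysis of a scalar ODE. Fix $c := L_0/2$ and $e_1 := (1, 0, \ldots, 0)\tr$. Set $f_0(x) := c\, e_1$, so that $U(f_0, x, t) = x + ct\, e_1$, and $f_1(x) := (c + g(x_1))\, e_1$ for a periodic function $g\colon \R \to \R$ to be chosen. The flow of $f_1$ leaves $x_2, \ldots, x_d$ invariant, while its first coordinate evolves by $\dot u = c + g(u)$. By the semigroup property, \cref{eq:periodicallyIdentical} reduces to $U(f_1, x, \stepsize)_1 = x_1 + c\stepsize$ for every $x_1 \in \R$; separating variables recasts this as the pair of conditions (a) $g$ is $c\stepsize$-periodic, and (b) $\int_0^{c\stepsize}(c + g(u))^{-1}\,du = \stepsize$.

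I take $g(y) := A \sin(\omega(y - y_0)) + \delta$ with $\omega := 2\pi/(c\stepsize)$ and $y_0 := x_{0,1} - c\stepsize/4$, so that (a) is automatic. For (b), the Weierstrass-substitution identity $\int_0^{2\pi}(\mu + A\sin\theta)^{-1}\,d\theta = 2\pi/\sqrt{\mu^2 - A^2}$, valid for $\mu > |A|$, reduces (b) to the algebraic equation $(c + \delta)^2 - A^2 = c^2$, solved exactly by $\delta := \sqrt{c^2 + A^2} - c = \landauO(A^2/c)$. I then set $A := A_0 \stepsize^\beta$ for a constant $A_0 = A_0(c, \Lbeta, \beta) > 0$ chosen small enough that the $(\beta - \ell)$-Hölder seminorm $|g^{(\ell)}|_{C^{\beta - \ell}} \leq 2 A\omega^\beta = 2 A_0 (2\pi/c)^\beta$ (a quantity independent of $\stepsize$) is at most $\Lbeta$. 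The lower-order bounds $\|g^{(k)}\|_\infty \leq A\omega^k = \landauO(\stepsize^{\beta - k})$ for $k \leq \ell < \beta$ tend to zero as $\stepsize \to 0$ and hence are dominated by $L_k$ for $\stepsize \leq C^{-1}$, and likewise $|c + g(y)| \leq c + A + |\delta| \leq L_0$ for $\stepsize$ small. Therefore $f_0, f_1 \in \FSmooth$.

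The lower bounds then follow directly. The choice of $y_0$ ensures $\sin(\omega(x_{0,1} - y_0)) = 1$, so $\euclof{f_0 - f_1}(x_0) = |g(x_{0,1})| = A + \delta \geq A$, giving the pointwise bound. For the $L^p$-part, integration over $x_2, \ldots, x_d$ contributes a factor $1$, and since $g$ is a sinusoid of amplitude of order $A$ whose period $c\stepsize$ is much smaller than $1$, elementary Riemann summation yields $\int_0^1 |g(x_1)|^p dx_1 \geq c_2 A^p$ for some constant $c_2 > 0$ depending only on $p, c$. The main obstacle is handling the mean adjustment $\delta$ forced by condition (b): the closed-form sinusoidal integral dispatches it in one step, after which one has to verify that $|\delta| = \landauO(\stepsize^{2\beta})$ is negligible both for the smoothness bounds and for the leading $\stepsize^\beta$ order in the lower bounds.
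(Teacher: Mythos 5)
Your construction is correct, but it takes a genuinely different route from the paper. Both proofs reduce to the one-dimensional case with a constant $f_0$ and a periodic perturbation, but where the paper starts from the \emph{flow}---it prescribes a $\smoothC$-diffeomorphism $g(x) = x + L r^{\beta+1}\Kper((x-z)/r)$ so that $U(f_1,x,t) = g(g^{-1}(x) + ct)$, and then recovers the model function as $f_1 = c\,g'\circ g^{-1}$---you start from the \emph{model function}, setting $f_1 = (c + g(x_1))\mo e_1$ for a sinusoid $g$ and then verifying that the flow hits $x_1 + c\stepsize$ after time $\stepsize$ by separation of variables. The paper's construction automatically satisfies the matching condition \eqref{eq:periodicallyIdentical} (periodicity of $g - \mathrm{id}$), but pays for it in the smoothness check: since $f_1$ involves the inverse diffeomorphism $g^{-1}$, establishing $f_1\in\FSmooth$ requires the Fa\'a di Bruno chain-rule argument of \cref{lmm:chain:remainder}. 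Your construction has the opposite trade-off: smoothness of $f_1$ is immediate from scaling the derivatives of a sine wave (the key observation being that $A\omega^\beta$ is $\stepsize$-independent when $A\asymp\stepsize^\beta$ and $\omega\asymp\stepsize^{-1}$, so the H\"older constant is controlled uniformly), but the matching condition is not automatic and requires you to impose the integral constraint $\int_0^{c\stepsize}(c+g(u))^{-1}\,du = \stepsize$. The Weierstrass substitution formula $\int_0^{2\pi}(\mu + A\sin\theta)^{-1}d\theta = 2\pi/\sqrt{\mu^2-A^2}$ closes that constraint in one algebraic step, yielding the shift $\delta = \sqrt{c^2+A^2}-c = \landauOOf{A^2/c}$, and you correctly note that $\delta = \landauOOf{\stepsize^{2\beta}}$ is negligible both for the smoothness budget and for the leading $\stepsize^\beta$ order. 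The semigroup-property reduction to the $i=1$ case, the monotonicity argument implicit in separation of variables, and the Riemann-sum estimate for the $L^p$ lower bound over $[0,1]^d$ are all sound. Net effect: you trade the paper's Fa\'a di Bruno lemma for a closed-form trigonometric integral; your argument is more elementary but hinges on the sinusoid being exactly solvable, whereas the paper's diffeomorphism trick would work for any smooth compactly-supported periodicized kernel.
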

\begin{remark}\mbox{}
    \begin{enumerate}[label = (\roman*)]
        \item
        \textbf{Error Tendencies.}
        In contrast to \cref{thm:stubble:probabilistic}, a large observation time $\Tmax$ (which is equivalent to a large constant time step $\stepsize$ between observations), increases the error lower bound. Here larger time steps mean more unobserved time in which the trajectories can deviate undetected.
    \end{enumerate}
\end{remark}
\cref{thm:stubble:deterministic} is a direct consequence of \cref{thm:stubble:deterministic:details}, see appendix \ref{sec:app:stubble:deterministic}, where the proof is given in detail. Here, we present a sketch of the proof.
\begin{proof}[Proof sketch of \cref{thm:stubble:deterministic}]
	It is enough to make this construction in one dimension, $d = 1$, as other dimension can be set to $0$. Furthermore, we can scale any construction easily to be appropriate for a given $\stepsize$ and $L_0$. Hence, we work with $\stepsize=1$ and $L_0$ large enough. We start by setting the first model function to be constant, $f_0(x) = 1$. The solutions are the linear functions $U(f_0, x, t) = x + t$. Next, we want to construct $f_1 \neq f_0$ with the property \eqref{eq:periodicallyIdentical}. We define a function $g \colon \R \to \R, x \mapsto x + h(x)$, where $h$ is a non-constant, periodic function with period 1. We choose $h$ such that $g$ is smooth and invertible. Then we set $f_1(x) = g\pr(g^{-1}(x))$. This yields $U(f_1, x, t) = g(g^{-1}(x) + t)$. As $h$ is periodic, one can show $g(g^{-1}(x) + i) = x + i$ for $i\in\Z$. The technical part concerning the smoothness conditions is left out here.
	An illustration of the construction is given in  \cref{fig:perioNonId}.
\end{proof}
\begin{figure}
	\begin{center}
		\includegraphics[height=0.3\textheight]{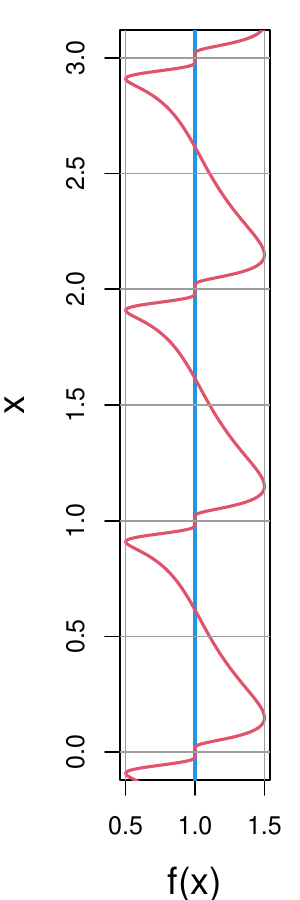}
		\quad
		\includegraphics[height=0.3\textheight]{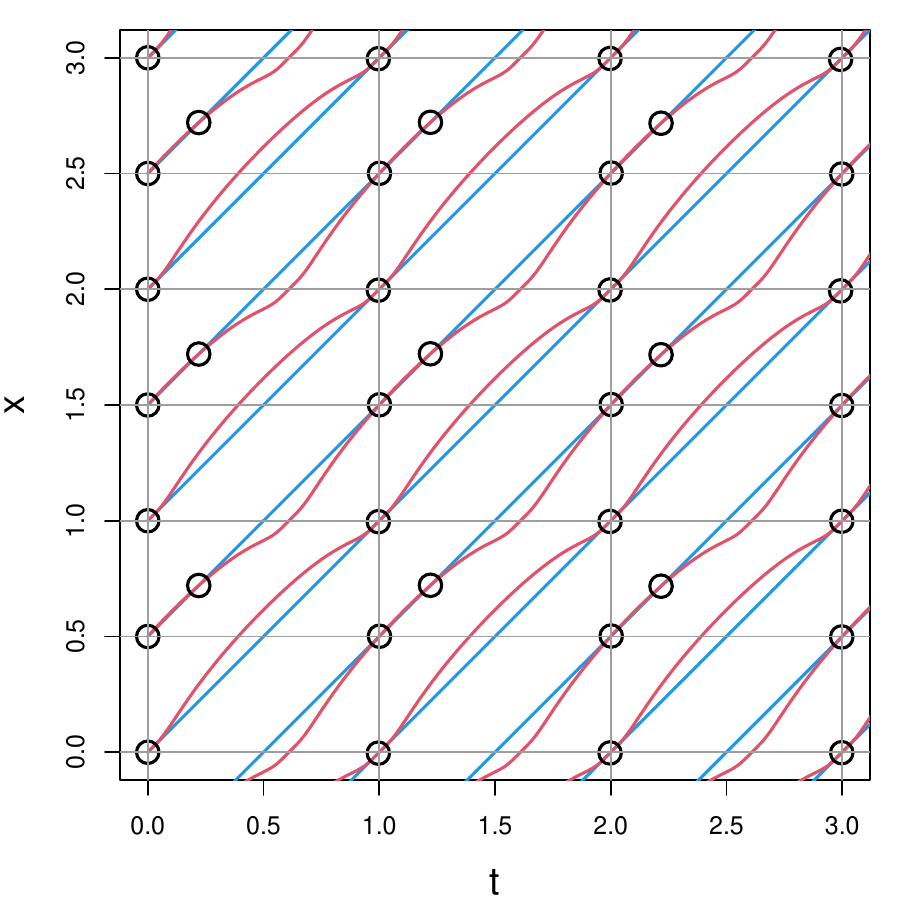}
	\end{center}
	\caption{Periodically intersecting but non-identical flows. In blue, we have a constant model function $f_0$ with linear trajectories $t\mapsto U(f_0, x, t)$.  In red, we have a periodic model function $f_1$ with trajectories $t\mapsto U(f_1, x, t)$ that periodically intersect those of $f_0$. The black circles mark intersections of the trajectories.}
	\label{fig:perioNonId}
\end{figure}
If the spacing of observation times is irregular, the construction in the proof of \cref{thm:stubble:deterministic} must fail as the next proposition shows.
\begin{proposition}\label{stubble:propo:varytimestep}
	Let $t_1, t_2\in\Rpp$ with $t_2/t_1\in \R\setminus\Q$.
	Assume, we have $f_0, f_1  \in \Sigma(1, L)$ for some $L\in\Rpp$ such that $U(f_0, x, t_i) = U(f_1, x, t_i)$ for $i=1,2$ and all $x\in\R$.
	Then $f_0 = f_1$.
\end{proposition}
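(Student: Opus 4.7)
The plan is to lift the two pointwise equalities at times $t_1$ and $t_2$ to an equality of the whole flows, using the group structure of the flow together with Kronecker's density theorem, and then differentiate at $t=0$ to recover $f_0$ and $f_1$.

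For $i\in\{0,1\}$, write $\phi^i_t(x) := U(f_i,x,t)$. Since each $f_i$ is globally Lipschitz, $\phi^i_t$ is defined for every $t\in\R$ and the semigroup property $\phi^i_{s+t}=\phi^i_s\circ\phi^i_t$ upgrades to a group property, with $\phi^i_{-t}=(\phi^i_t)^{-1}$. The hypothesis gives $\phi^0_{t_j}=\phi^1_{t_j}$ for $j\in\{1,2\}$, so by composition and inversion we get $\phi^0_s=\phi^1_s$ for every $s$ in the additive subgroup
\[
G := t_1\Z + t_2\Z = \{\,m t_1 + n t_2 : m,n\in\Z\,\} \subset \R.
\]

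Since $t_2/t_1$ is irrational, $G$ is not of the form $\alpha\Z$, and the classical dichotomy for subgroups of $(\R,+)$ then forces $G$ to be dense in $\R$. For each fixed $x\in\R$, the map $t\mapsto\phi^i_t(x)$ is a solution of an ODE with Lipschitz right-hand side, hence at least $C^1$ in $t$, and in particular continuous. Therefore the identity $\phi^0_t(x)=\phi^1_t(x)$, already known on the dense set $G$, extends to all $t\in\R$.

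Finally, differentiating this identity in $t$ at $t=0$ gives
\[
f_0(x) = \tfrac{\dl}{\dl t}\Big|_{t=0}\phi^0_t(x) = \tfrac{\dl}{\dl t}\Big|_{t=0}\phi^1_t(x) = f_1(x),
\]
for every $x\in\R$, which is the claim. The only step that requires a moment of care is the density of $G$, but this is immediate from the standard classification of closed subgroups of $\R$; everything else is a direct consequence of the fact that the globally Lipschitz ODE produces a one-parameter group of diffeomorphisms that is continuous (in fact $C^1$) in time.
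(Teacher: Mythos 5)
Your proof is correct and follows essentially the same route as the paper: establish $U(f_0,x,\cdot)=U(f_1,x,\cdot)$ on the subgroup $t_1\Z+t_2\Z$ via the (semi)group property of the flow, invoke density of that subgroup (Kronecker / classification of additive subgroups of $\R$), extend by continuity, and recover $f_0=f_1$ by differentiating at $t=0$. Your phrasing via the one-parameter group and closed-subgroup dichotomy is a mild repackaging of the paper's induction over $k\in\Z$, but the argument is the same.
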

\begin{proof}[Proof of \cref{stubble:propo:varytimestep}]
	For $k\in\Z$, $\ell\in\{0,1\}$, we have $U(f_\ell, x, kt_i) = U(f_\ell, U(f_\ell, x, (k-1)t_i), t_i)$. Using induction and $U(f_0, x, t_i) = U(f_1, x, t_i)$, we obtain
	$U(f_0, x, kt_i) = U(f_1, x, kt_i)$ for $i=1,2$, all $k\in\Z$, and all $x\in\R$. Hence, $U(f_0, x, k_1t_1+k_2t_2) = U(f_1, x, k_1t_1+k_2t_2)$ for all $k_1, k_2\in\Z$ and $x\in\R$.
	As $t_1/t_2$ is irrational, the set $\{k_1t_1 + k_2t_2 \colon k_1, k_2\in\mathbb Z\}$ is dense in $\R$ (Kronecker's theorem). Hence, $U(f_0, x, t) = U(f_1, x, t)$ for all $x,t\in\R$, which implies $f_0 = f_1$.
\end{proof}
\begin{remark}\mbox{ }
	It is unclear to the authors whether the failure to prove a similar lower error bound in the case of varying time steps is due to a different (faster) minimax error rate or just a lacking in the considered mathematical approach. In any case, the apparent difference between constant and varying timesteps for the statistical analysis seems intriguing due to its links to Diophantine approximation, i.e., the study of approximating real numbers by rational ones.
\end{remark}
\subsection{Combined Lower Bounds}
We combine the results of \cref{thm:stubble:probabilistic} and \cref{thm:stubble:deterministic} to one lower bound in \cref{cor:stubble:nice}.
To increase the clarity of the result, we use a more specific setting than is possible. In particular, we show results for the expected squared error, and we restrict ourselves to viewing $\nmax$ as a constant (not allowing $\nmax\to\infty$) and to equidistant observation times.
\begin{assumption}\mbox{ }
	\begin{itemize}
		\item \newAssuRef{EquidistantTime}: There is $\stepsize\in\Rpp$ such that $t_{j,i+1} - t_{j,i} = \stepsize$ for all $j\in\nnset m$, $i\in\nnset {n_j}$.
	\end{itemize}
\end{assumption}
\begin{corollary}\label{cor:stubble:nice}
	Use the model of \cref{sec:modelDef}.
	Assume \assuRef{Noise}, \assuRef{Cover}, and \assuRef{EquidistantTime}.
	Then there is $C\in\Rpp$ large enough, depending only on $d, \nmax, \beta, L_0, \dots, L_\ell, \Lbeta, \const{noise}, \const{cvr}$ with the following property:
	Assume
	\begin{equation}
		\stepsize \leq C^{-1}  \qquad \text{and}\qquad n \stepsize^2 \geq C\eqfs
	\end{equation}
	Then
	\begin{align}
		\forall x_0\in\hypercube\colon\ C \inf_{\festi} \sup_{\ftrue\in\FSmooth} \EOff{\ftrue}{\euclOf{\festi(x_0)-\ftrue(x_0)}^2}
		&\geq
		\br{n \stepsize^2}^{-\frac{2\beta}{2\beta+d}} + \stepsize^{2\beta}
		\eqcm\\
		C \inf_{\festi} \sup_{\ftrue\in\FSmooth} \EOff{\ftrue}{\sup_{x\in\hypercube}\euclOf{\festi - \ftrue}^2(x)}
		&\geq
		\br{\frac{n \stepsize^2}{\log\brOf{n \stepsize^2}}}^{-\frac{2\beta}{2\beta+d}} + \stepsize^{2\beta}
		\eqcm\\
		C \inf_{\festi} \sup_{\ftrue\in\FSmooth} \EOff{\ftrue}{\LpNormOf{2}{\hypercube}{\euclOf{\festi - \ftrue}}^2}
		&\geq
		\br{n \stepsize^2}^{-\frac{2\beta}{2\beta+d}} + \stepsize^{2\beta},
	\end{align}
    where the infima range over all estimators $\festi$ of $\ftrue$ based on the observations $Y_{\setIdx}$.
\end{corollary}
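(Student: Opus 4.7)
The plan is to derive the claimed lower bound as a sum of two pieces, one coming from \cref{thm:stubble:probabilistic} (which governs the rate in $n\stepsize^2$) and one coming from \cref{thm:stubble:deterministic} (which governs the $\stepsize^{2\beta}$ deterministic ``aliasing'' floor). Since the maximum of two non-negative numbers is at least half of their sum, it suffices to establish each term separately as a lower bound on the expected squared risk, up to multiplicative constants that can be absorbed into $C$.

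For the first term I would invoke \cref{thm:stubble:probabilistic} and pass from a probability bound to an expectation bound via Markov's inequality: if $\Pr_{\ftrue}(C X \geq a) \geq 1/4$ for $X = \euclof{\festi(x_0) - \ftrue(x_0)}$, then $\EOff{\ftrue}{X^2} \geq a^2/(4 C^2)$, and likewise for the sup- and $L^2$-norm functionals. I then translate the rate $(m \nmax \Tmax^2)^{-2\beta/(2\beta+d)}$ into the desired form $(n\stepsize^2)^{-2\beta/(2\beta+d)}$ using \assuRef{EquidistantTime}, which gives $\Tmax = \nmax \stepsize$, and the observation that with $\nmax$ treated as a constant one has $m \asymp n$ (since $m \leq n \leq m\nmax$), so $m \nmax \Tmax^2 = m\nmax^3\stepsize^2 \asymp n\stepsize^2$. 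I also need to check that the hypotheses of \cref{thm:stubble:probabilistic} hold: the lower bound $n\stepsize^2 \geq C$ in the corollary directly yields $m\nmax\Tmax^2 \geq C'$, and the assumption $\stepsize \leq C^{-1}$ together with $\nmax$ constant and $m \geq 1$ gives $m^{2\beta/d} \geq 1 \geq C' \nmax^3 \stepsize^2 = C' \nmax \Tmax^2$ after shrinking $\stepsize$ further if necessary.

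For the second term I would apply \cref{thm:stubble:deterministic} to obtain $f_0, f_1 \in \FSmooth$ that coincide on the observation grid in the sense that $U(f_0, \xany j, t_{j,i}) = U(f_1, \xany j, t_{j,i})$ for every $(j,i)\in\setIdx$. This makes the laws of $Y_\setIdx$ under $\Pr_{f_0}$ and $\Pr_{f_1}$ identical, so for any estimator $\festi = \festi(Y_\setIdx)$ the random variable $\festi(x_0)$ has the same distribution under both. A standard two-point argument (parallelogram identity) then gives
\begin{equation*}
    \max_{\ell\in\{0,1\}} \EOff{f_\ell}{\euclOf{\festi(x_0) - f_\ell(x_0)}^2} \geq \tfrac14 \euclOf{f_0(x_0) - f_1(x_0)}^2 \geq c\, \stepsize^{2\beta}
\eqcm
\end{equation*}
where the last inequality is \eqref{eq:stubble:determinstic:separated}. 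The same reasoning applied to $\sup_{x\in\hypercube}\euclof{\cdot}$ and $\LpNormof{2}{\hypercube}{\euclof{\cdot}}$ yields the corresponding $\stepsize^{2\beta}$ floors for the sup- and $L^2$-risks, again using \eqref{eq:stubble:determinstic:separated}.

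Combining via $\max(a,b) \geq (a+b)/2$ and absorbing all dimensional/smoothness constants into a single $C$ gives the three displayed inequalities. The main technical obstacle will be bookkeeping: verifying cleanly that the corollary's hypotheses $\stepsize\leq C^{-1}$ and $n\stepsize^2 \geq C$ imply both sets of hypotheses in \cref{thm:stubble:probabilistic} and \cref{thm:stubble:deterministic} simultaneously (with a single unified $C$), and making sure that the transition from $m\nmax\Tmax^2$ to $n\stepsize^2$ is quantitatively correct with $\nmax$ promoted from a parameter to a constant. Everything else is routine.
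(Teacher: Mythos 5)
Your proposal is correct and follows essentially the same route as the paper: combine \cref{thm:stubble:probabilistic} and \cref{thm:stubble:deterministic}, translate $m\nmax\Tmax^2$ into $n\stepsize^2$ via $\Tmax=\nmax\stepsize$ and $n/\nmax\leq m\leq n$, and pass from probability to expectation. The only cosmetic difference is that for the deterministic term you use a direct two-point parallelogram argument, whereas the paper first states a deterministic probability bound (\cref{thm:lower:deterministic:master}) and then applies \cref{thm:lower:reduction:expectation} with $\nu(x)=x^2$ — these yield the identical factor $a^2/4$, so the content is the same.
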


\begin{remark}\mbox{}\label{rem:stubble:nice}
    \begin{enumerate}[label = (\roman*)]
        \item
            \textbf{Minimax Optimality.}
            In the setting of \cref{cor:stubble:nice}, \cite[Corollary 3.13]{upperbounds} presents the upper bound
    		\begin{equation}
    			C^{-1}\EOff{\ftrue}{\euclOf{\festi(x_0)-\ftrue(x_0)}^2} \leq \br{n \stepsize^2}^{-\frac{2\beta}{2\beta+d}} + \stepsize^{2\beta}
    		\end{equation}
    		for $x_0\in\hypercube$ and some constant $C\in\Rpp$, and all $n\in\N$ large enough. This implies the minimax optimality of the presented estimation procedure proposed in \cite[Corollary 3.13]{upperbounds}.
        \item
            \textbf{Time Duration.}
            As $C$ in the corollary may depend on $\nmax$, we essentially assume that $\nmax$ is constant. Then, to compare with \cref{thm:stubble:probabilistic}, we see that $\Tmax = \nmax \stepsize$ is of the same order as $\stepsize$.
        \item
            \textbf{Exponent.}
            Roughly speaking, the function $f$ is $\beta$ smooth, making the trajectories $t\mapsto U(f, x, t)$ $(\beta+1)$-smooth due to the smoothing property of the forward map $f \mapsto U(f,x,\cdot)$. As a $\tilde \beta$-smooth object with $\tilde\beta:=\beta+1$ is observed, of which we want to estimate its first derivative, which has a $d$-dimensional domain, one may expect a rate of the order of
            \begin{equation}
                n^{-\frac{\tilde \beta - 1}{2\tilde \beta + d}} = n^{-\frac{\beta}{2(\beta+1) + d}}
            \end{equation}
            from classical nonparametric regression theory, see \cref{thm:lower:regression}. So, the exponent $\frac{\beta}{2\beta+d}$ may be surprising. But, after balancing the probabilistic and the deterministic terms, the expected term $2(\beta+1)+d$ indeed appears in the denominator, as we will see in \cref{cor:stubble:nice:onlyn}.
        \item
            \textbf{Balancing.}
            For balancing the two error terms, note that
    		\begin{equation}\label{eq:stubble:nice:compare}
    			\br{n \stepsize^2}^{-\frac{2\beta}{2\beta+d}} \geq \stepsize^{2\beta}
    		\end{equation}
    		if and only if
    		\begin{equation}
    			\stepsize \leq n^{-\frac{1}{2(\beta +1) + d}}
    			\eqfs
    		\end{equation}
  \end{enumerate}
\end{remark}
\begin{proof}[Proof of \cref{cor:stubble:nice}]
	The proof consists of first applying the probabilistic lower bound \cref{thm:stubble:probabilistic} and the deterministic lower bound \cref{thm:stubble:deterministic} together with the deterministic master theorem \cref{thm:lower:deterministic:master}. Then \cref{thm:lower:reduction:expectation} transfers results in probability to results in expectation. We have to check that all assumptions are fulfilled:

	As  $C^{-1} \geq \stepsize$, $n \stepsize^2 \geq C$ implies $n \geq C^3$. Thus,
	\begin{equation}
		m^{2\beta}
		\geq
		\br{\frac{n}\nmax}^{2\beta}
		\geq
		\br{\frac{C^3}{\nmax}}^{2\beta}
		=
		C\pr \nmax^3 C^{-2}
		\geq
		C\pr \nmax^3 \stepsize^2
		=
		C\pr \nmax \Tmax^2
	\end{equation}
	with
	\begin{equation}
		C\pr = \br{\frac{C^3}{\nmax}}^{2\beta} \br{\nmax^3 C^{-2}}^{-1}
		\eqfs
	\end{equation}
	Furthermore,
	\begin{equation}
		m \nmax \Tmax^2
		\geq
		n \nmax^2 \stepsize^2
		\geq
		C \nmax^2
		\geq
		C\prr
	\end{equation}
	with
	\begin{equation}
		C\prr := C \nmax^2
		\eqfs
	\end{equation}
	Thus, we can choose $C$ large enough, so that the conditions for \cref{thm:stubble:probabilistic} are fulfilled.
	As we assume $\stepsize \leq C^{-1}$ we can also apply \cref{thm:stubble:deterministic}.
\end{proof}
Based on a trade-off of the error terms in \cref{cor:stubble:nice}, the following corollary yields lower bounds for the estimation risk independent of $\stepsize$.
\begin{corollary}\label{cor:stubble:nice:onlyn}
	Use the setting of \cref{cor:stubble:nice}.
	Then
	\begin{align}
		C \inf_{\festi} \sup_{\ftrue\in\FSmooth} \EOff{\ftrue}{\euclOf{\festi(x_0)-\ftrue(x_0)}^2}
		&\geq
		n^{-\frac{2\beta}{2(\beta +1) + d}}
		\eqcm\\
		C \inf_{\festi} \sup_{\ftrue\in\FSmooth} \EOff{\ftrue}{\sup_{x\in\hypercube}\euclOf{\festi - \ftrue}^2(x)}
		&\geq
		\br{\frac{n}{\log(n)}}^{-\frac{2\beta}{2(\beta+1)+d}}
		\eqcm\\
		C \inf_{\festi} \sup_{\ftrue\in\FSmooth} \EOff{\ftrue}{\LpNormOf{2}{\hypercube}{\euclOf{\festi - \ftrue}}^2}
		&\geq
		n^{-\frac{2\beta}{2(\beta +1) + d}},
	\end{align}
    where the infima range over all estimators $\festi$ of $\ftrue$ based on the observations $Y_{\setIdx}$.
\end{corollary}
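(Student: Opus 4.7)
The plan is to derive \cref{cor:stubble:nice:onlyn} directly from \cref{cor:stubble:nice} via a case analysis on $\stepsize$. Because $\stepsize$ is fixed by the model rather than chosen by us, the bound of \cref{cor:stubble:nice} already applies for whatever value $\stepsize$ takes, so it is enough to show that the right-hand side $\br{n\stepsize^2}^{-\frac{2\beta}{2\beta+d}} + \stepsize^{2\beta}$ (and its sup-norm variant with the logarithmic correction) is bounded below by $c\, n^{-\frac{2\beta}{2(\beta+1)+d}}$ (respectively $c\,(n/\log n)^{-\frac{2\beta}{2(\beta+1)+d}}$) for a constant $c>0$, uniformly in every admissible $\stepsize$.

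For the pointwise and $L^2$ claims I would split at the balance point $\tau := n^{-\frac{1}{2(\beta+1)+d}}$ identified in \cref{rem:stubble:nice}(iv). If $\stepsize \geq \tau$, then directly $\stepsize^{2\beta} \geq \tau^{2\beta} = n^{-\frac{2\beta}{2(\beta+1)+d}}$. If $\stepsize < \tau$, then $n\stepsize^2 < n\tau^2 = n^{\frac{2\beta+d}{2(\beta+1)+d}}$, so $\br{n\stepsize^2}^{-\frac{2\beta}{2\beta+d}} > n^{-\frac{2\beta}{2(\beta+1)+d}}$. Either way the sum exceeds $n^{-\frac{2\beta}{2(\beta+1)+d}}$, yielding the first and third claimed bounds.

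For the uniform loss the analogous balance is $\tilde\tau := (\log(n)/n)^{\frac{1}{2(\beta+1)+d}}$. The case $\stepsize \geq \tilde\tau$ is immediate as before. In the opposite case I must show that $\br{n\stepsize^2/\log(n\stepsize^2)}^{-\frac{2\beta}{2\beta+d}} \geq c\, (n/\log n)^{-\frac{2\beta}{2(\beta+1)+d}}$. Since $x\mapsto x/\log x$ is increasing for $x$ beyond a small constant (which is ensured by the hypothesis $n\stepsize^2 \geq C$ of \cref{cor:stubble:nice}), the worst case within $\stepsize < \tilde\tau$ occurs at the upper boundary $n\stepsize^2 = n\tilde\tau^2$; there $\log(n\stepsize^2) \geq \frac{2\beta+d}{2(\beta+1)+d}\log n$ up to a $\log\log n$ correction, so $n\stepsize^2/\log(n\stepsize^2) \leq c_1\, (n/\log n)^{\frac{2\beta+d}{2(\beta+1)+d}}$, and the required lower bound follows after raising to the negative power $-\frac{2\beta}{2\beta+d}$.

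The main obstacle is the bookkeeping for the uniform case: the lower bound of \cref{cor:stubble:nice} features $\log(n\stepsize^2)$ rather than $\log n$, so one must verify that these two quantities are comparable up to a multiplicative constant in the relevant range of $\stepsize$, which hinges on $n\stepsize^2$ being polynomial in $n$ there. All remaining steps are routine algebra, and no further probabilistic reduction is needed beyond that already carried out in \cref{cor:stubble:nice}.
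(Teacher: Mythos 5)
Your proposal is correct and takes essentially the same route as the paper: the paper plugs in $\stepsize = n^{-\frac{1}{2(\beta+1)+d}}$ (resp.\ $(n/\log n)^{-\frac{1}{2(\beta+1)+d}}$), notes that these choices minimize the right-hand side of \cref{cor:stubble:nice} over $\stepsize$, and concludes the bound holds for any admissible $\stepsize$; your two-case split at exactly those thresholds is just a more explicit unpacking of that minimization argument, including the observation that $n\stepsize^2 \geq C$ keeps $x\mapsto x/\log x$ in its increasing regime for the sup-norm case.
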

\begin{remark}\mbox{ }\label{rem:stubble:nice:onlyn}
	\begin{enumerate}[label=(\roman*)]
		\item
        \textbf{Generality.}
        These bounds are the minima of the lower bounds of \cref{cor:stubble:nice} and hold for all $\stepsize$ that fulfill the conditions in \cref{cor:stubble:nice}.
        \item
        \textbf{Exponent.}
        As mentioned in \cref{rem:stubble:nice}, the exponents in the error rate are intuitively meaningful when compared to the minimax rates of nonparmetric regression (see \cref{sec:nonparamreg}):
        \begin{itemize}
            \item The $2$ in the numerator is due to considering the squared error.
            \item The $\beta$ in the numerator is the smoothness of the observed object $U(f,x,\cdot)$, which is $\beta+1$, minus one for the first derivative of that object that is estimated in the form of $f$.
            \item The $2$ in the denominator is due to condition \assuRef{Noise}, see also \cref{rem:noise}.
            \item The term $\beta+1$ in the denominator is the smoothness of the observed object.
            \item The $d$ in the denominator is the dimension of the domain of the estimated object $f$.
        \end{itemize}
	\end{enumerate}
\end{remark}
\begin{proof}[Poof of \cref{cor:stubble:nice:onlyn}]
	Set
	\begin{equation}
		\stepsize = n^{-\frac{1}{2(\beta +1) + d}}
	\end{equation}
	for the bounds at a point and in $L^2$ and
	\begin{equation}
		\stepsize = \br{\frac{n}{\log(n)}}^{-\frac{1}{2(\beta+1)+d}}
	\end{equation}
	for the bound in sup-norm.
	Then apply
	\cref{cor:stubble:nice}.
	Note that these choices minimize the bounds presented in \cref{cor:stubble:nice} with respect to $\stepsize$. Thus, the lower bounds in \cref{cor:stubble:nice:onlyn} are true for general $\stepsize$.
\end{proof}
\section{Snake Model}\label{sec:snake}
In this section, we consider the model described in \cref{sec:modelDef} with the target of estimating $\ftrue$ on the domain of interest $\hypercube$. We make a restriction on the location of the observed trajectories in the state space to avoid trivial settings in which consistent estimation is impossible. We call the resulting model the \textit{snake model}. In this model, we derive two different types of lower bounds -- one \textit{deterministic}, which is independent of the noise and sample size $n$ and another \textit{probabilistic} depending on these objects. Then the rates are combined for a full picture. The obtained lower bounds on the error rates are shown to be minimax optimal in at least one specific setting of interest.
\subsection{Model}\label{sec:snake:model}
If there is a location in state space which has a large distance to every point of the observed trajectories, we are not able to estimate the model function at this location well. In other words, the error rate for estimation of the model function $\ftrue$ depends on how densely the trajectories cover the domain of interest. We make this dependence visible by designing a parameter class that not only requires a minimum smoothness of $\ftrue$, but also requires a bound $\delta$	 on the maximal distance of any point in the domain of interest to the observed trajectories. Clearly, longer trajectories have a higher potential to fulfill the latter requirement. Moreover, the length of a trajectory $u_j := U(\ftrue, x_j, \cdot)\colon[0,T_j]\to\R^d$ can be bounded from above by the product of its maximum speed $\supNormof{\dot u_j}$ and the length of time interval $T_j$. If the corresponding model function is in $\FSmooth = \Sigma^{d\to d}(\beta, \indset{L}{0}{\ell}, \Lbeta)$, we have $\supNormof{\dot u_j} \leq L_0$. With these considerations in mind, we design the parameter class depending on the smoothness parameters $\beta, \indset{L}{0}{\ell}, \Lbeta$, the covering parameter $\delta$, and the total observed time $\Tsum = \sum_{j=1}^m T_j$.

In contrast to the stubble model, we do not prescribe initial conditions, but make them part of the parameter vector that describes the probability distribution of observations: For $m\in\N$ trajectories, the parameter vector $\theta = (f,\indset{x}{1}{m},\indset{T}{1}{m}) \in \FSmooth\times (\R^d)^m\times(\Rpp)^m$ contains the model function $f$, the initial conditions $\indset{x}{1}{m}$ and the maximum observation time $\indset{T}{1}{m}$ for each trajectory. Our observations are described by $\ptrue = (\ftrue,\indset{x}{1}{m},\indset{T}{1}{m})$ and the equation
\begin{equation}\label{eq:snake:lower:data}
	Y_{j,i} = U(\ftrue, \xany j, t_{j,i}) + \noise_{j,i}\eqcm \ i\in\nnset{n_j}, j\in\nnset{m}
\end{equation}
with -- as before -- independent, identically distributed, and centered noise $\noise_{j,i}\sim P^\noise$. We denote by $\Pr_{\ptrue}$ and $\Eoff{\ptrue}{\cdot}$ the corresponding probability measure and expectation, respectively.

As suggested above, to provide lower bounds on estimating $\ftrue$, we need some restrictions on $\ptrue$.
To describe the set of admissible parameters, we need the following definitions.
\begin{definition}[Tube]\label{def:tube}\mbox{}
	\begin{enumerate}[label=(\roman*)]
		\item
		Let $d\in\N$, $T\in\Rpp$. Let $u\colon[0,T]\to\R^d$ be continuously differentiable and denote its derivative as $\dot u\colon[0,T]\to\R^d$.
		Let $\delta\in\Rpp$. Define the (closed) \emph{tube} with radius $\delta$ around $u$ as
		\begin{equation}\label{eq:def:tube}
			\tube\brOf{u, \delta} := \bigcup_{t\in[0, T]} \setByEle{u(t) + v\,}{
				\begin{array}{l}
					\dot u(t) \neq 0,\cr
					 v \tr \dot u(t) = 0,\cr
					 \euclOf{v} \leq \delta
				\end{array}
			}
			\eqfs
		\end{equation}
		\item
		Let $d,m\in\N$, $f\colon\R^d\to\R^d$ Lipschitz continuous, $x_1, \dots, x_m \in \R^d$, and $T_1, \dots, T_m\in\Rpp$.
		Let $\theta := (f, \indset{x}{1}{m}, \indset{T}{1}{m})$. Let $u_j := U(\ftrue, x_j, \cdot)\colon[0,T_j]\to\R^d$. Let $A\subset \R^d$. Define the \emph{tube distance} as
		\begin{equation}
			d_{\mc T}(A, \theta) := \inf\setByEle{ \delta\in\Rpp}{ A\subset \bigcup_{j=1}^m\tube\brOf{u_j, \delta}}
			\eqfs
		\end{equation}
	\end{enumerate}
\end{definition}

\begin{remark}\label{rem:snake:lower:tube}
	Our definition here yields $\tube(u, \delta) = \emptyset$ if $\dot u = 0$. If we would remove the condition $\dot u(t) \neq 0$ from \eqref{eq:def:tube}, the model function $f = 0$, would be able to cover the hypercube with a sufficient amount of trajectories $m$, initial conditions on a uniform grid, and $T_j = 0$. But $f(x) = \epsilon \mo{e}_1$ with sufficiently small $\epsilon\in\Rpp$ would not fulfill this condition. Our convention avoids this discontinuity. For an analysis of settings with a large number of very short trajectories, see the stubble model.
\end{remark}
Now we can define the parameter class $\ParamSnake$, for which we show error lower bounds in this section.
\begin{definition}[Snake Parameter Class]\label{def:snake:smoothnessclass}
	Let $d\in\N_{\geq 2}$ and $\beta,L_0, \dots, L_\ell,\Lbeta,\delta,\Tsum\in\Rpp$. Set $\FSmooth := \Sigma^{d\to d}(\beta, \indset{L}{0}{\ell}, \Lbeta)$. Define
	\begin{equation}
		\ParamSnakeBase{d,\beta}{m}:=\ParamSnakeBase{d,\beta}{m}(\indset{L}{0}{\ell}, \Lbeta) :=\FSmooth\times(\R^d)^m \times \Rpp^m
	\end{equation}
	and
	\begin{align}
		\ParamSnake
		&:=
		\ParamSnake(\indset{L}{0}{\ell}, \Lbeta, \delta, \Tsum)
		\\&:=
		\setByEle{\theta = (f, \indset{x}{1}{m}, \indset{T}{1}{m})\in \ParamSnakeBase{d,\beta}{m} }{
			\begin{array}{l}
				m\in\N,\cr
				\sum_{j\in\nnset{m}} T_j = \Tsum,\cr
				d_\tube(\hypercube, \theta) \leq \delta
			\end{array}
		}\eqfs
	\end{align}
	For $\theta = (f, \indset{x}{1}{m}, \indset{T}{1}{m})\in \ParamSnake$, denote $f_\theta := f$.
\end{definition}
\begin{remark}\label{rem:snake:lower:paramset}
    The volume of a tube in dimension $d\in\N_{\geq 2}$ of radius $\delta\in\Rpp$ is bounded from above by the volume of sphere with radius $r$ in $d-1$ dimensions, i.e., $c_d\delta^{d-1}$, times the length of the tube. The length of the tubes generated by trajectories $U(f, x_j, \cdot )\colon [0, T_j] \to\R^d$, $j\in\nnset m$ is bounded from above by $L_0 \Tsum$, where $L_0:=\supNormof{f}$ and $\Tsum :=  \sum_{j\in\nnset m} T_j$.
	Thus, we require $c_d\delta^{d-1} L_0 \Tsum \geq 1$ to make it possible to achieve $d_\tube(\hypercube, \theta) \leq \delta$ for $\theta = (f, \indset{x}{1}{m}, \indset{T}{1}{m})$. In other words, if $L_0 \Tsum < \br{c_d\delta^{d-1}}^{-1}$, then $\ParamSnake(\indset{L}{0}{\ell}, \Lbeta, \delta, \Tsum) = \emptyset$.
\end{remark}
\begin{figure}[H]
    \begin{subfigure}{0.45\textwidth}
        \centering
         \includegraphics[width=\textwidth]{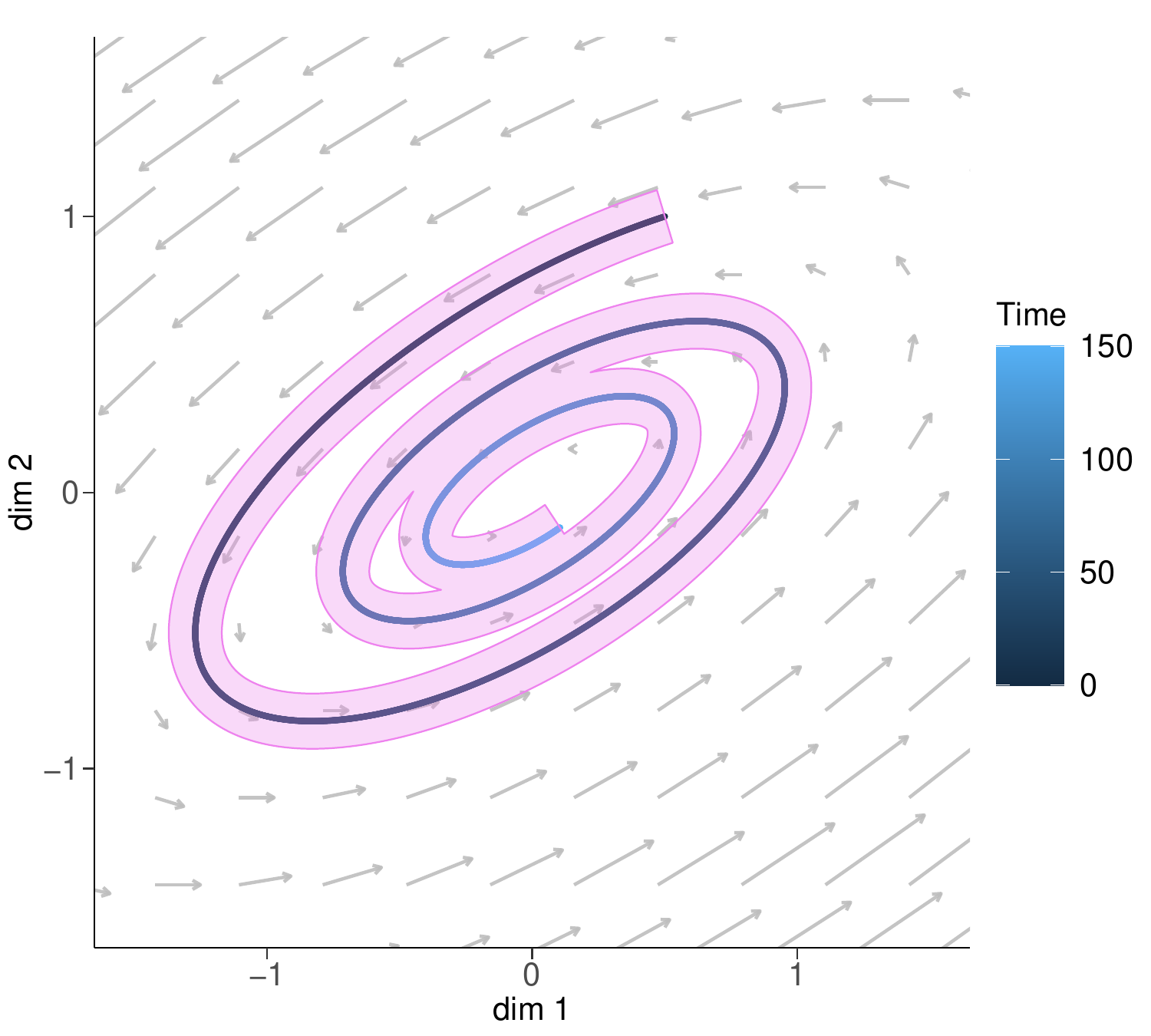}
          \caption{An example of a tube for a given trajectory.}

        \label{fig:tubedef}
    \end{subfigure}
    \hfill
    \begin{subfigure}{.45\textwidth}
        \centering
         \includegraphics[width=\textwidth]{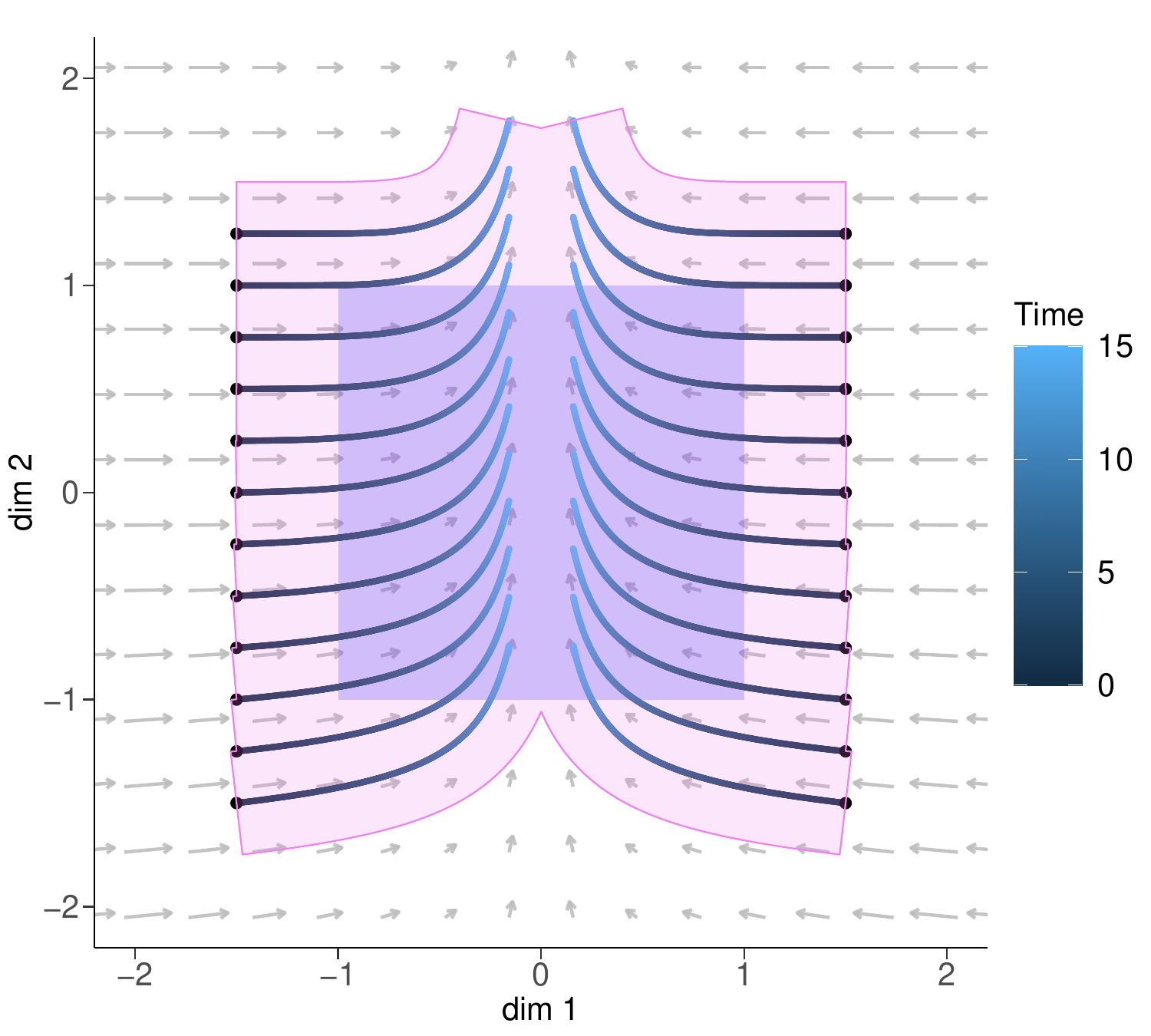}
            \caption{Continuing illustration \cref{fig:sub2}, the initial values, time horizons and model function fulfill the covering property with $\delta = 0.25$.}
        \label{fig:tubesnake}
    \end{subfigure}
    \caption{Illustration of tubes as defined in \cref{def:tube}. Object description as in \cref{fig:test}. Areas shaded in pink are covered by tubes around the trajectories.}
    \label{fig:tubes}
\end{figure}
\subsection{Lower Bound -- Probabilistic}
We consider the model described by \cref{eq:snake:lower:data}. We assume that the data points do not accumulate too much within an area, which is guaranteed by the following assumption.
\begin{assumption}\label{ass:snake:lower:cover}\mbox{ }
	\begin{itemize}
		\item \newAssuRef{CoverTime}:
			There is $\const{cvrtm}\in\Rpp$ with the following property:
			For all $j\in\nnset{m}$, $a,b\in[0,T_j]$ with $a < b$, we have
			\begin{equation}
				\sum_{i\in\nnset{n_j}}\mathds{1}_{[a,b]}(t_{i,j}) \leq \max\brOf{1, \const{cvrtm} (b-a)\frac{n}{\Tsum}}
				\eqfs
			\end{equation}
	\end{itemize}
\end{assumption}
\begin{remark}\mbox{}
    \begin{enumerate}[label=(\roman*)]
        \item
        \textbf{Example.}
        \assuRef{EquidistantTime} implies \assuRef{CoverTime} with $\const{cvrtm} = 3$.
        \item
        \textbf{Interpretation.}
       	The term $n/\Tsum$ is the average time destiny of the observations. The condition requires that the observation density is never too far above average, which would lead to potentially better estimates at that location. Compare with \assuRef{Cover}, which requires a similar bound in space instead of time.
    \end{enumerate}
\end{remark}
We have the following lower bound results.
\begin{theorem}\label{thm:snake:probabilistic}
	Use the model of \ref{sec:modelDef} with parameter class $\ParamSnake$ from \cref{def:snake:smoothnessclass}.
	Assume $d\geq 2$, \assuRef{Noise}, and \assuRef{CoverTime}.
	Let $p\in\Rpo$.
	Then there is a constant $C\in\Rpp$ large enough, depending only on $\beta, d, L_0, \dots, L_\ell, L_{\beta}, \const{noise}, \const{cvrtm}, p$, with the following property:
	Assume
	\begin{equation}
        C \max\brOf{\br{\frac {\Tsum}n }^{2\beta+d+1},\,\Tsum^{-1}} \leq \delta^{d-1} \leq C^{-1} \min\brOf{1,\, \frac n{\Tsum}}
    \end{equation}
    Then
    \begin{align}
        \forall x_0\in\hypercube\colon\ \inf_{\festi} \sup_{\ptrue\in\ParamSnake} \Pr_{\ptrue}\brOf{C \euclOf{\festi - f_{\ptrue}}(x_0) \geq \br{ \delta^{-(d-1)} \Tsum^{-1} n}^{-\frac{\beta}{2(\beta+1)+d}}}
        &\geq\frac14 \eqcm\\
		\inf_{\festi}
        \sup_{\ptrue\in\ParamSnake}
        \Pr_{\ptrue}\brOf{
             C \sup_{x\in\hypercube}\euclOf{\festi - f_{\ptrue}}(x)
             \geq
             \br{\frac{\delta^{-(d-1)} \Tsum^{-1} n}{\log\brOf{\delta^{-(d-1)} \Tsum^{-1} n}}}^{-\frac{\beta}{2(\beta+1)+d}}
        }
		&\geq\frac14 \eqcm\\
		\inf_{\festi} \sup_{\ptrue\in\ParamSnake}\Pr_{\ptrue}\brOf{C \LpNormOf{p}{\hypercube}{\euclOf{\festi - f_{\ptrue}}} \geq \br{\delta^{-(d-1)} \Tsum^{-1} n}^{-\frac{\beta}{2(\beta+1)+d}}}
		&\geq\frac14\eqcm
    \end{align}
    where the infima range over all estimators $\festi$ of $f_{\ptrue}$ based on the observations $Y_{\setIdx}$.
\end{theorem}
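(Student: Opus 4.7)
The plan is to invoke the master theorem (\cref{thm:lower:master}), following the pattern used for the stubble analogue \cref{thm:stubble:probabilistic}. I would construct a family of parameter vectors $\theta_\sigma \in \ParamSnake$ indexed by sign vectors $\sigma \in \{-1,+1\}^M$, sharing the same initial conditions, time horizons and observation times, but whose model functions differ by sign-flipped localized bumps. As a base I take the constant drift $f_0(x) := (L_0/2)\,\mo{e}_1$, whose flow $U(f_0, x, t) = x + (L_0/2)\,t\,\mo{e}_1$ consists of straight lines parallel to $\mo{e}_1$. Placing roughly $\delta^{-(d-1)}$ initial conditions on a $\delta$-grid inside $\{0\} \times [0,1]^{d-1}$ and choosing time horizons summing to $\Tsum$ so that each trajectory crosses $\hypercube$ ensures $d_\tube(\hypercube, \theta_0) \leq \delta$. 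Alternatives are obtained by adding $M$ scaled bumps $\Lbeta r^\beta\, \hbump{d}{\beta}((\cdot - z_k)/r)\,\mo{e}_2$ whose centers $z_k$ lie inside the tubes and whose radius satisfies $r \leq \delta$; the transverse orientation to the base flow and the amplitude scaling keep the perturbations in $\FSmooth$ and preserve the $\delta$-tube covering.

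For the pointwise bound at $x_0 \in \hypercube$ it suffices to take a single bump centered so that one of the base trajectories passes through $x_0$, yielding a two-hypothesis construction with separation $\Lbeta r^\beta$ at $x_0$. For the $L^p$ bound, I would pack $M$ disjoint bumps into the tube region and apply Varshamov--Gilbert to extract a separated subfamily with $L^p$-separation of order $M^{1/p} r^{\beta + d/p}$. For the sup-norm bound, a local Fano argument on a sub-family of size of order $\log M$ produces the additional logarithmic factor. These are the three standard variations of Tsybakov's framework once a hypothesis family with the correct separation-versus-KL trade-off has been exhibited.

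The main obstacle, absorbing most of the real work, is the KL-divergence analysis. Assumption \assuRef{Noise} bounds the KL divergence between the laws under $f_0$ and $f_\sigma$ by $\const{noise}\sum_{(j,i)\in\setIdx}\euclOf{U(f_\sigma, x_j, t_{j,i}) - U(f_0, x_j, t_{j,i})}^2$, and the key step is bounding how much the flow at each observation time depends on the bumps. A Gronwall-type estimate, exploiting boundedness of the smoothness constants on the finite time interval, shows that traversing a single bump contributes a displacement of order $\Lbeta r^{\beta+1}$ that persists (up to a bounded multiplicative factor) for subsequent observations along that trajectory. Averaging over the independent signs $\sigma_k$ decorrelates the contributions of distinct bumps, and \assuRef{CoverTime} is used to bound the number of observations lying after any given bump in terms of the tube-and-time geometry. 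The construction must be arranged carefully (for instance by placing the bump centers so that each trajectory intersects a comparable number of them, or by using antisymmetric bumps whose net displacement vanishes after traversal) so that this combinatorial accounting is sharp. Balancing the resulting KL budget against $M$ in the Varshamov--Gilbert or Fano step pins down the optimal bandwidth $r \sim (\delta^{-(d-1)}\Tsum^{-1}n)^{-1/(2(\beta+1)+d)}$ and yields the advertised rate $r^\beta \sim (\delta^{-(d-1)}\Tsum^{-1}n)^{-\beta/(2(\beta+1)+d)}$.

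Finally, the two-sided assumption on $\delta^{d-1}$ in the statement is precisely the feasibility region for this construction: the upper bound $\delta^{d-1} \leq C^{-1}\min(1, n/\Tsum)$ ensures that the optimal $r$ satisfies $r \leq \delta$, so the bumps fit inside the tubes without disturbing the covering, and that $M \geq 1$; the lower bound $\delta^{d-1} \geq C\max((\Tsum/n)^{2\beta+d+1}, \Tsum^{-1})$ rules out degenerate regimes in which the optimal $r$ would fall below the scale at which the perturbations still lie in $\FSmooth$ and the parameter vectors still lie in $\ParamSnake$.
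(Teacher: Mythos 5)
Your outline has the right high-level shape—base constant drift, lateral perturbations, master theorem, Gronwall for the flow, Fano/Varshamov--Gilbert for the various losses—but the central choice of perturbation is wrong in a way that breaks the rate. You add \emph{bumps} $\Lbeta r^\beta \hbump{d}{\beta}((\cdot - z_k)/r)\,\mo{e}_2$, and you yourself note the consequence: a trajectory that traverses the support picks up a persistent lateral displacement of order $\Lbeta L_0^{-1} r^{\beta+1}$, because $\int \hbump{d}{\beta} > 0$. That persistence is exactly what you cannot afford. The KL budget is $\const{noise}\sum_{(j,i)}\euclOf{u_{j,i}(\theta_{z,r}) - u_{j,i}(\theta_0)}^2$; with a bump, every observation on the affected trajectory \emph{downstream} of the bump contributes $\asymp (\Lbeta L_0^{-1} r^{\beta+1})^2$, so the number of contributing observations scales like the observations per trajectory, $\asymp n/m \asymp n\delta^{d-1}$, rather than like the observations inside the $r$-ball, $\asymp r\cdot n/(L_0\Tsum)$ per trajectory. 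Balancing this inflated KL gives $r \asymp (n\delta^{d-1})^{-1/(2(\beta+1))}$ and a lower bound of order $(n\delta^{d-1})^{-\beta/(2(\beta+1))}$, which is strictly weaker than the claimed $(\delta^{-(d-1)}\Tsum^{-1}n)^{-\beta/(2(\beta+1)+d)}$ — the $+d$ in the denominator of the exponent, i.e., the packing gain of order $r^d$, is lost.

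The paper avoids this precisely by taking the alternative perturbation to be a \emph{pulse} $\hpulse{d}{\beta}(x) = \tilde K_\beta(\euclof{x})\,\tilde K_\beta'(\Pi_1 x)$, which is odd in the first coordinate (the direction of the base flow). Lemma~\ref{lem:snake:lower:ODEprop}(ii) gives the resulting time-reversal symmetry $\Pi_k U(f,x,t_z+t) = \Pi_k U(f,x,t_z-t)$, so the net lateral displacement after traversal is exactly zero, and $u_k(\theta_{z,r}) = u_k(\theta_0)$ for all $k$ with $q_k(\theta_0)\notin\ball^d(z,r)$. That is what makes Assumption~\ref{ass:lower:general}\ref{ass:lower:general:ref} hold and keeps $\chi_n(r)\lesssim \const{cvrtm}L_0^{-1}\Tsum^{-1}mnr^d$, which in turn gives $\psi_n(r)^2\chi_n(r)\lesssim a_n r^{2(\beta+1)+d}$ with the correct exponent $\gamma = 2(\beta+1)+d$. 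You mention ``antisymmetric bumps whose net displacement vanishes after traversal'' as one of two possible fixes, and that option is indeed the paper's construction — but it is not optional: the other idea you float, equalizing the number of bumps crossed per trajectory, does not help, since it reduces neither the persistence of each displacement nor the number of downstream observations that then contribute to the KL (and for the single-bump two-hypothesis test used for the pointwise bound, it is not even applicable). You should commit to the odd-in-$\Pi_1$ profile and prove the cancellation explicitly; after that, your KL, packing and Fano/VG steps go through along the lines of the master theorem and you recover the stated rate and the two-sided feasibility condition on $\delta^{d-1}$.
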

\begin{remark}\mbox{}
    \begin{enumerate}[label=(\roman*)]
    	\item
    		\textbf{Dimension.}
    		In our construction, we require $d\geq 2$. Furthermore, the tubes of \cref{def:tube} are only meaningful in $d\geq 2$. Note that $d = 1$ is a special case, where all solutions $U(f, x, \cdot)$ are monotone functions.
    	\item
    		\textbf{Equidistant Time.}
    		If we assume \assuRef{EquidistantTime}, then \assuRef{CoverTime} is fulfilled and $\Tsum^{-1} n = \stepsize^{-1}$.
        \item
            \textbf{Relation Assumptions.}
            The lower bounds are formulated in terms of the overall sample size $n$, the total observation time $\Tsum$, and the radius $\delta$ of the tube which is required to cover the domain of interest. A typical setting we may be interested in is
            \begin{equation}
                \delta \xrightarrow{n\to\infty} 0
                \text{ and }
                \Tsum  \xrightarrow{n\to\infty} \infty
                \text{ and }
                \frac{n}{\Tsum} \xrightarrow{n\to\infty} \infty
                \eqfs
            \end{equation}
            Then the assumptions reduce to the requirement that $\delta \xrightarrow{n\to\infty} 0$ is in some sense slower than  $\Tsum/n \xrightarrow{n\to\infty} 0$ and $\Tsum^{-1} \xrightarrow{n\to\infty} 0$. For example,
            \begin{equation}
                \delta = n^{-\frac{1}{2(\beta+1)+d}}\text{ and } \Tsum = C \delta^{-(d-1)} = C n^{\frac{d-1}{2(\beta+1)+d}}
            \end{equation}
            fulfill the requirements. This choice becomes relevant when balancing against the deterministic error presented below.
        \item
            \textbf{Error Tendencies.} We can understand $\Tsum^{-1} n$ as the observation density in time. If it increases, more accurate estimations can be made, reducing the error lower bound. The term $\delta^{-(d-1)}$ can be interpreted as the minimum required length of the (combined) trajectories that is needed to cover the domain of interest with $\delta$-tubes. A smaller $\delta$ means a smaller parameter class $\ParamSnake$, decreasing the error lower bound.
        \item
            \textbf{Exponent.}
            In comparison to the lower bounds in \cref{thm:stubble:probabilistic}, the exponent for the error term \cref{thm:snake:probabilistic} does match the intuitions for nonparametric regression models, cf.\ \cref{rem:stubble:probabilistic}. If the available time $\Tsum$ is used in an optimal way, one can balance $\Tsum = c \delta^{-(d-1)}$ so that the exponent stays the same when considering a lower error bound that only depends on $n$.
    \end{enumerate}
\end{remark}
\cref{thm:snake:probabilistic} is a direct consequence of \cref{thm:snake:probabilistic:details}, see appendix \ref{sec:app:snake:probabilistic}, where the proof is given in detail. Here, we present a sketch of the proof.
\begin{proof}[Sketch of Proof of \cref{thm:snake:probabilistic}]
    As for \cref{thm:stubble:probabilistic}, we use our master theorem for regression lower bounds, \cref{thm:lower:master}, to prove \cref{thm:snake:probabilistic}.

    We use the null hypothesis that the true model function is
    \begin{equation}
        f_0(x) = (L_0 , 0, \dots, 0)\tr\eqfs
    \end{equation}
    The resulting trajectories are linear in the first dimension and constant in all other dimensions. The alternatives are again based on scaled and shifted version of a prototype $\hpulse{d}{\beta} \colon \R^d \to \R$. Instead of a \textit{bump} in the model function as for the stubble model, we now want the bump to be in the trajectories. As the model function is essentially the derivative of the trajectory, we need a \textit{pulse}\footnote{Here a \textit{pulse} refers to a smooth function $\R\to\R$ with compact support that first increases into positive values, than decreases becoming negative, and then increases again until it is 0.} that is added to $f_0$ to obtain the alternative model functions. See \cref{fig:proofsketch:bumps} for an illustration. The pulse function $\hpulse{d}{\beta}$ is based on a symmetric kernel with compact support and its derivative. See \cref{lmm:bumpandpulse} for a definition.

    For a location $z\in\R^d$ and a scale $r\in\Rpp$, we define the alternative model function as
    \begin{equation}
        f_{z,r}(x) = \br{L_0, \Lbeta r^\beta\hpulse{d}{\beta}\brOf{\frac{x-z}{r}},\, 0,\, \dots,\, 0}\eqfs
    \end{equation}
    The pulse is only acting in the second dimension so that the trajectories move with constant speed in the first dimension. It is chosen so that the resulting trajectories are identical to those of $f_0$ outside the support of the distortion of the model function induced by $\hpulse{d}{\beta}$.

    On one hand, we need to make sure that the observed trajectories cover the domain of interest. On the other hand, we obtain the best lower bound if as many observations as possible are uninformative. So, if $\Tsum$ is larger than necessary to cover the domain of interest, the remaining time is \textit{wasted}, i.e., it is used to extend the first trajectory outside the domain of interest. See the top trajectory in \cref{fig:proofsketch:bumps}.
\end{proof}
\begin{figure}[H]
    \begin{subfigure}{0.45\textwidth}
        \centering
        \includegraphics[width=\textwidth]{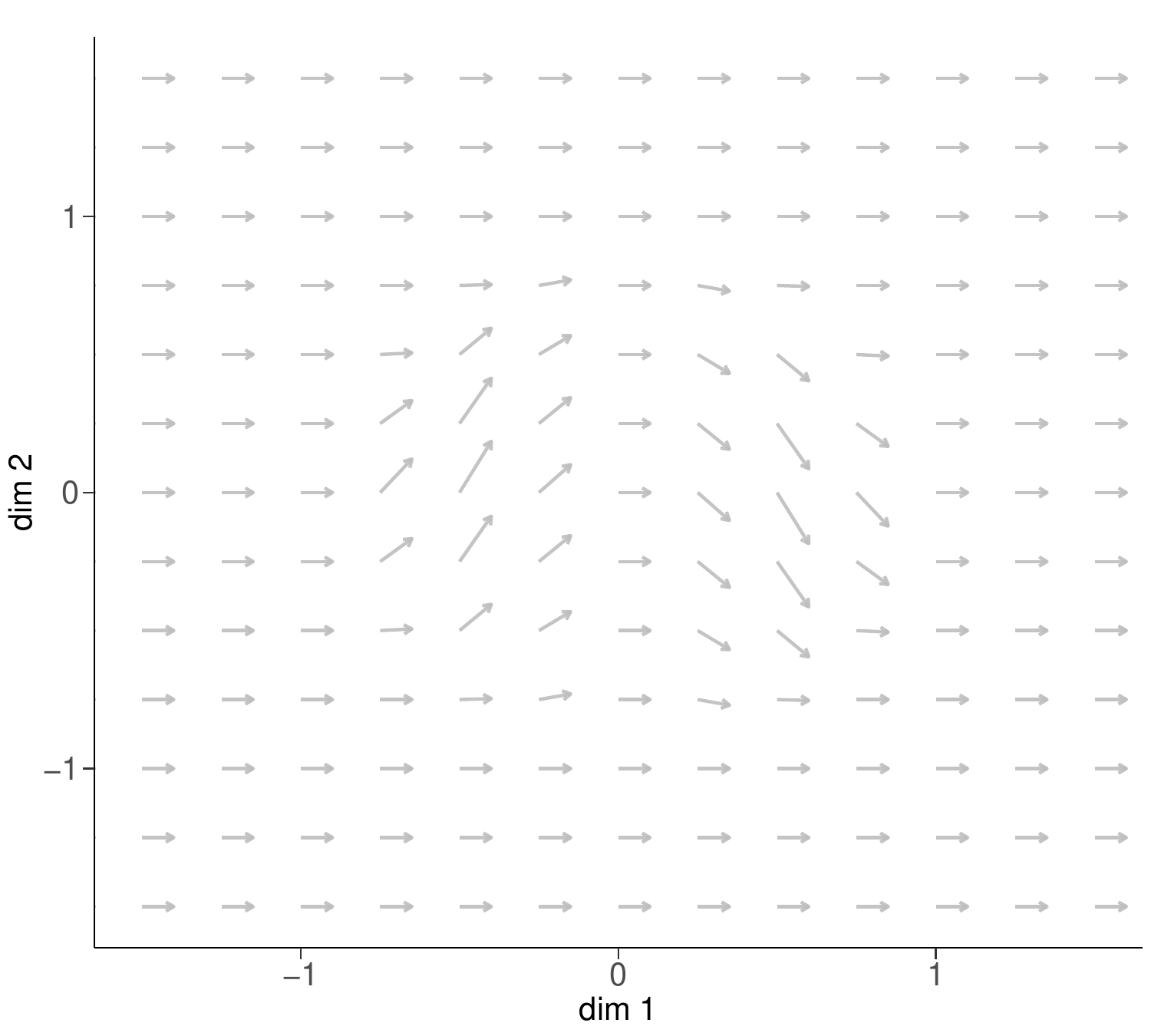}
        \caption{The model function $f_{z,r}$.}
        \label{fig:bumpvf}
    \end{subfigure}
    \hfill
    \begin{subfigure}{.45\textwidth}
        \centering
        \includegraphics[width=\textwidth]{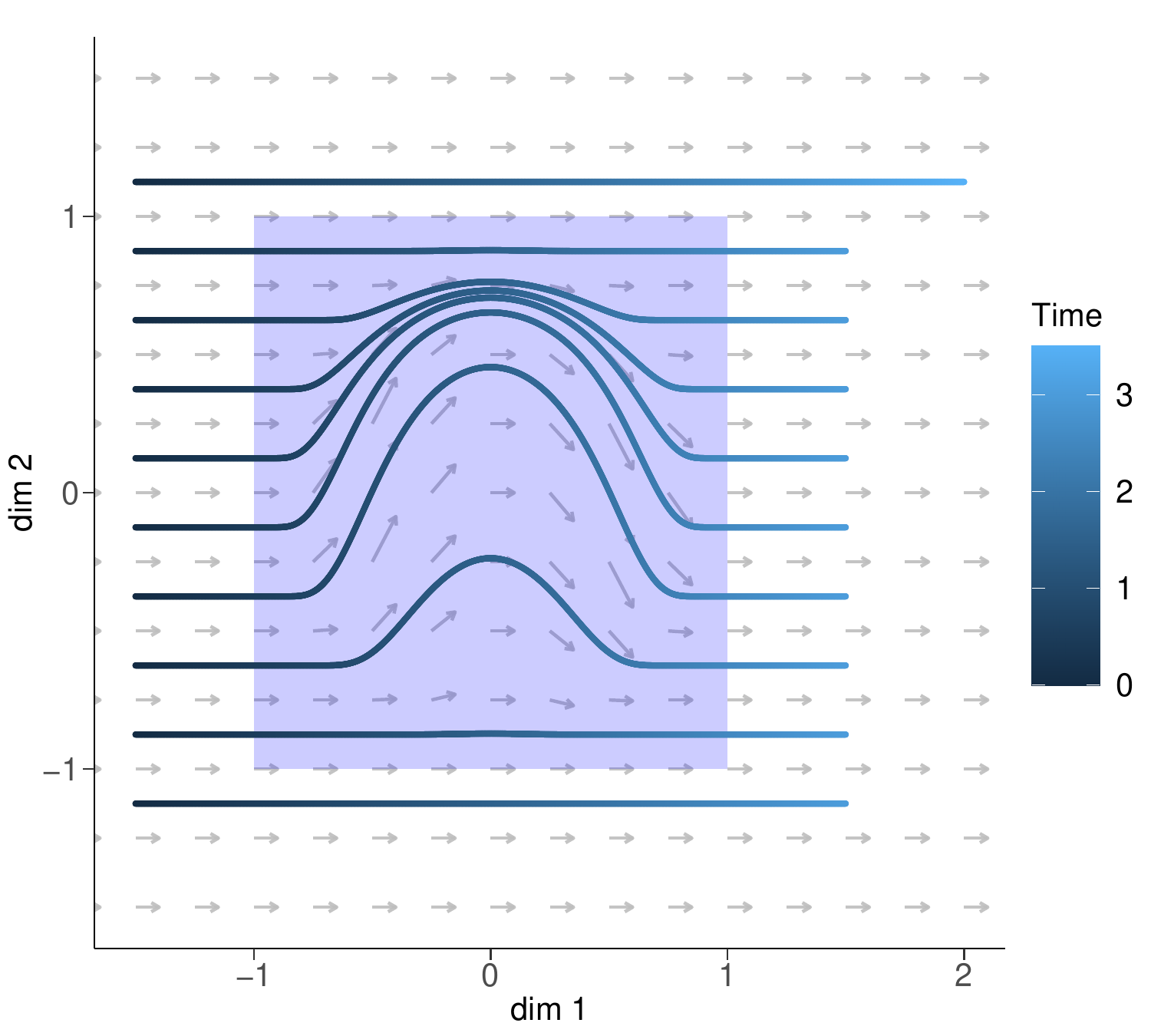}
        \caption{The model function $f_{z,r}$ with a row of solutions.}
        \label{fig:bumptraj}
    \end{subfigure}
    \caption{The model function used in the proof of error lower bounds in the snake model. Object description as in \cref{fig:test}.}
    \label{fig:proofsketch:bumps}
\end{figure}
\subsection{Lower Bound -- Deterministic}
With $\ptrue\in\ParamSnake$, we always have an error term of order $\delta^\beta$ -- even when observing trajectories continuously in time over an arbitrarily long time range, and even when observing without any measurement noise. This is due to the points in the domain of interest where the closest trajectory is a distance $\delta$ away.
This is demonstrated with the following theorem. We show the existence of two different model functions that induce trajectories that coincide for certain initial conditions and cover the domain of interest as required. It is impossible to distinguish the two model functions from observations.
\begin{theorem}\label{thm:snake:deterministic}
	Let $d\in\N_{\geq 2}$, $\beta\in\Rpo$, $\ell := \llfloor\beta\rrfloor$, $L_0,\dots,L_\ell,\Lbeta \in\Rpp$.
	Set $\FSmooth := \Sigma^{d\to d}(\beta, \indset{L}{0}{\ell}, \Lbeta)$.
	Let $p\in \Rpo$.
	Then there is $C\in\Rpp$ large enough, depending only $d, \beta, L_0, \dots, L_\ell, \Lbeta, p$, with the following property:
	Let $x_0\in\hypercube$ and $\delta\in\Rpp$ with $\delta \leq C^{-1}$.
	Assume $C \Tsum^{-1} \leq \delta^{d-1}$.
	Then there are $\theta_0, \theta_1 \in \ParamSnake$ of the form $\theta_k = (f_k, \indset{x}{1}{m},\indset{T}{1}{m})$, $k\in\nnzset1$ such that
	\begin{equation}\label{eq:snake:sameTraj}
		U(f_0, x_j, t) = U(f_1, x_j, t)
	\end{equation}
	for all $t\in\R$, $j \in \nnset m$ and
	\begin{equation}
		C \euclOf{f_0 - f_1}(x_0) \geq \delta^\beta
		\qquad\text{and}\qquad
		C \LpNormOf{p}{\hypercube}{\euclOf{f_0 - f_1}} \geq \delta^\beta
		\eqfs
	\end{equation}
\end{theorem}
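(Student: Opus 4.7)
The strategy is to construct two smooth model functions $f_0, f_1 \in \FSmooth$ that share the same trajectories from finitely many initial points while differing by order $\delta^\beta$ at $x_0$ and in $L^p$. The key observation is that, since $d \geq 2$, the trajectories form a one-dimensional subset of $\R^d$, and the tube-covering requirement $d_\tube(\hypercube, \theta) \leq \delta$ only forces trajectories to pass within $\delta$ of every point of $\hypercube$. This leaves room to perturb $f$ on a region at positive distance from every trajectory without affecting the flow, echoing the idea used for \cref{thm:stubble:deterministic} but now exploiting transverse dimensions rather than temporal periodicity.

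I will set $f_0(x) := (L_0, 0, \dots, 0)\tr$, whose flow is $U(f_0, x, t) = x + tL_0 e_1$, and select the initial conditions as a regular $(d-1)$-dimensional grid on $\{-1\} \times [-1,2]^{d-1}$ with spacing $s := c_1 \delta$, where $c_1 \leq 1/\sqrt{d-1}$ is chosen so that the resulting parallel straight-line trajectories cover $\hypercube$ with $\delta$-tubes; I shift the grid so that $x_0$ is centered inside one grid cell, placing $x_0$ at distance $c_2\delta$ from every trajectory for a fixed $c_2 \in (0, 1)$. Each trajectory needs time $3/L_0$ to traverse the relevant portion of $\hypercube$, so with $m \asymp \delta^{-(d-1)}$ grid points the base total time is of order $\delta^{-(d-1)}$, which the hypothesis $C\Tsum^{-1} \leq \delta^{d-1}$ accommodates; any surplus $\Tsum$ is absorbed by extending a single trajectory outside $\hypercube$. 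Next I define $f_1 := f_0 + g$, where $g$ is supported in the gap set $A := \{x \in \hypercube \,:\, \mathrm{dist}(x,\,\bigcup_j U(f_0,x_j,[0,T_j])) \geq c_3 \delta\}$ for some fixed $c_3 \in (0, c_2)$. Because $g$ vanishes in an open neighborhood of every trajectory, the ODE $\dot u = f_1(u)$ started from any $x_j$ produces the same curve as $f_0$, establishing \eqref{eq:snake:sameTraj}. For the pointwise bound, the first component of $g$ will include the scaled prototype $\Lbeta (c_3\delta/2)^\beta \hbump{d}{\beta}\brOf{(x - x_0)/(c_3\delta/2)}$ from \cref{lmm:bumpandpulse}, producing $\euclof{g(x_0)} \asymp \delta^\beta$; for the $L^p$ bound, I additionally pack a fixed-volume subset of $A$ with $\asymp \delta^{-d}$ disjoint translates of the same scaled bump, obtaining $\LpNormOf{p}{\hypercube}{\euclof g}^p \gtrsim \delta^{-d}\cdot (\delta^\beta)^p \delta^d \asymp (\delta^\beta)^p$, i.e.\ $L^p$-norm of order $\delta^\beta$.

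The two subtle points are verifying that $f_1$ actually lies in $\FSmooth$ and that $A$ is large enough to host the required packing. For smoothness, bumps with disjoint supports and a common shape sum to a function whose $k$-th derivative bounds and $(\beta-\ell)$-Hölder seminorm agree, up to an absolute constant, with those of a single scaled prototype; since $f_0$ is constant and contributes zero to every derivative, an absolute rescaling of the prototype in \cref{lmm:bumpandpulse} absorbs accumulated constants and keeps $f_1 \in \FSmooth = \Sigma^{d\to d}(\beta, \indset{L}{0}{\ell}, \Lbeta)$. For the packing, the grid spacing $c_1 \delta$ ensures that in each hyperplane orthogonal to $e_1$ the complement of the projected trajectories at distance $\geq c_3 \delta$ is a network of slabs whose $(d-1)$-dimensional area is a positive fraction of $[0,1]^{d-1}$ independent of $\delta$; extruding along $e_1$ across $\hypercube$ yields a $\delta$-independent volume inside $A$, which accommodates the required $\asymp \delta^{-d}$ disjoint balls of radius $c_3\delta/2$ by standard packing. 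I expect the main bookkeeping effort to go into tracking absolute constants through the Hölder estimate for the summed bump function so that the final scale $\Lbeta (c_3\delta/2)^\beta$ lands exactly inside the prescribed smoothness class.
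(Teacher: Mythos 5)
Your construction matches the paper's own proof of this theorem almost verbatim: a constant drift $f_0(x) = (L_0, 0, \dots, 0)\tr$, a $(d-1)$-dimensional grid of initial conditions with spacing $\asymp\delta$ whose straight-line trajectories cover $\hypercube$ with $\delta$-tubes, and a perturbation $f_1 - f_0$ built from $\asymp\delta^{-d}$ disjoint scaled copies of $\hbump{d}{\beta}$ placed in the gaps between trajectories; the pointwise and $L^p$ estimates then fall out exactly as you compute them.

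There is one step that fails as written. You place the bump in the \emph{first} component of $g$ — and the paper's displayed formula does the same — but this collides with the zeroth-order constraint in \cref{def:Hoelder}: the first component of $f_1$ becomes $L_0 + \Lbeta r^\beta\hbump{d}{\beta}(\cdot)$, whose sup-norm strictly exceeds $L_0$, so $f_1\notin\FSmooth$. Your remark that "$f_0$ is constant and contributes zero to every derivative" covers only orders $k\geq 1$ and does not rescue the $k=0$ bound. The fix, which is what the paper's proof sketch actually describes ("add bumps … in the second dimension", despite the $\mo e_1$ in the displayed formula), is to put the bump in the second (or any other than the first) component, where the constant term of $f_0$ is zero; then \cref{lmm:bumpandpulse} applies directly with $b=0$ and $r\leq\rmax$, and $f_1\in\FSmooth$ for $\delta$ small. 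Since $\euclof{f_0-f_1}$ is the full Euclidean norm of the vector-valued difference, none of your size estimates change. With that correction your argument is sound.
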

\begin{remark}\mbox{}
    \begin{enumerate}[label=(\roman*)]
        \item
        \textbf{Relation Assumptions.}
        If a trajectory has constant positive speed, it needs at least $c_d \delta^{-(d-1)}$ time to cover the domain of interest $[0,1]^d$ with $\delta$-tubes. Thus, the assumption $C \Tsum^{-1} \leq \delta^{d-1}$ is quite natural if the speed of trajectories has na upper bound, which it has as $\supNormOf{f} \leq L_0$. Note that this assumption is also present in the probabilistic bound, \cref{thm:snake:probabilistic}.
        \item
        \textbf{Comparison to Stubble.}
        Compare this result to \cref{thm:stubble:deterministic}, where we have 	$U(f_0, x, i\stepsize) = U(f_1, x, i\stepsize)$ instead of $U(f_0, x_j, t) = U(f_1, x_j, t)$. This illustrates again that the snake and the stubble models are complementary.
    \end{enumerate}
\end{remark}
\cref{thm:snake:deterministic} is a direct consequence of \cref{thm:snake:deterministic:details}, see appendix \ref{sec:app:snake:deterministic}, where the proof is given in detail. Here, we present a sketch of the proof.
\begin{proof}[Sketch of Proof of \cref{thm:snake:deterministic}]
    We set
    \begin{equation}
        f_0(x) = (L_0 , 0, \dots, 0)\tr
        \eqfs
    \end{equation}
    We then create initial conditions $x_j$ with 0 in the first dimension, $\Pi_1 x_j = 0$, and so that the values in the other dimensions $\{\Pi_{-1}x_1, \dots, \Pi_{-1}x_m\} \subset \R^{d-1}$ form a regular grid in with grid size $2r$, $r = \delta/\sqrt{d}$. (See \cref{not:analytical} \ref{not:analytical:projection} for a definition of the projector operators.) Now we add bumps of the form
    \begin{equation}
        f_{2,z,r}(x) = \Lbeta r^\beta\hbump{d}{\beta}\brOf{\frac{x-z}{r}}
    \end{equation}
    in the second dimension to $f_0$ to obtain $f_1$ as
    \begin{equation}
        f_1(x) = f_0(x) + \sum_{k\in\Z^d} f_{2,z+2rk,r}(x)\mo{e}_1
        \eqfs
    \end{equation}
    The precise locations of the bumps and the initial conditions are chose so that the trajectories $t \mapsto U(f_1, x_j, t)$ do not intersect the support of the bumps so that \eqref{eq:snake:sameTraj} is fulfilled. Furthermore, the grid is spaced, so that the tubes of radius $\delta$ around the trajectories cover the domain of interest. See \cref{fig:grid} for an illustration.
\end{proof}
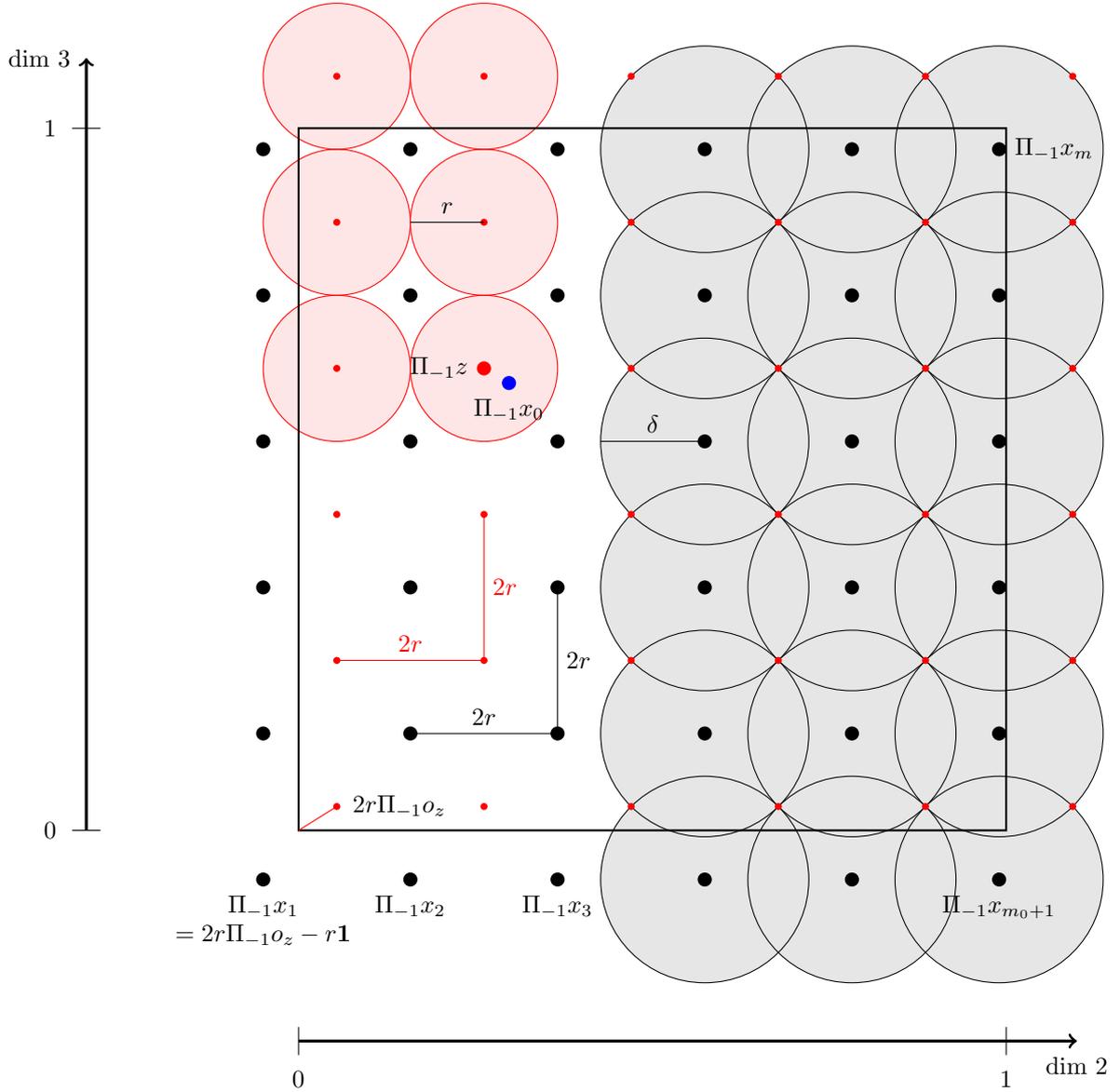
\begin{figure}
    \begin{center}
        \def\myxoffset{-0.05}
        \def\myn{5}
        \def\mys{0.104}
        \def\myrdot{0.01}
        \def\myrdotsmall{0.005}
        \def\myyoffset{-0.07}
        \def\myxaxisoffset{-0.3}
        \def\myyaxisoffset{-0.3}
        \def\myticklen{0.02}
        \begin{tikzpicture}[scale=10]
            \pgfmathtruncatemacro{\startValue}{\myn-2}
            \pgfmathtruncatemacro{\endValue}{\myn}
            \foreach \x in {\startValue,...,\endValue}{
                \foreach \y in {0,...,\myn}{
                    \fill[black!10] (2*\x*\mys+\myxoffset, 2*\y*\mys+\myyoffset) circle ({sqrt(2)*\mys});
                }
            }
            \foreach \x in {\startValue,...,\endValue}{
                \foreach \y in {0,...,\myn}{
                    \draw[black] (2*\x*\mys+\myxoffset, 2*\y*\mys+\myyoffset) circle ({sqrt(2)*\mys});
                }
            }
            \foreach \x in {0, ..., 1}{
                \foreach \y in {3,...,5}{
                    \fill[red!10] ({2*\x*\mys+\myxoffset+\mys}, {2*\y*\mys+\myyoffset+\mys}) circle ({\mys});
                }
            }
            \foreach \x in {0, ..., 1}{
                \foreach \y in {3,...,5}{
                    \draw[red] ({2*\x*\mys+\myxoffset+\mys}, {2*\y*\mys+\myyoffset+\mys}) circle ({\mys});
                }
            }
            \draw[thick] (0,0) rectangle (1, 1);
            \foreach \x in {0,...,\myn}{
                \foreach \y in {0,...,\myn}{
                    \fill (2*\x*\mys+\myxoffset, 2*\y*\mys+\myyoffset) circle (\myrdot);
                }
            }

            \foreach \x in {0,...,\myn}{
                \foreach \y in {0,...,\myn}{
                    \fill[red] ({2*\x*\mys+\myxoffset+\mys}, {2*\y*\mys+\myyoffset+\mys}) circle (\myrdotsmall);
                }
            }
            \coordinate (z) at ({2*2*\mys+\myxoffset-\mys}, {4*2*\mys+\myyoffset-\mys});
            \coordinate (oz) at ({1*2*\mys+\myxoffset-\mys}, {1*2*\mys+\myyoffset-\mys});
            \coordinate (x0) at ($(z) + ({0.17*2*\mys}, {-0.1*2*\mys})$);
            \coordinate (x1) at ({\myxoffset}, {\myyoffset});
            \coordinate (x2) at ({1*2*\mys+\myxoffset}, {\myyoffset});
            \coordinate (x3) at ({2*2*\mys+\myxoffset}, {\myyoffset});
            \coordinate (xm0) at ({\myn*2*\mys+\myxoffset}, {\myyoffset});
            \coordinate (xm) at ({\myn*2*\mys+\myxoffset}, {\myn*2*\mys+\myyoffset});
            \fill[red] (z) circle (\myrdot);
            \node[left=0.1cm] at (z) {$\Pi_{-1}z$};
            \fill[blue] (x0) circle (\myrdot);
            \node[below=0.1cm] at (x0) {$\Pi_{-1}x_0$};
            \node[below=0.1cm,align=center] at (x1) {$\Pi_{-1} x_1$\\$= 2r\Pi_{-1}o_z - r \mo 1$};
            \node[below=0.1cm] at (x2) {$\Pi_{-1}x_2$};
            \node[below=0.1cm] at (x3) {$\Pi_{-1}x_3$};
            \node[below=0.1cm] at (xm0) {$\Pi_{-1}x_{m_0+1}$};
            \node[right=0.1cm] at (xm) {$\Pi_{-1}x_{m}$};
            \draw[red] (0, 0) -- (oz);
            \node[right=0.1cm] at (oz) {$2r\Pi_{-1}o_z$};
            \draw[black] ({3*2*\mys\myxoffset}, {3*2*\mys+\myyoffset}) -- ({3*2*\mys+\myxoffset-sqrt(2)*\mys}, {3*2*\mys+\myyoffset}) node [midway, above, sloped] {$\delta$};
            \draw[black] ({1*2*\mys+\myxoffset+\mys}, {4*2*\mys+\myyoffset+\mys}) -- ({1*2*\mys+\myxoffset}, {4*2*\mys+\myyoffset+\mys}) node [midway, above, sloped] {$r$};
            \draw[black] ({1*2*\mys+\myxoffset}, {1*2*\mys+\myyoffset}) -- ({2*2*\mys+\myxoffset}, {1*2*\mys+\myyoffset}) node [midway, above] {$2r$};
            \draw[black] ({2*2*\mys+\myxoffset}, {2*2*\mys+\myyoffset}) -- ({2*2*\mys+\myxoffset}, {1*2*\mys+\myyoffset}) node [midway, right] {$2r$};
            \draw[red] ({1*2*\mys+\myxoffset-\mys}, {1*2*\mys+\myyoffset+\mys}) -- ({2*2*\mys+\myxoffset-\mys}, {1*2*\mys+\myyoffset+\mys}) node [midway, above] {$2r$};
            \draw[red] ({2*2*\mys+\myxoffset-\mys}, {2*2*\mys+\myyoffset+\mys}) -- ({2*2*\mys+\myxoffset-\mys}, {1*2*\mys+\myyoffset+\mys}) node [midway, right] {$2r$};

            \draw[->,very thick] (0,\myxaxisoffset) -- (1.1, \myxaxisoffset);
            \foreach \x in {0, 1} {
                \draw (\x,\myxaxisoffset+\myticklen) -- (\x, \myxaxisoffset-\myticklen);
                \node[below=0.1cm] at (\x,\myxaxisoffset-\myticklen) {$\x$};
            }
            \node[below=0.1cm] at (1.1, \myxaxisoffset){dim 2};

            \draw[->,very thick] (\myyaxisoffset, 0) -- (\myyaxisoffset, 1.1);
            \foreach \y in {0, 1} {
                \draw (\myyaxisoffset+\myticklen, \y) -- (\myyaxisoffset-\myticklen, \y);
                \node[left=0.1cm] at (\myyaxisoffset-\myticklen, \y) {$\y$};
            }
            \node[left=0.1cm] at (\myyaxisoffset, 1.1){dim 3};

        \end{tikzpicture}
    \end{center}
    \caption{Construction of the grid for proof of \cref{thm:snake:deterministic} for the lower error bound at $x_0$, illustrated with $d=3$. As the trajectories are linear in the first dimension and constant in the other dimension, we show the projection to the second and third dimension. The black dots are the projected initial conditions, which coincide with the trajectories. Red dots mark the centers of the bumps of $f_1$. The tubes covering the domain of interest are shaded in gray.}
    \label{fig:grid}
\end{figure}
\subsection{Combined Lower Bounds}
We have now proven two different lower bounds, which van be combined to the following lower bounds of the estimation risk.
\begin{corollary}\label{cor:snake:combined}
	Use the model of \ref{sec:modelDef} with parameter class $\ParamSnake$ from \cref{def:snake:smoothnessclass}.
	Assume $d\geq 2$, \assuRef{Noise}, and \assuRef{CoverTime}.
	Then there is a constant $C\in\Rpp$ large enough, depending only on $\beta, d, L_0, \dots, L_\ell, L_{\beta}, \const{noise}, \const{cvrtm}$, with the following property:
	Assume
	\begin{equation}\label{eq:snake:condition}
        C \max\brOf{\br{\frac {\Tsum}n }^{2\beta+d+1},\,\Tsum^{-1}} \leq \delta^{d-1} \leq C^{-1} \min\brOf{1,\, \frac n{\Tsum}}
        \eqfs
    \end{equation}
	Then
    \begin{align}
    	\forall x_0\in\hypercube\colon\ C \inf_{\festi} \sup_{\ptrue\in\ParamSnake} \E_{\ptrue}\abOf{ \euclOf{\festi - f_{\ptrue}}^2(x_0)}
    	&\geq
    	\delta^{2\beta}+ \br{\delta^{-(d-1)} n \Tsum^{-1}}^{-\frac{2\beta}{2(\beta+1)+d}}
    	\\
		C \inf_{\festi} \sup_{\ptrue\in\ParamSnake} \E_{\ptrue}\abOf{ \sup_{x\in\hypercube}\euclOf{\festi - f_{\ptrue}}^2(x) }
		&\geq
		\delta^{2\beta}+ \br{\frac{\delta^{-(d-1)} n\Tsum^{-1}}{\log(\delta^{-(d-1)} n\Tsum^{-1})}}^{-\frac{2\beta}{2(\beta+1)+d}}
		\\
		C \inf_{\festi} \sup_{\ptrue\in\ParamSnake}\E_{\ptrue}\abOf{ \LpNormOf{2}{\hypercube}{\euclOf{\festi - f_{\ptrue}}}^2}
		&\geq
		\delta^{2\beta} +  \br{\delta^{-(d-1)} n\Tsum^{-1}}^{-\frac{2\beta}{2(\beta+1)+d}}
		\eqcm
    \end{align}
    where the infima range over all estimators $\festi$ of $f_{\ptrue}$ based on the observations $Y_{\setIdx}$.
\end{corollary}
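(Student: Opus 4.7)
The plan is to combine the probabilistic lower bound \cref{thm:snake:probabilistic} and the deterministic lower bound \cref{thm:snake:deterministic} via the deterministic master theorem \cref{thm:lower:deterministic:master}, and then transfer the resulting in-probability statements to statements in expectation using the reduction theorem \cref{thm:lower:reduction:expectation}. This mirrors closely the proof of \cref{cor:stubble:nice}, substituting the snake-model ingredients for their stubble-model counterparts; in particular the three loss functionals (pointwise, uniform, $L^p$) can be treated in parallel.

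First I would verify that the hypothesis \eqref{eq:snake:condition}, with the constant $C$ in the corollary chosen sufficiently large, implies both the hypotheses of \cref{thm:snake:probabilistic} and those of \cref{thm:snake:deterministic}. The former hypothesis is literally \eqref{eq:snake:condition}. For the latter, I need $\delta \leq C^{-1}$ and $C\Tsum^{-1}\leq \delta^{d-1}$: the second inequality is a direct consequence of the lower bound in \eqref{eq:snake:condition}, while the first follows from $\delta^{d-1}\leq C^{-1}\min(1,n/\Tsum) \leq C^{-1}$ together with $d\geq 2$, by enlarging the corollary constant so that $C^{-1/(d-1)}$ falls below the threshold required by \cref{thm:snake:deterministic}.

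Next I would apply \cref{thm:snake:probabilistic} to obtain, for each of the three loss functionals, a probabilistic lower bound of order $(\delta^{-(d-1)}n\Tsum^{-1})^{-\beta/(2(\beta+1)+d)}$ (with a logarithmic correction in the uniform case). Independently, \cref{thm:snake:deterministic} produces two parameters $\theta_0,\theta_1 \in \ParamSnake$ that share initial conditions and time horizons and whose flows agree along those initial conditions, so that the data distributions under $\theta_0$ and $\theta_1$ coincide, yet whose model functions are separated in the relevant loss by at least $\delta^{\beta}$. Feeding this pair into the deterministic master theorem \cref{thm:lower:deterministic:master} yields a second probabilistic lower bound of order $\delta^{\beta}$. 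Any estimator must incur at least the larger of the two deviations on some element of $\ParamSnake$; using $\max(a,b)^2 \geq \tfrac12(a^2+b^2)$ then gives the additive form $\delta^{2\beta} + (\delta^{-(d-1)}n\Tsum^{-1})^{-2\beta/(2(\beta+1)+d)}$ after squaring, and \cref{thm:lower:reduction:expectation} converts the in-probability bound into the expected squared-loss bound stated in the corollary.

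The main obstacle I expect is purely bookkeeping: carefully tracking the different constants that appear in \cref{thm:snake:probabilistic}, \cref{thm:snake:deterministic}, \cref{thm:lower:deterministic:master}, and \cref{thm:lower:reduction:expectation}, and checking that they can be absorbed into a single corollary constant $C$ that depends only on the advertised parameters. Beyond this, no new mathematical input should be required, since all the analytic content has already been encapsulated in the four ingredient theorems.
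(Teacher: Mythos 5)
Your proposal matches the paper's own proof exactly: the paper combines \cref{thm:snake:probabilistic} with \cref{thm:snake:deterministic} via \cref{thm:lower:deterministic:master} and then invokes \cref{thm:lower:reduction:expectation} to pass from probability to expectation, just as you describe. Your additional bookkeeping — checking that \eqref{eq:snake:condition} implies both sets of hypotheses and using $\max(a,b)^2 \geq \tfrac12(a^2+b^2)$ to produce the additive form — is the correct (and in the paper, implicit) detail.
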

\begin{remark}\mbox{}
    \begin{enumerate}[label=(\roman*)]
    	\item
    	\textbf{Wasting Time.} One only needs $\Tsum = c_d  \br{L_0 \delta^{d-1}}^{-1}$ to cover the domain of interest $[0, 1]^d$, see \cref{rem:snake:lower:paramset}. If $\Tsum$ is much larger, this time and the corresponding observations can be wasted for non-informative observations to achieve a large lower bound.
    	\item \textbf{Optimal Covering.} If $\Tsum^{-1} = c_d L_0 \delta^{d-1}$ then \eqref{eq:snake:condition} becomes
    	\begin{equation}
    		C n^{-\frac{2(\beta+1)+d-1}{2(\beta+1)+d}} \leq \delta^{d-1} \leq C^{-1}
    		\eqfs
    	\end{equation}
    	Setting $\delta = c n^{-\frac{1}{2(\beta+1)+d}}$ fulfills this condition.
    	\item
    	\textbf{Balancing.} The deterministic error term dominates,
    	\begin{equation}
    		\delta^{2\beta} \geq c \br{\delta^{-(d-1)} n\Tsum^{-1}}^{-\frac{2\beta}{2(\beta+1)+d}}
    		\eqcm
    	\end{equation}
    	if and only if
    	\begin{equation}
    		\delta \geq c \br{\frac{n}{\Tsum}}^{-\frac{1}{2(\beta+1)+1}}
    		\eqfs
    	\end{equation}
    	If we further assume that no time is wasted, $\Tsum^{-1} = c_d L_0 \delta^{d-1}$, we arrive at
    	\begin{equation}
    		\delta \geq c n^{-\frac{1}{2(\beta+1)+d}}
    		\eqfs
    	\end{equation}
        \item
        \textbf{Minimax Optimality.}
        In \cite[Corollary 4.17]{upperbounds}, for $m=1$, an upper bound of the form
        \begin{equation}
            \sup_{x\in\hypercube}\euclOf{\festi - f_{\ptrue}}^2(x) \in \mathbf{O}_\Pr\brOf{
            \delta^{2\beta}
             +
             \br{\frac{n}{\Tsum \log n}}^{-\frac{2\beta}{2(\beta+1)+1}}}
        \end{equation}
        is shown for a certain estimator. That estimator is known to be flawed as the smoothness of $f$ is only taken into account along an observed trajectory but not orthogonal to it. In comparison with our lower error bound, it still achieves the minimax optimal rate of convergence (ignoring log factors) if
        \begin{equation}
            \delta \geq c \br{\frac{n}{\Tsum}}^{-\frac{1}{2(\beta+1)+1}}
            \eqfs
        \end{equation}
        In other words, the estimator in \cite[Corollary 4.17]{upperbounds} is minimax optimal, if the deterministic error term $\delta^{2\beta}$ dominates the error.
    \end{enumerate}
\end{remark}
\begin{proof}[Proof of \cref{cor:snake:combined}]
	The proof consists of first applying the probabilistic lower bound \cref{thm:snake:probabilistic} and the deterministic lower bound \cref{thm:snake:deterministic} together with the deterministic master theorem \cref{thm:lower:deterministic:master}. Then \cref{thm:lower:reduction:expectation} transfers results in probability to results in expectation.
\end{proof}
Similar as seen in the \textit{stubble model} for the time step $\stepsize$, we can choose optimal parameters $\delta$ and $\Tsum$, from which lower bounds follow only depending on the sample size $n$.
\begin{corollary}\label{cor:snake:combined:nice}
	Use the setting of \cref{cor:snake:combined}.
	Then
    \begin{align}
      \forall x_0\in\hypercube\colon\  \inf_{\festi} \sup_{\ptrue\in\ParamSnake} \E_{\ptrue}\abOf{ \euclOf{\festi - f_{\ptrue}}^2(x_0)}
      &\geq
      C n^{-\frac{2\beta}{2(\beta+1)+d}}
      \eqcm\\
      \inf_{\festi} \sup_{\ptrue\in\ParamSnake} \E_{\ptrue}\abOf{ \sup_{x\in\hypercube}\euclOf{\festi - f_{\ptrue}}^2(x)}
      &\geq
      C\left(\frac{n}{\log(n)}\right)^{-\frac{2\beta}{2(\beta+1)+d}}
      \eqcm\\
      \inf_{\festi} \sup_{\ptrue\in\ParamSnake}\E_{\ptrue}\abOf{ \LpNormOf{2}{\hypercube}{\euclOf{\festi - f_{\ptrue}}}^2}
      &\geq
      C n^{-\frac{2\beta}{2(\beta+1)+d}}
      \eqcm
    \end{align}
    where the infima range over all estimators $\festi$ of $f_{\ptrue}$ based on the observations $Y_{\setIdx}$.
\end{corollary}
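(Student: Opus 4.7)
The plan is to apply \cref{cor:snake:combined} and exploit the covering constraint in \eqref{eq:snake:condition} to conclude. Unlike the analogous \cref{cor:stubble:nice:onlyn} in the stubble case, where one picks a specific value of $\stepsize$ to minimize the mixed bound, here the parameters $\delta$ and $\Tsum$ do not need to be tuned explicitly: the constraint $C \Tsum^{-1} \leq \delta^{d-1}$ in \eqref{eq:snake:condition} rewrites as $\delta^{-(d-1)} \Tsum^{-1} \leq C^{-1}$, which already bounds the argument appearing in the probabilistic error term and yields the claimed $n$-only rate without any further optimization.

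For the pointwise and $L^2$ risks, I would first apply \cref{cor:snake:combined} to obtain lower bounds of the form $C^{-1}\bigl(\delta^{2\beta} + (\delta^{-(d-1)} n \Tsum^{-1})^{-2\beta/(2(\beta+1)+d)}\bigr)$. The constraint above then gives
\begin{equation*}
\br{\delta^{-(d-1)} n \Tsum^{-1}}^{-\frac{2\beta}{2(\beta+1)+d}} \geq \br{C^{-1} n}^{-\frac{2\beta}{2(\beta+1)+d}} \geq c\, n^{-\frac{2\beta}{2(\beta+1)+d}}\eqcm
\end{equation*}
and dropping the nonnegative term $\delta^{2\beta}$ yields the pointwise and $L^2$ bounds.

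For the sup-norm bound, I would use the monotonicity of $x \mapsto x/\log(x)$ on $(e,\infty)$. The opposite side of \eqref{eq:snake:condition}, namely $\delta^{d-1} \leq C^{-1} n/\Tsum$, ensures that $\delta^{-(d-1)} n \Tsum^{-1} \geq C$ lies above $e$ for $C$ large enough, so the monotonicity applies and gives
\begin{equation*}
\frac{\delta^{-(d-1)} n \Tsum^{-1}}{\log(\delta^{-(d-1)} n \Tsum^{-1})} \leq \frac{C^{-1} n}{\log(C^{-1} n)} \leq c'\, \frac{n}{\log(n)}\eqcm
\end{equation*}
where the last inequality uses $\log(C^{-1} n) \geq \log(n)/2$ for $n$ sufficiently large. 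Raising to the power $-2\beta/(2(\beta+1)+d)$ and again discarding $\delta^{2\beta}$ yields the middle bound.

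The main obstacle, if any, is routine bookkeeping of constants in the logarithmic step for the sup-norm case; the conceptual content lies entirely in the observation that the covering constraint in \eqref{eq:snake:condition} is precisely what is needed to convert the $(\delta, \Tsum)$-dependent bounds of \cref{cor:snake:combined} into a uniform rate in $n$.
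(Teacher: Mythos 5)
Your proof is correct and takes a genuinely different route from the paper's. The paper's proof selects the specific values $\delta = n^{-1/(2(\beta+1)+d)}$ and $\Tsum = C\delta^{-(d-1)}$ (with the $\log n$ analogue for the sup-norm), verifies these satisfy \eqref{eq:snake:condition}, applies \cref{cor:snake:combined}, and then argues that these choices minimize the bound, so the conclusion extends to general $\delta, \Tsum$. Your proof avoids any explicit tuning: the left side of \eqref{eq:snake:condition} forces $\delta^{-(d-1)}\Tsum^{-1} \leq C^{-1}$, hence $\delta^{-(d-1)} n \Tsum^{-1} \leq C^{-1}n$, and the negative exponent $-\tfrac{2\beta}{2(\beta+1)+d}$ flips this into the desired lower bound on the probabilistic term; dropping the nonnegative $\delta^{2\beta}$ finishes. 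Two things your version buys. First, it is arguably more rigorous: the paper's specific $(\delta,\Tsum)$ is not literally the minimizer of the bound over the feasible region (the infimum is approached as $\delta$ shrinks with $\Tsum = C\delta^{-(d-1)}$, and it is exactly the expression you isolate); your argument shows directly that the infimum of the probabilistic term over all admissible $(\delta,\Tsum)$ is of the claimed order, which is what the corollary requires. Second, it is more structurally illuminating: you make explicit that the covering constraint in the snake model plays the role that the explicit balancing of $\stepsize$ plays in \cref{cor:stubble:nice:onlyn}, whereas in the stubble model the analogous constraint $n\stepsize^2 \geq C$ points the wrong way under the negative exponent and so cannot replace the tuning. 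Your $\log$ bookkeeping in the sup-norm case is fine; to make it airtight, note that the right-hand side of \eqref{eq:snake:condition} already forces $\delta^{-(d-1)} n \Tsum^{-1} \geq C$, so taking $C$ large enough (and $n \geq C^2$) covers both monotonicity and $\log(C^{-1}n) \geq \tfrac12\log n$, an implicit large-$n$ caveat the paper's proof shares.
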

\begin{remark}\mbox{}
    \begin{enumerate}[label=(\roman*)]
        \item
        \textbf{Generality.}
        These bounds are the minima of the lower bounds of \cref{cor:snake:combined} and hold for all $\Tsum$ and $\delta$ that fulfill the conditions in \cref{cor:snake:combined}.
        \item
        \textbf{Exponent.} Confer \cref{rem:stubble:nice:onlyn}.
        \item
        \textbf{Comparison to Stubble.}
        It is remarkable, that the two complementary models, snake and stubble, achieve the same universal lower error bound, see \cref{cor:stubble:nice:onlyn}.
    \end{enumerate}
\end{remark}
\begin{proof}[Proof of \cref{cor:snake:combined:nice}]
    Set
    \begin{equation}
        \delta = n^{-\frac{1}{2(\beta+1)+d}}\text{ and } \Tsum = C \delta^{-(d-1)} = C n^{\frac{d-1}{2(\beta+1)+d}}
    \end{equation}
    for the bounds at a point and in $L^2$ and
    \begin{equation}
        \delta = \left(\frac{n}{\log(n)}\right)^{-\frac{1}{2(\beta+1)+d}}\text{ and } \Tsum = C \delta^{-(d-1)}  =  C \left(\frac{n}{\log(n)}\right)^{\frac{d-1}{2(\beta+1)+d}}
    \end{equation}
    for the bound in the sup-norm.
    In the former case, the requirement
    \begin{equation}
        C \br{\frac {\Tsum}n }^{2\beta+d+1} \leq \delta^{d-1}
    \end{equation}
    becomes
    \begin{equation}
        C \br{n^{-\br{2(\beta+1)+1}\br{2\beta+d+1}}}  \leq  n^{-(d-1)}
    \end{equation}
    which is fulfilled even for $\beta=0$ if $n$ is large enough. The other conditions are also easily checked. Thus, we can apply \cref{cor:snake:combined}. As these choices minimizes the bounds of \cref{cor:snake:combined} with respect to $\delta$ and $\Tsum$, the lower bounds in \cref{cor:snake:combined:nice} are true for general $\delta$ and $\Tsum$.
\end{proof}
\begin{remark}
	The snake parameter class of \cref{def:snake:smoothnessclass} allows for an arbitrary number of trajectories $m\in\N$. Furthermore, the constructions in the proofs of \cref{thm:snake:deterministic} and \cref{thm:snake:probabilistic} use $m = c_{d, \beta} \delta^{-(d-1)}$ initial conditions. We show in \cref{sec:app:run} that we can achieve the same lower bound for $d=2$ and $\beta = 1$ with $m=1$ and conjecture that this can be generalized to arbitrary $d\in\N_{\geq 2}$ and $\beta\in\Rpo$.
\end{remark}
\begin{appendix}
\section{Analytical Results}\label{sec:analystical}
\begin{notation}\mbox{}
	Subsequently (in all sections of the appendix), all lower case $c$, with or without index, are elements of $\Rpp$ and universal insofar as they only depend on the variables written as index, e.g., $c_{d,\beta}$ depends only on $\beta$ and $d$. In particular, a constant $c$ with no index refers to a fixed positive number. Every occurrence of such a variable may refer to a different value.
\end{notation}
In this section, we present basic analytical result that will be used later in the main proofs.
\begin{notation}\mbox{}\label{not:analytical}
    \begin{enumerate}[label = (\roman*)]
        \item Let $X$ be a set, $d\in\N$, and $f \colon X \to \R^d$. Denote the Euclidean norm as $\euclof{v}$ for $v\in\R^d$. Define $\supNormOf{f}:=\sup_{x\in X} \euclOf{f}$.
        \item \label{not:analytical:projection}
	       For $d\in\N$ fixed, denote by $\mo{e}_j\in\R^d$, $j\in\nnset{d}$, the $j$-th unit vector, meaning that all entries are equal to zero, but not the $j$-th, which equals 1.
	        \begin{itemize}
	        	\item  Denote by $\Pi_j:\R^d\to\R$, $j\in\nnset{d}$, the projection operator on the $j$-th component, namely
	        	\begin{equation}
	        		\Pi_j(x) := x\tr \mo{e}_j,
	        	\end{equation}
	        	for all $x\in\R^d$.
	        	\item Denote by $\Pi_{-j}:\R^d\to\R^{d-1}$, $j\in\nnset{d}$, the  projection operator on all but the $j$-th component, namely
        		\begin{equation}
        			\Pi_{-j}(x) := \br{\Pi_1(x),\, \dots,\, \Pi_{j-1}(x),\, \Pi_{j+1}(x),\, \dots,\,\Pi_d(x)}\tr
        		\end{equation}
        		for all $x\in\R^d$.
	        \end{itemize}
    \end{enumerate}
\end{notation}
\subsection{Derivatives}
For any finite-dimensional $\R$-vector space $V$, we denote the Euclidean norm as $\euclof{x}$ for $x\in V$.
For $k\in\N$ and finite-dimensional $\R$-vector spaces $V$ and $W$, let $\mc L_k(V, W)$ be the set of $k$-multilinear functions from $V^k$ to $W$.
An element of $\mc L_k(V, W)$ is called symmetric if it is invariant under permutations of its $k$ arguments.
A function $f\colon V \to W$ is differentiable at $x\in V$ if there is $A \in \mc L_1(V, W)$ such that
\begin{equation}\label{eq:derivative:first}
	\lim_{\euclOf{v} \to 0} \frac{\euclOf{f(x+v)-f(x) - A(v)}}{\euclOf{v}} = 0
	\eqfs
\end{equation}
In this case we write $D f(x) = A$.
The function $f$ is differentiable if it is differentiable at all $x\in V$. Denote the set of differentiable functions from $V$ to $W$ as $\mc D(V, W)$.
From now on, assume $V = \R^d$ for a $d\in\N$. Define $\mb D_d := \setByEle{v\in\R^d}{\euclOf{v} = 1}$.
The directional derivative of $f\in\mc D(V, W)$ at $x\in V$ in the direction $v\in\mb D_d$ is
\begin{equation}
	D_v f(x) := D f(x)(v)
	\eqfs
\end{equation}
We set $D^0$ to the identity, i.e., $D^0 f = f$.
Let $k\in\N$. Define the set of $k$-times differentiable functions $\mc D^k(V, W)$ and the $k$-th derivative operator $D^k$ recursively:
For $k = 1$, we set $\mc D^1(V, W) := \mc D(V, W)$ and $D^1 := D$.
Let $k \in \N_{\geq 2}$.
A function $f\in\mc D^k(V,W)$ is $k$-times differentiable at $x\in V$ if there is symmetric $A \in \mc L_{k}(V,W)$ such that
\begin{equation}
	\lim_{\euclOf v\to 0} \sup_{\mo v\in\mb D_d^{k-1}} \frac{\euclOf{D^{k-1}_{\mo v}f(x+v)-D^{k-1}_{\mo v}f(x) - A((\mo v, v))}}{\euclOf{v}} = 0
	\eqfs
\end{equation}
In this case we write $D^{k} f(x) = A$.
The function $f$ is $k$-times differentiable if it is $k$-times differentiable at all $x\in V$. Denote the set of $k$-times differentiable functions from $V$ to $W$ as $\mc D^k(V, W)$.
The $k$-th directional derivative of $f\in\mc D^k(V, W)$ at $x\in V$ in the directions $\mo v\in\mb D_d^k$ is
\begin{equation}
	D_{\mo v}^k f(x) := D^k f(x)(\mo v)
	\eqfs
\end{equation}
Let $p\in\N$. For $f\in\mc D^k(\R^d, \R^p)$ the derivative operator acts component-wise, i.e., for $f = (f_1, \dots, f_p)$ with $f_\ell\in\mc D^k(\R^d, \R)$, we have
\begin{equation}
	D^k_{\mo v}f(x) = \begin{pmatrix}	D^k_{\mo v}f_1(x) \\ \vdots \\ D^k_{\mo v}f_p(x) \end{pmatrix}
	\eqcm
\end{equation}
for $x\in\R^d$ and $\mo v\in \mb D_d^k$.
The operator norm for $A \in \mc L_k(V, W)$ is
\begin{equation}
	\opNormOf{A} := \sup_{\mo v \in\mb D_d^k} \euclOf{A(\mo v)}
	\eqfs
\end{equation}
Define the sup-norm of the $k$-th derivative as
\begin{equation}
	\supNormof{D^kf} := \sup_{x\in\R^d} \opNormof{D^kf(x)} = \sup_{x\in\R^d}  \sup_{\mo v\in \mb D_d^k} \euclOf{D^k_{\mo v}f(x)}
	\eqfs
\end{equation}
\begin{definition}[Hölder-smoothness classes]\label{def:Hoelder}
	Let $d\in\N$,
	For $L\in\Rpp$, define
	\begin{equation}
		\Sigma^{d\to1}(0; L) := \setByEle{f\colon\R^d\to\R}{\supNormOf{f} \leq L}\eqfs
	\end{equation}
	For $L\in\Rpp$, $\beta\in(0,1]$, define
	\begin{equation}
		\Sigma^{d\to1}(\beta; L) := \setByEle{f\colon\R^d\to\R}{\abs{f(x)-f(\tilde x)} \leq L \euclof{x-\tilde x}^{\beta}}\eqfs
	\end{equation}
	Let $\beta\in\Rppo$ and $\ell:=\llfloor\beta\rrfloor$. Define
	\begin{equation}
		\Sigma^{d\to1}(\beta; L) := \setByEle{f \in \mc D^{\ell}(\R^d, \R)}{\opNormof{D^\ell f(x)-D^\ell f(\tilde x)} \leq L \euclof{x-\tilde x}^{\beta - \ell}}\eqfs
	\end{equation}
	For $\beta\in\Rp$, use the short notation
	\begin{equation}
		\Sigma(\beta; L) := \Sigma^{1\to1}(\beta; L)\eqfs
	\end{equation}
	Let $L_0, \dots, L_\ell, \Lbeta\in\Rpp$. Define
	\begin{equation}
		\Sigma^{d\to 1}(\beta; \indset{L}{0}{\ell}, \Lbeta) := \Sigma^{d\to1}(\beta; \Lbeta) \cap \bigcap_{k=0}^\ell \Sigma^{d\to1}(k; L_k)\eqfs
	\end{equation}
	Let $\dm{in},\dm{out}\in\N$.  Define
	\begin{equation}
		\Sigma^{\dm{in}\to \dm{out}}(\beta; \indset{L}{0}{\ell}, \Lbeta) := \br{\Sigma^{\dm{in}\to 1}(\beta; \indset{L}{0}{\ell}, \Lbeta)}^{\dm{out}}\eqcm
	\end{equation}
	where $f = (f_1, \dots, f_{\dm{out}})\in \Sigma^{\dm{in}\to \dm{out}}(\beta; \indset{L}{0}{\ell}, \Lbeta)$ is treated as a function
	\begin{equation}
		f\colon \R^{\dm{in}} \to \R^{\dm{out}}, x\mapsto (f_1(x), \dots, f_{\dm{out}}(x))\tr
		\eqfs
	\end{equation}
\end{definition}
\begin{lemma}\label{app:diff:lem}
	Let $d\in\N$.
	Let $k\in\N$.
	Let $f\in \mc D^{k}(\R^d, \R)$.
	Let $L\in\Rpp$.
	Assume $\supNormof{D^{k}f} \leq L$.
	Then $f \in \Sigma^{d\to1}(k, L)$.
\end{lemma}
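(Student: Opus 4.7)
The plan is to reduce the statement to the case $k=1$ and then iterate the definition of higher-order derivatives.

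First I would dispense with $k=1$. Here $\Sigma^{d\to 1}(1,L)$ is the set of $L$-Lipschitz functions (from the case $\beta\in(0,1]$ of \cref{def:Hoelder}). Given $x,\tilde x\in\R^d$, the fundamental theorem of calculus applied to $t\mapsto f(\tilde x + t(x-\tilde x))$ yields
\begin{equation}
f(x)-f(\tilde x) \;=\; \int_0^1 D f\!\brOf{\tilde x + t(x-\tilde x)}(x-\tilde x)\,\dl t\eqfs
\end{equation}
Since $\opNormof{Df(y)}\leq L$ uniformly in $y$, the integrand is bounded in absolute value by $L\euclof{x-\tilde x}$, so $\abs{f(x)-f(\tilde x)}\leq L\euclof{x-\tilde x}$, as required.

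For $k\geq 2$ I would use that $\llfloor k\rrfloor = k-1$ (since $\llfloor\cdot\rrfloor$ is the \emph{strict} floor as defined in the paper), so that
\begin{equation}
\Sigma^{d\to 1}(k,L) = \setByEle{g\in\mc D^{k-1}(\R^d,\R)}{\opNormof{D^{k-1}g(x)-D^{k-1}g(\tilde x)}\leq L\euclof{x-\tilde x}}\eqfs
\end{equation}
Fix any $\mo v=(v_1,\dots,v_{k-1})\in\mb D_d^{k-1}$ and define $g_{\mo v}\colon\R^d\to\R$ by $g_{\mo v}(x):=D^{k-1}_{\mo v}f(x)=D^{k-1}f(x)(\mo v)$. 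By the recursive definition of $D^k$, the map $g_{\mo v}$ lies in $\mc D(\R^d,\R)$ with $Dg_{\mo v}(x)(w)=D^kf(x)(\mo v,w)$, hence
\begin{equation}
\opNormof{Dg_{\mo v}(x)} \;=\; \sup_{w\in\mb D_d}\abs{D^k f(x)(\mo v,w)} \;\leq\; \opNormof{D^k f(x)} \;\leq\; L\eqfs
\end{equation}
Applying the $k=1$ case just established to $g_{\mo v}$ gives $\abs{g_{\mo v}(x)-g_{\mo v}(\tilde x)}\leq L\euclof{x-\tilde x}$. Taking the supremum over $\mo v\in\mb D_d^{k-1}$ and recalling that $\opNormof{D^{k-1}f(x)-D^{k-1}f(\tilde x)}=\sup_{\mo v}\abs{D^{k-1}_{\mo v}f(x)-D^{k-1}_{\mo v}f(\tilde x)}$ yields the desired Lipschitz bound on $D^{k-1}f$.

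The only delicate point is the case distinction triggered by the paper's strict-floor convention, which makes $\Sigma^{d\to 1}(k,L)$ for integer $k$ a Lipschitz-type condition on $D^{k-1}f$ rather than a boundedness condition on $D^k f$ itself; once this is recognized the argument is routine, and no new analytic tool beyond the one-variable fundamental theorem of calculus (applied along line segments) is needed.
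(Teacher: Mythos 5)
Your proof is correct and follows essentially the same route as the paper's: fix a direction tuple $\mo v\in\mb D_d^{k-1}$, bound the Lipschitz constant of the scalar function $D^{k-1}_{\mo v}f$ by $\supNormof{D^kf}$ via a one-variable argument along line segments, and take the supremum over $\mo v$. The only cosmetic difference is that you invoke the fundamental theorem of calculus where the paper cites the mean value theorem, and you make the reduction to the $k=1$ case explicit rather than applying the one-variable argument directly to $D^{k-1}_{\mo v}f$ for general $k$.
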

\begin{proof}[Proof of \cref{app:diff:lem}]
	Let $\mo v\in\mb D_d^{k-1}$. Then, by the mean value theorem,
	\begin{equation}
		\abs{D^{k-1}_{\mo v}f(x) - D^{k-1}_{\mo v}f(\tilde x)} \leq \sup_{\tilde v \in \mb D_d}\supNormOf{D^{k}_{(\mo v, \tilde v)}f} \euclOf{x - \tilde x}
	\end{equation}
    with $(\mo v, \tilde v)\in\mb D_d^{k}$. Thus,
	\begin{equation}
		\opNormOf{D^{k-1}f(x) - D^{k-1}f(\tilde x)} \leq \supNormOf{D^{k}f} \euclOf{x - \tilde x}
		\eqfs
	\end{equation}
\end{proof}
\subsection{Smoothness and Kernels}
The construction of the hypotheses in the proofs of \cref{sec:stubble} and \cref{sec:snake} are based on kernel functions and their smoothness properties. First, we show that a particular construction of function is contained in the function class $\Sigma^{d\to1}(\beta; \indset{L}{0}{\ell}, \Lbeta)$.
\begin{lemma}[Smoothness]\label{lem:general:smoothness}
	Let $d\in\N$.
	For $\beta\in\Rpp$ set $\ell := \llfloor\beta\rrfloor$.
	Assume $h \in \Sigma^{d\to 1}(\beta, 1)$.
	For $z\in\R^d$, $L,r\in\Rpp$, $b\in\R$,
	define
	\begin{equation}
		f_r \colon\R^d\to \R ,\, x \mapsto L r^\beta h\brOf{\frac{x-z}{r}} + b
		\eqfs
	\end{equation}
	For $L_{0}, \dots, L_\ell, \Lbeta\in\Rpp$,
	assume one of the following (equivalent) conditions:
	\begin{enumerate}[label=(\roman*)]
		\item \label{lmm:general:smoothness:fToSigma}
		We have
		\begin{align}
			L_0 &\geq L r^\beta \supNormOf{h} + \abs{b}\eqcm\\
			L_k &\geq L r^{\beta-k} \supNormOf{D^k h}\eqcm\qquad k\in\nnset\ell\eqcm\\
			\Lbeta &\geq L
			\eqfs
		\end{align}
		\item \label{lmm:general:smoothness:SigmaTof}
		We have
		\begin{equation}
			L \leq \min\brOf{
				\frac{L_0 -\abs{b}}{r^\beta \supNormOf{h}}
				\eqcm\
				\frac{L_1}{r^{\beta-1} \supNormOf{D^1 h}}
				\eqcm\
				\dots
				\eqcm\
				\frac{L_k}{r^{\beta-\ell} \supNormOf{D^\ell h}}
				\eqcm\
				\Lbeta
			}
			\eqfs
		\end{equation}
		\item \label{lmm:general:smoothness:rmax}
		We have $\abs{b} < L_0$, $L \leq \Lbeta$, and
		\begin{equation}
			r \leq \min\brOf{
				\br{\frac{L_0 -\abs{b}}{L \supNormOf{h}}}^{\frac{1}{\beta}}
				\eqcm\
				\br{\frac{L_1}{L \supNormOf{D^1 h}}}^{\frac{1}{\beta-1}}
				\eqcm\
				\dots
				\eqcm\
				\br{\frac{L_k}{L \supNormOf{D^\ell h}}}^{\frac{1}{\beta-\ell}}
			}
			\eqfs
		\end{equation}
	\end{enumerate}
	Then
	\begin{equation}
		f_r\in \Sigma^{d\to1}(\beta; \indset{L}{0}{\ell}, \Lbeta)
		\eqfs
	\end{equation}
\end{lemma}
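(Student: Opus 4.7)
The plan is to reduce everything to a direct chain-rule computation on $g(x) := h\!\br{(x-z)/r}$ and then read off the three defining conditions of $\Sigma^{d\to1}(\beta; L_0,\dots,L_\ell,\Lbeta)$. Since $f_r(x) = L r^\beta g(x) + b$ is obtained from $h$ by an affine change of variables followed by a scalar multiple and a constant shift, the $k$-th derivative satisfies
\begin{equation}
    D^k f_r(x) = L r^{\beta-k}\, D^k h\!\br{\tfrac{x-z}{r}}\,,\qquad k\in\nnset{\ell}\eqcm
\end{equation}
by iterating the chain rule; this is the only analytic content of the proof. The shift $b$ contributes only to the $k=0$ term.

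With this identity in hand I would verify the membership in three steps. First, for the sup-norm I would note $\supNormof{f_r}\le L r^\beta\supNormof{h}+\abs{b}$, which is bounded by $L_0$ precisely by the first inequality in \ref{lmm:general:smoothness:fToSigma}. Second, for $k\in\nnset{\ell}$, the display above gives $\supNormof{D^k f_r}= L r^{\beta-k}\supNormof{D^k h}\le L_k$ by the corresponding inequality in \ref{lmm:general:smoothness:fToSigma}, so that by \cref{app:diff:lem} we obtain $f_r\in\Sigma^{d\to1}(k;L_k)$ for each such $k$. Third, for the Hölder bound on $D^\ell f_r$, I would use the assumption $h\in\Sigma^{d\to1}(\beta,1)$ to estimate
\begin{equation}
    \opNormOf{D^\ell f_r(x)-D^\ell f_r(\tilde x)}
    = L r^{\beta-\ell}\opNormOf{D^\ell h\!\br{\tfrac{x-z}{r}}-D^\ell h\!\br{\tfrac{\tilde x-z}{r}}}
    \le L r^{\beta-\ell}\euclOf{\tfrac{x-\tilde x}{r}}^{\beta-\ell}
    = L\euclOf{x-\tilde x}^{\beta-\ell}\eqcm
\end{equation}
which is at most $\Lbeta\euclof{x-\tilde x}^{\beta-\ell}$ since $L\le\Lbeta$. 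Combined, these yield $f_r\in\Sigma^{d\to1}(\beta;\indset{L}{0}{\ell},\Lbeta)$.

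It then remains to observe that \ref{lmm:general:smoothness:fToSigma}, \ref{lmm:general:smoothness:SigmaTof} and \ref{lmm:general:smoothness:rmax} are elementary rearrangements of one another: \ref{lmm:general:smoothness:SigmaTof} is \ref{lmm:general:smoothness:fToSigma} solved for $L$ (requiring $\abs b<L_0$ to make the first denominator meaningful), and \ref{lmm:general:smoothness:rmax} is \ref{lmm:general:smoothness:SigmaTof} solved for $r$ once $L\le\Lbeta$ is enforced separately (since the Hölder condition does not involve $r$). No step presents a real obstacle; the main thing to keep straight is that the scaling $r^{\beta-\ell}$ exactly cancels in the Hölder estimate so that $L$ and not $L r^{\beta-\ell}$ appears as the Hölder constant, and that the constant shift $b$ only enters the $L_0$-condition. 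The equivalence of \ref{lmm:general:smoothness:fToSigma}--\ref{lmm:general:smoothness:rmax} is then just algebra.
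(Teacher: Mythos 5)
Your proof is correct and follows essentially the same route as the paper: compute $D^k f_r = L r^{\beta-k}\, D^k h(\cdot)$ by the chain rule, read off the sup-norm bounds for $k = 0,\dots,\ell$, and for the top-order Hölder condition use $h\in\Sigma^{d\to1}(\beta,1)$ together with the exact cancellation of the $r^{\beta-\ell}$ factor against $r^{-(\beta-\ell)}$. You are slightly more careful than the paper's write-up in two places: you explicitly invoke \cref{app:diff:lem} to pass from the sup-norm bound on $D^k f_r$ to membership in $\Sigma^{d\to1}(k;L_k)$ (the paper leaves this transition implicit), and you correctly state the $L_0$-bound as an inequality $\supNormof{f_r}\le L r^\beta\supNormof{h}+\abs{b}$ rather than the equality claimed in the paper, which need not hold when $h$ changes sign or $b$ is negative. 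The treatment of the equivalence of \ref{lmm:general:smoothness:fToSigma}--\ref{lmm:general:smoothness:rmax} as elementary algebra matches the paper's ``follows directly'' dismissal.
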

\begin{proof}[Proof of \cref{lem:general:smoothness}]\mbox{}
	\begin{enumerate}[label=(\roman*)]
		\item
			For the bound on $L_0$, note
			\begin{equation}
				\supNormOf{f_r} = L r^\beta \supNormOf{h} + \abs{b}
				\eqfs
			\end{equation}
			Next, we consider the bounds on $\indset{L}{1}{\ell}$:
			Let $\mo v\in \mb D_d^k$.
			For $k\in\nnset{\ell}$, we have
			\begin{align}
				D^k_{\mo v} f_r(x) &=  L r^{\beta-k} D^k_{\mo v} h (x)\eqcm\\
				\supNormOf{D^k_{\mo v} f_r} &=  L r^{\beta-k} \supNormOf{D^k_{\mo v} h}\eqcm\\
				\supNormOf{D^k f_r} &=  L r^{\beta-k} \supNormOf{D^k h}\eqfs
			\end{align}
			Finally, we turn to the bound on $\Lbeta$: For $x, \tilde x\in\R^d$, as $h \in \Sigma^{d\to 1}(\beta, 1)$, we get
			\begin{align}
				\abs{D^\ell_{\mo v} f_r(x) - D^\ell_{\mo v} f_r(\tilde x)}
				&=
				L r^{\beta-\ell} \abs{D^\ell_{\mo v} h\brOf{\frac{x - x_0}{r}} - D^\ell_{\mo v} h\brOf{\frac{\tilde x - x_0}{r}}}
				\\&\leq
				L r^{\beta-\ell} \euclOf{{\frac{x - x_0}{r}} - {\frac{\tilde x - x_0}{r}}}^{\beta-\ell}
				\\&\leq
				L \euclOf{x - \tilde x}^{\beta-\ell}
				\eqfs
			\end{align}
		\item
			Follows directly from \ref{lmm:general:smoothness:fToSigma}.
		\item
			Follows directly from \ref{lmm:general:smoothness:fToSigma}.
	\end{enumerate}
\end{proof}
\begin{definition}[Kernel]\label{def:app:Kernel}
	A kernel is a continuous function $K\colon\R\to\R$.
	\begin{enumerate}[label=(\roman*)]
		\item The kernel is nonnegative if $K(x) \geq 0$ for all $x\in\R$.
		\item The kernel is symmetric if $K(x) = K(-x)$ for all $x\in\R$.
		\item A symmetric kernel is monotone if $K(x) \geq K(y)$ for all $x,y\in\Rp$ such that $x\leq y$.
		\item The support of a kernel $K$ is $\supp(K) := \setByEleInText{x\in\R}{K(x) \neq 0}$.
	\end{enumerate}
\end{definition}

\begin{example}[Standard kernel]\label{def:StandardKernel}
    Define the \textit{standard kernel} $\stdKernel\colon\R\to\R$ as
	\begin{equation}
		\stdKernel(w):=\exp\left(-\frac{1}{1-w^2}\right)\indOfOf{(-1,1)}{w}.
	\end{equation}
	for all $w\in\R$.
    Elementary computations show that the standard kernel $\stdKernel$ is a nonnegative, symmetric, monotone kernel with support $\supp(\stdKernel) = (-1, 1)$. Furthermore, $\stdKernel$ is smooth, i.e., $\stdKernel\in\smoothC(\R)$. See also \cite[equations (2.33) and (2.34)]{Tsybakov09Introduction}.
\end{example}
\begin{lemma}[Bump and Pulse]\label{lmm:bumpandpulse}
	Let $d\in\N$ and $\beta\in\Rpp$.
	\begin{enumerate}[label=(\roman*)]
		\item \label{lmm:bumpandpulse:bump}
			There is a nonnegative, symmetric, and monotone kernel $K_\beta$ with support $\supp(K_\beta) = (-1,1)$ with following property:
			Define $\hbump{d}{\beta}(x) := K_\beta(\euclof{x})$.
			Then $\hbump{d}{\beta}\in\Sigma^{d\to1}(\beta, 1)$.
		\item \label{lmm:bumpandpulse:pulse}
			There is a nonnegative, symmetric, and monotone kernel $\tilde K_\beta$ with support $\supp(\tilde K_\beta) = (-1,1)$ with following property:
			Define $\hpulse{d}{\beta}(x) := \tilde K_{\beta}\brOf{\euclOf{x}} \tilde K_{\beta}\pr\brOf{\Pi_1 x}$.
			Then $\hpulse{d}{\beta}\in\Sigma^{d\to1}(\beta, 1)$.
	\end{enumerate}
	Set $\ell := \llfloor\beta\rrfloor$.
	Let $z\in\R^d$, $r\in\Rpp$, $b\in\R$.
	Let $L_0, \dots, L_\ell, \Lbeta\in\Rpp$.
	Let $h \in \cb{\hbump{d}{\beta}, \hpulse{d}{\beta}}$.
	Assume $\abs{b} < L_0$.
	Set
	\begin{align}
		\rmax
		&:=
		\rmax\brOf{\beta, \indset{L}{0}{\ell}, \Lbeta, h, b}
		\\&:=
		\min\brOf{
			\br{\frac{L_0 -\abs{b}}{\Lbeta \supNormOf{h}}}^{\frac{1}{\beta}}
			\eqcm\
			\br{\frac{L_1}{\Lbeta \supNormOf{D^1 h}}}^{\frac{1}{\beta-1}}
			\eqcm\
			\dots
			\eqcm\
			\br{\frac{L_\ell}{\Lbeta \supNormOf{D^\ell h}}}^{\frac{1}{\beta-\ell}}
		}
		\eqfs
	\end{align}
	Define
	\begin{equation}
		f_r \colon\R^d\to \R ,\, x \mapsto \Lbeta r^\beta h\brOf{\frac{x-z}{r}} + b
		\eqfs
	\end{equation}
	Assume $r\in(0, \rmax]$.
	\begin{enumerate}[label=(\roman*),start=3]
		\item \label{lmm:bumpandpulse:smooth}
			Then $f_r\in \Sigma^{d\to1}(\beta; \indset{L}{0}{\ell}, \Lbeta)$.
	\end{enumerate}
\end{lemma}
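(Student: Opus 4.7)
I prove the three parts of the lemma in order, using throughout a scaled copy of the standard kernel $\stdKernel$ from \cref{def:StandardKernel} as the fundamental building block.

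\emph{Part (i).} Set $K_\beta := c_{d,\beta} \stdKernel$ where $c_{d,\beta} \in (0,1]$ is a positive constant to be fixed at the end. Since $\stdKernel$ is nonnegative, symmetric, monotone, and has support $(-1,1)$, and $c_{d,\beta}>0$, the same properties hold for $K_\beta$. For smoothness of $x \mapsto \stdKernel(\euclof{x})$ on $\R^d$, I exploit the evenness of $\stdKernel$ to write $\stdKernel(w) = \psi(w^2)$ with $\psi \colon \R \to \R$ smooth (explicitly, $\psi(s) := \exp(-1/(1-s))$ for $s<1$ and $\psi(s) := 0$ for $s \geq 1$; smoothness at $s=1$ follows since $\psi$ and all its derivatives decay to $0$ there). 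Hence $\hbump{d}{\beta}(x) = c_{d,\beta}\, \psi\brOf{x_1^2 + \dots + x_d^2}$ is smooth on $\R^d$ by composition, and since its support lies in the closed unit ball, every derivative is bounded on $\R^d$. In particular $D^{\ell+1}\hbump{d}{\beta}$ is bounded, giving a global Lipschitz (and hence $(\beta-\ell)$-Hölder) estimate for $D^\ell\hbump{d}{\beta}$ via \cref{app:diff:lem}. Both bounds scale linearly with $c_{d,\beta}$, so choosing $c_{d,\beta}$ small enough (depending only on $d$ and $\beta$) forces $\hbump{d}{\beta} \in \Sigma^{d\to 1}(\beta,1)$.

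\emph{Part (ii).} Set $\tilde K_\beta := \tilde c_{d,\beta} \stdKernel$ for a positive constant $\tilde c_{d,\beta}$. The kernel properties transfer as before. Then
\begin{equation}
    \hpulse{d}{\beta}(x) = \tilde c_{d,\beta}^{\,2}\, \stdKernel\brOf{\euclof{x}}\, \stdKernel'\brOf{\Pi_1 x}
\end{equation}
is a product of two smooth functions on $\R^d$ (the first by the argument of Part (i); the second because $\stdKernel'$ is smooth and $\Pi_1$ is linear), with compact support inside the closed unit ball. By the Leibniz rule, derivatives up to order $\ell+1$ of the product are finite sums of products of bounded functions, hence bounded; as before, choosing $\tilde c_{d,\beta}$ small enough (depending only on $d$ and $\beta$) yields $\hpulse{d}{\beta} \in \Sigma^{d\to 1}(\beta,1)$.

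\emph{Part (iii).} This is an immediate application of \cref{lem:general:smoothness}(iii) with $L := \Lbeta$: from (i) or (ii) we have $h \in \Sigma^{d\to 1}(\beta,1)$; the conditions $\abs{b} < L_0$ and $L \leq \Lbeta$ are respectively part of our hypothesis and trivially $\Lbeta \leq \Lbeta$; and the bound $r \in (0,\rmax]$ imposed here is, by construction of $\rmax$, precisely the minimum of the bounds required by \cref{lem:general:smoothness}(iii) with $L=\Lbeta$.

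\emph{Main obstacle.} The only genuine subtlety is smoothness of $\hbump{d}{\beta}$ at the origin, where $\euclof{\cdot}$ itself is not differentiable; the representation $\stdKernel(w) = \psi(w^2)$ with smooth $\psi$, afforded by the evenness of $\stdKernel$, sidesteps this difficulty and simultaneously streamlines the Leibniz computation for the pulse. Everything else reduces to the observation that a compactly supported $\smoothC$ function can be scaled by a small constant to achieve any prescribed Hölder bound on its $\ell$-th derivative.
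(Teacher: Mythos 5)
Your proof is correct and follows essentially the same route as the paper: scale the standard kernel $\stdKernel$ by a small constant to land in $\Sigma^{d\to1}(\beta,1)$, then deduce part (iii) from \cref{lem:general:smoothness}. The one place you diverge is the justification that $x\mapsto \stdKernel(\euclof{x})$ is $\smoothC$ on $\R^d$: you write $\stdKernel(w)=\psi(w^2)$ with a smooth one-dimensional function $\psi$, so that $\hbump{d}{\beta}(x)$ becomes $c_{d,\beta}\,\psi(x_1^2+\cdots+x_d^2)$, a composition of globally smooth maps. The paper instead argues smoothness by restricting $\stdKernel(\euclof{\cdot})$ to lines $\lambda\mapsto a+\lambda b$ and observing that each one-dimensional restriction is smooth (because $\stdKernel$ is smooth and even). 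Your even-function reparametrization is cleaner: it avoids the somewhat delicate step of passing from directional smoothness along all lines to joint smoothness in $\R^d$, a step the paper leaves implicit, and it also makes the Leibniz computation for the pulse in part (ii) immediate. Both ultimately exploit the same structural fact — evenness of $\stdKernel$ neutralizes the non-smoothness of $\euclof{\cdot}$ at the origin — so this is a refinement of the paper's argument rather than a different proof.

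One small remark: your parenthetical "global Lipschitz (and hence $(\beta-\ell)$-Hölder) estimate" is correct but relies on the fact that $D^\ell\hbump{d}{\beta}$ is also bounded (since it has compact support), which makes Lipschitz upgrade to $(\beta-\ell)$-Hölder uniformly over all distances, not just distances below $1$. You invoke this implicitly; spelling it out would close the gap cleanly, but it is not a genuine flaw.
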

\begin{proof}[Proof of \cref{lmm:bumpandpulse}]\mbox{}
	\begin{enumerate}[label=(\roman*)]
		\item
			Let $\stdKernel$ be the standard kernel from \cref{def:StandardKernel}. As $\stdKernel$ is smooth and symmetric, so is $\R\to\R,\,x\mapsto \stdKernel(\abs{x})$.
			As $\R\to\R,\,\lambda \mapsto \euclof{a + \lambda b}$ for $a, b \in\R^d$ is either smooth or equal to $\lambda \mapsto \abs{a_0 + \lambda b_0}$ for some $a_0, b_0\in\R$, we have that $\R^d\to\R,\,x\mapsto \stdKernel(\euclof{x})$ is smooth.
			Thus, for $\alpha_\beta\in\Rpp$ small enough, we can set $K_\beta(x) := \alpha_\beta \stdKernel(x)$, such that $\hbump{d}{\beta}(x) := K_\beta(\euclof{x})$ fulfills $\hbump{d}{\beta}\in\Sigma(\beta, 1)$.
		\item
			With the same construction as before, for $\tilde\alpha_\beta\in\Rpp$ small enough, we can set $\tilde K_\beta(x) := \tilde\alpha_\beta \stdKernel(\euclof{x})$, such that $\hpulse{d}{\beta}(x) := \tilde K_{\beta}\brOf{\euclOf{x}} \tilde K_{\beta}\pr\brOf{\Pi_1 x}$ fulfills $\hpulse{d}{\beta}\in\Sigma(\beta, 1)$.
		\item
			As in both cases $h\in\Sigma(\beta, 1)$,
			\cref{lem:general:smoothness} yields $f_r\in \Sigma^{d\to1}(\beta; \indset{L}{0}{\ell}, \Lbeta)$ for all $r\in(0, \rmax]$.
	\end{enumerate}
\end{proof}
\begin{lemma}\label{lem:snake:lower:ODEprop}
	Let $d\in\N_{\geq 2}$ and $\beta\in\Rpp$. Set $\ell := \llfloor\beta\rrfloor$.
	Let $z\in\R^d$, $r\in\Rpp$.
	Let $L_0, \dots, L_\ell, \Lbeta\in\Rpp$.
	Let $\hpulse{d}{\beta}$ and $\rmax = \rmax(\beta, \indset{L}{0}{\ell}, \Lbeta, \hpulse{d}{\beta}, 0)$ as in \cref{lmm:bumpandpulse}.
	Define
	\begin{equation}
		f \colon\R^d\to \R^d ,\, x \mapsto \Lbeta r^\beta \hpulse{d}{\beta}\brOf{\frac{x-z}{r}}\mo{e}_2 + b \mo{e}_1
		\eqfs
	\end{equation}
	\begin{enumerate}[label=(\roman*)]
		\item \label{lem:snake:lower:ODEprop:smooth}
			Assume $r\in(0, \rmax]$ and $\abs{b} < L_0$.
			Then $f\in\Sigma^{d\to d}(\beta, \indset{L}{0}{\beta}, \Lbeta)$.
		\item \label{lem:snake:lower:ODEprop:symm}
			Assume $b\neq 0$. Let $x\in\R^d$. Set $t_z := \Pi_1(z-x)/b$. Then, for all $t\in\R$ and $k\in\nset{2}{d}$,
			\begin{equation}
				\Pi_{k}U(f, x , t_z + t) = \Pi_{k}U(f, x , t_z - t)
				\eqfs
			\end{equation}
	\end{enumerate}
\end{lemma}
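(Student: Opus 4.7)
The two parts of the lemma are largely independent: part (i) reduces to \cref{lmm:bumpandpulse}, while part (ii) is an ODE symmetry argument driven by the evenness of $\tilde K_\beta$ (so that $\tilde K_\beta'$ is odd).

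For (i), I would split $f$ into its coordinate functions. The first coordinate is the constant $b$, which lies in $\Sigma^{d\to 1}(\beta; \indset{L}{0}{\ell}, \Lbeta)$ since $\abs{b} < L_0$ and all derivatives vanish identically. The second coordinate is exactly of the form addressed by \cref{lmm:bumpandpulse}\ref{lmm:bumpandpulse:smooth} with $h = \hpulse{d}{\beta}$ and additive constant $0$; the hypothesis $r \in (0,\rmax]$ matches the assumption there, since $\rmax$ in our statement is already taken with $b=0$. Coordinates $3,\dots,d$ are identically zero and trivially in the class. As $\Sigma^{d\to d}$ is defined coordinatewise (\cref{def:Hoelder}), these pieces assemble into the claim.

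For (ii), I would analyse the flow component by component. The first component solves $\dot u_1 = b$, giving $u_1(t) = \Pi_1 x + bt$ and in particular $u_1(t_z) = \Pi_1 z$. For $k \geq 3$ the dynamics are trivial, $\dot u_k \equiv 0$, making $u_k$ constant and the symmetry $\Pi_k U(f,x,t_z+t) = \Pi_k U(f,x,t_z-t)$ immediate. The substantive case is $k = 2$. Writing $g(t) := (U(f,x,t) - z)/r$, one has
\begin{equation}
\dot u_2(t) = \Lbeta r^\beta \tilde K_\beta\brOf{\euclOf{g(t)}}\, \tilde K_\beta'\brOf{\Pi_1 g(t)}\eqfs
\end{equation}
Since $\Pi_1 g(t) = b(t-t_z)/r$ and $\Pi_k g(t)$ is constant for $k\geq 3$, the quantity $\euclOf{g(t)}^2$ depends on $t$ only through $(t-t_z)^2$ and $u_2(t)$. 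Hence $\dot u_2(t) = F(t-t_z, u_2(t))$ for a suitable $F$, and because $\tilde K_\beta$ is even (so $\tilde K_\beta'$ is odd), one has $F(-s, v) = -F(s, v)$.

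To finish, I would set $v(s) := u_2(t_z + s)$ and $w(s) := u_2(t_z - s)$. Then $\dot v(s) = F(s, v(s))$, while $\dot w(s) = -\dot u_2(t_z - s) = -F(-s, w(s)) = F(s, w(s))$, so both curves satisfy the same ODE with the same initial value $u_2(t_z)$. Picard--Lindelöf, applicable because $f$ is Lipschitz by part (i), yields $v \equiv w$, which is the claim. The main obstacle is essentially bookkeeping: one has to confirm that the antisymmetry of $F$ in its first argument survives the coupling through $\euclOf{g(t)}$, which works precisely because the coordinate on which $\tilde K_\beta'$ is evaluated (namely $g_1$) is the one evolving affinely in $t$, while all other time-dependent coordinates enter through a square.
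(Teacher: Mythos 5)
Your proposal is correct and follows essentially the same route as the paper: part (i) is reduced coordinatewise to \cref{lmm:bumpandpulse}, and part (ii) uses the oddness of $\tilde K_\beta'$ (from evenness of $\tilde K_\beta$) together with the fact that $\Pi_1 U$ is affine in $t-t_z$ and the other time-dependent quantities enter $\euclOf{g(t)}$ only through a square, so that both time-reflected trajectories of the second component solve the same scalar ODE and uniqueness finishes. The only cosmetic difference is that the paper compares the full $\R^d$-valued flows $U(f,x,t_z\pm t)$ componentwise and writes out the two ODEs for $u_2$ and $v_2$ explicitly, whereas you package the same argument by directly exhibiting $\dot u_2(t)=F(t-t_z,u_2(t))$ with $F$ antisymmetric in its first argument.
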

\begin{proof}[Proof of \cref{lem:snake:lower:ODEprop}]
	The first point follows directly from \cref{lmm:bumpandpulse}.

	Let $u(t) := U(f, x, t_z + t)$ and $v(t) := U(f, x , t_z - t)$ for all $t\in\R$.
	Denote $u_k := \Pi_k u$, $v_k := \Pi_k v$ for $k\in\nnset d$.
	Set $w := x - z$. Then, for all $k\in\nset{3}{d}$ and $t\in\R$,
	\begin{align}
		u_1(t) &= x_1 + b(t_z + t) = z_1 + bt\eqcm
		& u_k(t) &= x_{k}\eqcm \\
        v_1(t) &= x_1 + b(t_z - t) = z_1 - bt\eqcm
        & v_k(t) &= x_{k}\eqcm
	\end{align}
	and
    \begin{align}
	   \dot{u}_2(t)
       &=
       \Lbeta r^\beta
       \tilde K_{\beta}\brOf{\euclOf{\frac{u(t)-z}{r}}} \tilde K_{\beta}\pr\brOf{\frac{u_1(t)-z_1}{r}}
       \\&=
       \Lbeta r^\beta \tilde K_{\beta}\brOf{\frac{\sqrt{a + (u_2(t)-z_2)^2}}{r}}
       \tilde K_{\beta}\pr\brOf{\frac{bt}{r}}
       \eqcm\\
       \dot{v}_2(t)
       &=
       -\Lbeta r^\beta
       \tilde K_{\beta}\brOf{\euclOf{\frac{v(t)-z}{r}}} \tilde K_{\beta}\pr\brOf{\frac{v_1(t)-z_1}{r}}
       \\&=
       -\Lbeta r^\beta \tilde K_{\beta}\brOf{\frac{\sqrt{a + (v_2(t)-z_2)^2}}{r}}
       \tilde K_{\beta}\pr\brOf{-\frac{bt}{r}}
       \eqcm
    \end{align}
	where
	\begin{equation}
		a := b^2 t^2 + \sum_{k=3}^d (x_{k}-z_k)^2
		\eqfs
	\end{equation}
	As $\tilde K_{\beta}$ is an even function, $\tilde K_{\beta}\pr$ is an odd function, i.e.,
	\begin{equation}
		\tilde K_{\beta}\pr(-t) = -\tilde K_{\beta}\pr(t)
		\eqfs
	\end{equation}
	Thus, $v_2$ and $u_2$ solve the same ODE. As $u(0) = v(0)$, we obtain $u_2 = v_2$.
\end{proof}
The following lemma states the well-known \textit{Grönwall's inequality} in a simple form.
\begin{lemma}[Grönwall's inequality]\label{lem:snake:lower:Gronwall}
	Let $I=[a,b]\subset\R$ be an arbitrary interval and $v:I\to\R$ a continuous function. If there exist constants $A\in\R$ and $B\in\Rpp$, such that
	\begin{equation}
		v(t) \leq A + B\int_a^t v(s)\mathrm{d}s
	\end{equation}
	for all $t\in I$, then
	\begin{equation}
		v(t) \leq A\exp(B(t-a))
	\end{equation}
	for all $t\in I$.
\end{lemma}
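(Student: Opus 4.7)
The plan is a textbook integrating-factor argument, applied to the accumulated integral of $v$ rather than to $v$ itself. Define
\begin{equation}
    w(t) := \int_a^t v(s)\,\dl s \eqcm \qquad t\in I\eqfs
\end{equation}
Since $v$ is continuous on $I$, $w$ is continuously differentiable with $w(a)=0$ and $w\pr(t)=v(t)$. Substituting this into the hypothesis yields the differential inequality
\begin{equation}
    w\pr(t) \leq A + B\,w(t)\eqcm \qquad t\in I\eqfs
\end{equation}

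Next, I would multiply through by the integrating factor $\euler^{-B(t-a)}>0$, giving
\begin{equation}
    \br{\euler^{-B(t-a)} w(t)}\pr = \euler^{-B(t-a)}\br{w\pr(t) - B w(t)} \leq A\,\euler^{-B(t-a)}\eqfs
\end{equation}
Integrating from $a$ to $t$ and using $w(a)=0$ gives
\begin{equation}
    \euler^{-B(t-a)} w(t) \leq A\int_a^t \euler^{-B(s-a)}\,\dl s = \frac{A}{B}\br{1 - \euler^{-B(t-a)}}\eqcm
\end{equation}
so that $B\,w(t) \leq A\br{\euler^{B(t-a)} - 1}$.

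Finally, I would feed this back into the original hypothesis:
\begin{equation}
    v(t) \leq A + B\,w(t) \leq A + A\br{\euler^{B(t-a)}-1} = A\,\euler^{B(t-a)}\eqcm
\end{equation}
which is the claim. Note the argument works for any real $A$ (with $B>0$), since the integrating-factor step does not require a sign on $A$; if $A<0$ the bound is simply sharper than $A$ itself. There is no real obstacle here — the only point that needs a line of justification is that $w$ is differentiable (which follows from continuity of $v$ and the fundamental theorem of calculus), everything else being routine manipulation. An alternative route, via a pointwise comparison with the ODE $y\pr = A + By$, $y(a)=0$, would give the same conclusion but requires invoking a comparison principle, so the direct integrating-factor computation is cleaner.
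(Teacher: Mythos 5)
Your proof is correct. The paper itself states \cref{lem:snake:lower:Gronwall} as a well-known result without giving a proof, so there is no paper argument to compare against; your integrating-factor derivation is the standard textbook proof and every step is sound. In particular, you correctly use the fundamental theorem of calculus to get $w\in C^1$ with $w'=v$, the integrating-factor identity $\bigl(\euler^{-B(t-a)}w(t)\bigr)'\leq A\,\euler^{-B(t-a)}$ is valid, integrating a one-sided inequality over $[a,t]$ preserves it, and your observation that no sign condition on $A$ is needed (only $B>0$ enters via the positivity of $\euler^{-B(t-a)}$ and the division by $B$) matches the hypothesis $A\in\R$, $B\in\Rpp$ in the lemma statement.
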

We now apply \cref{lem:snake:lower:Gronwall} to analyze the behavior of solutions to the initial value problem discussed in the proof of \cref{lem:snake:lower:ODEprop}.
\begin{lemma}\label{lem:snake:lower:Gronwall:ODE}
	Let $d\in\N_{\geq 2}$ and $\beta\in\Rpp$.
	Let $z\in\R^d$, $r\in\Rpp$.
	Let $\Lbeta\in\Rpp$.
	Let $\hpulse{d}{\beta}$ as in \cref{lmm:bumpandpulse}.
 	Define
	\begin{equation}
		f \colon\R^d\to \R^d ,\, x \mapsto \Lbeta r^\beta \hpulse{d}{\beta}\brOf{\frac{x-z}{r}}\mo{e}_2 + b \mo{e}_1
		\eqfs
	\end{equation}
	For two initial conditions $x_1,x_2\in\R^d$ denote $u_j := U(f, x_j, \cdot)$, $j\in\nnset{2}$.
    Then, for $t\in\R$, we have
    \begin{enumerate}[label=(\roman*)]
    	\item
	    	\begin{equation}
	    		\euclOf{u_{1}(t)-u_{2}(t)} \leq
	    		c \euclOf{x_1-x_2} + c_{d, \beta} \Lbeta b^{-1} r^{\beta+1}\eqcm
	    	\end{equation}
    	\item
	    	\begin{equation}
	    		\euclOf{u_{1}(t)-u_{2}(t)} \leq
	    		c \euclOf{x_1-x_2} \exp\brOf{c_{d, \beta} \Lbeta b^{-1} r^{\beta}}\eqfs
	    	\end{equation}
    \end{enumerate}
\end{lemma}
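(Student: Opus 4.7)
The plan is to exploit the very particular structure of $f$: it acts as the constant $b\mo{e}_1$ in all coordinates except the second, and the second component feels the pulse only on a set of bounded time measure. I start with a coordinate-wise decomposition. Since $\Pi_1 f \equiv b$ and $\Pi_k f \equiv 0$ for $k\in\nset{3}{d}$, the solutions satisfy $\Pi_1 u_j(t) = \Pi_1 x_j + bt$ and $\Pi_k u_j(t) = \Pi_k x_j$ for $k\geq 3$, so $\Pi_k(u_1-u_2)(t) = \Pi_k(x_1-x_2)$ for every $k\neq 2$. Only the second coordinate evolves non-trivially, via $\frac{d}{dt}\Pi_2 u_j(t) = g(u_j(t))$ with $g(y) := \Lbeta r^\beta \hpulse{d}{\beta}\brOf{(y-z)/r}$.

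Next comes temporal localization. Since $\supp(\hpulse{d}{\beta})\subset\ball^d(0,1)$, the function $g$ is supported in $\ball^d(z,r)$. Because $\Pi_1 u_j(s) = \Pi_1 x_j + bs$, the set $S_j := \{s\in[0,t] : u_j(s)\in\ball^d(z,r)\}$ is contained in the single time interval $\{s : \abs{\Pi_1 x_j + bs - z_1} < r\}$ of length $2r/\abs{b}$. Thus $S := S_1\cup S_2\subset[0,t]$ is a union of at most two intervals with total length $\abs{S}\leq 4r/\abs{b}$, and the vector $u_1-u_2$ is constant on $[0,t]\setminus S$.

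Part (i) follows directly by integrating the crude bound $\abs{g(u_1(s)) - g(u_2(s))} \leq 2\supNormOf{g} = 2\Lbeta r^\beta \supNormOf{\hpulse{d}{\beta}}$ over $S$: this gives $\abs{\Pi_2(u_1-u_2)(t) - \Pi_2(x_1-x_2)}\leq c_{d,\beta}\Lbeta b^{-1} r^{\beta+1}$, and combining with the identities for the other coordinates via $\euclOf{v} \leq \sum_k \abs{\Pi_k v}$ (with an extra factor $\sqrt{d}$ absorbed into the constant) yields the asserted bound.

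For part (ii), I use the global Lipschitz estimate $\abs{g(x)-g(y)} \leq \supNormOf{Dg}\euclOf{x-y} = c_{d,\beta}\Lbeta r^{\beta-1}\euclOf{x-y}$ together with the fact that $g(u_1(s))-g(u_2(s))$ vanishes off $S$. Differentiating $v^2(t) := \euclOf{u_1(t)-u_2(t)}^2$ and applying Cauchy--Schwarz gives
\begin{equation*}
\tfrac{d}{dt} v^2(t) \leq c_{d,\beta}\Lbeta r^{\beta-1}\indOf{S}(t)\, v^2(t)\eqfs
\end{equation*}
The main obstacle is that \cref{lem:snake:lower:Gronwall} is stated with a constant coefficient, whereas I need a variable coefficient whose integral is small ($\leq 4r/\abs{b}$) even though $t$ itself may be arbitrarily large; a naive global Lipschitz application of Grönwall gives only the useless factor $\exp(c\Lbeta r^{\beta-1} t)$. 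I resolve this by partitioning $[0,t]$ into at most five sub-intervals using the endpoints of the connected components of $S$: on each sub-interval outside $S$ the function $v^2$ is constant, while on each sub-interval of length $\ell$ inside $S$ the constant-coefficient Grönwall gives a multiplicative factor $\exp(c_{d,\beta}\Lbeta r^{\beta-1}\ell)$. Telescoping across the partition yields the overall factor $\exp(c_{d,\beta}\Lbeta r^{\beta-1}\abs{S})\leq \exp(c_{d,\beta}'\Lbeta b^{-1} r^\beta)$, independent of $t$, which gives the bound in (ii) after taking square roots.
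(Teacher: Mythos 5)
Your proof is correct and follows the same overall strategy as the paper: coordinate decomposition (only $\Pi_2(u_1-u_2)$ evolves), temporal localization to the short window where the pulse acts, then a crude sup-norm bound for (i) and a Lipschitz-plus-Grönwall bound for (ii). The one place you diverge is in \emph{how} the temporal localization is argued. The paper invokes the time-symmetry property of \cref{lem:snake:lower:ODEprop} (ii) to say that outside the ball the perturbed trajectories coincide with the unperturbed linear ones, and then implicitly restricts attention to $t\in[0, 2r/b]$. You instead observe directly that $\dot u_1 - \dot u_2 = 0$ on $[0,t]\setminus S$ (because both points lie outside $\ball^d(z,r)$ there, so the $\mo e_2$-component of $f$ vanishes at both), hence $u_1-u_2$ is literally constant off $S$, with $\abs{S}\leq 4r/\abs{b}$. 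This is a cleaner route: it bounds exactly the quantity needed ($u_1-u_2$) without appealing to the symmetry lemma, and it sidesteps the paper's somewhat imprecise phrase that one can "only consider $t\in[0,2r/b]$" for trajectories whose transit windows need not align. Your explicit piecewise application of the constant-coefficient Grönwall inequality across at most five subintervals, telescoping to the integrated factor $\exp(c\Lbeta r^{\beta-1}\abs{S})$, is also a correct and slightly more self-contained way to handle the variable-coefficient case than the paper's condensed presentation.
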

\begin{proof}[Proof of \cref{lem:snake:lower:Gronwall:ODE}]
    First, observe that we only need to discuss in situations, where the initial values are given in such a way that the resulting trajectories are reaching a point, where $\hpulse{d}{\beta}$ is not zero. Otherwise, the the trajectories have the form
    \begin{equation}\label{eq:gronwall:ode:default:traj}
        u_{j}(t) = x_j + (bt,0,\dots,0)\tr
    \end{equation}
    for all $t\in\R$ and $j\in\nnset{2}$, such that
     \begin{equation}
        \euclOf{u_{1}(t)-u_{2}(t)} = \euclOf{x_1-x_2}
        \eqfs
    \end{equation}
    For any initial conditions, $x_1, x_2\in\R^d$, we have
    \begin{equation}
        u_{j}(t) = x_j + \left(bt,\Lbeta r^\beta\int_0^t \hpulse{d}{\beta}\brOf{\frac{u_j(s)-z}{r}}\mathrm{d}s,0,\dots,0\right)\tr,
    \end{equation}
    for all $t\in\R$. In particular,
    \begin{equation}
        \Pi_l(u_{1}(t)-u_{2}(t)) = \Pi_l(x_1-x_2)
    \end{equation}
    for $l\in\nnset{d}\setminus\{2\}$ and
    \begin{align}
         \abs{\Pi_2(u_{1}(t)-u_{2}(t))}
         &\leq
         \abs{\Pi_2(x_1-x_2)} +
         \Lbeta r^\beta \abs{\int_0^t\hpulse{d}{\beta}\brOf{\frac{u_1(s)-z}{r}} - \hpulse{d}{\beta}\brOf{\frac{u_2(s)-z}{r}}\dl s}
         \eqfs
    \end{align}
    On one hand, we can use the bound $\abs{\hpulse{d}{\beta}(v)-\hpulse{d}{\beta}(w)} \leq \supNormof{D\hpulse{d}{\beta}} \euclof{v-w}$ to obtain
    \begin{equation}
    	\abs{\int_0^t\hpulse{d}{\beta}\brOf{\frac{u_1(s)-z}{r}} - \hpulse{d}{\beta}\brOf{\frac{u_2(s)-z}{r}}\dl s}
    	\leq
    	c_{d, \beta} \frac1r \int_0^t \euclOf{u_1(s)-u_2(s)} \dl s
    	\eqfs
    \end{equation}
    Thus, we have
    \begin{align}
        \euclOf{u_{1}(t)-u_{2}(t)} & \leq
        c \euclOf{x_1-x_2} + c_{d, \beta} \Lbeta r^{\beta-1} \int_0^t \euclOf{u_1(s)-u_2(s)} \dl s
        \eqfs
    \end{align}
    By Grönwall's inequality, \cref{lem:snake:lower:Gronwall}, we obtain
    \begin{equation}
    	\euclOf{u_{1}(t)-u_{2}(t)} \leq
    	c\euclOf{x_1-x_2} \exp\brOf{c_{d, \beta} \Lbeta r^{\beta-1} t}
    	\eqfs
    \end{equation}
    On the other hand, we can use the bound $\abs{\hpulse{d}{\beta}(v)-\hpulse{d}{\beta}(w)} \leq 2 \supNormof{\hpulse{d}{\beta}}$ to obtain
    \begin{equation}
    	\abs{\int_0^t\hpulse{d}{\beta}\brOf{\frac{u_1(s)-z}{r}} - \hpulse{d}{\beta}\brOf{\frac{u_2(s)-z}{r}}\dl s}
    	\leq
    	c_{d, \beta} t
    	\eqfs
    \end{equation}
    Trajectories a disturbed only in balls of radius $r$. Outside these balls, they are identical to those trajectories described in \eqref{eq:gronwall:ode:default:traj} due to the time symmetry inside the balls, \cref{lem:snake:lower:ODEprop} \ref{lem:snake:lower:ODEprop:symm}.
    Hence, we only need to consider time intervals during which the trajectories are inside balls of radius $r$: For $t\in[0,2r/b]$, we obtain
    \begin{equation}
        \euclOf{u_{1}(t)-u_{2}(t)} \leq
        \min\brOf{
        	c \euclOf{x_1-x_2} + c_{d, \beta} \Lbeta b^{-1} r^{\beta+1}
        	\,,\
        	c \euclOf{x_1-x_2} \exp\brOf{c_{d, \beta} \Lbeta b^{-1} r^{\beta}}
       	}
        \eqfs
    \end{equation}
\end{proof}
In the following lemma we derive a covering property of trajectories driven by the model function $f$ of \cref{lem:snake:lower:Gronwall:ODE}. A sufficiently fine grid of fixed initial values results in the domain of interest $[0,1]^d$ being covered by the set of all tubes around these trajectories.
\begin{lemma}\label{lem:snake:lower:Covering}
	Let $d\in\N_{\geq 2}$, $\beta\in\Rpp$, $\Lbeta\in\Rpp$.
	Let $z\in\R^d$, $b\in\Rpp$.
	Let $T\in\Rpp$. Let $\delta\in\Rpp$.
	Let $\hpulse{d}{\beta}$ as in \cref{lmm:bumpandpulse}.
	Define two model functions $f_\cdot:\R^d\to\R^d$ by
    \begin{equation}
        f_0(x):= b \mo{e}_1
    \end{equation}
    and, for $r\in\Rpp$,
    \begin{equation}
		f_r(x):= \Lbeta r^\beta \hpulse{d}{\beta}\brOf{\frac{x-z}{r}}\mo{e}_2 + b \mo{e}_1
		\eqfs
	\end{equation}
	Then there is $C\in\Rpp$ large enough, depending only on $\beta$ and $d$, with the following property:
    Assume $bT \geq C$.
    Let
    \begin{equation}
    	m := \br{\left\lceil C \delta^{-1} \right\rceil + 1}^{d-1}
    	\eqfs
    \end{equation}
    Denote by $\mathcal{T}(\cdot,\delta)$ the tube as defined in \cref{def:tube}.
    Then there is a grid of initial conditions $x_1, \dots, x_m\in\R^d$, such that for all $r$ with
    \begin{equation}\label{eq:snake:lower:Covering:cond}
    	0 \leq r \leq C^{-1} \min\brOf{1,\,\br{\frac b\Lbeta}^{\frac1\beta}}
    \end{equation}
 	the tubes of radius $\delta$ around the $m$ trajectories started at $\indset{x}1m$ cover the domain of interest $\hypercube$, i.e.,
    \begin{equation}
    	\bigcup_{j=1}^{m} \mathcal{T}(u_j,\delta)\supset [0,1]^d
    	\eqcm
    \end{equation}
    where $u_j = U(f_r, x_j, \cdot) \colon [0,T]\to\R^d$.
\end{lemma}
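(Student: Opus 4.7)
The plan is to place the initial conditions on a regular grid in the hyperplane $\cb{x \in \R^d : \Pi_1 x = 0}$ whose $\Pi_{-1}$-projection slightly inflates $[0,1]^{d-1}$, and then, for each $p \in \hypercube$, identify a suitable trajectory by minimising $t \mapsto \euclof{u_j(t) - p}^2$ and invoking the first-order condition. Concretely, set $\Pi_1 x_j = 0$ and let $\cb{\Pi_{-1} x_j : j \in \nnset m}$ be a regular grid in $[-\eta, 1+\eta]^{d-1}$ with spacing $s := \delta/(C_1 \sqrt{d-1})$, where $\eta, C_1 \in \Rpp$ are constants depending only on $\beta, d$ to be fixed below. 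This yields $m$ of order $(1/\delta)^{d-1}$, matching the statement for $C$ large enough depending only on $\beta, d$.

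Under both $f_0$ and $f_r$, the velocity satisfies $\Pi_1 \dot u_j \equiv b$ and $\Pi_k \dot u_j \equiv 0$ for $k \geq 3$, so only the $\Pi_2$-component of $u_j$ is perturbed by the pulse. A direct integration using $\supNormof{\hpulse{d}{\beta}} \leq c_{d,\beta}$ and the time-in-ball bound $\leq 2r/b$ gives $\abs{\Pi_2 u_j(t) - x_{j,2}} \leq c_{d,\beta} \Lbeta r^{\beta+1}/b$. The hypothesis $r \leq C^{-1}\min(1, (b/\Lbeta)^{1/\beta})$ implies both $\Lbeta r^\beta/b \leq C^{-\beta}$ and $r \leq C^{-1}$, so this displacement is at most $c_{d,\beta} C^{-\beta-1}$ and, by \cref{lem:snake:lower:Gronwall:ODE}, the flow $x \mapsto U(f_r, x, t)$ is Lipschitz in $x$ with constant $\alpha \leq c\exp(c_{d,\beta} C^{-\beta})$, which is bounded by an absolute constant for $C$ large. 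For $r = 0$ in particular, $\tube(u_j, \delta)$ is a cylinder with cross-section $\ball^{d-1}(\Pi_{-1} x_j, \delta)$ over $\Pi_1 \in [0, bT] \supset [0,1]$, and these cylinders cover $\hypercube$ as soon as $\cb{\Pi_{-1} x_j}$ is a $\delta$-net of $[0,1]^{d-1}$, which the grid choice ensures.

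For general admissible $r$, fix $p \in \hypercube$ and set $t_p := \Pi_1 p / b$, so that $\Pi_1 u_j(t_p) = \Pi_1 p$ for every $j$. Consider the slice map $\Psi_r : y \mapsto \Pi_{-1} U(f_r, (0, y)\tr, t_p)$; it coincides with the identity in components $2, \dots, d-1$ and is perturbed in its first component by at most $c_{d,\beta} C^{-\beta-1}$. Since $\Pi_2 u_j$ obeys a one-dimensional ODE with Lipschitz right-hand side once the remaining components are frozen, a standard comparison principle makes $y_1 \mapsto (\Psi_r(y))_1$ strictly monotone; the one-dimensional intermediate value theorem combined with the displacement bound then shows that for $\eta := c_{d,\beta} C^{-\beta-1}$ the image $\Psi_r([-\eta, 1+\eta]^{d-1})$ contains $[0,1]^{d-1}$. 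Taking $C_1 := 2\alpha$, the Lipschitz property of $\Psi_r$ forces $\cb{\Psi_r(\Pi_{-1} x_j)}$ to be a $\delta/2$-net of $[0, 1]^{d-1}$, so some $j$ satisfies $\euclof{\Pi_{-1} u_j(t_p) - \Pi_{-1} p} \leq \delta/2$, and hence $\euclof{u_j(t_p) - p} \leq \delta/2$.

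Letting $t^* \in \argmin_{t \in [0, T]} \euclof{u_j(t) - p}^2$, the bound just derived together with $\Pi_1 p \in [0,1] \subset (0, bT)$ and the linearity $\Pi_1 u_j(t) = bt$ forces $t^*$ to lie in the open interval $(0, T)$, so the first-order condition yields $(p - u_j(t^*)) \perp \dot u_j(t^*)$; as $\dot u_j(t^*) \neq 0$, the vector $v := p - u_j(t^*)$ satisfies $v \perp \dot u_j(t^*)$ and $\euclof{v} \leq \delta/2 \leq \delta$, witnessing $p \in \tube(u_j, \delta)$. I expect the main technical hurdle to be making the surjectivity $\Psi_r([-\eta, 1+\eta]^{d-1}) \supset [0,1]^{d-1}$ uniform over the full range of admissible $r$ while keeping $\eta, C_1$ dependent only on $\beta, d$; the saving grace is that because the pulse perturbs only $\Pi_2$, this reduces to a one-dimensional intermediate-value argument rather than a topological degree computation in $\R^{d-1}$.
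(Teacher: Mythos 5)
Your overall structure closely tracks the paper's: a grid of initial conditions on a hyperplane, the Grönwall-type bound \cref{lem:snake:lower:Gronwall:ODE} to control trajectory spread, a displacement bound exploiting that the pulse perturbs only the $\Pi_2$-coordinate, and a minimisation-plus-first-order-condition argument to produce the orthogonal vector required by the tube definition. Your slice map $\Psi_r$ with the intermediate-value/monotonicity argument is a genuinely different route to the covering than the paper's convex-hull argument, and it cleanly exploits the one-dimensional structure of the perturbation.

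There is, however, a real gap at the end. You place $\Pi_1 x_j = 0$, so the initial conditions lie on the face $\{\Pi_1 = 0\}$ of $\hypercube$, and you then assert $t^*\in(0,T)$ from ``$\Pi_1 p\in[0,1]\subset(0,bT)$''. That inclusion is false: $0\in[0,1]$ while $0\notin(0,bT)$. For $p$ with $\Pi_1 p$ at or near $0$, the minimiser $t^*$ of $t\mapsto\euclOf{u_j(t)-p}^2$ over $[0,T]$ can land at the endpoint $t^*=0$: the one-sided derivative there is $2(x_j-p)\tr f_r(x_j) = -2b\,\Pi_1 p + 2(\Pi_2 x_j-\Pi_2 p)\,\Pi_2 f_r(x_j)$, and the pulse term can make this nonnegative once $\Pi_1 p$ is smaller than a constant times $\Lbeta r^\beta/b$. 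At a boundary minimiser Fermat's condition no longer gives $(p-u_j(t^*))\perp\dot u_j(t^*)$; moreover $\dot u_j(0)=f_r(x_j)$ may have a nonzero $\Pi_2$-component, so the tube cross-section through $x_j$ is a tilted disk and need not cover the $\delta$-ball inside the slice $\{\Pi_1=0\}$. The paper sidesteps this by launching every trajectory from the hyperplane $\{\Pi_1 = -2\}$ and taking $bT\geq C$ with $C>3$, ensuring each trajectory enters and leaves the slab $[0,1]\times\R^{d-1}$ at strictly interior times, so the argmin is interior. Placing your grid at $\Pi_1 x_j = -c$ for some fixed $c>0$ and enlarging the constant in the requirement $bT\geq C$ to absorb the added transit time repairs the argument while keeping the rest of your construction intact.
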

\begin{proof}[Proof of \cref{lem:snake:lower:Covering}]
	Tubes of trajectories from $f_0$ starting from initials conditions that form a uniform grid in $\cb{0} \times [0,1]^{d-1}$ can cover the domain of interest, if the side length of the grid and the tube radius are in a sufficient relation. To make this work for $f_r$, $r>0$, we have to make sure that the disturbance introduced by the pulse in $f_r$ does not create holes in the covering. To this end, we extend the grid by at least the diameter $2r$ of the disturbance area: We assume $r\leq 1$ and use an extended grid of initial conditions in $\cb{-2} \times [-2,3]^{d-1}$. Then we apply \cref{lem:snake:lower:Gronwall:ODE} to show that two neighboring trajectories do not depart too far from each other.
	\begin{enumerate}[label=\arabic*.]
		\item
		\textbf{Definition of grid.} We define a uniform grid of points $x_1, \dots, x_m \in \cb{-2} \times [-2,3]^{d-1}$.
		Set the side length $s := C^{-1}\delta(d-1)^{-1/2}$.
		Set the number of points in the grid to $m := (5\lceil s \rceil+1)^{d-1}$.
		Choose $x_1, \dots, x_m\in\R^d$ such that $\Pi_{1}x_j = -2$ for all $j\in\nnset m$ and
		\begin{equation}
			\{\Pi_{-1}x_1, \dots, \Pi_{-1}x_m\}
			=
			\setByEle{sk}{k\in\nnzset{\lceil s \rceil}^{d-1}}
			\eqfs
		\end{equation}
		\item
		\textbf{Initial values cover one side.}
		We assume $C\geq \frac12$. Then, for each $x\in\cb{-2} \times [0,1]^{d-1}$, there is $j \in\nnset{m}$, such that
		\begin{equation}
			\euclOf{x_j - x} \leq \frac12 (d-1)^{\frac12} s = \frac12 C^{-1} \delta \leq \delta
			\eqfs
		\end{equation}
		\item
		\textbf{Tubes of $f_0$-trajectories cover the domain of interest.}
		The solution of the ODE $\dot u = f_0(u)$ starting from $x_j$ is
		\begin{align}
			U(f_0, x_j, t) &= x_j + bt\mo e_1\\
			\dot U(f_0, x_j, t) &= b\mo e_1
		\end{align}
		for $t\in\R$. Thus, we obtain
		\begin{equation}
			\bigcup_{j=1}^{m} \mathcal{T}(u_j,\delta)\supset [0,1]^d
		\end{equation}
		for $u_j = U(f_0, x_j, \cdot) \colon [0,T]\to\R^d$ given $C$ is chosen such that $r \leq C^{-1} \leq 1$ and $bT \geq C \geq 3$.
		\item
		\textbf{Tubes of $f_r$-trajectories, $r>0$, cover the domain of interest.}
	    We have to check whether trajectories that are perturbed by the pulse in $f_r$ also exhibit such a covering property.
        Let  $u_j = U(f_r, x_j, \cdot) \colon [0,T]\to\R^d$. Fix $x \in \hypercube$. Denote index of the trajectory with the closest state and the respective time point as
        \begin{equation}
            (j_0, t_0) \in \argmin_{j\in\nnset{m}, t\in[0,T]} \euclOf{x - u_j(t)} \eqfs
        \end{equation}
        Define the squared distance function of $u_{j_0}$ to $x$ as
        \begin{equation}
            \eta \colon [0, T] \to \Rp, t\mapsto \euclOf{x - u_{j_0}(t)}^2
            \eqfs
        \end{equation}
        Observe that
        \begin{equation}
            \dot \eta(t) = 2 \br{x - u_{j_0}(t)}\tr \dot u_{j_0}(t)
        \end{equation}
        and $\dot \eta(t_0) = 0$. Define $v := x - u_{j_0}(t_0)$. We have proven that $x\in\tube\brOf{u_{j_0}, \euclof{v}}$ and it remains to show that $\euclof{v} \leq \delta$.

        Let $j_1, j_2 \in \nnset m$. \cref{lem:snake:lower:Gronwall:ODE} yields
        \begin{equation}
            \euclOf{u_{j_1}(t)-u_{j_2}(t)}
            \leq
            c \euclOf{x_{j_1}-x_{j_2}} \exp\brOf{c_{d, \beta} \Lbeta b^{-1} r^{\beta}}
            \eqfs
        \end{equation}
        Assuming $C\geq c_{d,\beta}$, \eqref{eq:snake:lower:Covering:cond} implies
        \begin{equation}
            \exp\brOf{c_{d, \beta} \Lbeta b^{-1} r^{\beta}} \leq \exp(1)
            \eqfs
        \end{equation}
        Thus, if $x_{j_1}, x_{j_2}$ are neighboring, i.e., $\euclof{x_{j_1}-x_{j_2}} = s$, then
        \begin{equation}\label{eq:cubetrajbound}
            \euclOf{u_{j_1}(t) - u_{j_2}(t)} \leq c s = c C^{-1}\delta(d-1)^{-1/2}
            \eqfs
        \end{equation}
        Let $J \subset \nnset m$, be the indices of a grid cell of initial conditions, i.e., $(\Pi_{-1} x_j)_{j\in J}$ forms a hypercube of side length $s$ in $\R^{d-1}$. By \eqref{eq:cubetrajbound}, at time $t$, the convex hull $H \subset \R^{d-1}$ of $(\Pi_{-1} u_j(t))_{j\in J}$ has a diameter of at most $c_d C^{-1} \delta$. Hence, if $C$ is large enough, every point $x\in H$ is at most a distance $\delta$ away from the closest point of $(\Pi_{-1} u_j(t))_{j\in J}$. Furthermore, at any time $t\in[0, T]$ the convex hull of all points $(\Pi_{-1} u_j(t))_{j\in \nnset m}$ covers the hypercube $[0,1]^{d-1}$:
        First note that $(\Pi_{-1}x_j)_{j\in\nnset m}$ forms a grid in $[-2, 3]^{d-1}$. Next, a trajectory spends at most $\frac{2r}{b}$ time inside the support of $f_r$. Thus, by \eqref{eq:snake:lower:Covering:cond},
        \begin{equation}
            \euclOf{\Pi_{-1} (u_j(t) - x_j)} \leq \frac{2r}{b} \sup_{x\in \R^d} \abs{\Pi_2 f_r(x)} \leq c_{d,\beta} L_\beta b^{-1} r^{\beta+1} \leq c_{d,\beta}  C^{-1} \leq 2
        \end{equation}
        for $C$ large enough.

        Thus, we conclude that, for any $x\in[0,1]^d$, the closest point over all trajectories has a distance $\euclof{v} \leq \delta$.
	\end{enumerate}
\end{proof}

	\section{Lower Bounds in General}\label{sec:app:lower}
In this section, we revisit some well-known techniques for proving lower error bounds as they can be found in \cite{Tsybakov09Introduction}. Furthermore, we derive a master theorem for lower bounds in regression settings. We illustrate the master theorem by applying it to the well-known nonparametric regression setting. In \cref{sec:app:stubble} and \cref{sec:app:snake}, the theorem is then applied to prove the statements about lower bounds in the stubble and snake models made in \cref{sec:stubble} and \cref{sec:snake}, respectively.
\subsection{General Reduction Scheme}\label{sec:lower:reduction}
Let $(\Theta,d)$ be a pseudometric space, i.e., a set $\Theta$ equipped with a map $d\colon\Theta\times\Theta\to\Rgeq{0}$ satisfying
\begin{enumerate}[label=(\roman*)]
    \item $d(\theta, \theta) = 0$,
    \item $d(\theta, \tilde\theta) = d(\tilde\theta, \theta)$, and \hfill (Symmetry)
    \item $d(\theta, \tilde\theta) \leq d(\theta, \theta\pr) +  d(\theta\pr, \tilde\theta)$ \hfill (Triangle Inequality)
\end{enumerate}
for all $\theta, \tilde\theta, \theta\pr \in\Theta$. In the following, let us consider a \textit{statistical model} $\{\Pr_\theta|\,\theta\in\Theta\}$, i.e., a family of probability measures index by the \textit{parameter space} $\Theta$.
The next two lemmas show how a lower bound on the error of estimating $\theta$ can be achieved by reducing $\Theta$ to only finitely many elements.
Recall that $\kullback(\cdot, \cdot)$ denotes the Kullback--Leibler divergence of two probability measures.
\begin{lemma}[Two Hypotheses]\label{thm:lower:reduction:two}
	Let $\theta_0,\theta_1\in\Theta$.
	Assume
	\begin{equation}
		\kullback(\Pr_{\theta_1}, \Pr_{\theta_0}) \leq \frac12
		\eqfs
	\end{equation}
	Then
	\begin{equation}
		\inf_{\hat\theta} \sup_{\ptrue\in\Theta} \Pr_{\ptrue}\brOf{d(\hat\theta, \ptrue) \geq \frac12 d(\theta_0, \theta_1)} \geq \frac14
		\eqcm
	\end{equation}
    where the infimum ranges over all estimators $\hat{\theta}$ of $\ptrue$ based on an observation $Y\sim \Pr_{\ptrue}$.
\end{lemma}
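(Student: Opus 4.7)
The plan is to apply the standard two-point reduction scheme (essentially as in \cite{Tsybakov09Introduction}): reduce the estimation problem to a binary hypothesis test between $\theta_0$ and $\theta_1$, bound the minimal sum of type-I and type-II errors of any such test from below by $1 - \|\Pr_{\theta_0}-\Pr_{\theta_1}\|_{\mathrm{TV}}$, and then control the total variation by the Kullback--Leibler divergence via Pinsker's inequality.

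Concretely, I would set $s := \tfrac{1}{2} d(\theta_0,\theta_1)$ and first observe that the events $\{d(\hat\theta,\theta_0) < s\}$ and $\{d(\hat\theta,\theta_1) < s\}$ must be disjoint, because otherwise the triangle inequality for the pseudometric would yield $d(\theta_0,\theta_1) \leq d(\hat\theta,\theta_0) + d(\hat\theta,\theta_1) < 2s = d(\theta_0,\theta_1)$, a contradiction. This disjointness lets me define the test $\psi := \mathds{1}\{d(\hat\theta,\theta_0) \geq s\}$, which is a measurable function of the observation because $\hat\theta$ is, and which satisfies $\{\psi = 1-i\} \subseteq \{d(\hat\theta,\theta_i) \geq s\}$ for $i\in\{0,1\}$. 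Enlarging the supremum from $\{\theta_0,\theta_1\}$ to $\Theta$ and then averaging over $i$ gives
\[
    \sup_{\ptrue\in\Theta} \Pr_{\ptrue}\br{d(\hat\theta,\ptrue)\geq s} \;\geq\; \tfrac{1}{2}\br{\Pr_{\theta_0}(\psi=1) + \Pr_{\theta_1}(\psi=0)}.
\]

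Next, the sum $\Pr_{\theta_0}(\psi=1) + \Pr_{\theta_1}(\psi=0)$ is bounded from below by $1 - \|\Pr_{\theta_0}-\Pr_{\theta_1}\|_{\mathrm{TV}}$ for any test $\psi$, by the variational characterization of the total variation distance. Pinsker's inequality then yields $\|\Pr_{\theta_0}-\Pr_{\theta_1}\|_{\mathrm{TV}} \leq \sqrt{\tfrac{1}{2}\kullback(\Pr_{\theta_1},\Pr_{\theta_0})} \leq \tfrac{1}{2}$ under the hypothesis. Combining, the sum of testing errors is at least $\tfrac{1}{2}$, so the supremum above is at least $\tfrac{1}{4}$, and taking the infimum over $\hat\theta$ preserves the bound.

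There is no substantive obstacle: the argument is textbook. The only points to be mindful of are (i) that $d$ is merely a pseudometric, but the reduction uses only symmetry and the triangle inequality, both of which are part of the pseudometric axioms, and (ii) that the Kullback--Leibler direction appearing in the hypothesis matches the direction in which Pinsker's inequality is applied, which it does since total variation is symmetric in its arguments.
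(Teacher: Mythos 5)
Your proof is correct and follows the same route as the paper, which simply cites \cite[Equation (2.9) and Theorem 2.2]{Tsybakov09Introduction} with $M=1$, $\alpha=\tfrac12$; you have unpacked that citation into the underlying reduction-to-testing argument plus Pinsker's inequality, yielding the same constant $\tfrac14$.
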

\begin{proof}[Proof of \cref{thm:lower:reduction:two}]
	\cite[Equation (2.9) and Theorem 2.2]{Tsybakov09Introduction} with $M=1$ and $\alpha=\frac12$.
\end{proof}
\begin{lemma}[Many Hypotheses]\label{thm:lower:reduction:many}
	Let $M\in\N$, $M \geq 2$.
	Let $\theta_0,\theta_1,\dots,\theta_M\in\Theta$.
	Assume
	\begin{equation}
		\frac{1}{M}\sum_{j=1}^M\kullback(\Pr_{\theta_j}, \Pr_{\theta_0}) \leq \frac13 \log(M)
		\eqfs
	\end{equation}
	Then
	\begin{equation}
		\inf_{\hat\theta} \sup_{\ptrue\in\Theta} \Pr_{\ptrue}\brOf{d(\hat\theta, \ptrue) \geq \frac12 \inf_{j,k\in\nset0M}d(\theta_j, \theta_k)} \geq \frac14
		\eqcm
	\end{equation}
    where the infimum ranges over all estimators $\hat{\theta}$ of $\ptrue$ based on an observation $Y\sim \Pr_{\ptrue}$.
\end{lemma}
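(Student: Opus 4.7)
The plan is to follow the standard two-step reduction from estimation to $(M{+}1)$-ary hypothesis testing that underlies Theorem~2.5 of \cite{Tsybakov09Introduction}: first replace the infimum over estimators by an infimum over tests via a minimum-distance argument, and then apply a Fano/Birg\'e-type inequality to lower-bound the multiple-testing error under the given KL constraint.

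For Step 1, set $2s := \inf_{j\neq k,\,j,k\in\nset{0}{M}} d(\theta_j,\theta_k)$ (interpreting the infimum in the statement as ranging over distinct pairs, which is the only sensible reading in a pseudometric). Given an arbitrary estimator $\hat\theta$, define the minimum-distance test
\[
\psi^*(Y) := \argmin_{j\in\nset{0}{M}} d(\hat\theta(Y),\theta_j),
\]
with ties broken arbitrarily. If $\ptrue=\theta_j$ and $\psi^*(Y)=k\neq j$, then by minimality $d(\hat\theta(Y),\theta_k)\leq d(\hat\theta(Y),\theta_j)$, and the triangle inequality yields $2s\leq d(\theta_j,\theta_k)\leq 2d(\hat\theta(Y),\theta_j)$. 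Hence $\{\psi^*\neq j\}\subset\{d(\hat\theta,\theta_j)\geq s\}$, and restricting $\ptrue$ to $\{\theta_0,\dots,\theta_M\}\subset\Theta$ and then taking the infimum over $\hat\theta$ gives
\[
\inf_{\hat\theta}\sup_{\ptrue\in\Theta}\Pr_{\ptrue}\brOf{d(\hat\theta,\ptrue)\geq s}
\;\geq\;
\inf_{\psi}\max_{j\in\nset{0}{M}}\Pr_{\theta_j}(\psi\neq j),
\]
where the latter infimum runs over all tests $\psi\colon Y\mapsto\nset{0}{M}$.

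For Step 2, I would invoke the Birg\'e-type sharpening of Fano's inequality, as formulated in \cite[Theorem~2.5 and Proposition~2.3]{Tsybakov09Introduction}. It states that under a hypothesis of the form $\tfrac{1}{M}\sum_{j=1}^M\kullback(\Pr_{\theta_j},\Pr_{\theta_0})\leq \alpha\log M$ with a sufficiently small constant $\alpha<1$, the minimax multiple-testing error obeys $\inf_{\psi}\max_j\Pr_{\theta_j}(\psi\neq j)\geq \tfrac{1}{4}$. The numerical value $\alpha=\tfrac{1}{3}$ in the hypothesis of the lemma is tuned exactly so that this conclusion holds uniformly in $M\geq 2$. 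Combining Steps~1 and~2 yields the claim.

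The main obstacle is the numerical calibration in Step~2: the naive Fano bound $p^*_{e,M}\geq 1-(\bar K+\log 2)/\log(M+1)$ is too lossy for small $M$, since the additive $\log 2$ term eats up too much of the budget. I would handle this either by using Birg\'e's sharpened version throughout, or by treating the cases $M=2,3$ separately --- for instance, by applying \cref{thm:lower:reduction:two} to a pair $(\theta_j,\theta_k)$ achieving the minimum pairwise distance, after noting that the given average KL bound forces some individual KL to be small --- and invoking the Fano/Birg\'e estimate only for $M\geq 4$. Apart from this bookkeeping, the proof is entirely routine.
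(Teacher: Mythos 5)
Your proposal matches the paper's own proof, which simply cites Tsybakov's Equation~(2.9) (the minimum-distance reduction to testing, your Step~1) and Corollary~2.6 (the sharpened Fano bound, your Step~2), and then checks that with $\alpha=\tfrac13$ the resulting quantity $\tfrac{\log(M+1)-\log 2}{\log M}-\alpha$ is at least $\tfrac14$ uniformly over $M\geq 2$, the infimum being attained at $M=2$. You correctly diagnose that the naive Fano bound $1-(\bar K+\log 2)/\log(M+1)$ falls short of $\tfrac14$ for $M=2,3$ --- this is exactly why the sharpened Corollary~2.6 is needed --- and your fallback case-split via \cref{thm:lower:reduction:two} would also close the gap, since for $M\in\{2,3\}$ the hypothesis forces $\min_j\kullback(\Pr_{\theta_j},\Pr_{\theta_0})\leq\tfrac13\log M<\tfrac12$.
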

\begin{proof}[Proof of \cref{thm:lower:reduction:many}]
	\cite[Equation (2.9) and Corollary 2.6]{Tsybakov09Introduction} with $\alpha=\frac13$ noting that
	\begin{equation}
		\inf_{M\geq 2}\brOf{\frac{\log(M+1)-\log(2)}{\log(M)} - \alpha} = \frac{\log(3/2)}{\log(2)} - \frac13 \geq \frac14
		\eqfs
	\end{equation}
\end{proof}
If the parameter of a statistical model is not identifiable, we have a trivial lower bound on the error. This is a deterministic version of \cref{thm:lower:reduction:two}.
\begin{lemma}\label{thm:lower:deterministic:general}
	Let $\theta_0, \theta_1\in\Theta$. Assume
	\begin{equation}
		\Pr_{\theta_0} = \Pr_{\theta_1}
		\eqfs
	\end{equation}
	Then
	\begin{equation}
		\inf_{\hat\theta} \sup_{\ptrue\in\Theta} \Pr_\ptrue\brOf{d(\hat\theta, \ptrue) \geq \frac 12 d(\theta_0, \theta_1)} = 1,
	\end{equation}
	where the infimum ranges over all estimators $\hat{\theta}$ of $\ptrue$ based on an observation $Y\sim \Pr_{\ptrue}$.
\end{lemma}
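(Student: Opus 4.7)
The plan is to run a deterministic analogue of the two-hypothesis reduction of \cref{thm:lower:reduction:two}: since $\Pr_{\theta_0} = \Pr_{\theta_1}$, the two hypotheses are statistically indistinguishable --- no Kullback--Leibler bookkeeping is needed --- and the error is forced purely by the triangle inequality, which guarantees that no single point of $\Theta$ can be simultaneously closer than $\tfrac12 d(\theta_0, \theta_1)$ to both $\theta_0$ and $\theta_1$.

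Concretely, I would fix an arbitrary estimator $\hat\theta$ of $\ptrue$ based on $Y$ and let $\nu$ denote the pushforward law of $\hat\theta(Y)$ on $\Theta$; by the hypothesis this law agrees whether $Y\sim \Pr_{\theta_0}$ or $Y\sim\Pr_{\theta_1}$. Introducing the two bad events
\[
  A_i := \cb{d(\hat\theta, \theta_i) \geq \tfrac12 d(\theta_0, \theta_1)}, \quad i\in\cb{0,1},
\]
the triangle inequality $d(\theta_0,\theta_1) \leq d(\hat\theta,\theta_0) + d(\hat\theta,\theta_1)$ forces $A_0 \cup A_1$ to exhaust the sample space, so $\nu(A_0) + \nu(A_1) \geq 1$. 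Restricting the supremum over $\ptrue\in\Theta$ to the pair $\cb{\theta_0,\theta_1}$ and using $\Pr_{\theta_i}(A_i) = \nu(A_i)$ gives the quantitative bound
\[
  \sup_{\ptrue\in\Theta} \Pr_\ptrue\brOf{d(\hat\theta, \ptrue) \geq \tfrac12 d(\theta_0, \theta_1)} \geq \max\brOf{\nu(A_0), \nu(A_1)} \geq \tfrac12.
\]

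The asserted value $1$ is then recovered by observing that in the settings in which the lemma is actually invoked --- see the constructions in \cref{thm:stubble:deterministic} and \cref{thm:snake:deterministic} --- the equality $\Pr_{\theta_0} = \Pr_{\theta_1}$ arises from noise-free trajectories coinciding at the observation times, so $\hat\theta(Y)$ is itself deterministic and $\nu$ concentrates on a single point; then $\nu(A_0), \nu(A_1) \in \cb{0,1}$ and $\nu(A_0) + \nu(A_1) \geq 1$ forces one of the two to equal $1$. There is essentially no substantive obstacle here --- everything reduces to the triangle inequality combined with the pushforward identity --- and the only bookkeeping care needed is to distinguish the running $\ptrue$ inside the supremum from the two fixed reference parameters $\theta_0, \theta_1$ that define the events $A_0, A_1$.
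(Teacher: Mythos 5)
Your derivation of $\nu(A_0)+\nu(A_1)\geq 1$, hence $\max(\nu(A_0),\nu(A_1))\geq\tfrac12$, via the triangle inequality together with the identity of the two laws is correct, and it is the same core step the paper takes. The gap is in the attempted upgrade to the asserted value $1$: you claim $\hat\theta(Y)$ is deterministic in the ODE applications because ``noise-free trajectories coincide at the observation times'', but this conflates the deterministic \emph{signal} $U(f, x_j, t_{j,i})$, which does coincide under $\theta_0$ and $\theta_1$, with the observation $Y_{j,i} = U(f, x_j, t_{j,i}) + \noise_{j,i}$ itself, which still carries additive noise. The pushforward law $\nu$ of $\hat\theta(Y)$ is therefore not a Dirac mass, $\nu(A_0)$ and $\nu(A_1)$ need not lie in $\cb{0,1}$, and the upgrade does not go through.

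What you have in fact isolated is a gap present in the paper's own proof. The paper's display asserts
\[
\max_{s\in\cb{0,1}} \Pr_{\theta_s}\brOf{d(\hat\theta,\theta_s)\geq\tfrac{a}{2}}
\;=\;
\Pr_{\theta_0}\brOf{\max\brOf{d(\hat\theta,\theta_0),\,d(\hat\theta,\theta_1)}\geq\tfrac{a}{2}},
\]
i.e.\ $\max(\Pr(A_0),\Pr(A_1))=\Pr(A_0\cup A_1)$; but in general this is only ``$\leq$'' and can be strict. Concretely, take $\Theta=\cb{0,1}$ with the usual metric, $\Pr_{\theta_0}=\Pr_{\theta_1}=\dunif{[0,1]}$, and $\hat\theta:=\indOfEvent{Y\geq\tfrac12}$: both probabilities equal $\tfrac12$ and the supremum is $\tfrac12<1$, so the claimed $\inf\sup=1$ cannot hold as stated. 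The honest conclusion of the triangle-inequality argument --- precisely the bound you had reached before the attempted upgrade --- is $\geq\tfrac12$, and this is all that is needed: after passing through \cref{thm:lower:deterministic:master} and \cref{thm:lower:reduction:expectation}, the change from $1$ to $\tfrac12$ only affects the constants in \cref{cor:stubble:nice}, \cref{cor:snake:combined}, and their sequels by a factor of $2$.
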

\begin{proof}[Proof of \cref{thm:lower:deterministic:general}]
	Set $a := d(\theta_0, \theta_1)$.
	As $d(\theta_0, \theta_1)  \leq d(\hat\theta, \theta_0) + d(\hat\theta, \theta_1)$, we have
	\begin{equation}
		\max\brOf{d(\hat\theta, \theta_0), d(\hat\theta, \theta_1)} \geq \frac12 d(\theta_0, \theta_1) = \frac a2\eqfs
	\end{equation}
	Thus,
	\begin{align}
		\sup_{\theta\in\Theta} \Pr_\theta\brOf{d(\hat\theta, \theta) \geq \frac a2}
		&\geq
		\max_{s\in\cb{0, 1}} \Pr_{\theta_s}\brOf{d(\hat\theta, \theta_s) \geq \frac a2}
		\\&=
		\Pr_0\brOf{\max\brOf{d(\hat\theta, \theta_0), d(\hat\theta, \theta_1)} \geq \frac a2}
		\\&=
		1\eqfs
	\end{align}
\end{proof}
Next, we note that it suffices to show lower bounds on the probability of error, since this also implies bounds on expectation.
\begin{theorem}[Reduction to bounds in probability]\label{thm:lower:reduction:expectation}
	Let $s,p\in\Rp$. Assume
	\begin{equation}
		\inf_{\hat\theta} \sup_{\ptrue\in\Theta} \Pr_{\ptrue}\brOf{d(\hat\theta, \ptrue) \geq s} \geq p
		\eqfs
	\end{equation}
	Let $\nu\colon\Rp\to\Rp$ be increasing.
	Then
	\begin{equation}
		\inf_{\hat\theta} \sup_{\ptrue\in\Theta} \E_{\ptrue}\abOf{\nu\brOf{d(\hat\theta, \ptrue)}} \geq p \nu(s)
		\eqfs
	\end{equation}
    Here, all infima range over all estimators $\hat{\theta}$ of $\ptrue$ based on an observation $Y\sim \Pr_{\ptrue}$.
\end{theorem}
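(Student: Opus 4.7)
The plan is a one-step Markov-type argument exploiting the monotonicity of $\nu$. The key observation is that for every estimator $\festi$ (abusing notation, every $\hat\theta$) and every $\ptrue\in\Theta$, the event $\cb{d(\hat\theta,\ptrue)\geq s}$ forces $\nu(d(\hat\theta,\ptrue))\geq \nu(s)$, because $\nu$ is increasing. Since $\nu$ takes values in $\Rp$, multiplying the indicator of this event by $\nu(s)$ yields a pointwise lower bound on $\nu(d(\hat\theta,\ptrue))$, so taking expectations gives
\begin{equation}
\E_{\ptrue}\abOf{\nu(d(\hat\theta,\ptrue))}
\;\geq\;
\nu(s)\,\Pr_{\ptrue}\brOf{d(\hat\theta,\ptrue)\geq s}\eqfs
\end{equation}

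From here the argument is purely order-theoretic. First I would take the supremum over $\ptrue\in\Theta$ on both sides; since the nonnegative constant $\nu(s)$ commutes with the supremum, this yields
\begin{equation}
\sup_{\ptrue\in\Theta}\E_{\ptrue}\abOf{\nu(d(\hat\theta,\ptrue))}
\;\geq\;
\nu(s)\sup_{\ptrue\in\Theta}\Pr_{\ptrue}\brOf{d(\hat\theta,\ptrue)\geq s}\eqfs
\end{equation}
Next I would take the infimum over all estimators $\hat\theta$ based on an observation $Y\sim\Pr_{\ptrue}$. Again, the nonnegative constant $\nu(s)$ commutes with the infimum, and applying the hypothesis on the probability lower bound finishes the proof:
\begin{equation}
\inf_{\hat\theta}\sup_{\ptrue\in\Theta}\E_{\ptrue}\abOf{\nu(d(\hat\theta,\ptrue))}
\;\geq\;
\nu(s)\inf_{\hat\theta}\sup_{\ptrue\in\Theta}\Pr_{\ptrue}\brOf{d(\hat\theta,\ptrue)\geq s}
\;\geq\;
p\,\nu(s)\eqfs
\end{equation}

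There is essentially no obstacle here; the statement is a generic transfer lemma from bounds in probability to bounds in expectation, and the only subtlety is to track that $\nu(s)\geq 0$ so it can be pulled past the $\sup$ and $\inf$ without reversing inequalities. No additional assumption on $\nu$ (continuity, strict monotonicity, left/right continuity) is required because we only evaluate $\nu$ at the fixed threshold $s$ and use monotonicity pointwise on the event.
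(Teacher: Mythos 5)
Your proof is correct and follows essentially the same Markov-type argument as the paper; the paper invokes Markov's inequality on $\nu(d(\hat\theta,\theta))$ and divides by $\nu(s)$ (assuming $\nu(s)\neq 0$), whereas you derive the same bound from the pointwise inequality $\nu(d(\hat\theta,\ptrue))\geq\nu(s)\indOfEvent{d(\hat\theta,\ptrue)\geq s}$, which avoids the case distinction and is marginally cleaner, but the underlying idea is identical.
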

\begin{proof}[Proof of \cref{thm:lower:reduction:expectation}]
	Let $\theta\in\Theta$ and $\hat{\theta}$ be an arbitrary estimator. Assume $\nu(s)\neq 0$. Then
	\begin{align}
		p
		&\leq
		\Pr_{\theta}\brOf{d(\hat\theta, \theta) \geq s}
		\leq
		\Pr_{\theta}\brOf{\nu\brOf{d(\hat\theta, \theta)} \geq \nu(s)}
		\leq
		\frac{\E_{\theta}\abOf{\nu\brOf{d(\hat\theta, \theta)}}}{\nu(s)}
		\eqcm
	\end{align}
	where we used Markov's inequality in the last step. As all inequalities do not depend on the choice of $\theta\in\Theta$ and $\hat{\theta}$, the claim follows immediately.
\end{proof}
\subsection{Master Theorem for Lower Bounds in Regression}\label{sec:app:master}
We now derive a lower bound on the estimation error in a general regression-type setting. It captures the common elements of lower bound proofs for the classical nonparametric regression setup as well as the snake and stubble ODE models.

Let $n, \dm q, \dm u, \dm f\in\N$.
Let $\xdomain\subset\R^{\dm q}$ be a bounded measurable set, subsequently called the \textit{domain of interest}.
Let $\Theta$ to be the set of parameters.
Each parameter $\theta\in\Theta$ is assumed to have the following attributes:
\begin{enumerate}[label=(\roman*)]
	\item $q_k(\theta) \in \R^{\dm q}$ for $k\in\nnset n$,\hfill (Location)
	\item  $u_k(\theta) \in\R^{\dm u}$ for $k\in\nnset n$, \hfill (Observed Object)
	\item  $f_\theta \colon \xdomain \to \R^{\dm f}$ continuous. \hfill(Function to Estimate)
\end{enumerate}
We will later require certain properties of these attributes. Consider the following regression model equation
\begin{equation}
	Y_k = u_k(\theta) + \noise_k \eqcm\quad k\in\nnset n\eqcm
\end{equation}
where $\noise_1, \dots, \noise_n$ are independent and identically distributed copies of the $\R^{\dm u}$-valued and centered random variable $\noise$ following the law $P^\noise$. We observe $Y_1, \dots, Y_n$ and want to estimate the function $f_\theta$ on the domain of interest $\xdomain$. For $\theta\in\Theta$, the distribution of $Y_1, \dots, Y_n$ is given by
\begin{equation}
	\Pr_\theta := \bigotimes_{k=1}^n \br{u_k(\theta) + P^{\noise}}
	\eqcm
\end{equation}
where we write $v + P$ for the distribution of $v + X$ with $X \sim P$, $v \in \R^{\dm u}$.
We make the following assumptions to achieve error lower bounds.
\begin{assumption}\label{ass:lower:general}\mbox{ }
	\begin{enumerate}[label=(\roman*)]
		\item \label{ass:lower:general:ref}
			There is a continuous \textit{reference function} $g\colon\R^{\dm q}\to\R^{\dm f}$ with $\supp(g) \subset \ball^{\dm q}(0,1)$ and a constant $v_0\in\R^{\dm f}$.
			There is $\theta_0\in\Theta$ with $f_{\theta_0}(x) = v_0$ for all $x\in\xdomain$.
			There are $\zeta\in\Rpp$ and $\rho_n^-, \rho_n^+ \in\Rpp$ with following property:
			For $z\in\R^{\dm q}$ and $r\in[\rho_n^-, \rho_n^+)$, there is $\theta_{z,r}\in\Theta$ such that
			\begin{equation}
				f_{\theta_{z,r}}(x) = r^\zeta g\brOf{\frac{x-z}r} + v_0
				\eqcm
			\end{equation}
			for all $x\in\xdomain$. We sometimes call $\theta_0$ the \textit{null hypothesis} and $\theta_{z,t}$ the \textit{alternatives}.
			Furthermore, if $q_k(\theta_0)\in\R^{\dm q}\setminus \ball^{\dm q}(z, r)$, then $q_k(\theta_{z,r}) = q_k(\theta_0)$ and $u_k(\theta_{z,r}) = u_k(\theta_0)$ for all $k\in\nnset n	$.
		\item \label{ass:lower:general:obs}
			For $r\in[\rho_n^-, \rho_n^+)$, we define the maximum observed distance between null hypothesis and alternatives, $\psi_{n}(r)$, and the number of locations in the support, $\chi_{n}(r)$, as
			\begin{align}
				\psi_{n}(r) &:= \sup_{z\in\R^{\dm q}} \sup_{k\in\nnset{n}} \euclOf{u_k(\theta_{z,r}) - u_k(\theta_0)}\eqcm\\
				\chi_{n}(r) &:= \sup_{z\in\R^{\dm q}} \#\setByEle{k\in\nnset n}{q_k(\theta_{z,r}) \in \ball^{\dm q}(z,r)}
				\eqfs
			\end{align}
			There are $a_n\in\Rpp$ and $\gamma\in\Rpp$ with the following property:
			For all $r\in[\rho_n^-, \rho_n^+)$, we have
			\begin{equation}
				\psi_{n}(r)^2 \chi_{n}(r) \leq a_n r^\gamma
				\eqfs
			\end{equation}
		\item \label{ass:lower:general:pack}
			For $r\in\Rpp$, let $\eta(r)$ be the maximal number of disjoint balls with radius $r$ that fit into $\xdomain$, i.e.
			\begin{equation}
				\eta(r) = \sup\!\setByEle{k\in\N}{\exists x_{\nnset{k}}\subseteq\xdomain \colon \ \forall j,\tilde j\in\nnset k, j\neq\tilde j\colon\ \euclOf{x_j - x_{\tilde j}} \geq 2r,\, \ball^{\dm q}(x_j, r)\subset \xdomain}
				\eqfs
			\end{equation}
			There is $\const{pack}\in\Rpp$, such that, for all $r\in[\rho_n^-, \rho_n^+)$,
			\begin{equation}
				\eta(r) \geq \const{pack} r^{-\dm q}
				\eqfs
			\end{equation}
		\item \label{ass:lower:general:add}
			For $r\in[\rho_n^-, \rho_n^+)$, if $z_1, \dots, z_J \in \R^{\dm q}, J\in\N$ fulfill $\inf_{j,i\in\nnset k} \euclof{z_j - z_i} \geq 2r$, then there is  $\theta\in\Theta$ such that,
			\begin{enumerate}[label=(\arabic*)]
				\item $f_{\theta}-f_{\theta_0} = \sum_{j=1}^J \br{f_{\theta_{z_j, r}}-f_{\theta_0}}$
				\item $u_k(\theta)- u_k(\theta_0) = \sum_{j=1}^J \br{u_k(\theta_{z_j, r})- u_k(\theta_0)}$, for all $k\in\nnset n$,
				\item $q_k(\theta) - q_k(\theta_0) = \sum_{j=1}^J \br{q_k(\theta_{z_j, r}) - q_k(\theta_0)}$, for all $k\in\nnset n$.
			\end{enumerate}
	\end{enumerate}
\end{assumption}
\begin{theorem}\label{thm:lower:master}
	Assume \cref{ass:lower:general} \ref{ass:lower:general:ref}, \ref{ass:lower:general:obs}.
	Assume the noise distribution $P^\noise$ fulfills \assuRef{Noise} with $d = \dm u$.
	\begin{enumerate}[label=(\roman*)]
		\item \label{thm:lower:master:pointwise}
		Set $C_1 := 2 \const{noise}$, $C_2 : = \frac12 \supNormOf{g}$.
		Assume $(\rho_n^-)^{-\gamma} \geq C_1a_n \geq (\rho_n^+)^{-\gamma}$.
		Then, for all $x_0\in\xdomain$,
		\begin{equation}
			\inf_{\festi} \sup_{\theta\in\Theta} \Pr_\theta\brOf{\euclOf{\festi - f_\theta}(x_0) \geq C_2\br{C_1 a_n}^{-\frac\zeta\gamma}} \geq \frac14
			\eqfs
		\end{equation}
		\item\label{thm:lower:master:sup}
		Assume \cref{ass:lower:general} \ref{ass:lower:general:pack}.
		Set $C_1 := \frac{12 \gamma \const{noise}}{\dm q}$, $C_2 : = \frac12 \supNormOf{g}$.
		Assume $(\rho_n^-)^{-\gamma} \geq C_1 a_n \log(a_n)^{-1} \geq (\rho_n^+)^{-\gamma}$, and $a_n \geq \max\brOf{2\eqcm \br{\frac{12 \gamma \const{noise}}{\dm q\const{pack}^{\frac{\gamma}{\dm q}}}}^{4}}$.
		Then
		\begin{equation}
			\inf_{\festi} \sup_{\theta\in\Theta} \Pr_\theta\brOf{\sup_{x\in\xdomain}\euclOf{\festi - f_\theta}(x) \geq C_2\br{C_1 a_n \log(a_n)^{-1}}^{-\frac\zeta\gamma}} \geq \frac14
			\eqfs
		\end{equation}
		\item\label{thm:lower:master:lp}
		Let $p\in\Rpo$.
		Assume \cref{ass:lower:general} \ref{ass:lower:general:pack}, \ref{ass:lower:general:add}.
		Set $C_1 := 36 \const{noise}$ and $C_2 : = 2^{-1-3/p} \const{pack}^{1/p} \LpNormOf{p}{\R^{\dm q}}{g}$.
		Assume $(\rho_n^-)^{-\gamma} \geq C_1 a_n \geq (\rho_n^+)^{-\gamma}$ and $C_1 a_n \geq \br{\frac18 \const{pack}}^{-\frac\gamma{\dm q}}$.
		Then
		\begin{equation}
			\inf_{\festi} \sup_{\theta\in\Theta} \Pr_\theta\brOf{\LpNormOf{p}{\xdomain}{\euclOf{\festi - f_\theta}} \geq C_2 \br{C_1 a_n}^{-\frac\zeta\gamma}}  \geq \frac14
			\eqfs
		\end{equation}
	\end{enumerate}
	Here, all infima range over all estimators $\festi$ of $f_{\theta}$ based on the observations $Y_{\nnset n}$.
\end{theorem}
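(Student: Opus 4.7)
The plan is to apply the three reduction schemes of Section~\ref{sec:lower:reduction} to explicit families of hypotheses built from the scaled-bump structure in Assumption~\ref{ass:lower:general}~(i). In all three parts the fundamental KL estimate
\[
\kullback\brOf{\Pr_{\theta_{z,r}}, \Pr_{\theta_0}} \leq \const{noise}\, \chi_n(r)\, \psi_n(r)^2 \leq \const{noise}\, a_n\, r^\gamma
\]
arises by tensorising the divergence over the $n$ independent observations, invoking \assuRef{Noise} to bound each summand by $\const{noise}\euclof{u_k(\theta_{z,r}) - u_k(\theta_0)}^2$, using Assumption~\ref{ass:lower:general}~(i) to discard the terms with $q_k(\theta_0) \notin \ball^{\dm q}(z,r)$, and finally Assumption~\ref{ass:lower:general}~(ii) to collapse the remainder.

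For (i), I would pit $\theta_0$ against a single alternative $\theta_{z,r}$ whose bump is centred so that $\euclof{f_{\theta_{z,r}}(x_0) - f_{\theta_0}(x_0)} = r^\zeta \supNormof{g}$; this is achieved by taking $z = x_0 - r w^\star$ for $w^\star \in \R^{\dm q}$ attaining $\euclof{g(w^\star)} = \supNormof{g}$ (which exists since $g$ is continuous with compact support). Setting $r := (C_1 a_n)^{-1/\gamma}$ with $C_1 = 2\const{noise}$ drives the KL below $1/2$, and the hypothesis on $\rho_n^\pm$ places $r$ inside the admissible window $[\rho_n^-, \rho_n^+)$, so Lemma~\ref{thm:lower:reduction:two} closes the argument. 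For (ii), I would invoke Lemma~\ref{thm:lower:reduction:many} on $\cb{\theta_0, \theta_{z_1,r}, \dots, \theta_{z_M,r}}$ where $z_1,\dots,z_M$ is a maximal $2r$-packing of $\xdomain$ with $M \geq \const{pack}\, r^{-\dm q}$ from Assumption~\ref{ass:lower:general}~(iii); the bumps have pairwise disjoint supports, so every pair of hypotheses is sup-separated by $r^\zeta \supNormof{g}$. Choosing $r^\gamma = \log(a_n)/(C_1 a_n)$ and using $\log M \geq \log \const{pack} + \frac{\dm q}{\gamma}\log(C_1 a_n/\log a_n)$, the fourth-power lower bound on $a_n$ absorbs the $\log\log a_n$ and $\log \const{pack}$ error terms, reducing the budget $\const{noise}a_n r^\gamma \leq \tfrac{1}{3}\log M$ to $C_1 \geq 12\gamma\const{noise}/\dm q$.

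For (iii), I would exploit Assumption~\ref{ass:lower:general}~(iv) to superimpose bumps: for $\omega \in \cb{0,1}^M$ let $\theta_\omega$ be the parameter whose target function, observation model and locations superpose those of $\theta_{z_j, r}$ over $j$ with $\omega_j = 1$. Pick a Varshamov--Gilbert subfamily $\Omega$ of $\cb{0,1}^M$ with $\cardof{\Omega} \geq 2^{M/8}$ and pairwise Hamming distance $\geq M/8$ (see \cite[Lemma~2.9]{Tsybakov09Introduction}). The disjointness of supports yields
\[
\LpNormOf{p}{\xdomain}{\euclof{f_{\theta_\omega} - f_{\theta_{\tilde\omega}}}}^p \geq \frac{M}{8}\, r^{\zeta p + \dm q}\, \LpNormOf{p}{\R^{\dm q}}{g}^p \geq \frac{\const{pack}}{8}\, r^{\zeta p}\, \LpNormOf{p}{\R^{\dm q}}{g}^p,
\]
and since each $q_k(\theta_0)$ lies in at most one of the disjoint balls $\ball^{\dm q}(z_j,r)$, one obtains $\kullback(\Pr_{\theta_\omega}, \Pr_{\theta_0}) \leq \const{noise}\, |\omega|\, \psi_n(r)^2 \chi_n(r) \leq \const{noise}\, M\, a_n\, r^\gamma$. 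The averaged KL budget $\leq \tfrac{1}{3}\log\cardof{\Omega} \geq \tfrac{M\log 2}{24}$ then reduces to $r = (36\const{noise}a_n)^{-1/\gamma}$ (which is a safe overshoot of $24/\log 2$), while the side condition $C_1 a_n \geq (\const{pack}/8)^{-\gamma/\dm q}$ forces $M \geq 8$ so that Varshamov--Gilbert is applicable.

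The main obstacle I foresee is the constant tracking in part (ii): $\log M$ must be separated from $\log a_n$ by a definite factor against the lower-order $\log\log a_n$ and $\log \const{pack}$ contributions, which is precisely what the quantitative hypothesis $a_n \geq \br{12\gamma \const{noise}/(\dm q\,\const{pack}^{\gamma/\dm q})}^{4}$ is calibrated for. Otherwise, the three parts are routine assemblies of the respective reduction lemmas together with the displayed KL estimate, once Assumption~\ref{ass:lower:general}~(iv) is read as guaranteeing that superpositions of admissible single bumps are themselves admissible parameters of $\Theta$ with the additive $u_k$- and $q_k$-structure inherited from (i).
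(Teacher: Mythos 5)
Your proposal is correct and takes essentially the same approach as the paper: the same KL tensorization giving $\psi_n(r)^2\chi_n(r) \leq a_n r^\gamma$, the two-hypothesis reduction for the pointwise bound, the $\eta(r)$-many packed alternatives with $r^\gamma \asymp \log(a_n)/a_n$ for the sup-norm bound, and the Varshamov--Gilbert subfamily combined with the additive structure of Assumption~(iv) for the $L^p$ bound. Your remark that $36$ is a safe overshoot of $24/\log 2$ matches the paper's slack from replacing $8/\log 2$ by $12$ when bounding the packing number by the log-cardinality of the Varshamov--Gilbert set.
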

\begin{proof}[Proof of \cref{thm:lower:master}]
	The general strategy for the proofs of all parts of this theorem is to find suitable hypotheses $\theta_0, \dots, \theta_M\in\Theta$ that have a relatively large distance but relatively small Kullback--Leibler divergence of their associated probability measures and then apply the generic lower bound of \cref{thm:lower:reduction:two} or \cref{thm:lower:reduction:many}.
	\begin{enumerate}[label=(\roman*)]
		\item
			\begin{enumerate}[label=(\arabic*)]
			\item \textbf{Construction of Hypotheses:}\\
				Let $z\in\R^{\dm q}$ and $r\in[\rho_n^-, \rho_n^+)$ to be chosen later. Consider the two hypotheses $\theta_0$ and $\theta_{z,r}\in\Theta$.
			\item \textbf{Bound of Kullback--Leibler divergence:}\\
				By \assuRef{Noise} and \cref{ass:lower:general} \ref{ass:lower:general:ref}, \ref{ass:lower:general:obs}, for $r\in[\rho_n^-, \rho_n^+)$, we have
				\begin{align}
					\kullback(\Pr_{\theta_{z, r}}, \Pr_{\theta_0})
					&\leq
					\const{noise} \sum_{k=1}^n \euclOf{u_k(\theta_{z, r}) - u_k(\theta_{0})}^2
					\\&\leq
					\const{noise} \sum_{k=1}^n \indOfOf{\ball^{\dm q}(z, r)}{q_k(\theta_{z,r})} \psi_{n}(r)^2
					\\&\leq
					\const{noise} \psi_{n}(r)^2 \chi_n(r)
					\\&\leq
					\const{noise} a_n r^\gamma
					\eqfs
				\end{align}
				Choosing
				\begin{equation}
					r_n := \br{C_1 a_n}^{-\frac1\gamma} = \br{2\const{noise} a_n}^{-\frac1\gamma}
				\end{equation}
				yields $\kullback(\Pr_{\theta_{z, r_n}}, \Pr_{\theta_0}) \leq 1/2$.
			\item \textbf{Distance between Hypotheses:}\\
				As $g$ is continuous, $\ball^{\dm q}(0, 1)$ is precompact, and $g = 0$ on $\overline{\ball^{\dm q}(0, 1)} \setminus \ball^{\dm q}(0, 1)$, we can find $z\in\R^{\dm q}$ such that
				\begin{equation}
					\euclOf{f_{\theta_{z,r_n}}(x_0) - f_{\theta_0}(x_0)} =
					\euclOf{r^\zeta g\brOf{\frac{x_0-z}{r}}} =
					r^\zeta \supNormOf{g}
					\eqfs
				\end{equation}
			\item \textbf{Application of reduction scheme:}\\
				Hence, \cref{thm:lower:reduction:two} yields
				\begin{equation}
					\inf_{\festi} \sup_{\theta\in\Theta} \Pr_{\theta}\brOf{\euclOf{\festi - f_\theta}(x_0) \geq \frac12 \supNormOf{g} r_n^\zeta} \geq \frac14
					\eqfs
				\end{equation}
			\end{enumerate}
		\item
			\begin{enumerate}[label=(\arabic*)]
				\item \textbf{Construction of Hypotheses:}\\
					Let $r\in[\rho_n^-, \rho_n^+)$ to be chosen later.
					Using \cref{ass:lower:general} \ref{ass:lower:general:pack}, let $z_1,\dots z_{\eta}\in\xdomain$ with $\eta := \eta(r) \geq 2$ such that $\normof{z_\kappa-z_{\tilde\kappa}} \geq 2r$ and $\ball^{\dm q}(z_\kappa, r)\subset \xdomain$ for all $\kappa,\tilde\kappa\in\nnset{{\eta}}$.
					We consider the hypotheses $\theta_0, \theta_{z_1, r}, \dots, \theta_{z_\eta, r}$.
				\item \textbf{Bound of Kullback--Leibler divergence:}\\
					By \assuRef{Noise} and \cref{ass:lower:general} \ref{ass:lower:general:ref}, \ref{ass:lower:general:obs}, for $r\in[\rho_n^-, \rho_n^+)$, we have
					\begin{align}
						\frac1{\eta}\sum_{\kappa=1}^{\eta}\kullback(\Pr_{\theta_{z_\kappa, r}}, \Pr_{\theta_0})
						&\leq
						\const{noise} \frac1{\eta}\sum_{\kappa=1}^{\eta} \sum_{k=1}^n \euclOf{u_k(\theta_{z_\kappa, r}) - u_k(\theta_{0})}^2
						\\&\leq
						\const{noise} \frac1{\eta}\sum_{\kappa=1}^{\eta} \sum_{k=1}^n \indOfOf{\ball^{\dm q}(z_\kappa, r)}{q_k(\theta_{z_\kappa,r})} \psi_{n}(r)^2
						\\&\leq
						\const{noise} \psi_{n}(r)^2 \chi_n(r)
						\\&\leq
						\const{noise} a_n r^\gamma
						\eqfs
					\end{align}
					We assume $a_n > 1$ and choose
					\begin{equation}
						r_n := \br{\frac{C_{\kullback} \log(a_n)}{a_n}}^{\frac1\gamma}
						\eqcm
					\end{equation}
					where we set
					\begin{equation}
						C_{\kullback} := \frac{\dm q}{12 \gamma \const{noise}}
						\eqfs
					\end{equation}
					Then, with \cref{ass:lower:general}  \ref{ass:lower:general:pack}
					\begin{align}
						\log(\eta(r_n))
						&\geq
						\log\brOf{\const{pack}r_n^{-\dm q}}
						\\&=
						\log(\const{pack}) - \dm q \log\brOf{\br{\frac{C_{\kullback} \log(a_n)}{a_n}}^{\frac1\gamma}}
						\\&\geq
						C_{\dm q,\gamma} + \frac{\dm q}{\gamma}\br{\log(a_n) - \log \log(a_n)}
						\\&\geq
						C_{\dm q,\gamma} + \frac{\dm q}{2\gamma}\log(a_n)
						\eqcm
					\end{align}
					with $\log(x) \geq \euler \log(\log(x))$ for all $x>1$, where
					\begin{equation}
						C_{\dm q,\gamma} :=
						\log(\const{pack}) - \frac{\dm q}{\gamma} \log\brOf{C_{\kullback}}
						\eqfs
					\end{equation}
					We have
					\begin{equation}
						\log(\eta(r_n))
						\geq
						C_{\dm q,\gamma} + \frac{\dm q}{2\gamma}\log(a_n)
						\geq
						\frac{\dm q}{4\gamma}\log(a_n)
					\end{equation}
					if $C_{\dm q,\gamma} \geq -\frac{\dm q}{4\gamma}\log(a_n)$. We have
					\begin{align}
						C_{\dm q,\gamma} \geq -\frac{\dm q}{4\gamma}\log(a_n)
						& \Leftrightarrow
						a_n \geq \exp\brOf{-C_{\dm q,\gamma} \frac{4\gamma}{\dm q}}
						\\&\Leftrightarrow
						a_n \geq \br{\frac{12 \gamma \const{noise}}{\dm q\const{pack}^{\frac{\gamma}{\dm q}}}}^{4} =: a_0
						\eqfs
					\end{align}
					Then, assuming $r_n\in[\rho_n^-, \rho_n^+)$, $a_n > 1$, and $a_n \geq a_0$, we obtain
					\begin{align}
						\frac1{\eta(r_n)}\sum_{\kappa=1}^{\eta(r_n)} \kullback(\Pr_{\theta_{x_0, r_n}}, \Pr_{\theta_0})
						&\leq
						\const{noise} a_n r_n^\gamma
						\\&=
						\const{noise} C_{\kullback} \log(a_n)
						\\&\leq
						\frac13 \frac{\dm q}{4\gamma} \log(a_n)
						\\&\leq
						\frac13 \log(\eta(r_n))
						\eqfs
					\end{align}
				\item \textbf{Distance between Hypotheses:}\\
					As $\ball^{\dm q}(z_{\kappa}, r) \cap \ball^{\dm q}(z_{\tilde\kappa}, r)  = \emptyset$ and $\ball^{\dm q}(z_{\kappa}, r) \subset \xdomain$, we have
					\begin{equation}
						\sup_{x\in\xdomain} \euclOf{f_{\theta_{z_\kappa, r}} - f_{\theta_{z_{\tilde\kappa}, r}}}(x) = \sup_{x\in\ball^{\dm q}(0,1)} \euclOf{ r^\zeta g\brOf{\frac{x-z}{r}}} = r^\zeta \supNormOf{g}
					\end{equation}
					for all $\kappa,\tilde\kappa\in\nnset{\eta(r)}$, $\kappa\neq\tilde\kappa$, by \cref{ass:lower:general} \ref{ass:lower:general:ref}.
				\item \textbf{Application of reduction scheme:}\\
					Hence, \cref{thm:lower:reduction:many} yields
					\begin{equation}
						\inf_{\festi} \sup_{\theta\in\Theta} \PrOf{\sup_{x\in\xdomain}\euclOf{\festi - f_\theta}(x) \geq \frac12 \supNormOf{g} r_n^\zeta} \geq \frac14
						\eqcm
					\end{equation}
					for $a_n \geq \const{pack}^{-\frac{4\gamma}{\dm q}} \const{noise}^4$, $a_n>1$, and $r_n\in[\rho_n^-, \rho_n^+)$, where
					\begin{equation}
						r_n = \br{\frac{\dm q \log(a_n)}{12 \gamma \const{noise} a_n}}^{\frac1\gamma}
						\eqfs
					\end{equation}
			\end{enumerate}
		\item
		\begin{enumerate}[label=(\arabic*)]
			\item \textbf{Construction of Hypotheses:}\\
				Let $r\in[\rho_n^-, \rho_n^+)$ to be chosen later.
				Using \cref{ass:lower:general} \ref{ass:lower:general:pack}, let $z_1,\dots z_{\eta}\in\xdomain$ with $\eta := \eta(r) \geq \const{pack} r^{-\dm q}$ such that $\normof{z_\kappa-z_{\tilde\kappa}} \geq 2r$ and $\ball^{\dm q}(z_\kappa, r)\subset \xdomain$ for all $\kappa,\tilde\kappa\in\nnset{{\eta}}$.
				We assume $r \leq \br{\frac18 \const{pack}}^{\frac1{\dm q}}$ to get $\eta \geq 8$.
				Then, according to the Varshamov–Gilbert bound \cite[Lemma 2.9]{Tsybakov09Introduction}, there is a set $\mc B \subset \{0, 1\}^{\eta}$ with following properties:
				\begin{enumerate}[label=(\alph*)]
					\item $0\in\mc B$
					\item $\# \mc B = 2^{\eta/8}$
					\item the pairwise Hamming distance of $\# \mc B$ is at least $\eta/8$, i.e., $\sum_{\kappa=1}^\eta \abs{b_\kappa - \tilde b_\kappa} \geq\frac\eta8$ for all $b,\tilde b\in\mc B$.
				\end{enumerate}
				Use \cref{ass:lower:general} \ref{ass:lower:general:add} to obtain $\theta_b\in\Theta$ such that
				\begin{align}
					f_{\theta_b} - f_{\theta_0} &= \sum_{\kappa=1}^{\eta} b_\kappa \br{f_{\theta_{z_\kappa,r}} - f_{\theta_0}}\eqcm\\
					q_k(\theta_b) - q_k(\theta_0) &= \sum_{\kappa=1}^{\eta} b_\kappa \br{q_k(\theta_\kappa) - q_k(\theta_0)}\eqcm\\
					u_k(\theta_b) - u_k(\theta_0) &= \sum_{\kappa=1}^{\eta} b_\kappa \br{u_k(\theta_\kappa) - u_k(\theta_0)}
					\eqfs
				\end{align}
				Consider the hypotheses $\theta_b$, $b\in\mc B$.
			\item \textbf{Bound of Kullback--Leibler divergence:}\\
				By \assuRef{Noise} and \cref{ass:lower:general} \ref{ass:lower:general:ref} and \ref{ass:lower:general:obs}, for $r\in[\rho_n^-, \rho_n^+)$, we have
				\begin{align}
					\frac1{\#\mc B}\sum_{b\in\mc B}\kullback(\Pr_{\theta_{b}}, \Pr_{\theta_0})
					&\leq
					\const{noise} \frac1{\#\mc B}\sum_{b\in\mc B}
					\sum_{k=1}^n \euclOf{u_k(\theta_{b}) - u_k(\theta_{0})}^2
					\\&\leq
					\const{noise} \frac1{\#\mc B}\sum_{b\in\mc B}
					\sum_{k=1}^n
					\sum_{\kappa=1}^\eta
					b_\kappa \indOfOf{\ball^{\dm q}(z_\kappa, r)}{q_k(\theta_b)} \psi_{n}(r)^2
					\\&\leq
					\const{noise} \eta \psi_{n}(r)^2 \chi_n(r)
					\\&\leq
					\const{noise}\eta  a_n r^\gamma
					\eqfs
				\end{align}
				By definition of $\mc B$, we have
				\begin{equation}
					\eta
					\leq
					8 \log(2)^{-1} \log(\# \mc B)
					\leq
					12 \log(\# \mc B)
					\eqfs
				\end{equation}
				Choosing
				\begin{equation}
					r_n := \br{C_1 a_n}^{-\frac1\gamma} = \br{36 \const{noise} a_n}^{-\frac1\gamma}
				\end{equation}
				yields
				\begin{equation}
					\frac1{\#\mc B}\sum_{b\in\mc B}\kullback(\Pr_{\theta_{b}}, \Pr_{\theta_0})
					\leq
					\frac1{3} \log(\#\mc B)
				\end{equation}
				as soon as $r_n\in[\rho_n^-, \rho_n^+)$.
			\item \textbf{Distance between Hypotheses:}\\
				By  \cref{ass:lower:general} \ref{ass:lower:general:ref}, we have
				\begin{align}
					\int_{\xdomain} \euclOf{f_{\theta_{z,r}}(x) - f_{\theta_0}(x)}^p \dl x
					&=
					\int_{\ball^{\dm q}(z, r)} \euclOf{r^\zeta g\brOf{\frac{x-z}r}}^p \dl x
					\\&=
					r^{\zeta p + \dm q} \int_{\ball^{\dm q}(0, 1)} \euclOf{g\brOf{x}}^p \dl x
					\eqfs
				\end{align}
				Thus, using \cref{ass:lower:general} \ref{ass:lower:general:pack},
				\begin{align}
					\LpNormOf{p}{\xdomain}{\euclOf{f_{\theta_b} - f_{\theta_{\tilde b}}}}^p
					&=
					\int_{\xdomain} \euclOf{f_{\theta_b}(x) - f_{\theta_{\tilde b}}(x)}^p \dl x
					\\&=
					\sum_{\kappa=1}^{\eta} \abs{b_\kappa -\tilde b_\kappa} \int_{\xdomain} \euclOf{f_{\theta_{z_\kappa}}(x) - f_{\theta_{0}}(x)}^p \dl x
					\\&=
					r^{\zeta p + \dm q} \LpNormOf{p}{\R^{\dm q}}{g}^p
					 \sum_{\kappa=1}^{\eta} \abs{b_\kappa -\tilde b_\kappa}
					\\&\geq
					r^{\zeta p + \dm q} \LpNormOf{p}{\R^{\dm q}}{g}^p \eta/8
					\\&\geq
					2^{-3} \const{pack} r^{\zeta p} \LpNormOf{p}{\R^{\dm q}}{g}^p
					\eqfs
				\end{align}
			\item \textbf{Application of reduction scheme:}\\
				Hence, \cref{thm:lower:reduction:many} yields
				\begin{equation}
					\inf_{\festi} \sup_{\theta\in\Theta} \Pr_\theta\brOf{\LpNormOf{p}{\xdomain}{\euclOf{\festi - f_\theta}} \geq 2^{-1-3/p} \const{pack}^{1/p} \LpNormOf{p}{\R^{\dm q}}{g} r_n^{\zeta}}  \geq \frac14
				\end{equation}
				as soon as $r_n\in[\rho_n^-, \rho_n^+)$ and $r_n \leq \br{\frac18 \const{pack}}^{\frac1{\dm q}}$. The second condition is equivalent to
				\begin{equation}
					C_1 a_n \geq \br{\frac18 \const{pack}}^{-\frac\gamma{\dm q}}
					\eqfs
				\end{equation}
		\end{enumerate}
	\end{enumerate}
\end{proof}
In this article, we apply the master theorem in different setting. \cref{tbl:symbolsMaster} gives as overview of how the abstract objects in of the master theorem map to the objects of its application.
\begin{table}
	\begin{center}
		\renewcommand{\arraystretch}{1.4}
		\setlength{\tabcolsep}{0.3em}
		\rowcolors{2}{gray!20}{white}
		\begin{tabular}{p{4.1cm}|p{0.8cm}|p{2.85cm}|p{2.85cm}|p{3.5cm}}
			\rowcolor{gray!40}
			Description & Mast. & Nonparam. Regr. & Stubble &  Snake \\
            \hline
            number of observations      & $n$                   & $n$                                           & $n$ & $n$\\
            observation index           & $k$                   & $k$                                           & $(j,i)$ & $(j,i)$\\
            location dimension          & $\dm q$               & $d$                                           & $d$ & $d$\\
            observation dimension       & $\dm u$               & $1$                                           & $d$ & $d$\\
            function output dimension   & $\dm f$               & $1$                                           & $d$ & $d$\\
            domain of interest          & $\xdomain$            & $[0,R]^d$                                     & $\hypercube$ & $\hypercube$ \\
            set of parameters           & $\Theta$              & $\Sigma^{d\to1}(\beta, \indset{L}{0}{\ell}, \Lbeta)$   & $\Sigma^{d\to d}(\beta, \indset{L}{0}{\ell}, \Lbeta)$ & $\ParamSnake(\indset{L}{0}{\ell}, \Lbeta, \delta, \Tsum)$ \\
            parameter                   & $\theta$              & $f$                                           & $f$                        & $(f, \indset x1m, \indset T1m)$ \\
            location                    & $q_k(\theta)$         & $x_k$                                         & $\xany{j_k}$               & $U(f,x_{j_k},t_{j_k,i_k})$ \\
            observed object             & $u_k(\theta)$         & $f(x_k)$                                      & $U(f,x_{j_k},t_{j_k,i_k})$ & $U(f,x_{j_k},t_{j_k,i_k})$ \\
            function to estimate        & $f_\theta$            & $D^s_{\mo v} f$                               & $f$ & $f$ \\
            reference function          & $g$                   & $\Lbeta D^s_{\mo v} \hbump{d}{\beta}(x)$                & $\Lbeta \hbump{d}{\beta}(x)\mo{e}_1$ & $\Lbeta \hpulse{d}{\beta}\brOf{x}\mo{e}_2$ \\
            reference constant          & $v_0$                 & $0$                                           & $0$ & $L_0\mo{e}_1$ \\
            bump radius exponent        & $\zeta$               & $\beta-s$                                     & $\beta$ & $\beta$ \\
            radius upper bound          & $\rho_n^+$            & $\min\brOf{R/2, \rmax}$                       & $\min\brOf{1/2, \rmax}$ & $\min(1/2, \rmax)$ \\
            radius lower bound          & $\rho_n^-$            & $R (\const{cvrsc} n)^{-\frac1d}$                & $(\const{cvr} m)^{-\frac1d}$ & $\frac12 \const{cvrtm}^{-1} L_0 \Tsum n^{-1}$ \\
            reference parameter         & $\theta_{0}$          & $0$                                           & $0$ & $(f_0,\indset{x^\delta}{1}{m},\indset{T}{1}{m})$ \\
            observed object error & $\psi_n(r)$           & $c_{d,\beta} \Lbeta r^\beta$                & $c_{d,\beta} \Lbeta \Tmax r^\beta$ & $c_{\beta} \Lbeta L_0^{-1} r^{\beta+1}$ \\
            \# locations in support & $\chi_n(r)$           & $\const{cvrsc} R^{-d} r^d n$                  & $\const{cvr} r^d m \nmax$ & $c_d\const{cvrtm} L_0^{-1}\Tsum^{-1} r^d m n$ \\
            error radius exponent       & $\gamma$              & $2\beta+d$                                    & $2\beta+d$ & $2(\beta+1)+d$ \\
            inner packing constant      & $\const{pack}$        & $c_d R^d$                                     & $c_d$ & $c_d$
		\end{tabular}
	\end{center}
	\caption{Symbols of the Master Theorem and their respective realization or bounds in its three applications.}
	\label{tbl:symbolsMaster}
\end{table}

In the same model as for \cref{thm:lower:master}, we provide a deterministic version of the master theorem.
\begin{theorem}\label{thm:lower:deterministic:master}
	Assume there are $\theta_0, \theta_1\in\Theta$ such that $u_k(\theta_0) = u_k(\theta_1)$ for all $k\in\nnset{n}$.
	\begin{enumerate}[label=(\roman*)]
		\item
		Let $x_0\in\R^d$.
		Set $a := \euclOf{f_{\theta_0} - f_{\theta_1}}(x_0)$.
		Then
		\begin{equation}
			\inf_{\festi} \sup_{\theta\in\Theta} \Pr_\theta\brOf{\euclOf{\festi - f_\theta}(x_0) \geq \frac a2} = 1
			\eqfs
		\end{equation}
		\item
		Set $a := \sup_{x\in\xdomain}\euclOf{f_{\theta_0} - f_{\theta_1}}(x)$.
		Then
		\begin{equation}
			\inf_{\festi} \sup_{\theta\in\Theta} \Pr_\theta\brOf{\sup_{x\in\xdomain}\euclOf{\festi - f_\theta}(x) \geq \frac a2}  = 1
			\eqfs
		\end{equation}
		\item
		Let $p\in\Rpo$.
		Set $a := \LpNormOf{p}{\xdomain}{\euclOf{f_{\theta_0} - f_{\theta_1}}}$. Then
		\begin{equation}
			\inf_{\festi} \sup_{\theta\in\Theta} \Pr_\theta\brOf{\LpNormOf{p}{\xdomain}{\euclOf{\festi - f_\theta}} \geq \frac a2} = 1
			\eqfs
		\end{equation}
	\end{enumerate}
	Here, all infima range over all estimators $\festi$ of $\ftrue$ based on the observations $Y_{\nnset n}$.
\end{theorem}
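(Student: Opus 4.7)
The key observation is that since $u_k(\theta_0) = u_k(\theta_1)$ for all $k\in\nnset{n}$, the two parameters induce identical observation distributions:
\begin{equation}
\Pr_{\theta_0} = \bigotimes_{k=1}^n \br{u_k(\theta_0) + P^\noise} = \bigotimes_{k=1}^n \br{u_k(\theta_1) + P^\noise} = \Pr_{\theta_1}.
\end{equation}
This puts us exactly in the setting of \cref{thm:lower:deterministic:general}, and the three claims reduce to three applications of that lemma with different pseudometrics measuring the quality of estimation of $f_\theta$.

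For each of the three parts I would introduce the corresponding pseudometric on the space of continuous functions $\xdomain \to \R^{\dm f}$, namely $d_{x_0}(g,h) := \euclOf{g-h}(x_0)$ for part (i), $d_\infty(g,h) := \sup_{x\in\xdomain}\euclOf{g-h}(x)$ for part (ii), and $d_p(g,h) := \LpNormOf{p}{\xdomain}{\euclOf{g-h}}$ for part (iii). Each is symmetric and satisfies the triangle inequality because the Euclidean norm, sup norm and $L^p$ norm do, so the standard pseudometric structure of \cref{sec:lower:reduction} applies. Lifting each of these pseudometrics to $\Theta$ via $\theta \mapsto f_\theta$ (i.e.\ $d(\theta,\tilde\theta) := d(f_\theta, f_{\tilde\theta})$) gives a pseudometric on $\Theta$.

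The argument is then a direct mimicry of the proof of \cref{thm:lower:deterministic:general}: for any candidate estimator $\festi$ and for $a$ the appropriate distance $d(f_{\theta_0}, f_{\theta_1})$, the triangle inequality yields
\begin{equation}
a = d(f_{\theta_0}, f_{\theta_1}) \leq d(\festi, f_{\theta_0}) + d(\festi, f_{\theta_1}),
\end{equation}
so at least one of the two terms on the right is $\geq a/2$, i.e.\ the event $\{\max(d(\festi, f_{\theta_0}), d(\festi, f_{\theta_1})) \geq a/2\}$ holds surely. Combining with $\Pr_{\theta_0} = \Pr_{\theta_1}$ gives
\begin{equation}
\sup_{\theta\in\Theta}\Pr_\theta\brOf{d(\festi, f_\theta) \geq \tfrac{a}{2}} \geq \max_{s\in\{0,1\}}\Pr_{\theta_s}\brOf{d(\festi, f_{\theta_s}) \geq \tfrac{a}{2}} \geq \Pr_{\theta_0}\brOf{\max_{s\in\{0,1\}} d(\festi, f_{\theta_s}) \geq \tfrac{a}{2}} = 1,
\end{equation}
and taking the infimum over $\festi$ preserves the equality (the upper bound $1$ is trivial). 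Specialising $d$ and $a$ to the three choices above yields parts (i)--(iii) respectively.

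There is essentially no obstacle here: the statement is a straightforward corollary of \cref{thm:lower:deterministic:general} and the only point worth stressing is that although we estimate the \emph{function} $f_\theta$ rather than $\theta$ itself, the reduction still works because the three loss functionals are pseudometrics that factor through $\theta \mapsto f_\theta$, and the identity $\Pr_{\theta_0}=\Pr_{\theta_1}$ makes the observations statistically indistinguishable regardless of the target.
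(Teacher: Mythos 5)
Your proof is correct and follows essentially the same route as the paper: both observe that $u_k(\theta_0)=u_k(\theta_1)$ for all $k$ forces $\Pr_{\theta_0}=\Pr_{\theta_1}$, and both then reduce each of the three parts to \cref{thm:lower:deterministic:general} with the pointwise, sup, and $L^p$ pseudometrics on $\Theta$ pulled back through $\theta\mapsto f_\theta$. The only cosmetic difference is that you unfold the triangle-inequality argument from \cref{thm:lower:deterministic:general} explicitly, while the paper simply cites it.
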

\begin{proof}[Proof of \cref{thm:lower:deterministic:master}]
	As $u_k(\theta_0) = u_k(\theta_1)$ for all $k\in\nnset{n}$, we have $\Pr_{\theta_0} = \Pr_{\theta_1}$.
	Set $d(\theta, \tilde\theta)$ to be any one of the three metrics in which the theorem measures the error (point-wise, supremum, $L^p$) and apply \cref{thm:lower:deterministic:general}.
\end{proof}
\subsection{Example: Nonparametric Regression Lower Bounds}\label{sec:nonparamreg}
In this section, we apply the master theorem, \cref{thm:lower:master}, in the classical nonparametric regression setting: Let $d\in\N$ and $\beta\in\Rpp$.
Set $\ell := \llfloor\beta\rrfloor$.
Let $L_0, \dots, L_\ell, \Lbeta\in\Rpp$.
Set $\FSmooth := \Sigma^{d\to1}(\beta, \indset{L}{0}{\ell}, \Lbeta)$.
Let $n\in\N$ and $x_1, \dots, x_n \in \R^d$.
For $\ftrue\in\FSmooth$, consider the regression model equation
\begin{equation}
	Y_i = \ftrue(x_i) + \noise_i\eqcm \ i\in\nnset n\eqcm
\end{equation}
with independent and identically distributed copies $\noise_1,\dots,\noise_n$ of the $\R$-valued and centered error variable $\noise\sim P^\noise$.
Let $s\in\Nn$ with $s<\beta$. Let $\mo v \in \mb D_d^s$ be $s$-many $d$-dimensional directions. Let $R\in\Rpp$. Our goal is to estimate the $s$-th derivative of $\ftrue$ in the directions $\mo v$, i.e., $D^s_{\mo v}\ftrue$, on the domain $[0, R]^d$ with an estimator $\hat g$ based on the observations $Y_{\nnset n}$.

We require a scaled version of \assuRef{Cover} for our results to hold.
\begin{assumption}\mbox{ }
	\begin{itemize}
		\item \newAssuRef{CoverScaled}:
		Let $R\in\Rpp$. There is $\const{cvrsc}\in\Rpp$ such that, for all $z\in[0,R]^d$, $r\in\Rpp$, we have
		\begin{equation}
			\frac1n\sum_{i=1}^n \indOfOf{\ball^d(z, r)}{x_i} \leq \max\brOf{\frac1n,\, \const{cvrsc}\br{\frac{r}{R}}^d}
			\eqcm
		\end{equation}
		where $\indOfOf{A}{\cdot}$ denotes the indicator function of a given set $A$.
	\end{itemize}
\end{assumption}
\begin{theorem}\label{thm:lower:regression}
	Assume \assuRef{Noise} with constant $\const{noise}\in\Rpp$.
	Assume \assuRef{CoverScaled} with constant $\const{cvrsc}\in\Rpp$.
	Let $p\in\Rpo$.
	Define
	\begin{align}
		C &:= c_{d,\beta,s,p} \Lbeta \br{\const{noise} \const{cvrsc} R^{-d} \Lbeta^2}^{-\frac{\beta-s}{2\beta+d}}
		\eqcm\\
		\rmax &:= c_{d,\beta} \min_{k\in\nnset\ell}\br{\frac{L_k}{\Lbeta}}^{\frac1{\beta-k}}
		\eqfs
	\end{align}
	\begin{enumerate}[label=(\roman*)]
		\item
		For all $x_0\in[0,R]^d$, we have
		\begin{equation}
			\inf_{\hat g} \sup_{\ftrue\in\FSmooth}
			\Pr_{\ftrue}\brOf{\euclOf{\hat g - D_{\mo v}^s \ftrue}(x_0) \geq C{n}^{-\frac{\beta-s}{2\beta+d}}}
			\geq \frac14
		\end{equation}
		if
		\begin{align}
			(\const{cvrsc} R^{-d} n)^{\frac{2\beta}d} &\geq c_{\beta} \const{noise} \Lbeta^2
			\eqcm\\
			n &\geq c_{d,\beta} \const{noise}^{-1} \const{cvrsc}^{-1} \Lbeta^{-2} \max\brOf{R^{-2\beta}, R^{d} \rmax^{-(2\beta+d)}}
			\eqfs
		\end{align}
		\item
		We have
		\begin{equation}
			\inf_{\hat g} \sup_{\ftrue\in\FSmooth}
			\Pr_{\ftrue}\brOf{\sup_{x\in[0,R]^d}\euclOf{\hat g - D_{\mo v}^s \ftrue}(x) \geq C\br{\frac{n}{\log n}}^{-\frac{\beta-s}{2\beta+d}}}
			\geq \frac14
		\end{equation}
		if
		\begin{align}
			n &\geq c_{d,\beta}\max\brOf{1\eqcm\const{noise}^4 R^{4(2\beta+d)}} \const{cvrsc}^{-1} \Lbeta^{-2} R^{d}
			\eqcm\\
			(\const{cvrsc} R^{-d} n)^{\frac{2\beta}d} \log(n)
			&\geq
			c_{d,\beta} \const{noise} \Lbeta^2
			\eqcm\\
			n \log\brOf{n}^{-1}
			&\geq
			c_{d,\beta} \const{noise}^{-1} \const{cvrsc}^{-1} \Lbeta^{-2} \max\brOf{R^{-2\beta}, R^{d} \rmax^{-(2\beta+d)}}
			\eqcm\\
			n &\geq c_\beta \const{cvrsc}^2 \Lbeta^4 R^{-2d}
			\eqfs
		\end{align}
		\item
		We have
		\begin{equation}
			\inf_{\hat g} \sup_{\ftrue\in\FSmooth}
			\Pr_{\ftrue}\brOf{R^{-d/p}\LpNormOf{p}{[0,R]^d}{\euclOf{\hat g - D_{\mo v}^s \ftrue}} \geq C {n}^{-\frac{\beta-s}{2\beta+d}}}
			\geq \frac14
		\end{equation}
		if
		\begin{align}
			(\const{cvrsc} R^{-d} n)^{\frac{2\beta}d} &\geq c_\beta \const{noise} \Lbeta^2\eqcm
			\\
			n &\geq c_{d,\beta} \const{noise}^{-1} \const{cvrsc}^{-1} \Lbeta^{-2} \max\brOf{R^{-2\beta}, R^{d} \rmax^{-(2\beta+d)}}
			\eqfs
		\end{align}
	\end{enumerate}
	Here, all infima range over all estimators $\hat g$ of $D_{\mo v}^s \ftrue$ based on the observations $Y_{\nnset n}$.
\end{theorem}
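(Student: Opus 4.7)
The plan is to apply \cref{thm:lower:master} with the identifications listed in the second column of \cref{tbl:symbolsMaster}. Concretely, take $\dm q := d$, $\dm u := \dm f := 1$, $\xdomain := [0,R]^d$, $\Theta := \FSmooth$, $q_k(f) := x_k$, $u_k(f) := f(x_k)$, and let the target of estimation be $f_\theta := D^s_{\mo v} f$. As null hypothesis I pick $\theta_0 := 0$, and as alternatives the scaled bumps $\theta_{z,r}(x) := \Lbeta r^\beta \hbump{d}{\beta}\brOf{\frac{x-z}{r}}$ supplied by \cref{lmm:bumpandpulse}. By linearity of $D^s_{\mo v}$ one has
\begin{equation}
    D^s_{\mo v}\theta_{z,r}(x) = r^{\beta-s}\, g\brOf{\frac{x-z}{r}}, \qquad g := \Lbeta D^s_{\mo v} \hbump{d}{\beta},
\end{equation}
which realises Assumption \ref{ass:lower:general} \ref{ass:lower:general:ref} with exponent $\zeta := \beta - s$. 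The membership $\theta_{z,r}\in\FSmooth$ is guaranteed by \cref{lmm:bumpandpulse} \ref{lmm:bumpandpulse:smooth} whenever $r \leq \rmax$, so I set $\rho_n^+ := \min(R/2, \rmax)$.

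Next, I would estimate the functionals $\psi_n,\chi_n$ from Assumption \ref{ass:lower:general} \ref{ass:lower:general:obs}. Boundedness of $\hbump{d}{\beta}$ on its compact support yields $\psi_n(r) \leq c_{d,\beta}\Lbeta r^\beta$, while \assuRef{CoverScaled} gives $\chi_n(r) \leq \max(1, \const{cvrsc}(r/R)^d n)$. The choice $\rho_n^- := R(\const{cvrsc} n)^{-1/d}$ makes the second branch dominate throughout $[\rho_n^-,\rho_n^+)$, so the assumption is satisfied with $\gamma := 2\beta+d$ and $a_n := c_{d,\beta}\const{cvrsc} R^{-d}\Lbeta^2 n$. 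Assumption \ref{ass:lower:general} \ref{ass:lower:general:pack} is immediate from a cubic sub-grid of $[0,R]^d$ with spacing $2r$, which gives $\eta(r) \geq c_d (R/r)^d$, i.e.\ $\const{pack} = c_d R^d$.

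At this point parts \ref{thm:lower:master:pointwise} and \ref{thm:lower:master:sup} of \cref{thm:lower:master} directly produce the pointwise and sup-norm lower bounds at rates $(C_1 a_n)^{-\zeta/\gamma}$ and $(C_1 a_n/\log a_n)^{-\zeta/\gamma}$ respectively; substituting $\zeta/\gamma = (\beta-s)/(2\beta+d)$ and gathering constants reproduces the claimed prefactor $C$. The quantitative hypotheses on $n$ in (i) and (ii) are just the algebraic translations of the admissibility constraint $\rho_n^- \leq (C_1 a_n)^{-1/\gamma} \leq \rho_n^+$ together with the auxiliary threshold $a_n \geq (12\gamma\const{noise}/(\dm q\const{pack}^{\gamma/\dm q}))^4$ entering part \ref{thm:lower:master:sup}, so no extra input is required.

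For the $L^p$ bound (iii) the remaining ingredient is Assumption \ref{ass:lower:general} \ref{ass:lower:general:add}. Given centers $z_1,\dots,z_J$ that are pairwise $2r$-separated I set $\theta := \sum_{j=1}^J \theta_{z_j,r}$; items (2) and (3) are automatic because $u_k$ is linear in the function while $q_k$ is independent of it, and item (1) is the linearity of $D^s_{\mo v}$. The only nontrivial point, which I expect to be the main obstacle, is checking $\theta\in\FSmooth$: disjointness of supports trivialises the local Hölder estimate, but for points $x,\tilde x$ lying in distinct bumps one must insert two intermediate points on $\partial \ball^d(z_j,r)$ and $\partial \ball^d(z_k,r)$ where all derivatives of $\hbump{d}{\beta}$ vanish, and then use the subadditivity $a^{\beta-\ell}+b^{\beta-\ell}\leq 2^{1-(\beta-\ell)}(a+b)^{\beta-\ell}$ to recombine the two pieces. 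The dimensional factor lost in this recombination is absorbed by mildly shrinking the prefactor in $\theta_{z,r}$, which only affects $C$ and preserves the rate. Part \ref{thm:lower:master:lp} of the master theorem then yields (iii), with the $R^{-d/p}$ appearing on the left-hand side exactly cancelling the $\const{pack}^{1/p} = c_d^{1/p}R^{d/p}$ inside $C_2$.
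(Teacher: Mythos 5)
Your proposal follows the paper's proof of \cref{thm:lower:regression} essentially line for line: same identification of the abstract objects of the master theorem (cf.\ \cref{tbl:symbolsMaster}), same reference function $g = \Lbeta D^s_{\mo v}\hbump{d}{\beta}$, same choices $\zeta = \beta - s$, $\gamma = 2\beta + d$, $\rho_n^-$, $\rho_n^+$, $a_n$, $\const{pack}$, and the same appeal to parts \ref{thm:lower:master:pointwise}--\ref{thm:lower:master:lp} of \cref{thm:lower:master}. The one place you go beyond the paper is in verifying \cref{ass:lower:general} \ref{ass:lower:general:add}: the paper disposes of it with the remark that ``the definitions directly imply these conditions'', whereas you correctly flag the cross-bump Hölder estimate as the genuine content and fill it by routing the comparison through boundary points where $D^\ell\hbump{d}{\beta}$ vanishes, accepting the factor $2^{1-(\beta-\ell)}$ from the subadditivity of $t\mapsto t^{\beta-\ell}$, and absorbing it into the bump amplitude --- a sound and slightly more careful treatment of a step the paper leaves implicit, costing only a harmless change of the constant $C$.
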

The relation conditions between the parameters and the sample size can be summarized in the following way.
\begin{corollary}\label{cor:regression:simple}
	Assume \assuRef{Noise} for $P^\noise$ with constant $\const{noise}\in\Rpp$ and \assuRef{CoverScaled} for $(x_i)_{i\in\nnset n}$ with constant $\const{cvrsc}\in\Rpp$. Let $p\in\Rpo$.
	Then there are constants $n_0, C\in\Rpp$ large enough, where $n_0$ is only depending on $\beta, d, \indset{L}{0}{\ell}, \Lbeta, R, \const{noise}, \const{cvrsc}$ and potentially on $p$ and $C$ is only depending on $\beta, d, \Lbeta, \const{noise}, \const{cvrsc}$ and potentially on $p$, with the following property:
	Assume $n \geq n_0$. Then
	\begin{align}
		\forall x_0\in[0,R]^d\colon&\inf_{\hat g} \sup_{\ftrue\in\FSmooth}
		\Pr_{\ftrue}\brOf{C \euclOf{\hat g - D_{\mo v}^s \ftrue}(x_0) \geq \br{\frac{R^d}n}^{\frac{\beta-s}{2\beta+d}}}
		\geq \frac14
		\eqcm\\
		&\inf_{\hat g} \sup_{\ftrue\in\FSmooth}
		\Pr_{\ftrue}\brOf{C \sup_{x\in[0,R]^d}\euclOf{\hat g -D_{\mo v}^s \ftrue}(x) \geq \br{\frac{R^d\log(n)}{n}}^{\frac{\beta-s}{2\beta+d}}}
		\geq \frac14
		\eqcm\\
		&\inf_{\hat g} \sup_{\ftrue\in\FSmooth}
		\Pr_{\ftrue}\brOf{C R^{-d/p}\LpNormOf{p}{[0,R]^d}{\euclOf{\hat g - D_{\mo v}^s \ftrue}} \geq \br{\frac{R^d}n}^{\frac{\beta-s}{2\beta+d}}}
		\geq \frac14
		\eqfs
	\end{align}
  	Here, all infima range over all estimators $\hat g$ of $D_{\mo v}^s \ftrue$ based on the observations $Y_{\nnset n}$.
\end{corollary}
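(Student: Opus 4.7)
The plan is to derive \cref{cor:regression:simple} as a direct bookkeeping consequence of \cref{thm:lower:regression}. The only substantive work is to (i) repackage the explicit constant
\begin{equation}
    C_{\mathrm{thm}} := c_{d,\beta,s,p}\, \Lbeta \br{\const{noise} \const{cvrsc} R^{-d} \Lbeta^2}^{-\frac{\beta-s}{2\beta+d}}
\end{equation}
appearing in \cref{thm:lower:regression} into the shape required by the corollary, and (ii) verify that the several smallness-in-$n$ conditions of \cref{thm:lower:regression} can be collapsed into a single threshold $n \geq n_0$.

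For step (i), I would multiply and divide inside the exponent: writing
\begin{equation}
    C_{\mathrm{thm}} \, n^{-\frac{\beta-s}{2\beta+d}}
    =
    \frac{c_{d,\beta,s,p}\, \Lbeta}{\br{\const{noise} \const{cvrsc} \Lbeta^2}^{\frac{\beta-s}{2\beta+d}}}
    \br{\frac{R^d}{n}}^{\frac{\beta-s}{2\beta+d}}
    \eqcm
\end{equation}
the $R^d$-dependence moves entirely into the $(R^d/n)^{\cdot}$ factor, so the remaining prefactor depends only on $\beta, d, s, p, \Lbeta, \const{noise}, \const{cvrsc}$. Defining $C$ in the corollary to be the reciprocal of this prefactor (or any larger value) yields the desired inequality $C \, \|\cdot\| \geq (R^d/n)^{(\beta-s)/(2\beta+d)}$ from the inequality $\|\cdot\| \geq C_{\mathrm{thm}} n^{-(\beta-s)/(2\beta+d)}$ supplied by the theorem. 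The same rewriting handles the sup-norm case with the extra $\log n$ factor and the $L^p$ case after dividing through by $R^{d/p}$.

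For step (ii), I would inspect each of the side conditions listed in \cref{thm:lower:regression} --- namely $(\const{cvrsc} R^{-d} n)^{2\beta/d} \geq c_\beta \const{noise} \Lbeta^2$, $n \geq c_{d,\beta} \const{noise}^{-1} \const{cvrsc}^{-1} \Lbeta^{-2} \max(R^{-2\beta}, R^d \rmax^{-(2\beta+d)})$, together with the additional $n \geq c \const{cvrsc}^2 \Lbeta^4 R^{-2d}$ and $n/\log n \geq \dots$ conditions for the sup-norm bound. Each is of the form $n \geq g(\beta, d, \indset{L}{0}{\ell}, \Lbeta, R, \const{noise}, \const{cvrsc})$ (with $p$ potentially entering only through $\rmax$-independent combinatorial factors for the $L^p$ case), and each bound involving $n/\log n$ is satisfied for all $n$ larger than some threshold with the same parameter dependence. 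Taking $n_0$ to be the maximum of all these thresholds delivers the claimed dependence of $n_0$ in the corollary.

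There is no genuine mathematical obstacle here; the only thing to be careful about is respecting the declared parameter dependencies. In particular, I would double-check that the constant $C$ can be made independent of $R$ and of $L_0, \dots, L_\ell$ (which it is, since after extracting the $(R^d/n)^{(\beta-s)/(2\beta+d)}$ factor the remaining constant involves only $\const{noise}, \const{cvrsc}, \Lbeta, \beta, d, s, p$), while $n_0$ is allowed to depend on all quantities including $R$ and the $L_k$ through $\rmax$. The conclusion then follows from \cref{thm:lower:regression} applied separately in each of the three regimes.
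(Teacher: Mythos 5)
Your proposal is correct and is exactly the (implicit) route the paper intends: the paper states \cref{cor:regression:simple} without a written proof, leaving it as the algebraic repackaging of \cref{thm:lower:regression} that you carry out — extracting $R^{de}$ from $C_{\mathrm{thm}}$ so that the $R$-dependence is absorbed into $(R^d/n)^{(\beta-s)/(2\beta+d)}$, and taking $n_0$ to be the maximum of the finitely many parameter-dependent thresholds appearing as side conditions. One small caveat worth noting for precision: the theorem's constant carries a subscript $s$ (via $c_{d,\beta,s,p}$), while the corollary's stated dependency list for $C$ omits $s$; since $s\in\N_0$ with $s<\beta$, there are only finitely many admissible $s$ for a fixed $\beta$, so one can absorb this into $C$ by taking a maximum over $s$, but it would be cleaner to either add $s$ to the dependency list or say so explicitly.
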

\begin{proof}[Proof of \cref{thm:lower:regression}]
    We intend to apply \cref{thm:lower:master}. Therefore we need to define the necessary objects in this setting. See \cref{tbl:symbolsMaster} for an overview. Firstly, let $h := \hbump{d}{\beta}\in\Sigma^{d\to1}(\beta, 1)$ be the bump function from \cref{lmm:bumpandpulse} and $\rmax = \rmax\brOf{\beta, \indset{L}{0}{\ell}, \Lbeta, h, 0}$ from the same lemma. The dimension-parameters are in this situation given by
    \begin{equation}
        \dm q := d\text{ and }\dm u := \dm f := 1.
    \end{equation}
    The \textit{domain of interest} is given by $\xdomain := [0,R]^d$ and following the model description above, we set
    \begin{equation}
        \Theta := \Sigma^{d\to1}(\beta, \indset{L}{0}{\ell}, \Lbeta)
        \eqfs
    \end{equation}
    As reference function, we define
    \begin{equation}
        g(x) := \Lbeta D^s_{\mo v} h(x)
    \end{equation}
    and set $v_0:= 0$. As hypotheses $\theta_0$ and $\theta_{z, r}$ we set
    \begin{equation}
        \theta_0 := 0
        \qquad\text{and}\qquad
		\theta_{z, r}(x) := \Lbeta r^\beta h\brOf{\frac{x-z}{r}}
		\eqfs
    \end{equation}
    Following the model description, for each $\theta\in\Theta$, we define the attributes
    \begin{enumerate}[label = (\roman*)]
        \item $q_k(\theta) := x_k$,\hfill(Location)
        \item $u_k(\theta) := \theta(x_k)$,\hfill(Observation)
        \item $f_\theta  :=  D^s_{\mo v}\theta$.\hfill(Function to Estimate)
    \end{enumerate}
    We set $\zeta := \beta-s$.
	Moreover, we set the range of the radius $r$ as
    \begin{equation}
        \rho_n^- :=  R (\const{cvrsc} n)^{-\frac1d}
        \qquad\text{and}\qquad
		\rho_n^+ := \min\brOf{R/2, \rmax}
		\eqfs
    \end{equation}
    We will show
	\begin{align}
		\psi_{n}(r) &\leq \supNormof{h} \Lbeta r^\beta\eqcm\\
		\chi_{n}(r) &\leq \const{cvrsc} R^{-d} r^d n\eqcm
	\end{align}
    such that
    \begin{equation}
        a_n :=  \supNormof{h}^2 \const{cvrsc} \Lbeta^2 R^{-d}n
    \end{equation}
    as well as
    \begin{equation}
        \gamma := 2\beta+d
        \eqfs
    \end{equation}
	In order to apply \cref{thm:lower:master}, we need to check \cref{ass:lower:general}:
	\begin{enumerate}[label=(\roman*)]
		\item[\ref{ass:lower:general:ref}:]
			The conditions on $q_k$ and $u_k$, as well as $\theta_0\in\Theta$ are trivial.
			By \cref{lmm:bumpandpulse} \ref{lmm:bumpandpulse:smooth}, we have $\theta_{z,r}\in\Theta$ if $r\leq \rmax$, which is true for $r\leq \rho^+_n$. Denote the $s$-th directional derivative with respect to $x$ as $D^s_{x,\mo v}$. Then, for all $x\in\xdomain$,
			\begin{align}
				f_{\theta_{z, r}}(x)
				&=
				(D^s_{\mo v}\theta_{z, r})(x)
				\\&=
				\Lbeta r^\beta D^s_{x,\mo v} h\brOf{\frac{x-z}{r}}
				\\&=
				\Lbeta r^{\beta-s} (D_{\mo v}^s h)\brOf{\frac{x-z}{r}}
				\\&=
				r^{\beta-s} g\brOf{\frac{x-z}{r}}
				\eqfs
			\end{align}
		\item[\ref{ass:lower:general:obs}:]
			We have
			\begin{align}
				\psi_{n}(r)
				&=
				\sup_{z\in\R^d} \sup_{k\in\nnset{n}} \Lbeta r^\beta h\brOf{\frac{x_k-z}{r}}
				=
				\supNormof{h} \Lbeta r^\beta
			\end{align}
			and, using \assuRef{CoverScaled},
			\begin{align}
				\chi_{n}(r)
				&=
				\sup_{z\in\R^d} \#\setByEle{k\in\nnset n}{x_k \in \ball^d\brOf{z, r}}
				\leq
				\const{cvrsc} R^{-d} r^d n
			\end{align}
			if $\const{cvrsc} R^{-d} r^d n \geq 1$, which is equivalent to $r \geq \rho_n^-$.
			Thus, we have
			\begin{align}
				\psi_{n}(r)^2 \chi_{n}(r)
				&\leq
				\supNormof{h}^2 \const{cvrsc} \Lbeta^2 R^{-d} r^{2\beta+d}  n
				=
				a_n r^\gamma
				\eqfs
			\end{align}
		\item[\ref{ass:lower:general:pack}:]
			For $r \leq R/2$, which is implied by $r \leq \rho^+_n$, we have
			\begin{equation}
				\eta(r) \geq \left\lfloor\frac{R}{2r}\right\rfloor^d \geq c_d R^d r^{-d}
				\eqfs
			\end{equation}
			Hence, we can set $\const{pack} := c_d R^d$.
		\item[\ref{ass:lower:general:add}:]
			The definitions above directly imply these conditions.
	\end{enumerate}
	By \cref{thm:lower:master}, we obtain the following:
	\begin{enumerate}[label=(\roman*)]
		\item
		By \cref{thm:lower:master} \ref{thm:lower:master:pointwise}, we obtain, for $n$ large enough,
		\begin{align}
			\frac14
			&\leq
			\inf_{\festi} \sup_{\theta\in\Theta} \Pr_\theta\brOf{\euclOf{\festi - f_\theta}(x_0) \geq \frac12 \supNormOf{g} \br{2 \const{noise} a_n}^{-\frac\zeta\gamma}}
			\\&\leq
			\inf_{\festi} \sup_{\theta\in\Theta}
			\Pr_\theta\brOf{\euclOf{\festi - f_\theta}(x_0) \geq C{n}^{-\frac{\beta-s}{2\beta+d}}}
		\end{align}
		with
		\begin{equation}
			C := c_{d, \beta, s} \Lbeta \br{\const{noise} \const{cvrsc} R^{-d} \Lbeta^2}^{-\frac{\beta-s}{2\beta+d}}
			\eqfs
		\end{equation}
		The conditions on $n$, which are all fulfilled if $n$ is large enough, are as follows:
		\begin{enumerate}[label=(\Roman*)]
			\item
			\begin{align}
				&
				(\rho_n^-)^{-\gamma} \geq 2 \const{noise} a_n \geq (\rho_n^+)^{-\gamma}
				\\\Leftrightarrow\qquad&
				(\const{cvrsc} R^{-d} n)^{\frac{2\beta+d}d} \geq 2 \supNormof{h}^2 \const{noise} \const{cvrsc} \Lbeta^2 R^{-d} n \geq \max\brOf{2R^{-1}, \rmax^{-1}}^{2\beta+d}
				\\\Leftrightarrow\qquad&
				(\const{cvrsc} R^{-d} n)^{\frac{2\beta}d} \geq c_{\beta} \const{noise} \Lbeta^2
				\\\quad\text{and}\quad&
				n \geq c_\beta \const{noise}^{-1} \const{cvrsc}^{-1} \Lbeta^{-2} \max\brOf{R^{-2\beta}, R^{d} \rmax^{-(2\beta+d)}}
				\eqfs
			\end{align}
		\end{enumerate}
		\item
		By \cref{thm:lower:master} \ref{thm:lower:master:sup}, we obtain, for $n$ large enough,
		\begin{align}
			\frac14
			&\leq
			\inf_{\festi} \sup_{\theta\in\Theta} \Pr_\theta\brOf{\sup_{x\in\xdomain}\euclOf{\festi - f_\theta}(x) \geq  \frac12 \supNormOf{g} \br{\frac{12 \gamma \const{noise}}{d} a_n \log(a_n)^{-1}}^{-\frac1\gamma}}
			\\&\leq
			\inf_{\festi} \sup_{\theta\in\Theta} \Pr_\theta\brOf{\sup_{x\in\xdomain}\euclOf{\festi - f_\theta}(x) \geq  A_1 \br{\frac{n}{\log(A_2 n)}}^{-\frac{\beta-s}{2\beta+d}}}
		\end{align}
		with
		\begin{align}
			A_1 &:=
				\frac12 \supNormof{D_{\mo v}^s h} \Lbeta \br{\frac{12 (2\beta + d) \const{noise}}{d} \const{cvrsc} R^{-d} \supNormof{h}^2 L^2}^{-\frac{\beta-s}{2\beta+d}}
			\\
			A_2 &:= \const{cvrsc} R^{-d} \supNormof{h}^2 \Lbeta^2
			\eqfs
		\end{align}
		If  $n \geq A_2^2$, then $\log(A_2 n) \geq \frac12 \log(n)$. In this case we obtain
		\begin{equation}
			\frac14 \leq
				\inf_{\festi} \sup_{\theta\in\Theta} \Pr_\theta\brOf{\sup_{x\in\xdomain}\euclOf{\festi - f_\theta}(x) \geq  C \br{\frac{n}{\log(n)}}^{-\frac{\beta-s}{2\beta+d}}}
		\end{equation}
		with
		\begin{equation}
			C := c_{d, \beta, s} \Lbeta \br{\const{noise}\const{cvrsc} R^{-d} \Lbeta^2}^{-\frac{\beta-s}{2\beta+d}}
			\eqfs
		\end{equation}
		The conditions on $n$, which are all fulfilled if $n$ is large enough, are as follows:
		\begin{enumerate}[label=(\Roman*)]
			\item
			\begin{align}
				&
				a_n \geq \max\brOf{2\eqcm\br{\frac{12 \gamma \const{noise}}{d\const{pack}^{\frac{\gamma}{d}}}}^{4}}
				\\\Leftrightarrow\qquad&
				\supNormof{h}^2 \const{cvrsc} \Lbeta^2 R^{-d}n\geq c_{d,\beta}\max\brOf{1\eqcm\br{\frac{\const{noise}}{R^{2\beta+d}}}^{4}}
				\\\Leftrightarrow\qquad&
				n\geq c_{d,\beta}\max\brOf{1\eqcm\const{noise}^4 R^{4(2\beta+d)}} \const{cvrsc}^{-1} \Lbeta^{-2} R^{d}
			\end{align}
			\item
			\begin{align}
				&
				(\rho_n^-)^{-\gamma} \geq \frac{12 \gamma \const{noise}}{d} a_n \log(a_n)^{-1} \geq (\rho_n^+)^{-\gamma}
				\\\Leftrightarrow\qquad&
				(\const{cvrsc} R^{-d} n)^{\frac{2\beta+d}d}
				\geq
				c_{d,\beta} \const{noise} \const{cvrsc} \Lbeta^2 R^{-d} n \log\brOf{A_2 n}^{-1}
				\geq
				\max\brOf{2R^{-1}, \rmax^{-1}}^{2\beta+d}
				\\\Leftrightarrow\qquad&
				(\const{cvrsc} R^{-d} n)^{\frac{2\beta}d} \log(n)
				\geq
				c_{d,\beta} \const{noise} \Lbeta^2
				\\\quad\text{and}\quad&
				n \log\brOf{n}^{-1}
				\geq
				c_{d,\beta} \const{noise}^{-1} \const{cvrsc}^{-1} \Lbeta^{-2} \max\brOf{R^{-2\beta}, R^{d} \rmax^{-(2\beta+d)}}
			\end{align}
			\item
			\begin{align}
				&
				n \geq A_2^2
				\\\Leftrightarrow\qquad&
				n \geq \br{\const{cvrsc} R^{-d} \supNormof{h}^2 \Lbeta^2}^2
				\\\Leftrightarrow\qquad&
				n \geq c_\beta \const{cvrsc}^2 \Lbeta^4 R^{-2d}
			\end{align}
		\end{enumerate}
		\item
		By \cref{thm:lower:master} \ref{thm:lower:master:lp}, we obtain, for $n$ large enough
		\begin{align}
			\frac14
			&\leq
			\inf_{\festi} \sup_{\theta\in\Theta} \Pr_\theta\brOf{\LpNormOf{p}{\xdomain}{\euclOf{\festi - f_\theta}} \geq  2^{-1-3/p} \const{pack}^{1/p} \LpNormOf{p}{\R^d}{g} \br{36 \const{noise} a_n}^{-\frac\zeta\gamma}}
			\\&\leq
			\inf_{\festi} \sup_{\theta\in\Theta} \Pr_\theta\brOf{R^{-\frac dp}\LpNormOf{p}{\xdomain}{\euclOf{\festi - f_\theta}} \geq  C n^{-\frac{\beta-s}{2\beta+d}}}
		\end{align}
		with
		\begin{equation}
			C := c_{d, \beta, s, p} \Lbeta \br{\const{noise} \const{cvrsc} R^{-d} \Lbeta^2}^{-\frac{\beta-s}{2\beta+d}}
			\eqfs
		\end{equation}
		The conditions on $n$, which are all fulfilled if $n$ is large enough, are as follows:
		\begin{enumerate}[label=(\Roman*)]
			\item
			\begin{align}
				&
				(\rho_n^-)^{-\gamma} \geq 36 \const{noise} a_n \leq (\rho_n^+)^{-\gamma}
				\\\Leftrightarrow\qquad&
				(\const{cvrsc} R^{-d} n)^{\frac{2\beta+d}d} \geq 36 \supNormof{h}^2 \const{noise} \const{cvrsc} \Lbeta^2 R^{-d} n \geq \max\brOf{2R^{-1}, \rmax^{-1}}^{2\beta+d}
				\\\Leftrightarrow\qquad&
				(\const{cvrsc} R^{-d} n)^{\frac{2\beta}d} \geq c_\beta \const{noise} \Lbeta^2
				\\\quad\text{and}\quad&
				n \geq c_{d,\beta} \const{noise}^{-1} \const{cvrsc}^{-1} \Lbeta^{-2} \max\brOf{R^{-2\beta}, R^{d}}
			\end{align}
			\item
			\begin{align}
				&
				36 \const{noise} a_n \geq \br{\frac18 \const{pack}}^{-\frac\gamma d}
				\\\Leftrightarrow\qquad&
				n \geq c_{d,\beta} \const{noise}^{-1} \const{cvrsc}^{-1} \Lbeta^{-2} R^{-2\beta}
			\end{align}
		\end{enumerate}
	\end{enumerate}
\end{proof}
\section{Proofs for Section \ref{sec:stubble}}\label{sec:app:stubble}
\subsection{Stubble Model -- Lower Bound -- Deterministic} \label{sec:app:stubble:deterministic}
\begin{theorem}\label{thm:stubble:deterministic:details}
	Let $d\in\N$, $\beta\in\Rpo$, $\ell := \llfloor\beta\rrfloor$, $L_0,\dots,L_\ell,\Lbeta \in\Rpp$.
	Set $\FSmooth := \Sigma^{d\to d}(\beta, \indset{L}{0}{\ell}, \Lbeta)$.
	Let $\stepsize \in\Rpp$ with $\stepsize \leq c L_0^{-1}$.
	Set $L := c_\beta \min\cbOf{1, L_0,\dots,L_\ell,\Lbeta}$.
	Let $x_0\in\R^d$, $p\in\Rpo$.
	Then, there are $f_0, f_1 \in \FSmooth$ with following property: We have
	\begin{equation}
		U(f_0, x, i\stepsize) = U(f_1, x, i\stepsize)
	\end{equation}
	for all $i\in\Z$, $x\in\R^d$ and
	\begin{equation}
		\euclOf{f_0 - f_1}(x_0) \geq c_{\beta} L L_0^{\beta+1} \stepsize^{\beta}
		\qquad\text{and}\qquad
		\LpNormOf{p}{\hypercube}{\euclOf{f_0 - f_1}} \geq c_{d,\beta,p} L L_0^{\beta+1} \stepsize^{\beta}
		\eqfs
	\end{equation}
\end{theorem}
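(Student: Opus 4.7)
The plan is to reduce the statement to dimension one and embed trivially into $\R^d$ by setting all components but the first to zero (as noted in the proof sketch of \cref{thm:stubble:deterministic}). In one dimension, fix a smooth, non-constant, $1$-periodic prototype $h^\star\in\smoothC(\R)$ with $\supNormof{(h^\star)'}=1$ (e.g.\ a suitably normalised trigonometric function), and parameters $v,A\in\Rpp$ and a shift $c\in\R$ to be chosen, together with $T := v\stepsize$. Set
$h(x) := A\,h^\star\bigl((x-c)/T\bigr)$, $g(x) := x + h(x)$, $f_0(x) := v$, and $f_1(x) := v\,g'(g^{-1}(x))$. A direct computation gives $U(f_0, x, t) = x + vt$ and $U(f_1, x, t) = g(g^{-1}(x) + vt)$. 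Since $h$ has period $T$, the identity $g(y + iT) = y + iT + h(y) = g(y) + iT$ for $i\in\Z$ yields $U(f_1, x, i\stepsize) = g(g^{-1}(x) + iv\stepsize) = x + iv\stepsize = U(f_0, x, i\stepsize)$.

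Next I would pick the scaling. Choose $v$ proportional to $L_0$ (so that $\supNormof{f_0} \leq L_0$) and $A = c_\beta\, L\, \stepsize^{\beta+1}$, with the numerical constant small enough that $A/T \leq 1/2$. The assumption $\stepsize \leq c L_0^{-1}$ ensures $T = v\stepsize$ is bounded above. Under $A/T \leq 1/2$, $g$ is a smooth diffeomorphism whose inverse has $C^k$-norms bounded by universal constants, and one has $f_1 - f_0 = v\,h'\circ g^{-1}$ with $\supNormof{h'} = A/T\cdot\supNormof{(h^\star)'}$. The free shift $c$ is used for the pointwise bound: for any prescribed $x_0\in\hypercube$, choose $c$ so that $g^{-1}(x_0) - c\equiv 0 \pmod T$ and $\abs{(h^\star)'(0)} = \supNormof{(h^\star)'}=1$; then
$\euclOf{f_0-f_1}(x_0) = v\,\abs{h'(g^{-1}(x_0))} = vA/T = A/\stepsize$,
which after substitution is $\geq c_\beta L L_0^{\beta+1}\stepsize^\beta$. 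The $L^p$-bound over $\hypercube$ follows from $T$-periodicity of $h'\circ g^{-1}$: a $\stepsize$-independent fraction of the cube realises $\abs{h'\circ g^{-1}}\geq \tfrac12\supNormof{h'}$, giving $\LpNormOf{p}{\hypercube}{\euclOf{f_0-f_1}} \geq c_{d,\beta,p} L L_0^{\beta+1}\stepsize^\beta$.

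The technical heart, and the main obstacle, is verifying $f_1\in\FSmooth$, i.e.\ tracking all of $\supNormof{D^k f_1}\leq L_k$ for $k\in\nnzset{\ell}$ and the Hölder-$(\beta-\ell)$ bound on $D^\ell f_1$ with constant $\Lbeta$, with a \emph{single} choice of $A$. Writing $f_1 - f_0 = v\,h'\circ g^{-1}$ and applying Faà di Bruno, each $D^k(f_1-f_0)$ is a polynomial in derivatives of $h'$ of order $\leq k$ and derivatives of $g^{-1}$ of order $\leq k$. Since $\supNormof{h'}\leq 1/2$, the $C^k$-norms of $g^{-1}$ are universally bounded; and $\supNormof{h^{(k)}} = (A/T^k)\supNormof{(h^\star)^{(k)}}$, while the Hölder-$(\beta-\ell)$ seminorm of $h^{(\ell)}$ is of order $A/T^\beta$. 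Consequently $\supNormof{D^k f_1}\leq c_{k,\beta} v A/T^{k+1}$ and the Hölder seminorm of $D^\ell f_1$ is $\leq c_\beta v A/T^{\beta+1}$. Substituting $v\sim L_0$, $T\sim L_0\stepsize$, $A\sim L\stepsize^{\beta+1}$ and using $L\leq c_\beta\min(1,L_1,\dots,L_\ell,\Lbeta)$ together with $v\stepsize\leq c$, each of these upper bounds is dominated by the corresponding $L_k$ or $\Lbeta$. The bookkeeping is routine but delicate: one must verify that the binding constraint is indeed the Hölder one (this uses $v\stepsize\leq 1$ so that $(v\stepsize)^{\beta-k}\leq 1$ for $k\leq \ell$), and that all implicit constants can be absorbed into the single small factor $c_\beta$ defining $L$.
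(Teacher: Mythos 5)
Your construction is essentially the paper's: reduce to $d=1$, take $f_0$ constant, set $g(x)=x+h(x)$ with $h$ a scaled, shifted, $T$-periodic smooth prototype where $T$ equals the displacement of an $f_0$-trajectory over one time step, define $f_1 = v\,g'\circ g^{-1}$, use $g(y+iT)=g(y)+iT$ to get coinciding observations, align the peak of $h'$ with $x_0$ for the pointwise bound, invoke periodicity for the $L^p$ bound, and control the derivatives of $g'\circ g^{-1}$ by Faà di Bruno — the last step is precisely what the paper isolates as \cref{lmm:chain:remainder}. Two minor slips in your bookkeeping: the amplitude should scale as $A\asymp L\,T^{\beta+1}\asymp L(L_0\stepsize)^{\beta+1}$ rather than $L\stepsize^{\beta+1}$, otherwise $vA/T = A/\stepsize$ does not produce the stated $L_0^{\beta+1}$ factor; and the claim that the $C^k$-norms of $g^{-1}$ are universally bounded by $\supNormof{h'}\leq 1/2$ alone is not quite right — they also depend on $\supNormof{h^{(j)}}\asymp A/T^j$ for $j\leq k$, which is exactly the inductive estimate \cref{lmm:chain:remainder} tracks (and which your scaling does keep $O(1)$, so the conclusion is unaffected).
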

\begin{proof}[Proof of \cref{thm:stubble:deterministic:details}]\mbox{}
\begin{enumerate}[label=(\alph*)]
	\item \textbf{Construction:}\\
	Let $K \colon \R \to \Rp$ be a function with $K \in \smoothC(\R)$ and $K(x)>0 \Leftrightarrow x \in (0,1)$.
	Define $\Kper(x) := K(x - \lfloor x \rfloor)$.
	Set $r:= \frac23 L_0 \stepsize$. Let $z\in\R$ to be chosen later. Define $g\colon\R\to\R$ by
	\begin{equation}
		g(x) := x + L r^{\beta+1} \Kper\brOf{\frac{x-z}{r}}\eqfs
	\end{equation}
	Then $g\in \smoothC(\R)$ with
	\begin{equation}
		g\pr(x) = 1 + L r^\beta  \Kper\pr\brOf{\frac{x-z}{r}}
		\eqfs
	\end{equation}
	As  $\stepsize \leq c L_0^{-1}$, we have $r^\beta \leq c_\beta$. Thus, we can assume that $L$ is defined such that $L r^\beta \supNormof{K\pr} \leq \frac12$. Then $g\pr(x) \in [\frac12, \frac 32]$. In particular, $g$ is a $\smoothC$-diffeomorphism of $\R$, i.e., the inverse $g^{-1}\colon\R\to\R$ exists and $g^{-1}\in \smoothC(\R)$.
	Define
	\begin{equation}
		f_0(x) := \frac23 L_0
		\qquad\text{and}\qquad
		f_1(x) := \frac23 L_0 g\pr(g^{-1}(x))\eqfs
	\end{equation}
	To find the solution to the ODE $\dot u = f_1(u)$, we calculate
	\begin{align}
		\frac{\dl}{\dl t}g\brOf{g^{-1}(x) + \frac23 L_0 t}
		=
		\frac23 L_0 g\pr\brOf{g^{-1}(x) + \frac23 L_0 t}
		=
		f_1\brOf{g\brOf{g^{-1}(x) + \frac23 L_0 t}}
		\eqfs
	\end{align}
	Thus,
	\begin{equation}
		 U(f_0, x, t) = x + \frac23 L_0 t
		 \qquad\text{and}\qquad
		 U(f_1, x, t) = g\brOf{g^{-1}(x) + \frac23 L_0 t}\eqcm
	\end{equation}
	for all $t,x\in\R$.
	\item \textbf{Smoothness:}\\
	Clearly, we have $f_0\in\Sigma^{1\to 1}(\beta, \indset{L}{0}{\ell}, \Lbeta)$. Furthermore, as $g\pr(x) \in [\frac12, \frac 32]$,
	\begin{equation}
		\supNormof{f_1} = \frac23 L_0 \supNormof{g\pr} \leq L_0
		\eqfs
	\end{equation}
	By \cref{lmm:chain:remainder} below, $f_1 \in\Sigma^{1\to 1}(\beta, \indset{L}{0}{\ell}, \Lbeta)$ for $r \leq c_\beta$ and $L \leq c_\beta \min_{k\in\nnset k\cup\cb{\beta}} L_k$, which is assumed.
	\item \textbf{Error at $x_0$:}\\
	For the difference at $x_0$ between the two constructed model functions, we obtain
	\begin{equation}
		\abs{f_0(x_0) - f_1(x_0)} = \frac23 L_0 \abs{1 - g\pr(g^{-1}(x_0))}
		\eqfs
	\end{equation}
	As we can set $z$ arbitrarily, $g\pr$ is continuous, and $g^{-1}$ is a diffeomorphism of $\R$, we can choose $z$ so that
	\begin{equation}
		\abs{1 - g\pr(g^{-1}(x_0))} = \sup_{x\in\R} \abs{1 - g\pr(x)}
		\eqfs
	\end{equation}
	With this choice, we obtain
	\begin{align}
		\abs{f_0(x_0) - f_1(x_0)}
		&=
		\frac23 L_0 L \supNormOf{K\pr} r^\beta
		\\&=
		L \supNormOf{K\pr} \br{\frac23 L_0}^{\beta+1} \stepsize^\beta
		\\&=
		c_\beta L L_0^{\beta+1} \stepsize^\beta
		\eqfs
	\end{align}
	\item \textbf{Error in $L_p$:}\\
	Because of the assumption $\stepsize \leq c L_0^{-1}$, we can continue with $r \leq 1$. Then
	\begin{align}
		\int_{[0,1]} \abs{f_0(x) - f_1(x)}^p \dl x
		&\geq
		\sum_{k=0}^{\lfloor1/r\rfloor-1} \int_{[z + kr,z + (k+1)r]} \abs{f_0(x) - f_1(x)}^p \dl x
		\\&=
		\lfloor1/r\rfloor \int_{[z,z + r]} \abs{f_0(x) - f_1(x)}^p \dl x
		\\&=
		\lfloor1/r\rfloor \br{\frac23 L_0}^p \int_{[z,z + r]}  \abs{1 - g\pr(g^{-1}(x))}^p \dl x
		\\&=
		\lfloor1/r\rfloor \br{\frac23 L_0 L r^\beta }^p \int_{[z,z + r]}  \abs{\Kper\pr\brOf{\frac{g^{-1}(x)-z}{r}}}^p \dl x
		\eqfs
	\end{align}
	As $g\pr(x) \in [\frac12, \frac 32]$, we can find $c_p$ independent of $L, r, \beta, z$ such that
	\begin{equation}
		\int_{[z,z + r]}  \abs{\Kper\pr\brOf{\frac{g^{-1}(x)-z}{r}}}^p \dl x \geq c_p
		\eqfs
	\end{equation}
	Thus,
	\begin{equation}
		\br{\int_{[0,1]} \abs{f_0(x) - f_1(x)}^p \dl x}^{\frac1p} \geq c_{\beta, p} L L_0^{\beta+1} \stepsize^\beta
		\eqfs
	\end{equation}
	\item \textbf{Identical Observations:}\\
	The function $x \mapsto g(x) - x$ is $r$-periodic, i.e., $g(x + ir) - (x + ir) = g(x) - x$ for all $i \in \mathbb Z$. Thus,
	\begin{equation}
		g(g^{-1}(x) + i r) = (g^{-1}(x) + i r) + g(g^{-1}(x)) - g^{-1}(x) = x + i r\eqfs
	\end{equation}
	Hence,
	\begin{align}
		U(f_1, x, i \stepsize)
		&=
		g\brOf{g^{-1}(x) + \frac23 L_0 i \stepsize}
		\\&=
		g(g^{-1}(x) + i r)
		\\&=
		x  + i r
		\\&=
		x + \frac23 L_0 i \stepsize
		\\&=
		U(f_0, x, i \stepsize)
		\eqfs
	\end{align}
	\item \textbf{Extension to arbitrary dimension $d$:}\\
	So far, we have shown the theorem for $d=1$. For $d>1$, extend the two model functions constructed above by letting them depend only on the first dimension and setting their values to 0 in all but the first dimension.
\end{enumerate}
\end{proof}
The proof of \cref{thm:stubble:deterministic:details} uses following lemma, which establishes the smoothness of the model function $f_1(x) = \frac23 L_0 s(x)$.
\begin{lemma}\label{lmm:chain:remainder}
	Let $K \colon \R \to \Rp$ be a function with $K \in \smoothC(\R)\cap\Sigma(\beta+1, L_K)$.
	Let $\const{kern} := \sup_{k\in\nnzset{\ell+1}}\supNormOf{K^{(k)}}$.
	Let $z\in\R$, $L\in\Rpp$, $\beta\in\Rpo$.
	Set $\ell:=\llfloor \beta\rrfloor$.
	Let $r_0\in\Rpp$ and $r\in(0,r_0]$.
	Define $g\colon\R\to\R$ by
	\begin{equation}
		g(x) := x + L r^{\beta+1} K\brOf{\frac{x-z}{r}}
		\eqfs
	\end{equation}
	Assume there is $g_0\in\Rpo$ with $g_0^{-1} \leq g\pr(x) \leq g_0$ for all $x\in\R$ and denote the inverse function of $g$ as $g^{-1}$.
	Define
	\begin{equation}
		s(x) := g\pr(g^{-1}(x))
		\eqfs
	\end{equation}
	Then there is $C\in\Rpp$ large enough, depending only on $g_0, r_0, \ell, \const{kern}$ with the following property:
	We have
	\begin{equation}
		s \in\Sigma^{1\to 1}(\beta, \indset{L}{0}{\ell}, \Lbeta)
	\end{equation}
	with
	$L_0 = C$, $L_k = C L r^{\beta-k}$ for $k\in\nnset\ell$, and $L_\beta = C L L_K$.
\end{lemma}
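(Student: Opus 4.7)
The plan is to change variables via $y := g^{-1}(x)$, so that $s(x) = g'(y)$, and then express $s^{(k)}(x)$ as a finite sum of rational combinations of derivatives of $g$ evaluated at $y$. Because $g(y) = y + Lr^{\beta+1} K((y-z)/r)$, for $j\geq 2$ we have the explicit scaling
\begin{equation}
	g^{(j)}(y) = L r^{\beta+1-j} K^{(j)}\brOf{\tfrac{y-z}{r}},
\end{equation}
so $\supNormof{g^{(j)}} \leq \const{kern} L r^{\beta+1-j}$. Together with $g_0^{-1} \leq g' \leq g_0$ (which also forces $Lr^\beta \leq C_0$ for a constant $C_0$ depending only on $g_0$ and $\supNormof{K'}$), this is the only quantitative input we will need.

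First I would prove by induction on $k \in \nnset{\ell}$ the structural claim that
\begin{equation}
	s^{(k)}(x) \;=\; \sum_{(j_1,\dots,j_p)\,\in\, A_k} \frac{c_{j_1,\dots,j_p}\, g^{(j_1)}(y)\cdots g^{(j_p)}(y)}{g'(y)^{p+k-1}},
\end{equation}
where $A_k$ is the finite set of tuples with $j_i\ge 2$ and $\sum_i(j_i-1)=k$, and $c_{j_1,\dots,j_p}\in\Z$ are universal coefficients. The base case $s'(x) = g''(y)/g'(y)$ is immediate from $\frac{dy}{dx} = 1/g'(y)$, and the induction step differentiates one more time by the chain rule, producing either a $g^{(j_i+1)}$ factor (shifting the tuple) or an extra $g''/g'$ from differentiating the denominator. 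Both operations preserve the invariant $\sum(j_i-1)=k$.

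Plugging in the scaling yields, for each term,
\begin{equation}
	\frac{\abs{\prod_i g^{(j_i)}(y)}}{\abs{g'(y)}^{p+k-1}} \;\le\; g_0^{p+k-1}\const{kern}^p L^p r^{p(\beta+1)-\sum_i j_i} \;=\; g_0^{p+k-1}\const{kern}^p L^p r^{p\beta-k}.
\end{equation}
Writing $L^p r^{p\beta-k} = (Lr^\beta)^{p-1}\cdot L r^{\beta-k} \leq C_0^{p-1} Lr^{\beta-k}$ and summing over the finitely many tuples in $A_k$ gives the bound $\supNormof{s^{(k)}}\leq C L r^{\beta-k}$ with $C$ depending only on $g_0, \ell, \const{kern}$. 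The bound $\supNormof{s}\leq g_0$ handles $k=0$.

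For the Hölder bound on $s^{(\ell)}$, I would fix $x_1,x_2$, set $y_i := g^{-1}(x_i)$, note $|y_1-y_2|\leq g_0|x_1-x_2|$, and bound each term of the representation for $s^{(\ell)}$ by telescoping: for a product $\prod_i g^{(j_i)}(y)\cdot g'(y)^{-(p+\ell-1)}$, replace one factor at a time. Whenever the replaced factor has $j_i\leq \ell$, the intermediate difference is controlled by $\supNormof{g^{(j_i+1)}}\cdot|y_1-y_2|\leq \const{kern}Lr^{\beta-j_i}|y_1-y_2|$, and for the factor $1/g'$ we use $|1/g'(y_1)-1/g'(y_2)|\leq g_0^2\supNormof{g''}|y_1-y_2|$. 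Since $\sum(j_i-1)=\ell<\beta$, every such ``full-derivative'' replacement contributes a factor $|y_1-y_2|\leq (r_0\vee1)^{1-(\beta-\ell)}|y_1-y_2|^{\beta-\ell}$, which is controlled by constants. The crucial case is when $(j_1,\dots,j_p)$ contains an index $j_i=\ell+1$: here the remaining invariant forces $p=1$, and $K\in\Sigma(\beta+1,L_K)$ gives
\begin{equation}
	\abs{g^{(\ell+1)}(y_1)-g^{(\ell+1)}(y_2)} \leq Lr^{\beta-\ell}\cdot L_K\cdot r^{-(\beta-\ell)}|y_1-y_2|^{\beta-\ell} = LL_K|y_1-y_2|^{\beta-\ell},
\end{equation}
which produces the announced constant $L_\beta = C L L_K$. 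Summing finitely many such contributions yields the desired Hölder estimate.

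The main obstacle is not the analysis but rather the bookkeeping: one must verify that the structural identity is preserved under differentiation and that the telescoping replacement interacts cleanly with the case $j_i=\ell+1$ (where the factor is only Hölder, not Lipschitz). Both steps are routine once the representation is fixed, and the claim $s\in\Sigma^{1\to1}(\beta,L_0,\dots,L_\ell,L_\beta)$ then follows by combining the derivative bounds with \cref{app:diff:lem}.
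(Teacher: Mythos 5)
Your proof is correct and takes a genuinely different route from the paper's. The paper applies Faá di Bruno's formula twice (once to $g'\circ g^{-1}$ and once to $(\mathsf{inv}\circ s)$), isolates the leading term $g^{(k+1)}(y)\,s(x)^{-k}$, and controls the remainder $R_k$ by a separate induction; the paper's unrolling is recursive (the remainder involves $s^{(\#w)}$), which makes the power counting in $L$ and $r$ somewhat opaque. You instead prove directly a non-recursive structural identity
\begin{equation}
	s^{(k)}(x) = \sum_{(j_1,\dots,j_p)\in A_k}\frac{c_{j_1,\dots,j_p}\,g^{(j_1)}(y)\cdots g^{(j_p)}(y)}{g'(y)^{p+k-1}}\eqcm\qquad \textstyle\sum_i(j_i-1)=k\eqcm\ j_i\geq 2\eqcm
\end{equation}
which makes the homogeneity invariant $\sum(j_i-1)=k$ manifest, so that every term scales as $\const{kern}^p(Lr^\beta)^{p-1}\cdot Lr^{\beta-k}$ with no stray powers of $r$; the bound $Lr^\beta\leq C_0$ (forced by $g_0^{-1}\leq g'\leq g_0$) then kills all cross terms at once. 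This is cleaner than the paper's remainder induction, whose intermediate claim $\abs{R_k}\leq c\,Lr^{\beta-k+1}$ (one extra power of $r$) does not actually hold: the $p\geq 2$ contributions are generically of the same order $Lr^{\beta-k}$ as the main term, which your representation shows transparently and which still suffices for the conclusion. Your handling of the Hölder step, splitting the $p=1,\ j_1=\ell+1$ term (where the genuine $\beta$-Hölder input $K\in\Sigma(\beta+1,L_K)$ enters) from the remaining Lipschitz-type contributions, mirrors the paper's step at the corresponding place and is sound. One shared caveat (also present, implicitly, in the paper): the bound $Lr^\beta\leq C_0$ requires a \emph{lower} bound on $\supNormOf{K'}$, so the resulting constant $C$ really depends on the specific kernel $K$ and not merely on the upper bound $\const{kern}$ stated in the lemma; this is harmless in the paper's only application, where $K$ is a fixed explicit kernel, but is worth flagging.
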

To prove this lemma, we will make use of Faá di Bruno's formula for the iterated chain rule.
For $k\in\N$, let $\mc P_k$ be the set of all partitions of $\nnset k$, i.e.,
\begin{equation}
	\mc P_k := \setByEle{B\subset 2^{\nnset k}}{\bigcup_{b\in B} b = \nnset k
	\quad\text{and}\quad
	\forall b_1, b_2\in B\colon b_1 \cap b_2 = \emptyset},
\end{equation}
where $2^{\nnset{k}}$ denotes the power set of $\nnset{k}$.
\begin{lemma}[Faá di Bruno's formula \cite{Craik05Prehistory}]\label{lmm:faa:di:bruno}
	Let $f,g \colon \R\to\R$ be $k$-times continuously differentiable functions.
	Then, for all $x\in\R$,
	\begin{equation}
		(f \circ g)^{(k)}(x) = \sum_{B \in \mc P_k} f^{(\# B)}\brOf{g(x)} \prod_{b\in B} \br{g}^{(\# b)}(x)
		\eqfs
	\end{equation}
\end{lemma}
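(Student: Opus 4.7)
The plan is to prove the formula by induction on $k$. The base case $k=1$ is immediate: the only partition of $\nnset{1}$ is $\{\{1\}\}$, so the right-hand side reduces to $f^{(1)}(g(x)) g^{(1)}(x)$, which is $(f\circ g)'(x)$ by the ordinary chain rule.

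For the inductive step, assume the formula holds at level $k$. Differentiating the right-hand side once more with respect to $x$ and applying the product and chain rules block-by-block, I obtain, for each $B\in\mc P_k$, one term of the form
\begin{equation*}
T_1(B) := f^{(\#B+1)}(g(x))\, g'(x) \prod_{b\in B} g^{(\#b)}(x)
\end{equation*}
(from differentiating the outer factor $f^{(\#B)}(g(x))$), together with, for each block $b^*\in B$, one term of the form
\begin{equation*}
T_2(B, b^*) := f^{(\#B)}(g(x))\, g^{(\#b^*+1)}(x) \prod_{b\in B\setminus\{b^*\}} g^{(\#b)}(x)
\end{equation*}
(from differentiating the factor $g^{(\#b^*)}(x)$).

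Next I would introduce a map from the set of these summands into $\mc P_{k+1}$, sending $T_1(B)$ to $B\cup\{\{k+1\}\}$ (adjoining $k+1$ as a new singleton block) and $T_2(B,b^*)$ to $(B\setminus\{b^*\})\cup\{b^*\cup\{k+1\}\}$ (inserting the element $k+1$ into the existing block $b^*$). I would then verify that this map is a bijection: an arbitrary partition $B'\in\mc P_{k+1}$ either has $\{k+1\}$ as a singleton block, in which case it arises uniquely via $T_1$ with $B = B'\setminus\{\{k+1\}\}$, or $k+1$ lies in some block of size at least two, in which case deleting $k+1$ from that block recovers a unique pair $(B,b^*)$ giving $B'$ via $T_2$. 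Under this correspondence the indices of $f$ and $g$ transform correctly: for $T_1$ one has $\#B+1 = \#B'$ and the extra factor $g'(x) = g^{(1)}(x)$ is supplied by the new singleton; for $T_2$ one has $\#B = \#B'$ and $g^{(\#b^*+1)}(x) = g^{(\#(b^*\cup\{k+1\}))}(x)$.

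The main obstacle is purely combinatorial: confirming that every partition of $\nnset{k+1}$ is hit exactly once by the above procedure and that the $f$- and $g$-indices line up block-by-block, so that the sum of the $T_1(B)$ and $T_2(B,b^*)$ coincides term-by-term with $\sum_{B'\in\mc P_{k+1}} f^{(\#B')}(g(x)) \prod_{b\in B'} g^{(\#b)}(x)$. Once this bookkeeping is settled, the formula at level $k+1$ follows immediately, closing the induction.
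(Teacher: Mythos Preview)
Your inductive argument is correct and complete: the base case, the product-rule expansion, and the bijection between the resulting summands and $\mc P_{k+1}$ are all sound. Note, however, that the paper does not actually prove this lemma; it is stated with a citation to \cite{Craik05Prehistory} and used as a black box. Your proof therefore supplies something the paper omits rather than paralleling or diverging from an argument in the paper.
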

\begin{proof}[Proof of \cref{lmm:chain:remainder}]\mbox{ }
	\begin{enumerate}[label=(\alph*)]
		\item \textbf{Faá di Bruno:}\\
		By Faá di Bruno's formula \cref{lmm:faa:di:bruno}, we have
		\begin{align}
			s^{(k)}(x)
			&=
			(g\pr \circ g^{-1})^{(k)}(x)
			\\&=
			\sum_{B \in \mc P_k} g^{(1+\# B)}\brOf{g^{-1}(x)} \prod_{b\in B} \br{g^{-1}}^{(\# b)}(x)
			\eqfs
		\end{align}
		Let $\ms{inv}\colon\Rpp\to\Rpp, x \mapsto x^{-1}$.
		We have,
		\begin{equation}
			\br{g^{-1}}^{(k+1)} = \br{\ms{inv} \circ g\pr \circ g^{-1}}^{(k)} = \br{\ms{inv} \circ s}^{(k)}
			\eqfs
		\end{equation}
		A second application of Faá di Bruno's formula yields
		\begin{align}
			\br{\ms{inv} \circ s}^{(k)}
			&=
			\sum_{W \in \mc P_k} \br{\ms{inv}}^{(\# W)}\brOf{s(x)} \prod_{w\in W} s^{(\# w)}(x)
			\\&=
			\sum_{W \in \mc P_k} \br{-1}^{\# W} \# W! s(x)^{-(1+\# W)} \prod_{w\in W} s^{(\# w)}(x)
			\eqfs
		\end{align}
		Together, we obtain
		\begin{align}
			s^{(k)}(x)
			&=
			\sum_{B \in \mc P_k} g^{(1+\# B)} \brOf{g^{-1}(x)} s(x)^{-l(B)} \prod_{\substack{b\in B\\\# b>1}} \br{\ms{inv} \circ s}^{(\#b-1)}
			\\&=
			\sum_{B \in \mc P_k} g^{(1+\# B)} \brOf{g^{-1}(x)} s(x)^{-l(B)} \prod_{\substack{b\in B\\\# b>1}} \sum_{W \in \mc P_{\#b-1}} a(\# W) s(x)^{-(1+\# W)} \prod_{w\in W} s^{(\# w)}(x)
			\eqcm
		\end{align}
		where $l(B) := \# \setByEle{b\in B}{\# b = 1}$ and $a(\# W) = \br{-1}^{\# W} \# W!$.
		Define
		\begin{equation}
			R_k(x) := \sum_{\substack{B \in \mc P_k\\\# B \neq k}} g^{(1+\# B)} \brOf{g^{-1}(x)} s(x)^{-l(B)} \prod_{\substack{b\in B\\\# b>1}} \sum_{W \in \mc P_{\#b-1}} a(\# W) s(x)^{-(1+\# W)} \prod_{w\in W} s^{(\# w)}(x)
		\end{equation}
		to obtain
		\begin{equation}\label{eq:faadirbruno:deriv}
			s^{(k)}(x)  = g^{(k+1)} \brOf{g^{-1}(x)} s(x)^{-k} + R_k(x)
			\eqfs
		\end{equation}
		As $s(x) \in[g_0^{-1}, g_0]$, we have
		\begin{equation}\label{eq:faadirbruno:rest}
			\abs{R_k(x)} \leq c_{k,g_0} \sum_{\substack{B \in \mc P_k\\\# B \neq k}} \abs{g^{(1+\# B)} \brOf{g^{-1}(x)}} \prod_{\substack{b\in B\\\# b>1}} \sum_{W \in \mc P_{\#b-1}} \prod_{w\in W} \abs{s^{(\# w)}(x)}
			\eqfs
		\end{equation}
		\item \textbf{Induction:}\\
		We now prove by induction that, for $k\in\nnset{\ell+1}$,
		\begin{equation}\label{eq:chain:remainder}
			\abs{R_k(x)} \leq c_{k,g_0,r_0} L r^{\beta-k+1}
			\eqfs
		\end{equation}
		For $k = 1$, the statement \eqref{eq:chain:remainder} is true with $R_1(x) = 0$ and
		\begin{equation}
			s^{(1)}(x) = g^{(2)}\brOf{g^{-1}(x)} \frac{1}{s(x)}
			\eqfs
		\end{equation}
		Set
		\begin{equation}
			v := \frac{g^{-1}(x)-z}{r}
			\eqfs
		\end{equation}
		If the statement \eqref{eq:chain:remainder} is true for $k\in\N$, $k\leq \ell$, it is also true for $k+1$:
		First, we have
		\begin{equation}
			g^{(1+\# B)} \brOf{g^{-1}(x)} = L r^{\beta-\# B} K^{(1+\# B)}\brOf{v}
		\end{equation}
		for $B\in\mc P_{k+1}$ with $\# B\neq k+1$, i.e., $\# B\in\nnset{k}$.
		Second, using \eqref{eq:chain:remainder} on $\# w$, we obtain
		\begin{equation}
			\abs{s^{(\# w)}(x)} \leq \abs{g^{(1+\# w)}(g^{-1}(x))s(x)^{-\# w}} + \abs{R_{\# w}(r)}  \leq c_{\# w} L r^{\beta - \# w}\eqcm
		\end{equation}
		where $\# w \in W$, $W\in \mc P_{\# b - 1}$, $b\in B$, and $B\in\mc P_{k+1}$, i.e., $\# w \in\nnset{k}$.
		Using these two bounds in \eqref{eq:faadirbruno:rest} yields 
		\begin{equation}
			\abs{R_{k+1}(x)} \leq c_{{k},g_0} \sum_{\substack{B \in \mc P_{k+1}\\\# B \neq k+1}} L r^{\beta-\#B} \prod_{\substack{b\in B\\\# b>1}} \sum_{W \in \mc P_{\#b-1}} \prod_{w\in W} L r^{\beta - \# w}
			\leq  c_{{k},g_0, r_0} L r^{\beta-k}
		\end{equation}
		with $r\leq r_0$.
		This implies \eqref{eq:chain:remainder} for $k+1$.
		\item \textbf{Derivative bounds:}\\
		For $k\in\nnset{\ell}$, \eqref{eq:faadirbruno:deriv} and $s(x) \in[g_0^{-1}, g_0]$ yield
		\begin{align}
			\supNormOf{s^{(k)}}
			&\leq
			c_{k, g_0} \supNormOf{g^{(k+1)}} + \supNormOf{R_k}
			\\&\leq
			c_{k, g_0} L r^{\beta-k} + c_{k, g_0, r_0} L r^{\beta-k+1}
			\\&\leq
			c_{k, g_0, r_0} L r^{\beta-k}
			\eqfs
		\end{align}
		It remains to show the bound on $\Lbeta$.
		Equation \eqref{eq:chain:remainder} implies that $R_\ell(x)$ is Lipschitz continuous with constant
		\begin{equation}
			\supNormOf{R_{\ell+1}} \leq c_{\ell, g_0, r_0} L r^{\beta-\ell}\eqfs
		\end{equation}
		Thus,
		\begin{align}
			\abs{s^{(\ell)}(x) - s^{(\ell)}(\tilde x)}
			&\leq
			\abs{g^{(\ell+1)} \brOf{g^{-1}(x)} s(x)^{-\ell} - g^{(\ell+1)} \brOf{g^{-1}(\tilde x)} s(\tilde x)^{-\ell}} + \abs{R_\ell(x) - R_\ell(\tilde x)}
			\\&\leq
			\frac{
				\abs{g^{(\ell+1)} (v) s(\tilde x)^{\ell} - g^{(\ell+1)} \brOf{\tilde v} s(x)^{\ell}}
			}{ s(x)^{\ell}s(\tilde x)^{\ell}}
			+ c_{\ell, g_0, r_0} L r^{\beta-\ell} \abs{x-\tilde x}
		\end{align}
		with $v := g^{-1}(x)$, $\tilde v := g^{-1}(\tilde x)$. As $s(x) \in[g_0^{-1}, g_0]$, only need to find a suitable bound on
		\begin{equation}
			\abs{g^{(\ell+1)} (v) s(\tilde x)^{\ell} - g^{(\ell+1)} \brOf{\tilde v} s(x)^{\ell}}
			\leq
			\abs{s(\tilde x)^{\ell}} \abs{g^{(\ell+1)} (v)  - g^{(\ell+1)} \brOf{\tilde v}}
			+
			\abs{g^{(\ell+1)} \brOf{\tilde v}} \abs{s(\tilde x)^{\ell} - s(x)^{\ell}}
			\eqfs
		\end{equation}
		For the first term,
		\begin{align}
			\abs{s(\tilde x)^{\ell}} \abs{g^{(\ell+1)} (v)  - g^{(\ell+1)} \brOf{\tilde v}}
			&\leq
			c_{\ell, g_0, r_0} L r^{\beta-\ell} \abs{K^{(\ell+1)}\brOf{\frac{v-z}{r}}  - K^{(\ell+1)}\brOf{\frac{\tilde v-z}{r}}}
			\\&\leq
			c_{\ell, g_0, r_0} L r^{\beta-\ell}
				L_K\abs{\frac{v-z}{r}  - \frac{\tilde v-z}{r}}^{\beta-\ell}
			\\&=
			c_{\ell, g_0, r_0} L
				L_K \abs{\tilde v - v}^{\beta-\ell}
			\eqfs
		\end{align}
		For the second term
		\begin{align}
			\abs{g^{(\ell+1)} \brOf{\tilde v}} \abs{s(\tilde x)^{\ell} - s(x)^{\ell}}
			&\leq
			c_{\ell, g_0, r_0} L r^{\beta-\ell} \abs{g\pr(\tilde v)^{\ell} - g\pr(v)^{\ell}}
			\\&\leq
			c_{\ell, g_0, r_0} L r^{\beta-\ell}
			\br{
				c_{\ell, g_0} L r^{\beta-1} \abs{\tilde v - v}
			}
			\\&\leq
			c_{\ell, g_0, r_0} L r^{2\beta-\ell-1} \abs{\tilde v - v}
		\end{align}
		as
		\begin{equation}
			\supNormOf{\frac{\dl }{\dl x}\br{(g\pr)^\ell}} \leq \ell \supNormOf{(g\pr)^{\ell-1}} \supNormOf{g\prr} \leq c_{\ell, g_0} L r^{\beta-1}
			\eqfs
		\end{equation}
		Thus, 
		\begin{align*}
			\abs{s^{(\ell)}(x) - s^{(\ell)}(\tilde x)}
			&\leq
			c_{\ell, g_0, r_0} L \br{L_K \abs{\tilde v - v}^{\beta-\ell} +  r^{\beta-\ell} \abs{x-\tilde x} + r^{2\beta-\ell-1} \abs{\tilde v - v}}
			\\&\leq
			c_{\ell, g_0, r_0} L L_K \abs{\tilde x - x}^{\beta-\ell}
			\eqfs
		\end{align*}
	\end{enumerate}
\end{proof}
\subsection{Stubble Model -- Lower Bound -- Probabilistic}\label{sec:app:stubble:probabilistic}
Let $d\in\N$ and $\beta\in\Rpo$.
Set $\ell := \llfloor\beta\rrfloor$.
Let $L_0, \dots, L_\ell, \Lbeta\in\Rpp$.
Set $\FSmooth := \Sigma^{d\to d}(\beta, \indset{L}{0}{\ell}, \Lbeta)$.
Let $m\in\N$ and $\xany 1, \dots, \xany m \in \R^d$, $n_1, \dots, n_m\in\N$, $T_1, \dots, T_m\in\Rpp$. Set $\nmax := \max_{j\in\nnset m} n_j$ and $\Tmax := \max_{j\in\nnset m} T_j$.
Set $n := \sum_{j\in\nnset{m}} n_j$.
Set $\setIdx := \setByEleInText{(j,i)}{i\in\nnset{n_j}, j\in\nnset{m}}$.
Let $t_{j,i}\in\Rp$ for $(j,i)\in\setIdx$ such that
$0 \leq t_{j,1} \leq \dots \leq t_{j,n_j} = T_j$.
For $\ftrue\in\FSmooth$, consider the ODE model
\begin{equation}
	Y_{j,i} = U(\ftrue, \xany j, t_{j,i}) + \noise_{j,i}\eqcm \ (j,i)\in\setIdx\eqcm
\end{equation}
with independent and identically distributed $\R^d$-valued noise variables $\noise_{j,i}$ such that $\Eof{\noise_{j,i}} = 0$. Denote the noise distribution as $P^\noise$, i.e., $\noise_{j,i}\sim P^\noise$.
We know $\xany{j}$ and $t_{j,i}$, and observe $Y_{j,i}$, but $\ftrue$ is unknown and to be estimated on the domain $[0, 1]^d$.
\begin{theorem}\label{thm:stubble:probabilistic:details}
	Assume \assuRef{Noise} for $P^\noise$ with constant $\const{noise}\in\Rpp$ and \assuRef{Cover} for $(\xany j)_{j\in\nnset m}$ with constant $\const{cvr}\in\Rpp$.
	Assume
	\begin{equation}\label{eq:stubble:noise:rmaxAssumption}
		\min_{k\in\nnzset\ell} L_k \geq c_\beta L_\beta
		\eqfs
	\end{equation}
	\begin{enumerate}[label=(\roman*)]
		\item
			Let $x_0\in\hypercube$. Then
			\begin{equation}
				\inf_{\festi} \sup_{\ftrue\in\FSmooth}
				\Pr_{\ftrue}\brOf{\euclOf{\festi - \ftrue}(x_0) \geq C_1\br{m \nmax \Tmax^2}^{-\frac{\beta}{2\beta+d}}}
				\geq \frac14
			\end{equation}
			where
			\begin{equation}
				C_1 := c_{d, \beta} \Lbeta \br{\const{cvr} \const{noise} \Lbeta^2}^{-\frac{\beta}{2\beta+d}}
			\end{equation}
			if
			\begin{align}
				\br{\const{cvr}m}^{\frac{2\beta}d} &\geq c_\beta \const{noise} \Lbeta^2 \nmax \Tmax^2
				\eqcm\\
				m \nmax \Tmax^2 &\geq c_{d,\beta} \const{noise}^{-1} \const{cvr}^{-1} \Lbeta^{-2}
				\eqfs
			\end{align}
		\item
			We have
			\begin{equation}
				\inf_{\festi} \sup_{\ftrue\in\FSmooth}
				\Pr_{\ftrue}\brOf{\sup_{x\in\hypercube}\euclOf{\festi - \ftrue}(x) \geq C_1 \br{\frac{m \nmax \Tmax^2}{\log\brOf{m \nmax \Tmax^2}}}^{-\frac\beta{2\beta+d}}}
				\geq \frac14
			\end{equation}
			where
			\begin{equation}
				C_1 := c_{d,\beta} \Lbeta \br{\const{noise} \const{cvr} \Lbeta^2}^{-\frac\beta{2\beta+d}}
			\end{equation}
			if
			\begin{align}
				m \nmax \Tmax^2 &\geq  c_{d, \beta} \max\brOf{1\eqcm\const{noise}^4} \const{cvr}^{-1} \Lbeta^{-2}
				\\
				\br{\const{cvr}m}^{\frac{2\beta}d} &\geq c_{d, \beta} \const{noise}  \Lbeta^2 \nmax \Tmax^2
				\\
				m \nmax \Tmax^2 &\geq c_{d, \beta} \const{noise}^{-1} \const{cvr}^{-1} \Lbeta^{-2}
				\\
				m \nmax \Tmax^2 &\geq \const{cvr} \Lbeta^2
				\eqfs
			\end{align}
		\item
			We have
			\begin{equation}
				\inf_{\festi} \sup_{\ftrue\in\FSmooth}
				\Pr_{\ftrue}\brOf{\LpNormOf{p}{\hypercube}{\euclOf{\festi - \ftrue}} \geq C_1\br{m \nmax \Tmax^2}^{-\frac{\beta}{2\beta+d}}}
				\geq \frac14
			\end{equation}
			where
			\begin{equation}
				C_1 := c_{d, \beta, p} \Lbeta \br{\const{cvr} \const{noise} \Lbeta^2}^{-\frac{\beta}{2\beta+d}}
			\end{equation}
			if
			\begin{align}
				m \nmax \Tmax^2 &\geq c_{d,\beta} \const{noise}^{-1} \const{cvr}^{-1}  \Lbeta^{-2}
				\eqcm\\
				\br{\const{cvr}m}^{\frac{2\beta}d} &\geq c_\beta \const{noise} \Lbeta^2 \nmax \Tmax^2
				\eqfs
			\end{align}
	\end{enumerate}
	Here, all infima range over all estimators $\festi$ of $\ftrue$ based on the observations $Y_{\setIdx}$.
\end{theorem}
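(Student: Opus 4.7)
The plan is to invoke the master theorem \cref{thm:lower:master} with the identifications recorded in the ``Stubble'' column of \cref{tbl:symbolsMaster}. I set $\dm{in}= \dm{out}= d$ (i.e.\ $\dm q = \dm u = \dm f = d$ in the notation of \cref{thm:lower:master}), $\xdomain = \hypercube$, $\Theta = \FSmooth$, and index observations by $k = (j,i) \in \setIdx$ with $q_k(\theta) := \xany{j}$ (independent of $\theta$), $u_k(\theta) := U(\theta, \xany{j}, t_{j,i})$, and $f_\theta := \theta$. For the null I take $\theta_0 \equiv 0$, so that $U(\theta_0, \xany{j}, t) = \xany{j}$ for all $t$, and for the alternatives
\begin{equation*}
\theta_{z,r}(x) := \Lbeta\, r^{\beta}\, \hbump{d}{\beta}\brOf{\frac{x-z}{r}}\, \mo{e}_1, \qquad z \in \R^d,\ r \in [\rho_n^-, \rho_n^+),
\end{equation*}
with reference function $g(x) := \Lbeta\, \hbump{d}{\beta}(x)\, \mo{e}_1$, exponent $\zeta := \beta$, and $v_0 := 0$. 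The admissible radii will be $\rho_n^+ := \min(1/2, \rmax)$ and $\rho_n^- := (\const{cvr}\, m)^{-1/d}$, with $\rmax$ from \cref{lmm:bumpandpulse}; assumption \eqref{eq:stubble:noise:rmaxAssumption} bounds $\rmax$ below by a universal constant so that $\rho_n^+$ is of constant order.

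Next I would verify the four parts of \cref{ass:lower:general}. For (i), \cref{lmm:bumpandpulse} \ref{lmm:bumpandpulse:smooth} gives $\theta_{z,r} \in \FSmooth$ whenever $r \leq \rmax$; the invariance $u_k(\theta_{z,r}) = u_k(\theta_0)$ whenever $\xany{j} \notin \ball^d(z,r)$ holds because $\theta_{z,r}$ vanishes outside $\ball^d(z,r)$, so the trajectory started from $\xany{j}$ is constant under both hypotheses. For (ii), the bound $\supNormof{\theta_{z,r}} \leq c_{d,\beta}\, \Lbeta\, r^{\beta}$ combined with the same vanishing property yields $\psi_n(r) \leq c_{d,\beta}\, \Lbeta\, \Tmax\, r^{\beta}$, while \assuRef{Cover} together with the lower constraint $r \geq \rho_n^-$ (which ensures $\const{cvr}\, r^d\, m \geq 1$) gives $\#\setByEleInText{j \in \nnset m}{\xany{j} \in \ball^d(z,r)} \leq \const{cvr}\, r^d\, m$ and, after multiplying by $n_j \leq \nmax$, $\chi_n(r) \leq \const{cvr}\, r^d\, m\, \nmax$. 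Hence $\psi_n(r)^2\, \chi_n(r) \leq a_n\, r^{\gamma}$ with $\gamma := 2\beta + d$ and $a_n := c_{d,\beta}\, \const{cvr}\, \Lbeta^2\, \Tmax^2\, m\, \nmax$. Condition (iii) is the standard packing estimate $\eta(r) \geq c_d\, r^{-d}$ on $\hypercube$ for $r \leq 1/2$. For (iv), centers with pairwise distance $\geq 2r$ produce disjoint bump supports, and the sum $\theta := \sum_j \theta_{z_j, r}$ lies in $\FSmooth$ by local inspection; moreover, trajectories started at $\xany{j'}$ under $\theta$ coincide with those under the single bump containing $\xany{j'}$ (or with the null trajectory if $\xany{j'}$ lies outside every bump), because the first-component velocity of $\theta_{z_j,r}$ vanishes at $\partial\ball^d(z_j, r)$ by construction of $\hbump{d}{\beta}$, so no trajectory escapes its initial bump.

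With the assumptions verified, applying \cref{thm:lower:master} \ref{thm:lower:master:pointwise}, \ref{thm:lower:master:sup}, and \ref{thm:lower:master:lp} yields the three claimed lower bounds with critical radius $r_n \asymp (a_n)^{-1/\gamma}$ (log-corrected in the sup-norm case). The explicit prefactors $C_1$ absorb $\Lbeta$ and the constants $\const{cvr}, \const{noise}$ exactly as stated, while the window condition $(\rho_n^-)^{-\gamma} \geq C_1\, a_n \geq (\rho_n^+)^{-\gamma}$ translates directly into the two stated hypotheses on $m, \nmax, \Tmax$ after substituting the chosen values of $\rho_n^\pm$ and invoking \eqref{eq:stubble:noise:rmaxAssumption}. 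I expect the most delicate point to lie in the additivity condition \cref{ass:lower:general} \ref{ass:lower:general:add}: one must argue that the Hölder constant of the $\ell$-th derivative of a sum of disjoint bumps remains bounded by $\Lbeta$ rather than scaling with the number of bumps, which requires splitting point pairs into same-bump and cross-bump cases and bridging the latter through intermediate points on the supports' boundaries where all relevant derivatives vanish. Tracking the dependencies on $\const{noise}, \const{cvr}, \Lbeta$ carefully through the transition from $\rho_n^\pm$-window conditions to the theorem's clean form is the other piece of bookkeeping that must be executed attentively.
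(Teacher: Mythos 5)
Your proposal reproduces the paper's own proof exactly: you instantiate the master theorem (\cref{thm:lower:master}) with the identical choice of hypotheses $\theta_0$, $\theta_{z,r}$, reference function $g(x)=\Lbeta\hbump{d}{\beta}(x)\mo{e}_1$, radius window $[\rho_n^-,\rho_n^+)$, and bounds $\psi_n(r)\leq c_{d,\beta}\Lbeta\Tmax r^\beta$, $\chi_n(r)\leq\const{cvr}r^d m\nmax$, yielding the same $a_n$, $\gamma=2\beta+d$, and rate. You are in fact slightly more careful than the paper about the additivity condition \cref{ass:lower:general}~\ref{ass:lower:general:add}, which the paper dismisses in one line: you correctly flag that one must check both that the Hölder seminorm of a sum of disjoint bumps remains controlled (bridging pairs of points through boundary locations where all relevant derivatives vanish, and absorbing the resulting universal factor into the amplitude and hence $C_1$) and that trajectories cannot escape their initial bump, the latter holding because the standard kernel and all its derivatives vanish at the boundary of its support so the escape time is infinite.
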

\begin{proof}[Proof of \cref{thm:stubble:probabilistic:details}]
    We want to apply \cref{thm:lower:master}. For this, we first map the symbols used there with the objects we are dealing with here. Then, we prove that the assumptions made in \cref{thm:lower:master} are fulfilled. Finally, we present the result of the application of \cref{thm:lower:master}.
    
    \textbf{Mapping Symbols.}
    See \cref{tbl:symbolsMaster} for an overview.
    We start with a consecutive indexing of the observation scheme: For $k\in\nnset n$, denote $j_k \in\nnset m$ and $i_k\in\nnset{j_k}$ indices of the observations, such that all observations are enumerated, i.e., $\#\setByEleInText{(j_k, i_k)}{k\in\nnset n} = n$. Now, set the dimension-parameters in this setting as
    \begin{equation}
        \dm q := \dm u := \dm f := d
    \end{equation}
    and the domain of interest as $\xdomain := \hypercube$. The parameter space $\Theta$ is given by $\Sigma^{d\to d}(\beta, \indset{L}{0}{\ell}, \Lbeta)$. For each function $\theta\in\Theta$ we define the following attributes:
    \begin{enumerate}[label = (\roman*)]
        \item (Location): $q_k(\theta) := \xany{j_k}$ --- The initial condition, which does not depend on the choice of $\theta$.
        \item (Observed Objects): $u_k(\theta) := U(\theta, x_{j_k}, t_{j_k, i_k})$ --- The location along the $j_k$-th trajectory at time $ t_{j_k, i_k}$ driven by the model function $\theta$ with fixed initial condition $x_{j_k}$.
        \item (Function to Estimate):  $f_\theta  := \theta|_\xdomain$ --- The model function $\theta$ restricted to the domain of interest $\xdomain$.
    \end{enumerate}
    \textbf{Verifying Assumptions.} Let us check \cref{ass:lower:general}:
	\begin{enumerate}[label=(\roman*)]
		\item[\ref{ass:lower:general:ref}:]
            We define the reference function $g:\R^d\to\R^d$ for any $x\in\R^d$ by
            \begin{equation}
                g(x) := \Lbeta h(x)\mo{e}_1
            \end{equation}
            with $h:=\hbump{d}{\beta}$ as in \cref{lmm:bumpandpulse} and set $\rmax = \rmax\brOf{\beta, \indset{L}{0}{\ell}, \Lbeta, h, 0}$ from the same lemma.  By \eqref{eq:stubble:noise:rmaxAssumption}, we have $\rmax \geq c_{\beta}$. We have $\operatorname{supp}(g) \subseteq \ball^d(0,1)$.
            Set $v_0 := 0$ and
            \begin{equation}
            	f_{1,z,r}(x) := \Lbeta r^\beta h\left(\frac{x-z}{r}\right)
            	\eqfs
            \end{equation}
            Moreover, for $x\in\R^d$, we define hypotheses
            \begin{equation}
                \theta_0(x) := 0
                \quad\text{and}\quad
                \theta_{z,r}(x) := f_{1,z,r}(x)\mo{e}_1 = \Lbeta r^\beta h\left(\frac{x-z}{r}\right)\mo{e}_1\eqcm
            \end{equation}
            for any $z\in\R^d$ and $r\in\Rpp$. \cref{lmm:bumpandpulse} then yields $\theta_0,\theta_{z,r}\in\Sigma^{d\to d}(\beta,\indset{L}{0}{\ell}L_\beta)$ for $r \leq \rmax$. Thus, we define $\zeta:=\beta$. Moreover, the definitions above are consistent, i.e., we have
			\begin{equation}
				f_{\theta_{z, r}}(x)  = r^{\zeta} g\brOf{\frac{x-z}{r}} + v_0
				\eqfs
			\end{equation}
            Furthermore, we define
            \begin{equation}
                \rho_n^- := (\const{cvr} m)^{-\frac1d}
                \quad\text{and}\quad
                \rho_n^+ := \min\brOf{\frac12, \rmax}.
            \end{equation}
            Then, the condition on $\theta_0\in\Theta$ is trivial. We have $\xany{j_k}=q_k(\theta_0)=q_k(\theta_{z,r})$. If $\xany{j_k}\in\R^d\setminus\ball^d(z,r)$, we have $\theta_{z, r}(\xany{j_k}) = 0$. Hence, the solution to the ODE with model function $\theta_{z, r}$ and initial conditions $\xany{j_k}$ is constant. Therefore, $u_k(\theta_{z,r}) = \xany{j_k} =u_k(\theta_0)$, i.e., the condition on $u_k$ is fulfilled.
			By \cref{lmm:bumpandpulse} \ref{lmm:bumpandpulse:smooth}, we have $\theta_{z,r}\in\Theta$ if $r \leq \rmax$, which is true for  $r \leq \rho_n^+$.
        \item[\ref{ass:lower:general:obs}:]
            For $r\in [\rho_n^-, \rho_n^+]$,
            we show
        	\begin{align}
        		\psi_{n}(r) \leq  \supNormof{h} \Lbeta \Tmax r^\beta
        		\quad\text{and}\quad
        		\chi_{n}(r) \leq  \const{cvr} r^d m \nmax
        		\eqfs
        	\end{align}
            We then set
            \begin{equation}
                a_{n} :=  c_\beta \const{cvr} m \nmax \Tmax^2 \Lbeta^2
                \quad\text{and}\quad
                \gamma := 2\beta+d
            \end{equation}
            to obtain
            \begin{equation}
                \psi_{n}(r)^2 \chi_{n}(r) \leq a_n r^\gamma
                \eqfs
            \end{equation}
            Indeed, we have
			\begin{align}
				\psi_{n}(r)
				&=
				\sup_{z\in\R^d} \sup_{k\in\nnset{n}} \euclOf{u_k(\theta_{z,r}) - u_k(\theta_0)}
				\\&=
				\sup_{z\in\R^d} \sup_{k\in\nnset{n}} \euclOf{U(\theta_{z,r}, \xany{j_k}, t_{j_k, i_k}) - U(\theta_0, \xany{j_k}, t_{j_k, i_k})}
				\eqfs
			\end{align}
			We have $U(\theta_0, x, t) = x$ for all $x\in\hypercube$, $t\in\R$. Moreover, $U(\theta_{z,r}, x, t) - x = v(x,t) \mo{e}_1$,
			where
			\begin{equation}
				v(x,t) := \int_0^t f_{1, z, r}(U(\theta_{z,r}, x, s)) \dl s
				\eqfs
			\end{equation}
			Set $b :=  \Lbeta r^\beta \supNormof{h}$.
			We have $0\leq f_{1, z, r}(x)\leq b$. Hence, $0\leq v(x, t) \leq tb$. Therefore,
			\begin{equation}
				\psi_{n}(r) \leq \Tmax b
				\eqfs
			\end{equation}
			Next, using \assuRef{Cover}, we obtain
			\begin{align}
				\sum_{j\in\nnset m} \indOfOf{\ball^d(z, r)}{\xany{j}}
				&\leq
				\max\brOf{1,\,\const{cvr} r^d m}
				\\&=  \const{cvr} r^d m
				\eqcm
			\end{align}
			for $ \const{cvr} r^d m \geq 1$, which is equivalent to $r \geq \rho_n^-$. Thus,
			\begin{align}
				\chi_{n}(r)
				&=
				\sup_{z\in\R^d} \#\setByEle{k\in\nnset n}{q_k(\theta_{z,r}) \in\ball^d(z,r)}
				\\&=
				\sup_{z\in\R^d} \sum_{j\in\nnset{m}} \indOfOf{\ball^d(z, r)}{\xany{j}} n_j
				\\&\leq
				 \const{cvr} r^d m \nmax
				\eqfs
			\end{align}
		\item[\ref{ass:lower:general:pack}:]
			As $r \leq \rho^+$, we have $r \leq 1/2$. Thus,
			\begin{equation}
				\eta(r) \geq \left\lfloor\frac{1}{2r}\right\rfloor^d \geq c_d r^{-d}
				\eqfs
			\end{equation}
			Hence, we can set $\const{pack} := c_d$.
		\item[\ref{ass:lower:general:add}:]
			The definitions above directly imply these conditions.
    \end{enumerate}
	\textbf{Presenting Results.}
	After proving \cref{ass:lower:general}, we apply \cref{thm:lower:master} and evaluate the results. Recall
	\begin{align}
		\supNormOf{g} &= c_{d,\beta} \Lbeta\eqcm \\
		a_{n} &=  c_\beta \const{cvr} m \nmax \Tmax^2 \Lbeta^2 \eqcm \\
		\zeta &= \beta \eqcm\\
		\gamma &= 2\beta+d\eqcm\\
		\rho_n^+ &=  \min\brOf{\frac12, \rmax}\eqcm\\
		\rho_n^- &= (\const{cvr} m)^{-\frac1d}
		\eqfs
	\end{align}
	\begin{enumerate}[label=(\roman*)]
		\item
			By \cref{thm:lower:master} \ref{thm:lower:master:pointwise}, we obtain the following lower bound.
			We have
			\begin{align}
				\frac14
				&\leq
				\inf_{\festi} \sup_{\theta\in\Theta} \Pr_\theta\brOf{\euclOf{\festi - f_\theta}(x_0) \geq \frac12 \supNormOf{g} \br{2 \const{noise} a_{n}}^{-\frac\zeta{\gamma}}}
				\\&\leq
				\inf_{\festi} \sup_{\ftrue\in\FSmooth}
				\Pr_{\ftrue}\brOf{\euclOf{\festi - \ftrue}(x_0) \geq C_1\br{m \nmax \Tmax^2}^{-\frac{\beta}{2\beta+d}}}
			\end{align}
			with
			\begin{equation}
				C_1 := c_{d, \beta} \Lbeta \br{\const{cvr} \const{noise} \Lbeta^2}^{-\frac{\beta}{2\beta+d}}
			\end{equation}
			The conditions for this bound to hold are:
			\begin{enumerate}[label=(\Roman*)]
				\item
					\begin{align}
						&
						(\rho_n^-)^{-\gamma} \geq 2 \const{noise}a_{n} \geq (\rho_n^+)^{-\gamma}
						\\\Leftrightarrow\qquad&
						(\const{cvr} m)^{\frac{2\beta+d}d} \geq c_{\beta} \const{noise} \const{cvr} m \nmax \Tmax^2 \Lbeta^2 \geq \max\brOf{2, \rmax^{-1}}^{2\beta+d}
						\\\Leftrightarrow\qquad&
						\br{\const{cvr}m}^{\frac{2\beta}d} \geq c_\beta \const{noise} \Lbeta^2 \nmax \Tmax^2
						\\\quad \text{and} \quad&
						m \nmax \Tmax^2 \geq c_{d,\beta} \const{noise}^{-1} \const{cvr}^{-1} \Lbeta^{-2}
						\eqfs
					\end{align}
			\end{enumerate}
		\item
			By \cref{thm:lower:master} \ref{thm:lower:master:sup}, we obtain the following lower bound.
			We have
			\begin{align}
				\frac14
				&\leq
				\inf_{\festi} \sup_{\theta\in\Theta}
				\Pr_\theta\brOf{\sup_{x\in\xdomain}\euclOf{\festi - f_\theta}(x) \geq \frac12 \supNormOf{g} \br{\frac{12 \gamma \const{noise}}{\dm q} a_{n} \log(a_{n})^{-1}}^{-\frac\zeta{\gamma}}}
				\\&\leq
				\inf_{\festi} \sup_{\ftrue\in\FSmooth}
				\Pr_{\ftrue}\brOf{\sup_{x\in\hypercube}\euclOf{\festi - \ftrue}(x) \geq C_1 \br{\frac{m \nmax \Tmax^2}{\log\brOf{A_1 m \nmax \Tmax^2}}}^{-\frac\beta{2\beta+d}}}
			\end{align}
			with $A_1 := \const{cvr} \Lbeta^2$ and
			\begin{equation}
				C_1 := c_{d,\beta} \Lbeta \br{\const{noise} \const{cvr} \Lbeta^2}^{-\frac\beta{2\beta+d}}\eqfs
			\end{equation}
			If additionally $m \nmax \Tmax^2 \geq A_1$, then
			\begin{equation}
				\frac14
				\leq
				\inf_{\festi} \sup_{\ftrue\in\FSmooth}
				\Pr_{\ftrue}\brOf{\sup_{x\in\hypercube}\euclOf{\festi - \ftrue}(x) \geq C_1 \br{\frac{m \nmax \Tmax^2}{\log\brOf{m \nmax \Tmax^2}}}^{-\frac\beta{2\beta+d}}}
				\eqfs
			\end{equation}
			The conditions for this bound to hold are:
			\begin{enumerate}[label=(\Roman*)]
				\item
				\begin{align}
					&
					a_{n} \geq \max\brOf{2\eqcm\br{\frac{12 \gamma \const{noise}}{\dm q\const{pack}^{\frac{\gamma}{\dm q}}}}^{4}}
					\\\Leftrightarrow\qquad&
					m \nmax \Tmax^2 \geq  c_{d, \beta} \max\brOf{1\eqcm\const{noise}^4} \const{cvr}^{-1} \Lbeta^{-2}
					\eqfs
				\end{align}
				\item
				\begin{align}
					&
					(\rho_n^-)^{-\gamma} \geq \frac{12 \gamma \const{noise}}{\dm q} a_{n} \geq (\rho_n^+)^{-\gamma}
					\\\Leftrightarrow\qquad&
					(\const{cvr} m)^{\frac{2\beta+d}d} \geq c_{d, \beta} \const{noise} \const{cvr} \Lbeta^2 m \nmax \Tmax^2 \geq \max\brOf{2, \rmax^{-1}}^{2\beta+d}
					\\\Leftrightarrow\qquad&
					\br{\const{cvr}m}^{\frac{2\beta}d} \geq c_{d, \beta} \const{noise}  \Lbeta^2 \nmax \Tmax^2
					\\\quad \text{and} \quad&
					m \nmax \Tmax^2 \geq c_{d, \beta} \const{noise}^{-1} \const{cvr}^{-1} \Lbeta^{-2}
					\eqfs
				\end{align}
				\item
				\begin{align}
					&
					m \nmax \Tmax^2 \geq A_1
					\\\Leftrightarrow\qquad&
					m \nmax \Tmax^2 \geq \const{cvr} \Lbeta^2
				\end{align}
			\end{enumerate}
		\item
		By \cref{thm:lower:master} \ref{thm:lower:master:lp}, we obtain the following lower bound.
		We have
		\begin{align}
			\frac14
			&\leq
			\inf_{\festi} \sup_{\theta\in\Theta}
			\Pr_\theta\brOf{\LpNormOf{p}{\xdomain}{\euclOf{\festi - f_\theta}} \geq  2^{-1-3/p} \const{pack}^{1/p} \LpNormOf{p}{\R^{\dm q}}{g} \br{ 36 \const{noise} a_{n}}^{-\frac\zeta{\gamma}}}
			\\&\leq
			\inf_{\festi} \sup_{\ftrue\in\FSmooth}
			\Pr_{\ftrue}\brOf{\LpNormOf{p}{\hypercube}{\euclOf{\festi - \ftrue}} \geq C_1\br{m \nmax \Tmax^2}^{-\frac{\beta}{2\beta+d}}}
		\end{align}
		with
		\begin{equation}
			C_1 := c_{d, \beta, p} \Lbeta \br{\const{cvr} \const{noise} \Lbeta^2}^{-\frac{\beta}{2\beta+d}}
			\eqfs
		\end{equation}
		The conditions for this bound to hold are:
		\begin{enumerate}[label=(\Roman*)]
			\item
			\begin{align}
				&
				36 \const{noise} a_{n} \geq \br{\frac18 \const{pack}}^{-\frac{\gamma}{\dm q}}
				\\\Leftrightarrow\qquad&
				m \nmax \Tmax^2 \geq c_{d,\beta} \const{noise}^{-1} \const{cvr}^{-1}  \Lbeta^{-2}
			\end{align}
			\item
			\begin{align}
				&
				(\rho_n^-)^{-\gamma} \geq 36 \const{noise}a_{n} \geq (\rho_n^+)^{-\gamma}
				\\\Leftrightarrow\qquad&
				(\const{cvr} m)^{\frac{2\beta+d}d} \geq c_\beta \const{noise} \const{cvr} m \nmax \Tmax^2 \Lbeta^2  \geq \max\brOf{2, \rmax^{-1}}^{2\beta+d}
				\\\Leftrightarrow\qquad&
				\br{\const{cvr}m}^{\frac{2\beta}d} \geq c_\beta \const{noise} \Lbeta^2 \nmax \Tmax^2
				\\\quad \text{and} \quad&
				m \nmax \Tmax^2 \geq c_{d, \beta} \const{noise}^{-1} \const{cvr}^{-1} \Lbeta^{-2}
				\eqfs
			\end{align}
		\end{enumerate}
	\end{enumerate}
\end{proof}
\section{Proofs for Section \ref{sec:snake}}\label{sec:app:snake}
\subsection{Snake Model -- Lower Bound -- Deterministic}\label{sec:app:snake:deterministic}
\begin{theorem}\label{thm:snake:deterministic:details}
	Let $d\in\N_{\geq 2}$. Let $\beta\in\Rpo$, $L_0, \dots, L_\ell, \Lbeta\in\Rpp$. Let $\delta\in\Rpp$. Let $x_0\in\R^d$, $p\in\Rpo$.
	Assume $\delta \leq \delta_0$, where $\delta_0\in\Rpp$ depends only on $d, \beta,\indset{L}{0}{\ell}, \Lbeta$.
	Then, there are $m\in\N$, $x_1, \dots, x_m \in \R^d$, $f_0, f_1\in \FSmooth$ that have following properties:
	\begin{enumerate}[label=(\roman*)]
		\item
		We have
		\begin{equation}
			m = \br{1 + \left\lceil\frac{\sqrt{d}}{2\delta}\right\rceil}^{d-1}
			\eqfs
		\end{equation}
		Set $\theta_0 := (f_0, \indset{x}{1}{m},\indset{T}{1}{m})$ and $\theta_1 := (f_1, \indset{x}{1}{m},\indset{T}{1}{m})$ with $T_j = L_0^{-1}$. Then we have $\theta_0, \theta_1\in \ParamSnake(\indset{L}{0}{\ell}, \Lbeta, \delta, \Tsum)$ with $\Tsum := m/L_0$.
		\item
		For all $t\in\R$, $j \in \nnset m$, we have
		\begin{equation}
			U(f_0, x_j, t) = U(f_1, x_j, t)
			\eqfs
		\end{equation}
		\item
		We have
		\begin{equation}
			\euclOf{f_0 - f_1}(x_0) \geq c_{d,\beta} \Lbeta \delta^\beta
			\qquad\text{and}\qquad
			\LpNormOf{p}{\hypercube}{\euclOf{f_0 - f_1}} \geq c_{d, \beta,p} \Lbeta \delta^\beta
			\eqfs
		\end{equation}
	\end{enumerate}
\end{theorem}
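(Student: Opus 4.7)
The plan is to follow the sketch closely: set $f_0(x) := L_0 \mo{e}_1$, so that the flow $U(f_0, x, t) = x + L_0 t \mo{e}_1$ consists of lines in the $\mo{e}_1$-direction, and construct $f_1$ as $f_0$ perturbed by a lattice of bumps placed so that the chosen trajectories never encounter them. With $r := \delta/\sqrt{d}$, I take initial conditions $x_1, \dots, x_m \in \R^d$ with $\Pi_1 x_j = 0$ and projections $\Pi_{-1} x_j$ forming a translated regular grid of spacing $2r$ in $\R^{d-1}$, shifted so that $\Pi_{-1} x_0$ lies at the center of one cell; the number of nodes required to cover $[0,1]^{d-1}$ after this translation is exactly $m = (1 + \lceil \sqrt{d}/(2\delta) \rceil)^{d-1}$. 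With $T_j := L_0^{-1}$ each trajectory $U(f_0, x_j, \cdot)$ traverses the first coordinate from $0$ to $1$, so its tube of radius $\delta$ is a cylinder of radius $\delta$ in the $\mo{e}_1$-direction; since the cell diagonal equals $r\sqrt{d-1} \leq r\sqrt{d} = \delta$, these cylinders jointly cover $\hypercube$, establishing $d_\tube(\hypercube, \theta_k) \leq \delta$. Together with $\Tsum = m L_0^{-1}$ this yields $\theta_0, \theta_1 \in \ParamSnake(\indset{L}{0}{\ell}, \Lbeta, \delta, \Tsum)$.

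For the perturbed model function, I place bump centers $\{z_k\}$ on the $d$-dimensional lattice whose $\Pi_{-1}$-projections are offset by $r$ in each coordinate from those of $\{x_j\}$, with periodic spacing $2r$ in every dimension, arranged so that one center equals $x_0$, and set
\begin{equation*}
f_1(x) := L_0 \mo{e}_1 + \sum_k \Lbeta r^\beta \hbump{d}{\beta}\brOf{\frac{x - z_k}{r}} \mo{e}_2.
\end{equation*}
The supports $\ball^d(z_k, r)$ are pairwise disjoint, so on each support $f_1$ equals $L_0\mo{e}_1$ plus a single rescaled and translated bump in the $\mo{e}_2$-direction; applying \cref{lmm:bumpandpulse}~\ref{lmm:bumpandpulse:smooth} component-wise (with $b = 0$ for the second component, and with the first component constantly $L_0$) gives $f_1 \in \FSmooth$ as soon as $r \leq \rmax(\beta, \indset{L}{0}{\ell}, \Lbeta, \hbump{d}{\beta}, 0)$, which is arranged by taking $\delta_0$ small enough. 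The offset construction ensures $\euclOf{\Pi_{-1}(x_j - z_k)} \geq r\sqrt{d-1} \geq r$ for every $j,k$, so the line $\{x_j + t L_0 \mo{e}_1 : t \in \R\}$ stays outside every $\ball^d(z_k, r)$; hence $f_1 = f_0$ along it and $U(f_1, x_j, t) = U(f_0, x_j, t)$ for all $t \in \R$.

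For the quantitative bound, since $x_0 = z_{k_0}$ for some $k_0$, a direct computation gives $\euclOf{f_1(x_0) - f_0(x_0)} = \Lbeta r^\beta \hbump{d}{\beta}(0) = c_{d, \beta} \Lbeta \delta^\beta$. For the $L^p$ bound I sum the contributions of the $\landauThetaOf{r^{-d}}$ bumps whose supports lie inside $\hypercube$: each contributes $\Lbeta^p r^{p\beta + d} \LpNormOf{p}{\R^d}{\hbump{d}{\beta}}^p$ to the $p$-th power of the norm, giving $\LpNormOf{p}{\hypercube}{\euclOf{f_1 - f_0}} \geq c_{d, \beta, p} \Lbeta \delta^\beta$. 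I expect the main obstacle to be the geometric book-keeping in the first paragraph: matching the prescribed count $m$ exactly under the translation needed to centre a bump at an arbitrary $x_0 \in \hypercube$, while still covering $[0,1]^{d-1}$ in projection. The ceiling in the formula $\lceil \sqrt{d}/(2\delta) \rceil$ is tight precisely to absorb this worst-case translation, and the disjointness of supports reduces the global smoothness check to the single-bump estimate of \cref{lmm:bumpandpulse}.
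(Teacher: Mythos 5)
Your proof is correct and follows essentially the same route as the paper's: the same uniform grid of initial conditions on the hyperplane $\Pi_1 = 0$ with spacing $2r = 2\delta/\sqrt{d}$, the same bump lattice offset by $r$ in each of the $d-1$ projected coordinates so that the straight trajectories stay on the boundary of (and hence outside) every open support ball, the same covering check via $r\sqrt{d-1}\leq\delta$, the same choice of the offset so that $x_0$ sits at a bump center for the pointwise bound, and the same counting of $\landauThetaOf{r^{-d}}$ interior bumps for the $L^p$ bound. One small remark worth noting: you add the bump in the $\mo{e}_2$ direction, whereas the paper's detailed proof writes $f_1 := f_0 + \sum_k f_{2,z+2rk,r}\mo{e}_1$; since $f_0 = L_0\mo{e}_1$ already saturates the $L_0$ bound in the first component and the bump $\hbump{d}{\beta}$ is nonnegative, adding it to the first component would push $\supNormof{f_{1,1}}$ above $L_0$ and break $f_1\in\FSmooth$. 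Your choice of $\mo{e}_2$ (consistent with the paper's own proof sketch, which says the bump is added "in the second dimension") is the one that actually works, so the $\mo{e}_1$ in the paper's appendix should be read as a typo rather than a genuine divergence of method.
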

In this subsection we prove \cref{thm:snake:deterministic} in the form of \cref{thm:snake:deterministic:details}. We will construct two different model functions with identical trajectories for appropriately chosen initial conditions.
We need the following definition.
\begin{definition}\label{def:hausdorff}\mbox{}
	For $k\in\N$, define the one-sided Hausdorff distance $d_\subset$ for sets $A, B\subset \R^k$ as
		\begin{equation}
			d_\subset(A, B) = \sup_{a\in A}\inf_{b\in B} \euclOf{a-b}
			\eqfs
		\end{equation}
\end{definition}
\begin{proof}[Proof of \cref{thm:snake:deterministic:details}]\mbox{ }
	\begin{enumerate}[label=(\alph*)]
		\item
			\textbf{Definition of grid:} We define a uniform grid of points $x_1, \dots, x_m$ as illustrated in \cref{fig:grid}:
			Set $r := \delta/\sqrt{d}$.
			Let $z\in\R^d$ to be chosen later.
			Define the offset vector $o_z = z/(2r)-\lfloor z/(2r)\rfloor \in [0,1)^{d}$, where $\lfloor\cdot\rfloor$ is applied component-wise.
			Define $m_0 := \lceil(2r)^{-1}\rceil$.
			Set $m = (m_0+1)^{d-1}$.
			Choose $x_1, \dots, x_m\in\R^d$ such that $\Pi_{1}x_j = 0$ for all $j\in\nnset m$ and
			\begin{equation}
				\{\Pi_{-1}x_1, \dots, \Pi_{-1}x_m\}
				=
				\setByEle{2r\br{\Pi_{-1}o_z - \frac12\mo1 + k}}{k\in\nnzset{m_0}^{d-1}}
				\eqcm
			\end{equation}
			where $\mo 1 = (1, \dots, 1)\tr\in\R^{d-1}$.
		\item
			\textbf{Construction of null-hypothesis:}
			Set $f_0(x) := (L_0, 0, \dots, 0)\tr$ for all $x\in\R^d$. Clearly, $f_0\in\Sigma(\beta, \indset{L}{0}{m}, \Lbeta)$. Moreover,
			\begin{equation}
				U(f_0, x_j, t) = \begin{pmatrix}L_0 t\\\Pi_{-1}x_j\end{pmatrix}
				\qquad\text{and}\qquad
				\dot U(f_0, x_j, t) = L_0 \mo{e}_1
				\eqfs
			\end{equation}
			For the tube distance between $\theta_0 = (f_0,\indset{x}{1}{m},\indset{T}{1}{m})$ and the hypercube, we obtain
			\begin{align}
				d_{\tube}\brOf{\hypercube, \theta_0}
				&=
				d_\subset\brOf{[0,1]^{d-1}, \cb{\Pi_{-1} x_1,\dots, \Pi_{-1} x_m}}
				\leq
				\sqrt{d} r
				=
				\delta
				\eqfs
			\end{align}
			Thus, $\theta_0 \in \ParamSnake( \indset{L}{0}{m}, \Lbeta, \delta, \Tsum)$.
		\item
			\textbf{Construction of alternative:}
			Let $h := \hbump{d}{\beta}\in\Sigma^{d\to1}(\beta, 1)$ be the bump function from \cref{lmm:bumpandpulse} and $\rmax = \rmax\brOf{\beta, \indset{L}{0}{\ell}, \Lbeta, h, 0}$ from the same lemma.
			For $z\in\R^d$, define
			\begin{equation}
				f_{2, z, r}(x) := \Lbeta r^\beta h\brOf{\frac{x-z}{r}}
			\end{equation}
			and set
			\begin{equation}
				f_1 := f_0 + \sum_{k\in\Z^d} f_{2, z+2rk, r}\mo{e}_1
				\eqfs
			\end{equation}
			As the supports $\supp(f_{2,z+2rk,r}) \subset \ball^d(z+2rk, r)$ are disjoint, $f_1$ is well-defined.
			Choose $z$ such that $\supNormOf{f_{1}} = f_{1}(x_0)$.
			By \cref{lmm:bumpandpulse} \ref{lmm:bumpandpulse:smooth}, we have $f_{2,z,r}\in\Sigma^{d\to1}(\beta, \indset{L}{0}{\ell}, \Lbeta)$ for $r \leq r_{\ms{max}}$, which is fulfilled by $\delta\leq\delta_0$. Thus, $f_1\in\Sigma^{d\to d}(\beta, \indset{L}{0}{\ell}, \Lbeta)$.
			As
			\begin{equation}
				\Pi_{-1} \brOf{\bigcup_{k\in\Z^d} \ball^d(z+2rk, r)} \cap \cb{\Pi_{-1}x_1, \dots, \Pi_{-1}x_m} = \emptyset
				\eqcm
			\end{equation}
			we have
			\begin{equation}
				U(f_1, x_j, t) = U(f_0, x_j, t) = \begin{pmatrix}t\\\Pi_{-1}x_j\end{pmatrix}
				\eqfs
			\end{equation}
			Thus, we have $\theta_1 = (f_1, \indset{x}{1}{m}, \indset{T}{1}{m}) \in \ParamSnake(\indset{L}{0}{\ell}, \Lbeta, \delta, \Tsum)$ as before for $f_0$.
		\item
			\textbf{Difference between hypotheses at a point:}
			By the choice of $z$, we have
			\begin{align}
				\euclOf{f_0 - f_1}(x_0)
				&=
				\abs{f_{r,z}(x_0)}
				=
				\supNormOf{f_{2,z,r}}
				=
				\Lbeta r^\beta \supNormOf{h}
				=
				c_{d,\beta} \Lbeta \delta^\beta
				\eqfs
			\end{align}
		\item
			\textbf{Difference between hypotheses in $\lebesgue_p$:} We have, for $r\leq c$,
			\begin{align}
				\int_{\hypercube} \euclOf{f_1(x) - f_0(x)}^p \dl x
				&=
				\sum_{k\in\Z^d} \int_{\hypercube} \abs{f_{2,z+2rk,r}(x)}^p \dl x
				\\&\geq
				\Lbeta^p r^{\beta p + d} \sum_{\substack{k\in\Z^d\\\ball^d(z+2rk,r)\subset \hypercube}} \int_{\R^d} \abs{h(x)}^p \dl x
				\\&\geq
				c_{d, \beta, p} \br{\Lbeta r^\beta}^p
				\\&\geq
				c_{d, \beta, p} \br{\Lbeta \delta^\beta}^p
				\eqfs
			\end{align}
	\end{enumerate}
\end{proof}
\subsection{Snake Model -- Lower Bound -- Probabilistic}\label{sec:app:snake:probabilistic}

Let $d\in\N$ and $\beta\in\Rpo$.
Set $\ell := \llfloor\beta\rrfloor$.
Let $L_0, \dots, L_\ell, \Lbeta\in\Rpp$.
Let $m\in\N$ and $\xany 1, \dots, \xany m \in \R^d$, $n_1, \dots, n_m\in\N$, $T_1, \dots, T_m\in\Rpp$.
Set $\Tsum := \sum_{j\in\nnset m} T_j$.
Set $n := \sum_{j\in\nnset{m}} n_j$.
Set $\setIdx := \setByEleInText{(j,i)}{i\in\nnset{n_j}, j\in\nnset{m}}$.
Let $t_{j,i}\in\Rp$ for $(j,i)\in\setIdx$ such that $0 \leq t_{j,1} \leq \dots \leq t_{j,n_j} = T_j$.
Let $\delta\in\Rpp$.
Let $\ParamSnake = \ParamSnake( \indset{L}{0}{\ell}, \Lbeta, \delta, \Tsum)$ as in \cref{def:snake:smoothnessclass}.
For $\ptrue = (\ftrue,\indset{x}{1}{m},\indset{T}{1}{m}) \in \ParamSnake$ consider the ODE model
\begin{equation}
	Y_{j,i} = U(\ftrue, \xany j, t_{j,i}) + \noise_{j,i}\eqcm \ (j,i)\in\setIdx\eqcm
\end{equation}
with independent and identically distributed $\R^d$-valued noise variables $\noise_{j,i}$ such that $\Eof{\noise_{j,i}} = 0$. Denote the noise distribution as $P^\noise$, i.e., $\noise_{j,i}\sim P^\noise$.
We know $\xany{j}$ and $t_{j,i}$, and observe $Y_{j,i}$, but $f_{\ptrue} = \ftrue$ is unknown and to be estimated on the domain $\hypercube$.
\begin{theorem}\label{thm:snake:probabilistic:details}
	Assume \assuRef{Noise} for $P^\noise$ with constant $\const{noise}\in\Rpp$ and
	\assuRef{CoverTime} with constant $\const{cvrtm}\in\Rpp$.
	Assume
	\begin{equation}
		\delta \leq \sqrt{d} \qquad\text{and}\qquad \Tsum \geq c_{d,\beta} \delta^{-(d-1)} L_0^{-1}
	\end{equation}
	and
	\begin{equation}\label{eq:snake:noise:rmaxAssumption}
		\min_{k\in\nnzset\ell} L_k \geq c_\beta L_\beta
		\eqfs
	\end{equation}
	Furthermore, assume
	\begin{equation}
		\br{\frac{\const{cvrtm}}{L_0}}^{2(\beta+1)+d} \br{\frac n{\Tsum}}^{2\beta+d+1}
		\geq
		c_{d,\beta}  \const{noise} \const{cvrtm} \Lbeta^2  L_0^{-3} \delta^{-(d-1)}
		\geq
		\frac{\Tsum}{n}
		\eqfs
	\end{equation}
	\begin{enumerate}[label=(\roman*)]
		\item
		Let $x_0\in\hypercube$. Then
		\begin{equation}
			\inf_{\festi} \sup_{\theta\in\ParamSnake}
			\Pr_{\ptrue}\brOf{\euclOf{\festi - f_{\ptrue}}(x_0) \geq C_1 \br{\delta^{-(d-1)} \frac{n}{\Tsum}}^{-\frac\beta{2(\beta+1)+d}}}
			\geq \frac14
		\end{equation}
		where
		\begin{equation}
			C_1 :=  c_{d,\beta} \Lbeta \br{\const{noise} \const{cvrtm} \Lbeta^2  L_0^{-3}}^{-\frac\beta{2(\beta+1)+d}}
			\eqfs
		\end{equation}
		\item
		We have
		\begin{equation}
			\inf_{\festi} \sup_{\ptrue\in\ParamSnake}
			\Pr_{\ptrue}\brOf{\sup_{x\in\hypercube}\euclOf{\festi - f_{\ptrue}}(x) \geq C_1 \br{\frac{\Tsum^{-1} \delta^{-(d-1)} n}{\log\brOf{\Tsum^{-1} \delta^{-(d-1)} n}}}^{-\frac\beta{2(\beta+1)+d}}}
			\geq \frac14
		\end{equation}
		where
		\begin{equation}
			C_1 := c_{d,\beta} \Lbeta \br{\const{noise} \const{cvrtm} \Lbeta^2  L_0^{-3}}^{-\frac\beta{2(\beta+1)+d}}
		\end{equation}
		if
		\begin{equation}
			\Tsum^{-1} \delta^{-(d-1)} n \geq c_{d,\beta} \max\brOf{\const{cvrtm}^{-1} \Lbeta^{-2}  L_0^{3},\, \const{cvrtm} \Lbeta^2  L_0^{-3}} \max\brOf{1,\const{noise}^{4}}
			\eqfs
		\end{equation}
		\item
		We have
		\begin{equation}
			\inf_{\festi} \sup_{\ptrue\in\ParamSnake}
			\Pr_{\ptrue}\brOf{\LpNormOf{p}{\xdomain}{\euclOf{\festi - f_{\ptrue}}} \geq  C_1 \br{\Tsum^{-1} \delta^{-(d-1)} n}^{-\frac\beta{2(\beta+1)+d}}}
			\geq \frac14
		\end{equation}
		where
		\begin{equation}
			C_1 :=  c_{d, \beta, p} \br{\const{noise}  \const{cvrtm} \Lbeta^2  L_0^{-3}}^{-\frac\beta{2(\beta+1)+d}}
		\end{equation}
		if
		\begin{equation}
			\Tsum^{-1} \delta^{-(d-1)} n \geq c_{d,\beta} \const{cvrtm}^{-1} \Lbeta^{-2}  L_0^{3}
			\eqfs
		\end{equation}
	\end{enumerate}
	The infima range over all estimators $\festi$ of $f_{\ptrue}$ based on the observations $Y_{\setIdx}$.
\end{theorem}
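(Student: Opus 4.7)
The plan is to derive all three inequalities as consequences of the master theorem \cref{thm:lower:master}, following the dictionary of \cref{tbl:symbolsMaster}. Enumerate $\setIdx$ by $k\mapsto(j_k,i_k)$ and take $\dm q=\dm u=\dm f=d$, $\xdomain=\hypercube$, $\Theta=\ParamSnake$. For $\theta=(f,\indset{x}{1}{m},\indset{T}{1}{m})\in\Theta$ set
\begin{equation}
    q_k(\theta):=U(f,x_{j_k},t_{j_k,i_k}),\qquad u_k(\theta):=U(f,x_{j_k},t_{j_k,i_k}),\qquad f_\theta:=f\eqfs
\end{equation}
The reference data are $g(x):=\Lbeta\hpulse{d}{\beta}(x)\mo{e}_2$, $v_0:=L_0\mo{e}_1$, $\zeta:=\beta$, $\gamma:=2(\beta+1)+d$, together with the radius window $[\rho_n^-,\rho_n^+]$ determined by $\rho_n^+:=\min(\tfrac12,\rmax)$ and $\rho_n^-:=\tfrac12\const{cvrtm}^{-1}L_0\Tsum n^{-1}$, where $\rmax$ is supplied by \cref{lmm:bumpandpulse} for $\hpulse{d}{\beta}$ with offset $L_0$ and is bounded below by $c_\beta$ thanks to \eqref{eq:snake:noise:rmaxAssumption}.

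For the null hypothesis we take $f_0(x):=L_0\mo{e}_1$ and invoke \cref{lem:snake:lower:Covering} to produce $m\asymp\delta^{-(d-1)}$ initial conditions $\indset{x}{1}{m}$ together with times $\indset{T}{1}{m}$ whose corresponding tubes of radius $\delta$ already cover $\hypercube$; any surplus in $\Tsum$ is absorbed into a single trajectory extended far outside $\hypercube$ (the \emph{wasted time} device of the proof sketch of \cref{thm:snake:probabilistic}). For each $z\in\R^d$ and $r\in[\rho_n^-,\rho_n^+]$ the alternative $\theta_{z,r}$ keeps the same $(\indset{x}{1}{m},\indset{T}{1}{m})$ and replaces $f_0$ by
\begin{equation}
    f_{z,r}(x):=L_0\mo{e}_1+\Lbeta r^\beta\hpulse{d}{\beta}\!\brOf{\tfrac{x-z}{r}}\mo{e}_2\eqcm
\end{equation}
which lies in $\FSmooth$ by \cref{lmm:bumpandpulse}. \cref{lem:snake:lower:ODEprop} together with \cref{lem:snake:lower:Gronwall:ODE} shows that the perturbed trajectories still satisfy the $\delta$-tube covering provided $r$ is small compared to $\delta$, so $\theta_{z,r}\in\ParamSnake$. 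The additivity property \cref{ass:lower:general}\,\ref{ass:lower:general:add} is guaranteed by superposing pulses whose supports are disjoint thanks to the $2r$-separation of their centres.

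The decisive input is the pair $(\psi_n,\chi_n)$. By \cref{lem:snake:lower:Gronwall:ODE} the second-coordinate drift induced by the pulse is at most $c_\beta\Lbeta L_0^{-1}r^{\beta+1}$, hence $\psi_n(r)\leq c_\beta\Lbeta L_0^{-1}r^{\beta+1}$. An $f_0$-trajectory spends at most $2r/L_0$ units of time inside $\ball^d(z,r)$ and, via \assuRef{CoverTime}, contributes at most $c\const{cvrtm}rL_0^{-1}n/\Tsum$ observations there; multiplying by the number $c_d(r/\delta)^{d-1}$ of trajectories whose grid-offset places them in the ball yields
\begin{equation}
    \chi_n(r)\leq c_d\const{cvrtm}L_0^{-1}\Tsum^{-1}\delta^{-(d-1)}n\,r^d\eqcm
\end{equation}
so that $\psi_n(r)^2\chi_n(r)\leq a_nr^\gamma$ with $a_n\asymp\const{noise}\const{cvrtm}\Lbeta^2L_0^{-3}\delta^{-(d-1)}\Tsum^{-1}n$ after absorbing the constant from \assuRef{Noise}. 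Standard packing gives $\const{pack}=c_d$. Feeding these quantities into the three parts of \cref{thm:lower:master} yields the three bounds with the stated constants; the two-sided condition on $\delta^{d-1}$ in the hypothesis translates exactly to $\rho_n^-\leq r_n\leq\rho_n^+$ at the optimal radius $r_n=(C_1a_n)^{-1/\gamma}$, while the sup-norm case needs $\Tsum^{-1}\delta^{-(d-1)}n$ large enough so that $\log a_n\asymp\log(\Tsum^{-1}\delta^{-(d-1)}n)$, explaining the extra lower bound in part (ii).

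The main obstacle will be verifying that every alternative $\theta_{z,r}$ remains inside $\ParamSnake$: a pulse acting in direction $\mo{e}_2$ could in principle open a hole in the $\delta$-tube covering of $\hypercube$. This is precisely what \cref{lem:snake:lower:Covering} settles, provided its hypothesis \eqref{eq:snake:lower:Covering:cond} — namely $r\leq c\min(1,(L_0/\Lbeta)^{1/\beta})$ — is met, and the latter is automatic under $r\leq\rho_n^+$ together with \eqref{eq:snake:noise:rmaxAssumption}. The remaining work is bookkeeping: checking that the chosen $m$ satisfies the prescribed $\Tsum$ via the wasted-time extension, and that the bounds on $\psi_n$ and $\chi_n$ derived above hold uniformly over $z\in\R^d$ and $r\in[\rho_n^-,\rho_n^+]$.
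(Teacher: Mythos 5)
Your proposal matches the paper's proof essentially step for step: same mapping into the master theorem with $q_k=u_k=U(f,x_{j_k},t_{j_k,i_k})$, $f_\theta=f$, the pulse reference function $g=\Lbeta\hpulse{d}{\beta}\,\mo{e}_2$ with $v_0=L_0\mo{e}_1$, the same radius window $\rho_n^\pm$, the same bounds $\psi_n(r)\lesssim\Lbeta L_0^{-1}r^{\beta+1}$ and $\chi_n(r)\lesssim\const{cvrtm}L_0^{-1}\Tsum^{-1}\delta^{-(d-1)}n\,r^d$ giving $a_n$ and $\gamma=2(\beta+1)+d$, with the membership $\theta_{z,r}\in\ParamSnake$ secured by \cref{lem:snake:lower:Covering} and \cref{lem:snake:lower:ODEprop}, and the wasted-time device to absorb surplus $\Tsum$. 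The paper's writeup additionally spells out the case analysis for $q_k(\theta_{z,r})=q_k(\theta_0)$ when the unperturbed observation leaves $\ball^d(z,r)$ and the superposition argument for the $L^p$ case, but your sketch identifies the correct lemmas for both.
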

\begin{proof}[Proof of \cref{thm:snake:probabilistic:details}]
	We want to apply \cref{thm:lower:master}. For this, we first map the symbols used there with the objects we are dealing with here. Then, we prove that the assumptions made in \cref{thm:lower:master} are fulfilled. Finally, we present the result of the application of \cref{thm:lower:master}.

    \textbf{Mapping Symbols.}
    See \cref{tbl:symbolsMaster} for an overview.
    We start with a consecutive indexing of our observations: For $k\in\nnset{n}$, denote by $j_k\in\nnset{m}$ and $i_k\in\nnset{n_{j_k}}$ indices of observations, such that all observations are enumerated, i.e., $\#\{(i_k,j_k):\,k\in\nnset{n}\} = n$. Next, we set the dimensions as
    \begin{equation}
        \dm q := \dm u := \dm f := d
    \end{equation}
    and the domain of interest as $\xdomain := [0,1]^d$. The parameter space $\Theta$ is given by  $\ParamSnake( \indset{L}{0}{\ell}, \Lbeta, \delta, \Tsum)$ from \cref{def:snake:smoothnessclass}. For each $\theta\in\Theta$ with $\theta = (f,\indset{x}{1}{m},\indset{T}{1}{m})$, we define the following attributes:
    \begin{enumerate}[label = (\roman*)]
        \item Location: We set $q_k(\theta):= U(f,x_{j_k},t_{j_k,i_k})$: The location of the $k$-th observation is $j_k$-th trajectory at time $t_{j_k,i_k}$, which is driven by the model function $f$ and has initial conditions $x_{j_k}$.
        \item Observed Objects: We identify the observed objects with the location attribute, i.e., $u_k(\theta):=q_{k}(\theta)$.
        \item Function to Estimate: We set $f_\theta := f|_{\hypercube}$, the model function $f$ restrict to the domain $\hypercube$.
    \end{enumerate}
    \textbf{Verifying Assumptions.}
    Next, we verify \cref{ass:lower:general}.  Set $\rho_n^+ := \min(\frac12, \rmax)$ and $\rho_n^- := \frac12 \const{cvrtm}^{-1} \frac{L_0 \Tsum}n$.
    \begin{enumerate}
        \item[\ref{ass:lower:general:ref}]
		    We define the reference function $g:\R^d\to\R^d$ for any $x\in\R^d$ as
		    \begin{equation}
		        g(x) := \Lbeta \hpulse{d}{\beta}\brOf{x}\mo{e}_2\eqcm
		    \end{equation}
		    with $\hpulse{d}{\beta}$ as in \cref{lmm:bumpandpulse}. By \cref{lmm:bumpandpulse}, we have $\operatorname{supp}(g)\subseteq\ball^d(0,1)$.
		    Set $v_0 := L_0 \mo{e}_1\in\R^d$.
		    We set
		    \begin{align}
		    	f_{0}(x) &= v_0 = L_0 \mo{e}_1\eqcm\\
		    	f_{z,r}(x) &= v_0 + r^\beta g\brOf{\frac{x-z}{r}} = L_0 \mo{e}_1 + \Lbeta r^\beta\hpulse{d}{\beta}\brOf{\frac{x-z}{r}}\mo{e}_2\eqfs
		    \end{align}
		    These functions are smooth in the appropriate sense as shown in \cref{lmm:bumpandpulse}: If $r\in(0, \rmax]$, where $\rmax$ is given in the lemma, we have $f_0, f_{z,r}\in \Sigma^{d\to d}(\beta, \indset{L}{0}{\ell}, \Lbeta)$.
		    The condition on $r$ is fulfilled for $r \leq \rho_n^+$.

		    Let $\xdelta1,\dots,\xdelta m$ be the sequence of initial conditions as constructed in \cref{lem:snake:lower:Covering} with
		    \begin{equation}
		    	m := \br{\left\lceil c_{d,\beta} \delta^{-1} \right\rceil + 1}^{d-1} \leq c_{d,\beta} \delta^{-(d-1)}
		    	\eqcm
		    \end{equation}
		    where the inequality is due to $\delta$ being bounded from above, specifically we assume $\delta \leq \sqrt d$.
		    As we assume $\Tsum \geq c_{d,\beta} L_0^{-1} \delta^{-(d-1)}$, we can choose $T_j := c_{d,\beta} L_0^{-1}$ for $j\geq 2$ and $T_1 = \Tsum - \sum_{j=2}^m T_j \geq c_{d,\beta} L_0^{-1}$.
		    \cref{lem:snake:lower:Covering} now implies $\theta_0 := (f_0,\indset{\xdeltaSymb}{1}{m},\indset{T}{1}{m})  \in \ParamSnake$ and $\theta_{z,r} := (f_{z,r},\indset{\xdeltaSymb}{1}{m},\indset{T}{1}{m}) \in \ParamSnake$ for
		    \begin{equation}
		    	0 < r \leq c_{d,\beta} \min\brOf{1,\,\br{\frac{L_0}\Lbeta}^{\frac1\beta}}
		    	\eqcm
		    \end{equation}
		    which is fulfilled for $r \leq \rho_n^+$.

		    To finalize \cref{ass:lower:general} \ref{ass:lower:general:ref}, we have to show $q_k(\theta_{z,r}) = q_k(\theta_0)$ for all $k\in\nnset n$ whenever $q_k(\theta_0)\in\R^{d}\setminus \ball^d(z, r)$. Then the same condition on the observed objects $u_k$ is also fulfilled as $u_k = q_k$.

		    For the location attribute, we have
		    \begin{equation}
		         q_k(\theta_0) = U(f_0 , \xdelta{j_k}, t_{j_k, i_k}) = \xdelta{j_k} + L_0 t_{j_k, i_k} \mo e_1
		    \end{equation}
		    and
		    \begin{equation}
		         q_k(\theta_{z,r}) = U(f_{z,r}, \xdelta{j_k}, t_{j_k, i_k}) = \xdelta{j_k} + L_0 t_{j_k, i_k} \mo e_1 + w_{z,r}^{j_k}(t_{j_k, i_k})\mo e_2\eqcm
		    \end{equation}
		    where
		    \begin{equation}\label{eq:snake:lower:proba:detail:wzr}
		        w_{z,r}^j(t) := \Lbeta r^\beta\int_0^{t}\hpulse{d}{\beta}\brOf{\frac{U(f_{z,r}, \xdelta{j}, s)-z}{r}}\mathrm{d}s.
		    \end{equation}
		    Now assume that $q_k(\theta_0)\notin \ball^d(z,r)$, which in turn implies
		    \begin{equation}\label{eq:notinball}
		        \xdelta{j} + L_0 t_{j, i} \mo e_1 \notin \ball^d(z,r).
		    \end{equation}
		    for $j = j_k$ and $i = i_k$.
		    Then three different cases can occur:
		    \begin{itemize}
		        \item \eqref{eq:notinball} and $\euclOf{\Pi_{-1} \br{\xdelta{j} - z}}\geq \delta$: In this situation, the resulting trajectory will not be inside of the support of $g$ at any time, such that $ w_{z,r}^j(t) = 0$ for all $t\in\R$.
		        \item \eqref{eq:notinball}, $\euclOf{\Pi_{-1} \br{\xdelta{j} - z}}\leq \delta$, and $L_0 t_{j,i} + \Pi_1( \xdelta{j}) \leq \Pi_1(z)$: In this situation, the trajectory has not yet reach the support of $g$, so that no displacement into the second dimension has taken place. In other words, $w_{z,r}(t)=0$ for all $t \leq t_{j,i}$.
		        \item \eqref{eq:notinball}, $\euclOf{\Pi_{-1} \br{\xdelta{j} - z}}\leq \delta$, and $L_0 t_{j,i} + \Pi_1( \xdelta{j}) \geq \Pi_1(z)$: In this case, the trajectory has run completely through the support of $g$. Due to the symmetry property of the solution of the initial value problem, see \cref{lem:snake:lower:ODEprop}, the solution after passing through the support of $g$ behaves as it did before the pass, i.e., it is zero in the second dimension. In other words, $w_{z,r}(t)=0$ for all $t \geq t_{j,i}$.
		    \end{itemize}
		    All in all, we have $q_k(\theta_{z,r}) = q_k(\theta_{0})$ if $q_k(\theta_0)\notin \ball^d(z,r)$.
        \item[\ref{ass:lower:general:obs}]
        	Observing that $U(f_{z,r}, \xdelta{j_k}, t_{j_k, i_k})$ and $U(f_0, \xdelta{j_k}, t_{j_k, i_k})$ only differ in the second component, we obtain
        	\begin{align}
        		\psi_{n}(r)
        		&=
        		\sup_{z\in\R^d} \sup_{k\in\nnset{n}} \euclOf{u_k(\theta_{z,r}) - u_k(\theta_0)}
        		\\&=
        		\sup_{z\in\R^d} \sup_{k\in\nnset{n}} \euclOf{U(\theta_{z,r}, \xdelta{j_k}, t_{j_k, i_k}) - U(\theta_0, \xdelta{j_k}, t_{j_k, i_k})}
        		\\&= \sup_{z\in\R^d} \sup_{k\in\nnset{n}} |w_{z,r}^{j_k}(t_{j_k,i_k})|
        	\end{align}
        	with $w_{z,r}^j$ from \eqref{eq:snake:lower:proba:detail:wzr}.
            Next, for $a\in\R$ and $b>0$ we have
            \begin{align}
            	\int_0^{t} \tilde K_{\beta}\pr\brOf{a + b s} \dl s
            	&=
            	\frac1b\int_{a}^{a+bt} \tilde K_{\beta}\pr\brOf{s} \dl s
            	\\&\leq
            	\frac1b \supNormOf{\tilde K_{\beta}\pr} \min(2, bt)
            	\\&\leq
            	c_\beta \frac1b
            \end{align}
            as $\supp(\tilde K_{\beta}) \subset [-1, 1]$. Thus,
            \begin{align}
                w_{z,r}^{j}(t)
                &=  \Lbeta r^\beta\int_0^{t}\hpulse{d}{\beta}\brOf{\frac{U(f_{z,r} , \xdelta{j}, s)-z}{r}}\mathrm{d}s\\
                & \leq \Lbeta r^\beta  \tilde K_{\beta}(0)\int_0^{t} \tilde K_{\beta}\pr\brOf{
                	\frac1r\br{\Pi_1 \xdelta{j} + L_0 s - \Pi_1 z}
            	} \dl s
            	\\& \leq
            	c_\beta \Lbeta L_0^{-1} r^{\beta+1}
                \eqcm
            \end{align}
            independent of $z\in\R^d$, $t\in\Rpp$, and $k\in\nnset{n}$, such that
            \begin{equation}
                \psi_{n}(r)
                \leq
                c_{\beta} \Lbeta  L_0^{-1} r^{\beta+1}
                \eqfs
            \end{equation}
            Now, for $\chi_{n}(r)$, we have
            \begin{align}
         		\chi_{n}(r)
         		&= \sup_{z\in\R^d} \#\setByEle{k\in\nnset n}{q_k(\theta_{z,r}) \in\ball^d(z,r)}
         		\\&= \sup_{z\in\R^d} \sum_{j\in\nnset{m}} \sum_{i\in\nnset{n_j}} \indOfOf{\ball^d(z,r)}{\xdelta{j} + L_0 t_{j, i} \mo e_1 + w_{z,r}^{j}(t_{j, i})\mo e_2}
         		\eqfs
            \end{align}
            Initial conditions outside the projected ball, $\Pi_{-1}\xdelta{j} \not\in \ball^{d-1}(\Pi_{-1} z, r)$, lead to linear trajectories that never intersect the ball $\ball^d(z, r)$. For $\Pi_{-1}\xdelta{j} \in \ball^{d-1}(\Pi_{-1} z, r)$, the trajectory $U(f_{z, r}, \xdelta j, t)$ can only intersect the ball $\ball^d(z, r)$ if $\Pi_1 \xdelta j + L_0 t \in [\Pi_1 z - r, \Pi_1 z + r]$.
            Hence, with \assuRef{CoverTime}, we have
            \begin{align}
              	\chi_{n}(r)
              	&\leq
              	\sup_{z\in\R^d} \sum_{j\colon \Pi_{-1} x_j^\delta\in\ball^{d-1}(\Pi_{-1}  z,r)} \sum_{i\in\nnset{n_j}} \indOfOf{[L_0^{-1}(\Pi_1(z - \xdelta j) - r),\, L_0^{-1}(\Pi_1(z - \xdelta j) + r)]}{t_{j, i}}
          		\\&\leq
          		\sup_{z\in\R^d} \sum_{j\colon \Pi_{-1} x_j^\delta\in\ball^{d-1}(\Pi_{-1}  z,r)} \max\brOf{1, \const{cvrtm}\frac{2r}{L_0\Tsum}n}
          		\\&\leq
          		c_d\const{cvrtm} L_0^{-1} \Tsum^{-1} m n r^d
          		\eqcm
            \end{align}
            where we use $1 \leq \const{cvrtm}\frac{2r}{L_0 \Tsum}n$ due to $r \geq \rho_n^- = \frac12 \const{cvrtm}^{-1} \frac{L_0 \Tsum}n$.
          	As $m \leq c_{d,\beta} \delta^{-(d-1)}$, we have
         	\begin{align}
         		\psi_{n}(r)^2 \chi_{n}(r)
    			&\leq
    			c_{d,\beta} \const{cvrtm} \Lbeta^2 L_0^{-3}  \Tsum^{-1} m n r^{2(\beta+1)+d}
    			\\&\leq
    			c_{d,\beta} \const{cvrtm} \Lbeta^2 L_0^{-3} \Tsum^{-1} \delta^{-(d-1)} n r^{2(\beta+1)+d}
    			\\&\leq
    			a_{n} r^{\gamma},
         	\end{align}
           with
           \begin{equation}
               a_n := c_{d,\beta} \const{cvrtm} \Lbeta^2  L_0^{-3}  \Tsum^{-1} \delta^{-(d-1)} n
               \qquad\text{and}\qquad
               \gamma := 2(\beta+1)+d
               \eqfs
           \end{equation}
        \item[\ref{ass:lower:general:pack}]
	        As $r \leq \rho_n^+ \leq \frac12$ and $\xdomain=[0,1]^d$ we can set $\const{pack} = c_d$.
        \item[\ref{ass:lower:general:add}]
	        For any $J\in\N$ and $z_1,\dots,z_J\in\R^d$ fulfilling
	        \begin{equation}
	            \inf_{i,j\in\nnset{J}}\euclOf{z_i-z_j}\geq 2r
	        \end{equation}
	        we have
	        \begin{align}
	            \sum_{j\in\nset{1}{J}}\left(f_{z_j,r}-f_{0}\right) &=  \sum_{j\in\nset{1}{J}} \Lbeta r^\beta\hpulse{d}{\beta}\brOf{\frac{x-z_j}{r}}\mo{e}_2\\
	            &= \left(0,\Lbeta r^\beta\sum_{j\in\nset{1}{J}}\hpulse{d}{\beta}\brOf{\frac{x-z_j}{r}},0,\dots,0\right)^T
	        \end{align}
	        Now set
	        \begin{equation}
	            f_{\Tilde{\theta}} := \Tilde{f}:= \left(1,\Lbeta r^\beta\sum_{j\in\nset{1}{J}}\hpulse{d}{\beta}\brOf{\frac{x-z_j}{r}},0,\dots,0\right)^T.
	        \end{equation}
	        As $\inf_{i,j\in\nnset{J}}\euclOf{z_i-z_j}\geq 2r$ all the arising bumps of $f_\theta $ have disjoint support. Choosing
	        \begin{equation}
	            \Tilde{\theta}:=(\Tilde{f},\indset{\xdeltaSymb}{1}{m},\indset{T}{1}{m}),
	        \end{equation}
	        with the same family of initial values as before we can repeat the proof of \cref{lem:snake:lower:Covering} for all $j\in\nnset{J}$ to obtain the covering argument, which yields $\Tilde{\theta}\in\Theta$.
	        Due to the linearity of the integral, simple computations show
	        \begin{equation}
	            q_k(\Tilde{\theta}) - q_k(\theta_0) = \sum_{j\in\nnset{J}} \br{q_k(\theta_{z_j, r}) - q_k(\theta_0)}
	        \end{equation}
	        for all $k\in\nnset n$. Due to the equality of location and observation, we further have
	        \begin{equation}
	            u_k(\Tilde{\theta})- u_k(\theta_0) = \sum_{j\in\nnset{J}}\br{u_k(\theta_{z_j, r})- u_k(\theta_0)}
	        \end{equation}
	         for all $k\in\nnset n$.
    \end{enumerate}
    \textbf{Presenting Results.}
    After proving \cref{ass:lower:general}, we apply \cref{thm:lower:master} and evaluate the results. Recall
    \begin{align}
    	\supNormOf{g} &= c_{d,\beta} \Lbeta\eqcm \\
    	a_{n}&= c_{d,\beta} \const{cvrtm} \Lbeta^2  L_0^{-3}  \Tsum^{-1} \delta^{-(d-1)} n \eqcm \\
    	\zeta &= \beta \eqcm\\
    	\gamma &= 2(\beta+1)+d\eqcm\\
    	\rho_n^+ &= \min(\frac12, \rmax)\eqcm\\
    	\rho_n^- &= \frac12 \const{cvrtm}^{-1} \frac{L_0 \Tsum}n
    	\eqfs
    \end{align}
    \begin{enumerate}[label=(\roman*)]
	    \item
		    By \cref{thm:lower:master} \ref{thm:lower:master:pointwise}, we obtain the following lower bound.
		    We have
		    \begin{align}
		    	\frac14
		    	&\leq
		    	\inf_{\festi} \sup_{\ptrue\in\Theta} \Pr_\theta\brOf{\euclOf{\festi - f_{\ptrue}}(x_0) \geq \frac12 \supNormOf{g} \br{2 \const{noise} a_{n}}^{-\frac\zeta{\gamma}}}
		    	\\&\leq
		    	\inf_{\festi} \sup_{\ptrue\in\ParamSnake} \Pr_\theta\brOf{\euclOf{\festi - f_{\ptrue}}(x_0) \geq C_1 \br{ \Tsum^{-1} \delta^{-(d-1)} n}^{-\frac\beta{2(\beta+1)+d}}}
		    \end{align}
		    with
		    \begin{equation}
		    	C_1 := c_{d,\beta} \Lbeta \br{\const{noise} \const{cvrtm} \Lbeta^2  L_0^{-3}}^{-\frac\beta{2(\beta+1)+d}}
		    	\eqfs
		    \end{equation}
		    The conditions for this bound to hold are:
		    \begin{enumerate}[label=(\Roman*)]
		    	\item
		    	\begin{align}
		    		&
		    		(\rho_n^-)^{-\gamma} \geq 2 \const{noise}a_{n} \geq (\rho_n^+)^{-\gamma}
		    		\\\Leftrightarrow\qquad&
		    		\br{\const{cvrtm}^{-1} \frac{L_0 \Tsum}n}^{-(2(\beta+1)+d)} 
		    		\geq
		    		c_{d,\beta}  \const{noise} \const{cvrtm} \Lbeta^2  L_0^{-3}  \Tsum^{-1} \delta^{-(d-1)} n  
		    		\\&\phantom{\br{\const{cvrtm}^{-1} \frac{L_0 \Tsum}n}^{-(2(\beta+1)+d)}}\geq
		    		\br{\min(\frac12, \rmax)}^{-(2(\beta+1)+d)}
		    		\\\Leftrightarrow\qquad&
		    		\br{\frac n{\Tsum}}^{2\beta+d+1} \geq c_{d,\beta}  \const{noise} \const{cvrtm} \Lbeta^2  L_0^{-3}   \br{\const{cvrtm}^{-1} L_0}^{(2(\beta+1)+d)} \delta^{-(d-1)}
		    		\\\quad \text{and} \quad&
		    		c_{d,\beta}  \const{noise} \const{cvrtm} \Lbeta^2  L_0^{-3} \geq  \Tsum \delta^{d-1} n^{-1}
		    		\eqfs
		    	\end{align}
		    	where we used \eqref{eq:snake:noise:rmaxAssumption} to bound $\rmax \geq c_\beta$.
		    \end{enumerate}
	    \item
		    By \cref{thm:lower:master} \ref{thm:lower:master:sup}, we obtain the following lower bound.
		    We have
		    \begin{align}
		    	\frac14
		    	&\leq
		    	\inf_{\festi} \sup_{\ptrue\in\Theta}
		    	\Pr_\theta\brOf{\sup_{x\in\xdomain}\euclOf{\festi - f_{\ptrue}}(x) \geq \frac12 \supNormOf{g} \br{\frac{12 \gamma \const{noise}}{\dm q} a_{n} \log(a_{n})^{-1}}^{-\frac\zeta{\gamma}}}
		    	\\&\leq
		    	\inf_{\festi} \sup_{\ptrue\in\ParamSnake}
		    	\Pr_\theta\brOf{\sup_{x\in\xdomain}\euclOf{\festi - f_{\ptrue}}(x) \geq C_1 \br{\frac{\Tsum^{-1} \delta^{-(d-1)} n}{\log(A_1  \Tsum^{-1} \delta^{-(d-1)} n)}}^{-\frac\beta{2(\beta+1)+d}}}
		    \end{align}
		    with $A_1 := c_{d,\beta} C_0$, $C_0:=\const{cvrtm} \Lbeta^2  L_0^{-3}$ and
		    \begin{equation}
		    	C_1 := c_{d,\beta} \Lbeta \br{\const{noise} C_0}^{-\frac\beta{2(\beta+1)+d}}\eqfs
		    \end{equation}
		    If additionally $\Tsum^{-1} \delta^{-(d-1)} n \geq A_1$, then
		    \begin{equation}
		    	\frac14
		    	\leq
		    	\inf_{\festi}\sup_{\ptrue\in\ParamSnake}
		    	\Pr_{f_{\ptrue}}\brOf{\sup_{x\in\hypercube}\euclOf{\festi - f_{\ptrue}}(x) \geq C_1 \br{\frac{\Tsum^{-1} \delta^{-(d-1)} n}{\log\brOf{\Tsum^{-1} \delta^{-(d-1)} n}}}^{-\frac\beta{2(\beta+1)+d}}}
		    	\eqfs
		    \end{equation}
		    The conditions for this bound to hold are:
		    \begin{enumerate}[label=(\Roman*)]
		    	\item
		    	\begin{align}
		    		&
		    		a_{n} \geq \max\brOf{2\eqcm\br{\frac{12 \gamma \const{noise}}{\dm q\const{pack}^{\frac{\gamma}{\dm q}}}}^{4}}
		    		\\\Leftrightarrow\qquad&
		    		c_{d,\beta} C_0 \Tsum^{-1} \delta^{-(d-1)} n \geq \max\brOf{2\eqcm\br{\frac{12 (2(\beta+1)+d) \const{noise}}{d \const{pack}^{\frac{2(\beta+1)+d}{d}}}}^{4}}
		    		\\\Leftrightarrow\qquad&
		    		\Tsum^{-1} \delta^{-(d-1)} n \geq c_{d,\beta} C_0^{-1} \max\brOf{1,\const{noise}^{4}}
		    		\eqfs
		    	\end{align}
		    	\item
		    	\begin{align}
		    		&
		    		(\rho_n^-)^{-\gamma} \geq \frac{12 \gamma \const{noise}}{\dm q} a_{n} \geq (\rho_n^+)^{-\gamma}
		    		\\\Leftrightarrow\qquad&
		    		\br{\frac{\const{cvrtm}}{L_0}}^{2(\beta+1)+d} \br{\frac n{\Tsum}}^{2\beta+d+1} \geq c_{d,\beta}  \const{noise} C_0 \delta^{-(d-1)} \geq \frac{\Tsum}{n}
		    		\eqfs
		    	\end{align}
		    	\item
		    	\begin{align}
		    		&
		    		\Tsum^{-1} \delta^{-(d-1)} n \geq A_1
		    		\\\Leftrightarrow\qquad&
		    		\Tsum^{-1} \delta^{-(d-1)} n \geq c_{d,\beta} C_0
		    	\end{align}
		    \end{enumerate}
	    \item
		    By \cref{thm:lower:master} \ref{thm:lower:master:lp}, we obtain the following lower bound.
		    We have
		    \begin{align}
		    	\frac14
		    	&\leq
		    	\inf_{\festi} \sup_{\ptrue\in\Theta}
		    	\Pr_{\ptrue}\brOf{\LpNormOf{p}{\xdomain}{\euclOf{\festi - f_{\ptrue}}} \geq  2^{-1-3/p} \const{pack}^{1/p} \LpNormOf{p}{\R^{\dm q}}{g} \br{ 36 \const{noise} a_{n}}^{-\frac\zeta{\gamma}}}
		    	\\&\leq
		    	\inf_{\festi} \sup_{\ptrue\in\ParamSnake}
		    	\Pr_{\ptrue}\brOf{\LpNormOf{p}{\xdomain}{\euclOf{\festi - f_{\ptrue}}} \geq  C_1 \br{\Tsum^{-1} \delta^{-(d-1)} n}^{-\frac\beta{2(\beta+1)+d}}}
		    \end{align}
		    with
		    \begin{equation}
		    	C_1 := c_{d, \beta, p} \br{\const{noise}  \const{cvrtm} \Lbeta^2  L_0^{-3}}^{-\frac\beta{2(\beta+1)+d}}
		    	\eqfs
		    \end{equation}
		    The conditions for this bound to hold are:
		    \begin{enumerate}[label=(\Roman*)]
		    	\item
		    	\begin{align}
		    		&
		    		36 \const{noise} a_{n} \geq \br{\frac18 \const{pack}}^{-\frac{\gamma}{\dm q}}
		    		\\\Leftrightarrow\qquad&
		    		\Tsum^{-1} \delta^{-(d-1)} n \geq c_{d,\beta} \const{cvrtm}^{-1} \Lbeta^{-2}  L_0^{3}
		    	\end{align}
		    	\item
		    	\begin{align}
		    		&
		    		(\rho_n^-)^{-\gamma} \geq 36 \const{noise}a_{n} \geq (\rho_n^+)^{-\gamma}
		    		\\\Leftrightarrow\qquad&
		    		\br{\frac{\const{cvrtm}}{L_0}}^{2(\beta+1)+d} \br{\frac n{\Tsum}}^{2\beta+d+1} \geq c_{d,\beta}  \const{noise} C_0 \delta^{-(d-1)} \geq \frac{\Tsum}{n}
		    		\eqfs
		    	\end{align}
		    \end{enumerate}
	\end{enumerate}
\end{proof}

    \section{Single Trajectory}\label{sec:app:run}
The snake parameter class of \cref{def:snake:smoothnessclass} allows for an arbitrary number of trajectories $m\in\N$. Furthermore, the constructions in the proofs of \cref{thm:snake:probabilistic} and \cref{thm:snake:deterministic} use $m = c_{d, \beta} \delta^{-(d-1)}$ initial conditions. It seems natural to ask whether the lower bound also holds when one is restricted to less trajectories.
We want to argue that the same lower bound is true even for $m = 1$.

First, note that the $m$ initial conditions used in the lower bound proofs form a regular grid in all but the first dimension, where they are equal. Furthermore, the trajectories eventually end up in states that are equal to the initial conditions shifted along the first dimension. In order to show that we only require $m=1$ trajectories, we want to connect the $m$ end-points with the starting points via trajectories outside the domain of interest that are solutions to an ODE with a smooth model function.

We make the construction explicit for $d=2$ and $\beta=1$.
\begin{lemma}\label{lmm:singleTraj}
	Let $\delta \in (0, 1/2]$.
	There is a function $f\colon \R^2 \to \R^2$ with solution $u \colon \R \to \R^2$ of the ODE $\dot u(t) =  f(u(t))$ for initial conditions $u(0) = 0\in\R^2$ with following properties:
	\begin{enumerate}[label=(\roman*)]
		\item $f_{|[0,1]^2} = (1, 0, \dots, 0)\tr$
		\item $f \in \Sigma^{2\to2}(\beta, L_0, L_1)$ with $\beta = 1$, $L_0 = 2$, and $L_1 = 5$.
		\item Set $K = \lceil\frac1\delta\rceil$. There is $T \in \Rpp$ with $T \leq c \delta^{-1}$ such that: There are times $s_1, \dots, s_K \in [0, T]$ such that
		\begin{equation}
			\cb{u(s_j) \colon j\in\{1,\dots, K\}} = \cb{\br{0, \frac{k}K}\tr \colon k\in\{0,\dots, J\}}
			\eqfs
		\end{equation}
	\end{enumerate}
\end{lemma}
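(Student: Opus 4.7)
The plan is to construct $f$ by first parametrizing a smooth unit-speed curve $\gamma\colon[0,T]\to\R^2$ with $\gamma(0)=0$, then defining $f:=\dot\gamma\circ\gamma^{-1}$ on the image of $\gamma$, and finally extending $f$ smoothly to all of $\R^2$ in a way that preserves $f|_{[0,1]^2}=(1,0)^{\!\top}$. The curve $\gamma$ would alternate between $K+1$ horizontal left-to-right segments inside $[0,1]^2$ at heights $0,1/K,\dots,1$, and $K$ smooth connecting arcs in $\R^2\setminus[0,1]^2$. The $k$-th arc starts at $(1,k/K)$ with tangent $(1,0)^{\!\top}$, winds once around $[0,1]^2$, and arrives at $(0,(k+1)/K)$ with the same tangent $(1,0)^{\!\top}$. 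Since the tangents match at every crossing of $\partial[0,1]^2$, the curve is $C^\infty$ there, so the induced $f$ is consistent with the required value $(1,0)^{\!\top}$ on $[0,1]^2$.

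I would arrange the $K$ connecting arcs as nested curves in an annular region of width $O(1)$ around $[0,1]^2$, with arc $k$ at perpendicular distance approximately $1+k/K$ from $\partial[0,1]^2$. In suitable box-type coordinates around $[0,1]^2$, the vector field in this annulus is essentially a slowly drifting azimuthal flow: the unit tangent has an angular component of order $1$ that rotates smoothly around $[0,1]^2$, plus a small radial drift of order $1/K$ that shifts successive laps outward by $1/K$ per loop. Each arc can be chosen of length $O(1)$ (bounded below by $2\pi/5$ because of the curvature constraint, and bounded above by an absolute constant by design), so
\begin{equation}
T \;\leq\; (K+1)\cdot 1 \;+\; K\cdot O(1) \;\leq\; c\,\delta^{-1}\eqfs
\end{equation}
The times $s_j$ at which $\gamma$ reaches the points $(0,k/K)$ are the start times of the horizontal segments.

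The extension of $f$ off of $\gamma\cup[0,1]^2$ can be done with a cutoff on a tubular neighborhood of $\gamma$, scaling $f$ smoothly down to $0$ at distance $\geq 1$ from $\gamma\cup[0,1]^2$ using a partition of unity, while preserving $|f|\leq 2$. The main obstacle is verifying the tight Lipschitz bound $L_1=5$ uniformly in $K$. Along $\gamma$, $|Df|\geq \kappa$ forces the curvature of $\gamma$ to be at most $5$, which is achievable by choosing each arc as a smooth deformation of a curve of radius of curvature $\geq 1/5$. The key observation is that although consecutive nested arcs lie only $O(1/K)$ apart radially, the vector field varies slowly in the radial direction: only the tiny radial drift coefficient changes between laps, while the azimuthal direction is common to all arcs, so $\|Df\|$ across the radial direction is $O(1)$ rather than $O(K)$. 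Combined with the $O(1)$ tangential derivative bound, a careful choice of the radial profile and of the partition-of-unity extension yields $\|f\|_\infty\leq 2$ and $\|Df\|_\infty\leq 5$, as required.
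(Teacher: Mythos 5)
Your proposal identifies the right strategy — a single trajectory that picks up a small drift of order $1/K$ per loop and so visits all $K+1$ grid heights in total time $O(K)$ — and your diagnosis that the Lipschitz bound will not blow up ``because only the tiny radial drift coefficient changes between laps'' is exactly the right intuition. But that verification \emph{is} the proof, and you leave it as an assertion (``a careful choice of the radial profile and of the partition-of-unity extension yields $\|f\|_\infty\leq 2$ and $\|Df\|_\infty\leq 5$''). The paper instead writes down a completely explicit piecewise field, built from four closed-form pieces (a constant field on the top strip, a constant-plus-drift field on the bottom strip, and two fixed rotational fields $\min(1,r_\bullet)\,g_{\mathsf{speed1}}(M_{\mathsf{rotate90}}(x-z_\bullet))$ centered at $(0,-1)$ and $(1,-1)$), and then checks $\|f\|_\infty$ and $\|Df\|_{\mathsf{op}}$ region by region. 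The crucial trick is that the left and right rotational pieces are \emph{one fixed field each}, independent of $K$; the $K$ arcs are simply integral curves of that fixed field at different radii, so there is nothing to ``blend'' and the radial derivative is trivially $O(1)$. The $\delta$-drift lives entirely in the downward component $h_\delta(x_1)$ on the bottom strip, and the speed factor $\min(1,r)$ kills the would-be singularity at the rotation centers.

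Two concrete points where your plan would need real work before it counts as a proof. First, the boundary-matching issue: your arcs wind once around $[0,1]^2$, so just outside the top edge the field must point roughly $(-1,0)$ while on $[0,1]^2$ it is $(1,0)$; this $180^\circ$ turn has to happen across a gap of width about $1$, and you need to exhibit an interpolation that respects $L_1=5$. The paper sidesteps this entirely because its arcs loop \emph{under} the bottom: at the right edge $\{1\}\times[0,1]$ the rotational field around $(1,-1)$ already points in direction $(1,0)$, so the pieces glue continuously with no interpolation region. Second, your tubular-neighborhood/partition-of-unity extension is problematic as stated: the arcs are only $O(1/K)$ apart, so tubular neighborhoods of fixed width overlap massively; defining $f$ by such a blend requires checking that the $K$ locally-defined fields are compatible to accuracy $O(1/K)$ in value and $O(1)$ in derivative, which is precisely the claim you are trying to establish. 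The paper avoids the circularity by never associating a field to an individual arc at all. So: right idea, genuinely incomplete execution on exactly the step the lemma is really about.
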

\begin{figure}[H]
	\centering
	\includegraphics[width= 0.75\textwidth]{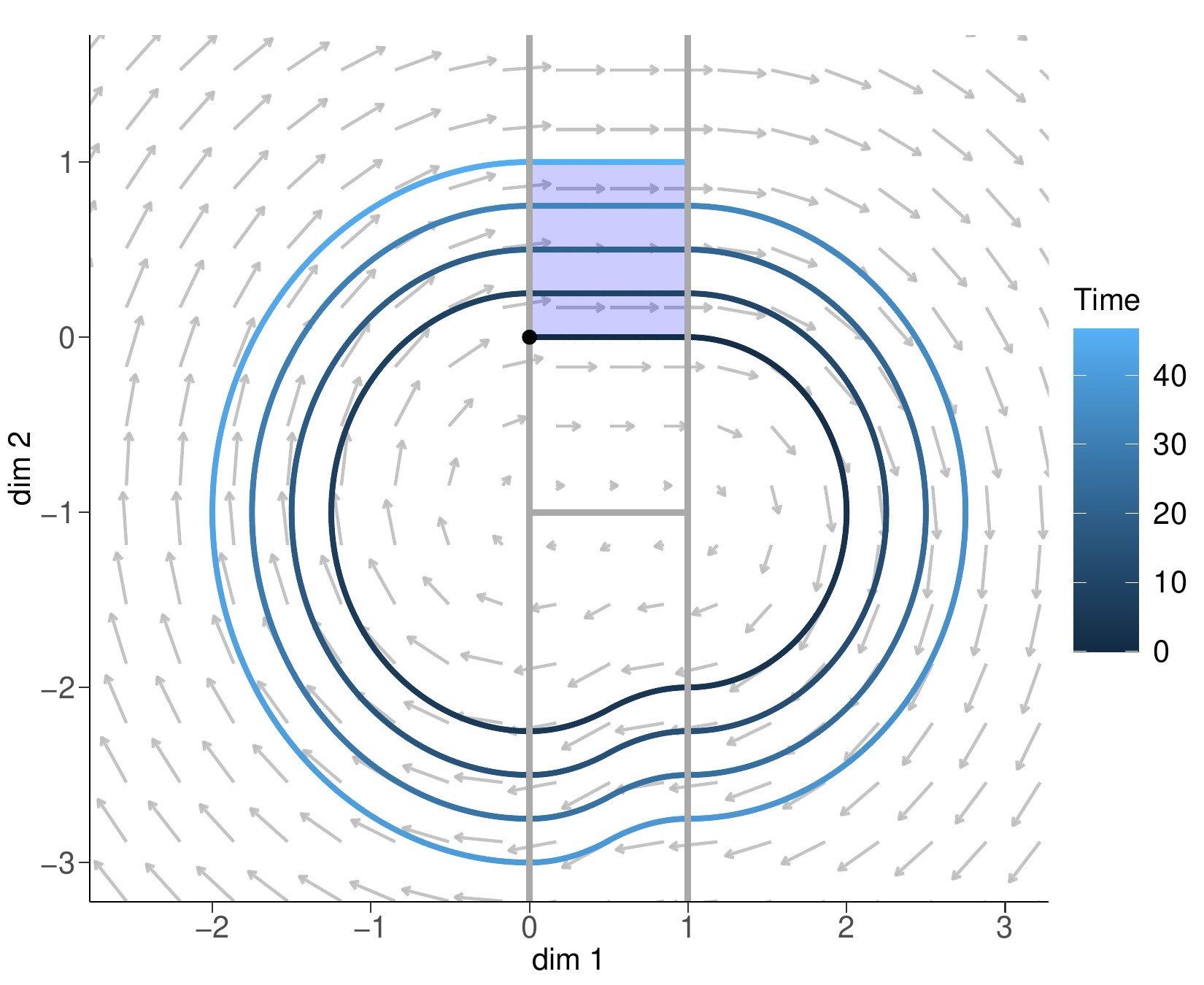}
	\label{fig:run}
	\caption{A grid of states $u(t_{2i-1})$ with $\Pi_1 u(t_{2i-1}) = 1$, $i\in\nnset4$, and another gird of states $u(t_{2i})$ with $\Pi_1 u(t_{2i}) = 0$ are connected via a single trajectory $u$. The model function $f$ is depicted as arrows. The black dot marks $u_0 = (0, 0)$. The domain of interest $[0, 1]^2$ is colored in purple. The image of the solution $u\colon[0,T] \to \R^2$ of $\dot u(t) = f(u(t))$ with initial conditions $u(0) = u_0$ is shown in black to blue. Gray lines mark the partitioning in the construction of $f$, see \eqref{eq:2dlipschtiz}.}
\end{figure}
The proof of the lemma is given below. By adding bumps or pulses to the constructed model function $f$ and scaling it, we can create the alternative model functions required for the proofs of the lower bounds for the snake model.

Recall \cref{def:snake:smoothnessclass}: For $d=2$ and $\beta=1$, we have
\begin{align}
	&\ParamSnake(L_0, L_1, \delta, \Tsum)
	\\&=
	\setByEle{\theta = (f, \indset{x}{1}{m}, \indset{T}{1}{m})\in \ParamSnakeBase{2,1}{m}(L_0, L_1) }{
		\begin{array}{l}
			m\in\N,\cr
			\sum_{j\in\nnset{m}} T_j = \Tsum,\cr
			d_\tube([0,1]^2, \theta) \leq \delta
		\end{array}
	}\eqcm
\end{align}
where $\ParamSnakeBase{2,1}{m}(L_0, L_1) = \Sigma^{2\to 2}(1, L_0, L_1) \times\R^d \times \Rpp$. With \cref{lmm:singleTraj} we can show that the lower bounds so far presented for the parameter class $\ParamSnakeSymb_{2,1}(L_0, L_1, \delta, \Tsum)$ also hold for the smaller class
\begin{equation}
	\setByEle{\theta = (f, x, \Tsum)\in \ParamSnakeBase{2,1}{1}(L_0, L_1) }{
		d_\tube([0,1]^2, \theta) \leq \delta
	}\eqcm
\end{equation}
where the number of trajectories is restricted to $m=1$. Furthermore, we conjecture that similar arguments can be made for arbitrary $d \in\N_{\geq 2}$ and $\beta\in\Rpo$.
\subsection{Proof of \cref{lmm:singleTraj}}\label{sec:app:run:proof}
We start by constructing the model function $f\colon\R^2 \to \R^2$. 
The function is defined piece-wise on following partition:
\begin{align}
	A_{\ms{top}} &:= [0,1]\times [-1,\infty)\eqcm\\
	A_{\ms{bottom}} &:= [0,1]\times (-\infty, -1]\eqcm\\
	A_{\ms{left}} &:= (-\infty,0] \times \R\eqcm\\
	A_{\ms{right}} &:= [1, \infty) \times \R
	\eqfs
\end{align}
To define $f$ on each part, we need some more definitions,
\begin{align}
    z_\ms{left} &:= (0,-1)\eqcm\\
    z_\ms{right} &:= (1,-1)\eqcm\\
    r_\ms{left}(x) &:= \normof{x - z_\ms{left}}\eqcm\\
    r_\ms{right}(x) &:= \normof{x - z_\ms{right}}\eqcm\\
    r_\ms{middle}(x) &:= \abs{x_2 + 1}\eqcm\\
    M_{\ms{rotate90}} &:= \begin{pmatrix}0 & 1\\-1 & 0\end{pmatrix}\eqcm\\
    g_{\ms{speed1}}(x) &:= \frac{x}{\normof{x}}\eqcm\\
    h_\delta(x_1) &:= -4\delta\br{\frac12 - \abs{x_1 - \frac12}}\eqfs
\end{align}
Now we define the model function as
\begin{equation}\label{eq:2dlipschtiz}
    f(x) :=
    \begin{cases}
        \min\brOf{1, r_\ms{middle}(x)}\cdot (1, 0) & \text{if } x\in A_{\ms{top}}\eqcm \\
        \min\brOf{1, r_\ms{middle}(x)}\cdot (-1, h_\delta(x_1)) & \text{if } x\in A_{\ms{bottom}} \eqcm\\
        \min\brOf{1, r_\ms{left}(x)}\cdot g_{\ms{speed1}}(M_{\ms{rotate90}} (x - z_\ms{left})) & \text{if } x\in A_{\ms{left}}\eqcm\\
        \min\brOf{1, r_\ms{right}(x)}\cdot g_{\ms{speed1}}(M_{\ms{rotate90}} (x - z_\ms{right}))  & \text{if } x\in A_{\ms{right}}\eqfs
    \end{cases}
\end{equation}
The remaining part of this section shows further properties of $f$ which, together, prove  \cref{lmm:singleTraj}.
\begin{lemma}\label{lemE2}
    The function $f$ as defined in \eqref{eq:2dlipschtiz} is $\sqrt{1 + 20\delta^2}$-Lipschitz with $\normof{f}_{\infty} = \sqrt{1 + 4\delta^2}$.
\end{lemma}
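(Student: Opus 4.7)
The plan is to bound $\supNormOf{f}$ and the local Lipschitz constant on each of the four closed convex pieces $A_{\ms{top}}, A_{\ms{bottom}}, A_{\ms{left}}, A_{\ms{right}}$ separately, check that the piecewise definitions agree on the shared boundaries, and then glue by observing that every line segment in $\R^2$ meets each convex piece in a subinterval. For the sup-norm, on $A_{\ms{top}}, A_{\ms{left}}, A_{\ms{right}}$ the directional factor $(1,0)$ or $g_{\ms{speed1}}(\cdot)$ has Euclidean norm $1$ and the scalar prefactor $\min(1, r_\cdot)$ lies in $[0,1]$, so $\euclOf{f(x)} \leq 1$; on $A_{\ms{bottom}}$ the direction vector $(-1, h_\delta(x_1))$ has Euclidean norm $\sqrt{1+h_\delta(x_1)^2}$, and since $|h_\delta(x_1)| \leq 2\delta$ on $[0,1]$ with equality at $x_1 = 1/2$, we get $\euclOf{f(x)} \leq \sqrt{1+4\delta^2}$, attained at, e.g., $(1/2, -2)$. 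Hence $\normof{f}_\infty = \sqrt{1 + 4\delta^2}$.

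Continuity across the four boundary segments is a short check: on $\{x_2 = -1\}\cap([0,1]\times \R)$ the factor $r_{\ms{middle}}$ vanishes so both sides give $f = 0$; on $\{x_1 = 0\}$ one computes $M_{\ms{rotate90}}(x - z_{\ms{left}}) = (x_2+1, 0)$, which after normalization by $g_{\ms{speed1}}$ yields $\pm(1,0)$, matching the $A_{\ms{top}}/A_{\ms{bottom}}$ formulas since $h_\delta(0) = 0$; the boundary $\{x_1 = 1\}$ is analogous via $h_\delta(1) = 0$. For the Lipschitz bound on the three easy regions, $f|_{A_{\ms{top}}}$ depends on $x$ only through the $1$-Lipschitz map $x_2 \mapsto \min(1, x_2+1)$, hence is $1$-Lipschitz. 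On $A_{\ms{left}}$, setting $y := x - z_{\ms{left}}$ and using that $M_{\ms{rotate90}}$ is an isometry, the map $y \mapsto \min(1, \euclOf{y}) M_{\ms{rotate90}} y / \euclOf{y}$ equals $M_{\ms{rotate90}} y$ for $\euclOf{y} \leq 1$ (linear, $1$-Lipschitz) and $M_{\ms{rotate90}} y/\euclOf{y}$ for $\euclOf{y} \geq 1$, whose Jacobian $\euclOf{y}^{-1} M_{\ms{rotate90}}(I - yy^{\top}/\euclOf{y}^2)$ has operator norm $1/\euclOf{y} \leq 1$; continuity at $\euclOf{y} = 1$ glues this to a $1$-Lipschitz bound. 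The argument for $A_{\ms{right}}$ is identical.

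The main obstacle is the Lipschitz bound on $A_{\ms{bottom}}$. I would subdivide $A_{\ms{bottom}}$ along $x_2 = -2$ (the kink of $\min(1, \cdot)$) and $x_1 = 1/2$ (the kink of $h_\delta$). On $\{x_2 \leq -2\}$, $f = (-1, h_\delta(x_1))$ has Lipschitz constant $|h_\delta'| = 4\delta$. On the transition sub-rectangle $(0, 1/2) \times (-2, -1)$, substituting $h_\delta(x_1) = -4\delta x_1$ and $\min(1, -x_2-1) = -x_2-1$ gives $f(x) = (x_2+1,\, 4\delta x_1(x_2+1))$ with Jacobian
\[
Df(x) = \begin{pmatrix} 0 & 1 \\ 4\delta(x_2+1) & 4\delta x_1 \end{pmatrix},
\]
whose squared operator norm is bounded by its squared Frobenius norm $1 + 16\delta^2 (x_2+1)^2 + 16\delta^2 x_1^2 \leq 1 + 16\delta^2 + 4\delta^2 = 1 + 20\delta^2$, using $(x_2+1)^2 \leq 1$ and $x_1^2 \leq 1/4$ on this piece. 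The mirror sub-rectangle $(1/2, 1) \times (-2, -1)$ produces an analogous Jacobian with $1 - x_1$ in place of $x_1$ and satisfies the same bound.

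Finally, I would glue: each of the four regions (and each of the sub-pieces used inside $A_{\ms{bottom}}$) is closed and convex, and together they cover $\R^2$. For any $x, y \in \R^2$, the segment $[x, y]$ meets the boundaries of these pieces in finitely many points, partitioning it into finitely many sub-segments each contained in a single region. Applying the per-region Lipschitz bound (at worst $\sqrt{1+20\delta^2}$) on each sub-segment and using continuity of $f$ at the break points, the triangle inequality along the chain yields $\euclOf{f(x) - f(y)} \leq \sqrt{1+20\delta^2}\, \euclOf{x - y}$, as required.
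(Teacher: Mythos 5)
Your proof is correct and follows essentially the same route as the paper: verify the sup-norm by inspection of each piece, bound the Lipschitz constant regionwise via the Jacobian's Frobenius norm on $A_{\ms{bottom}}$ (arriving at the identical $1 + 16\delta^2 + 4\delta^2$ split, just organized by sub-rectangles of $A_{\ms{bottom}}$ rather than by bounding the three nonzero Jacobian entries directly), confirm the isometry/radial-normalization argument on $A_{\ms{left}}$ and $A_{\ms{right}}$, and glue across the matching boundaries. The only difference is that you spell out the continuity checks at the seams and the convex-segment gluing argument, which the paper compresses to "one can easily check that $f$ does not jump."
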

\begin{proof}[Proof of \cref{lemE2}]
    First, we want to prove $\normof{f}_{\infty}^2 = 1 + (2\delta)^2$.
    As $\min(1, r_{\bullet}(x))\leq 1$ and $\normof{g_{\ms{speed1}}(x)} = 1$ for all $x\in\R^2$, we have $\normof{f(x)}\leq 1$ for $x \in A_{\ms{top}} \cup A_{\ms{left}} \cup A_{\ms{right}}$.
    On $ A_{\ms{bottom}}$, $f$ is maximized for $x_2 \leq -2$ and $x_1\in[0,1]$ such that $\abs{h_\delta(x_1)}$ is maximized. Hence,
    \begin{equation}
        \sup_{x\in\R^2} \normof{f(x)}^2 = \sup_{x\in A_{\ms{bottom}}} \normof{f(x)}^2 = 1 + \sup_{x_1\in[0, 1]}h_\delta(x_1)^2 = 1 + (2\delta)^2
        \eqfs
    \end{equation}

    Next, we consider Lipschitz-continuity of $f$. First, we show Lipschitz-continuity on each of the sets $A_\bullet$. Then we take a look the borders.

    All $r_{\bullet}$ are 1-Lipschitz and bounded by 1. This implies that $f$ is 1-Lipschitz on $A_{\ms{top}}$.

    On $x\in A_{\ms{bottom}}$, we have
    \begin{equation}
        Df(x) =
        \begin{pmatrix}
            0 & \partial_2 \min(1, r_\ms{middle}(x)) \\
            \min(1, r_\ms{middle}(x)) h_\delta\pr(x_1) & h_\delta(x_1) \partial_2 \min(1, r_\ms{middle}(x))
        \end{pmatrix}
    \end{equation}
    with
    \begin{align}
        \br{\partial_2 \min(1, r_\ms{middle}(x))}^2 &\leq 1\eqcm\\
        \br{\min(1, r_\ms{middle}(x)) h_\delta\pr(x_1)}^2 &\leq \br{4\delta}^2\eqcm\\
        \br{h_\delta(x_1) \partial_2 \min(1, r_\ms{middle}(x))}^2 & \leq \br{2\delta}^2\eqfs
    \end{align}
    As the operator norm of a matrix is bounded by the Frobenius norm, we obtain
    \begin{equation}
        \normof{D f(x)}^2 \leq 1 + \br{2\delta}^2 + \br{4\delta}^2
        \eqfs
    \end{equation}

    Because of the symmetry between $f$ on $A_{\ms{left}}$ and on $A_{\ms{right}}$, it is enough to consider only one side.
    On $x\in A_{\ms{left}}$, if $r_\ms{left}(x) \leq 1$, we have $f(x) = M_{\ms{rotate90}} (x - z_\ms{left})$, which is $1$-Lipschitz. If $r_\ms{left}(x) \geq 1$, $f(x + z_\ms{left}) = M_{\ms{rotate90}} \frac{x}{\normof{x}}$, which is also $1$-Lipschitz as the denominator is bounded below by $1$.

    To finish the proof, one can easily check that $f$ does not jump at any point of the borders between $A_{\ms{top}}, A_{\ms{bottom}}, A_{\ms{left}}, A_{\ms{right}}$.
\end{proof}
\begin{lemma}\label{lemE3}
    Let $K\in\N$ and set $\delta := \frac1K$. Set $T := 1 + \br{2 + 3 \pi} K$.
    Let $u\colon [0, T] \to \R^2$ be a solution of $\dot u(t) = f(u(t))$ with initial values $u(0) = (0, 0)$, where $f$ is given in \eqref{eq:2dlipschtiz}. For $k = 0,\dots,K$, define $s_k := \sum_{j=0}^{k-1} \br{2 + \pi \br{2 + \frac jK + \frac{j+1}K}}$.
    Then $u(s_k + t) = (t, \frac kK)$ for $t\in[0,1]$.
\end{lemma}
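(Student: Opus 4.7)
The plan is to prove by induction on $k \in \{0,1,\ldots,K\}$ that $u(s_k) = (0, k/K)$; the identity $u(s_k + t) = (t, k/K)$ for $t \in [0,1]$ then follows immediately, since the segment $[0,1]\times\{k/K\}$ lies in $[0,1]^2$, on which $f \equiv (1, 0)$ by property~(i) of \cref{lmm:singleTraj}. The base case $s_0 = 0$, $u(0) = (0,0)$ is given. For the inductive step, I would decompose the interval $[s_k, s_{k+1}]$ into four successive phases, exhibit an explicit curve on each phase, verify that it solves the ODE $\dot v = f(v)$ on the appropriate set $A_\bullet$ from~\eqref{eq:2dlipschtiz}, and invoke uniqueness of solutions, which is granted by the Lipschitz bound of \cref{lemE2}.

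Phase~1 (duration $1$, in $A_{\ms{top}}$): by property~(i), $u$ moves from $(0, k/K)$ to $(1, k/K)$ at unit speed. Phase~2 (duration $\pi(1 + k/K)$, in $A_{\ms{right}}$): set $r := 1 + k/K$ and parametrize $v(\tau) := z_{\ms{right}} + r(\sin(\tau/r), \cos(\tau/r))$ for $\tau \in [0, \pi r]$. Then $v_1(\tau) = 1 + r\sin(\tau/r) \geq 1$ and $\normof{v - z_{\ms{right}}} = r \geq 1$, so $v \in A_{\ms{right}}$ and $\min(1, r_{\ms{right}}(v)) = 1$. A direct computation gives $\dot v = (\cos(\tau/r), -\sin(\tau/r)) = g_{\ms{speed1}}(M_{\ms{rotate90}}(v - z_{\ms{right}}))$, verifying the ODE; the arc terminates at $v(\pi r) = (1, -2 - k/K)$. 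Phase~3 (duration $1$, in $A_{\ms{bottom}}$): parametrize $v(\tau) := (1-\tau,\ -2 - k/K + \int_0^\tau h_\delta(1-\sigma)\,\dl\sigma)$. Since $h_\delta \leq 0$ on $[0,1]$, we have $v_2(\tau) \leq -2$, so $v \in A_{\ms{bottom}}$ and $\min(1, |v_2+1|) = 1$; consequently $\dot v = (-1, h_\delta(v_1))$ matches~\eqref{eq:2dlipschtiz}, and using $\int_0^1 h_\delta = -\delta$ we obtain $v(1) = (0, -2 - (k+1)/K)$. Phase~4 (duration $\pi(1 + (k+1)/K)$, in $A_{\ms{left}}$): with $r' := 1 + (k+1)/K$, parametrize $v(\tau) := z_{\ms{left}} + r'(-\sin(\tau/r'), -\cos(\tau/r'))$ and verify the ODE as in Phase~2; the arc terminates at $v(\pi r') = (0, (k+1)/K)$, closing the induction.

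Summing the four phase durations gives $2 + \pi(2 + k/K + (k+1)/K) = s_{k+1} - s_k$, as required; telescoping yields $s_K = 2K + 2\pi K + (\pi/K)\sum_{j=0}^{K-1}(2j+1) = (2+3\pi)K$, so $s_K + 1 = T$ confirms that the statement is exactly consistent with the horizon $T$. The main obstacle is the bookkeeping in Phases~2 and~4: one has to pick the correct orientation of each circular parametrization so that, after applying $M_{\ms{rotate90}}$ and normalizing via $g_{\ms{speed1}}$, the tangent vector of $v$ matches $f(v)$, and one must verify that $v$ stays inside the prescribed domain $A_\bullet$ throughout each phase so that the piecewise definition~\eqref{eq:2dlipschtiz} of $f$ is evaluated on the correct branch; the rest is elementary calculation.
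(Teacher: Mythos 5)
Your proof is correct and follows essentially the same approach as the paper's: decompose each lap into the four phases (top traversal, right semicircle, bottom return, left semicircle), track the state through each, sum the durations, and verify that the telescoped total matches $T$. The paper merely asserts the four phase transitions as bullet points, whereas you substantiate them with explicit parametrizations checked against~\eqref{eq:2dlipschtiz} and a uniqueness argument via \cref{lemE2}; this is additional rigor rather than a different route.
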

\begin{proof}[Proof of \cref{lemE3}]
\mbox{}
    \begin{itemize}
        \item If $u(t_0) = (0, -1+a)$ with $a\in[1,\infty)$, then $u(t_0 + 1) = (1, -1+a)$.
        \item If $u(t_0) = (1, -1+a)$ with $a\in[1,\infty)$, then $u(t_0 + \pi a) = (1, -1-a)$.
        \item If $u(t_0) = (1, -1-a)$ with $a\in[1,\infty)$, then $u(t_0 + 1) = (0, -1-a-\delta)$.
        \item If $u(t_0) = (0, -1-a)$ with $a\in[1,\infty)$, then $u(t_0 + \pi a) = (0, -1+a)$.
    \end{itemize}
    When starting at $u(t_0) = (0, -1+a)$, the time to make one round is $q(\delta, a) = 1 + \pi a + 1 + \pi (a + \delta)$ such that $u(t_0 + q(\delta, a)) = (0, -1+a+\delta)$. The total time needed to make all $K-1$ full rounds and to get to  $u(t) = (0, 1)$ plus the additional time to get from there to $u(t) = (1, 1)$ is $T$, where
    \begin{align}
        T
        &=
        \sum_{k=0}^{K-1} \br{1 + \pi (1 + \frac kK) + 1 + \pi (1 + \frac {k+1}K)} + 1
        \\&=
        2 K + 1 + \pi \sum_{k=0}^{K-1} (1 + \frac kK) + \pi \sum_{k=0}^{K-1}  (1 + \frac {k+1}K)
        \\&=
        2 K + 1 + 2 \pi K + 2 \pi \frac1K \sum_{k=0}^{K} k - \pi
        \\&=
        2 K + 1 + 2 \pi K + \pi K
        \\&=
        1 + \br{2 + 3 \pi} K
        \eqfs
    \end{align}
\end{proof}
\end{appendix}
\textbf{Acknowledgments}
This work is funded by Deutsche Forschungsgemeinschaft (DFG, German Re- search Foundation) under Germany’s Excellence Strategy EXC-2181/1-39090098 (the Heidelberg STRUCTURES Cluster of Excellence)
\printbibliography
\end{document}